\numberwithin{equation}{section}
\newtheorem{Theorem}{Theorem}[section]
\newtheorem{Lemma}[Theorem]{Lemma}
\newtheorem{lma}[Theorem]{Lemma}
\newtheorem{Proposition}[Theorem]{Proposition}
\newtheorem{prop}[Theorem]{Proposition}
\newtheorem{Definition}[Theorem]{Definition}
\newtheorem{defn}[Theorem]{Definition}
\theoremstyle{remark}
\newtheorem{Remark}[Theorem]{Remark}
\newenvironment{customass}[1]
  { \innercustomass }
  {\endinnercustomass}
\newcommand{\MAFE}{{\text{MAFE}}}
\newcommand{\MSFE}{{\text{MSFE}}}
\newcommand{\bH}{{\boldsymbol H}}
\renewcommand{\b}{\boldsymbol}
\newcommand{\ones}{{\b 1}}
\newcommand{\bchi}{{\b \chi}}
\newcommand{\bxi}{{\b \xi}}
\newcommand{\bmu}{{\b \mu}}
\newcommand{\ba}{{\b a}}
\newcommand{\bB}{{\b B}}
\newcommand{\bb}{{\b b}}
\newcommand{\bX}{{\b X}}
\newcommand{\bx}{{\b x}}
\newcommand{\bY}{{\b Y}}
\newcommand{\by}{{\b y}}
\newcommand{\bu}{{\b u}}
\newcommand{\bU}{{\b U}}
\newcommand{\bv}{{\b v}}
\newcommand{\bW}{{\b W}}
\newcommand{\bz}{{\b z}}
\newcommand{\bP}{{\b P}}
\newcommand{\bp}{{\b p}}
\newcommand{\bF}{{\b F}}
\newcommand{\bQ}{{\b Q}}
\newcommand{\bD}{{\b D}}
\newcommand{\be}{{\b e}}
\newcommand{\bI}{{\b I}}
\newcommand{\bR}{{\b R}}
\newcommand{\bM}{{\b M}}
\newcommand{\tildeR}{\tilde{\b R}}
\newcommand{\bLambda}{{\b \Lambda}}
\newcommand{\hatbu}[1]{\hat{\bu}^{(#1)}}
\newcommand{\hatbU}[1]{\hat{\bU}^{(#1)}}
\newcommand{\tildebu}[1]{\tilde{\bu}^{(#1)}}
\newcommand{\tildebU}[1]{\widetilde{\bU}^{(#1)}}
\newcommand{\tildeB}[1]{\widetilde{\bB}^{(#1)} }
\newcommand{\tildeb}[1]{\tilde{\bb}^{(#1)} }
\newcommand{\barB}[1]{\overline{\bB}^{(#1)} }
\newcommand{\defeq}{ \vcentcolon = }
\newcommand{\eqdef}{=\vcentcolon}
\newcommand{\IC}{\mathrm{IC}}
\DeclareMathOperator*{\argmin}{arg\,min}
\newcommand{\tp}{\mathsf{T}}
\newcommand{\adjoint}{\star}
\DeclareMathOperator*{\tensor}{\otimes}  
\newcommand{\abs}[1]{\left\lvert #1 \right\rvert}
\renewcommand{\sc}[1]{\left\langle #1 \right\rangle}
\newcommand{\KL}{{Karhunen--Lo\`{e}ve }}
\newcommand{\LL}[1]{{L^2\left( #1, \mathbb{R} \right)}\xspace}
\newcommand{\LLP}{L^2\!\left( \Omega \right)\xspace}
\newcommand{\LLPH}[1]{L^2_{#1}\left( \Omega \right)\xspace}
\newcommand{\LLR}{{\LL{[0,1]}}}
\DeclareMathOperator{\pp}{ {\mathbb{P} }}
 \DeclareMathOperator{\ee}{ \:{\mathbb{E} }}
\newcommand{\eee}[1]{{\mathbb{E}\left[ #1 \right]}}
\DeclareMathOperator{\cov}{ {\mathrm{cov}}}
\newcommand{\convp}{\stackrel{{\rm P}}{\longrightarrow}}
\newcommand{\convP}{\stackrel{{\rm P}}{\longrightarrow}}
\newcommand{\identity}{\mathrm{Id}}
\newcommand{\canonicalProj}{\mathcal{P}}
\newcommand{\singleProj}{\mathsf{P}}
\newcommand{\operator}[1]{\mathsf{L}( #1 )} 
\newcommand{\matrixOf}[1]{\mathsf{Mat}( #1 )} 
\DeclareMathOperator{\row}{\mathrm{row}}
\DeclareMathOperator{\col}{\mathrm{col}}
\DeclareMathOperator{\trace}{{\mathrm{Tr}}}
\DeclareMathOperator{\proj}{\mathrm{proj}}
\DeclareMathOperator{\vspan}{\mathrm{span}}
\newcommand{\schatten}{\mathcal{S}}
\newcommand{\tc}[1]{\schatten_1(#1)}
\newcommand{\bounded}[1]{\mathcal{L}(#1)}
\newcommand{\HS}[1]{\schatten_2(#1)}
\newcommand{\compact}[1]{\schatten_\infty(#1)}
\newcommand{\hnorm}[1]{{\left\lVert #1 \right\rVert}}   
\newcommand{\rpnorm}[1]{{\left| #1 \right|}}   
\newcommand{\snorm}[1]{{\left\vert\kern-0.2ex\left\vert\kern-0.2ex\left\vert #1
  \right\vert\kern-0.2ex\right\vert\kern-0.2ex\right\vert}}
\newcommand{\ssnorm}[1]{{\vert\kern-0.2ex\vert\kern-0.2ex\vert #1
  \vert\kern-0.2ex\vert\kern-0.2ex\vert}} 
\newcommand{\opnorm}[1]{\snorm{#1}_\infty}
\newcommand{\tracenorm}[1]{\snorm{#1}_1}
\newcommand{\Fnorm}[1]{\snorm{#1}_2} 
\newcommand{\Lipnorm}[1]{\hnorm{#1}_\mathrm{Lip}} 
\newcommand{\vfi}{{\varphi}}
\newcommand{\vep}{{\varepsilon}}
\newcommand{\bbR}{\mathbb{R}}
\newcommand{\bbZ}{\mathbb{Z}}
\newcommand{\bbN}{\mathbb{N}}
\newcommand{\LipschitzSpace}{\mathsf{\Lambda}_1}
\newcommand{\image}{\mathrm{Im}}
\newcommand{\thesupplement}{Appendices} 
\begin{document}

\title{Factor Models for High-Dimensional Functional Time Series}
\author[1]{Shahin Tavakoli\footnote{address for correspondence: s.tavakoli@warwick.ac.uk}}
\author[2]{Gilles Nisol}
\author[2]{Marc Hallin}

\affil[1]{Department of Statistics, University of Warwick, UK}
\affil[2]{ECARES and D\' epartement de Math\' ematique\\ Universit\' e libre de
Bruxelles, Belgium}

\maketitle

\begin{abstract}
    In this paper, we set up the theoretical foundations for a high-dimensional
    functional factor model approach in the analysis of large  cross-sections (panels) of
    functional time series (FTS).  We first establish a representation result
    stating that, under mild assumptions on the covariance operator of the cross-section,
     we can represent each FTS as the sum of a common
    component driven by scalar factors loaded via functional loadings,
    and a mildly cross-correlated idio\-syncratic component.
    Our model and theory are developed in a general Hilbert space setting that allows for mixed 
    panels of functional and scalar time series.   We then turn to the
    identification of the number of factors, and the
    estimation of the factors, their loadings, and the common components. 
    We provide a family of information criteria for identifying the number of
    factors, and prove their consistency.
  We provide average error bounds for the estimators of the factors, loadings, and common component; our
    results encompass   the scalar case, for which they reproduce and extend,
    under weaker conditions, well-established similar results.
     Under slightly stronger assumptions, we also provide uniform
    bounds for the estimators of factors, loadings, and common component, thus
    extending existing scalar results.  
    Our consistency results in the asymptotic regime where the number $N$ of
    series  and the number~$T$ of time observations  diverge thus extend to the functional context 
    the ``blessing of dimensionality'' that explains the success of factor
    models in the analysis  of high-dimensional (scalar) time series.
    We provide numerical illustrations that
    corroborate the convergence rates predicted by the theory, and provide
    finer understanding of the interplay between $N$ and $T$ for estimation
    purposes. We conclude with an application to forecasting mortality curves,
    where we demonstrate that our approach outperforms existing methods. 
\end{abstract}

\paragraph{Keywords:} Functional time series, High-dimensional time series, Factor model, Panel Data, Functional data analysis.

\paragraph{MSC 2010 subject classification:} 62H25, 62M10, 60G10.

\section{Introduction}
Throughout the last decades, researchers have been dealing with datasets of increasing size
and complexity. In particular, Functional Data Analysis \citep[
see
e.g.][]{ramsay:silverman:2005,ferraty:vieu:2006,horvath:2012:book,hsing:eubank:2015,wang:2015review}
has received
much interest and, in view of its relevance in a number of applications,  has gained fast-growing popularity. In Functional Data Analysis, the observations
are taking values in some functional space, usually some Hilbert space~$H$---often,
in practice,  the space $\LLR$ of real-valued squared-integrable functions. When an ordered sequence of
functional observations exhibits serial dependence, we enter the realm of
\emph{Functional Time Series} (FTS) \citep{hormann:kokoszka:2010,hormann:kokoszka:2012}. Many standard univariate and low-dimensional multivariate 
time-series  methods have been adapted to this functional setting, either using a
time-domain approach
\citep{kokoszka:reimherr:2013norm,kokoszka:reimherr:2013det,hormann:horvath:reeder:2013,aue2014dependent,Aue:2015:prediction,horvath2016adaptive,Aue2017:GARCH,Gorecki:2018:testing,bucher2019detecting,gao:shang:2018},
a frequency domain approach under stationarity assumptions
\citep{panaretos:tavakoli:2013SPA,panaretos:tavakoli:2013AOS,hormann:kidzinski:hallin:2015,tavakoli2016detecting,hormann:kokoszka:nisol:2018,rubin2020sparsely,fang2020new}
or under local stationarity assumptions
\citep{van2017nonparametric,vandelft:2018locally,vandelft2021similarity}.

Parallel to this development of functional time series  analysis, data in high
dimensions \citep[e.g.][]{Buhlmann:VanDeGeer:2011,fan:2013:jrssb-discussion} have become
pervasive in data sciences and related disciplines where, under the name of Big Data,
they constitute one of the most active subjects of contemporary statistical research. 

This contribution stands at the intersection of these two strands of literature,
cumulating the challenges of function-valued observations and those of high
dimension. Datasets, in this context, consist of large collections of $N$ scalar or
functional time series---equivalently, scalar or functional time series in high dimension (from
fifty, say, to several hundreds)---observed over a period of time $T$. Typical
examples are continuous-time  series of concentrations for a large number of
pollutants, or/and collected daily over a large number of sites, daily series of  returns
observed at high intraday frequency  for a large collection of stocks, or intraday
energy consumption curves\footnote{available, for instance, at~\texttt{data.london.gov.uk/dataset/smartmeter-energy-use-data-in-} \texttt{london-households}}, to name only
a few. Not all component series in the dataset are required to be function-valued, though, and it is very important for practice that mixed panels\footnote{We are adopting here the convenient econometric terminology: a {\it panel} is the finite $N\times T$ realization of a double-indexed stochastic process  $\{x_{it}\vert i\in \mathbb{N}, t\in \mathbb{Z}\}$, where $i$ is a cross-sectional index and $t$ stands for time---equivalently, an $N\times T$ array the columns of which constitute the finite realization $({\bx}_{1}, \ldots, {\bx}_{T})$ of some $N$-dimensional time series ${\bx}_t=(x_{1t}, \ldots,  x_{Nt})^{\tp}$,  with functional or scalar components.} of scalar and functional series can be considered as well. 
In order to model such datasets, we develop a class of \emph{high-dimensional functional
factor models} 
inspired by the factor model approaches developed, mostly,
in time series econometrics, which have proven effective, flexible, and quite
efficient in the scalar case.

Since the econometric literature may be unfamilar to much of the readership of this journal, a general presentation of  factor model methods for high-dimensional time series is in order.  The basic idea of the approach  consists of decomposing the observation into the sum of two mutually orthogonal unobserved components: the {\it common component}, a reduced-rank process driven by a small number of {\it common shocks} which  account for most of the cross-correlations, and the {\it idiosyncratic component} which, consequently, 
 is only mildly cross-correlated. 
Early instances  of this can be traced back to the
pioneering contributions by  \citet{geweke:1977}, \citet{sargent:sims:1977},
\citet{chamberlain:1983}, and \citet{chamberlain:rothschild:1983}. The factor models
considered by  Geweke, Sargent, and Sims are {\it exact}, that is, involve mutually
orthogonal (all leads, all lags) idiosyncratic components; the common shocks then account for all cross-covariances, a most restrictive 
assumption that cannot be expected to hold in practice.  \citet{chamberlain:1983} and
\citet{chamberlain:rothschild:1983} are relaxing this exactness assumption into an
assumption of {\it mildly} cross-correlated idiosyncratics (the so-called
``approximate factor models'').\linebreak  
Finite-$N$ non-identifiability is the price to be paid for
that relaxation; the resulting models, however, remain asymptotically (as $N$ tends to
infinity) identified, which is perfectly in line with the spirit of high-dimensional
asymptotics. 

This idea of an approximate factor model in the analysis of high-dimensional time series has been picked up,  and popularized in the early 2000s, mostly by
\citet{stock:watson:2002a,stock:watson:2002b,bai:ng:2002,bai2003inferential}, and \citet{FHLR:2000}, followed by many others.  The most common  definition of ``mildly cross-correlated,'' hence of ``idiosyncratic,'' is   {\it ``a (second-order stationary) process $\{\xi_{it}\, \vert\, i\in\mathbb{N},\, t\in\mathbb{Z}\}$ is called {\em idiosyncratic} if the  variance of any sequence of normed (sum of squared coefficients equal to one) linear combination of the present, past, and future  values of~$\{\xi_{it}\}$, $i=1,\ldots,N$ is bounded as $N\to\infty$.''}
With this definition, \cite{hallin:lippi:2013} show that, under very general regularity assumptions,   the decomposition into common and idiosyncratic constitutes a representation result  rather than a statistical model---meaning that   (at population level) it exists and is fully identified---and coincides with the so-called {\it general(ized) dynamic factor model} of \cite{FHLR:2000}. 

{Let us give some intuition for common and idiosyncratic components for scalar data. In finance and economics, the common component is the co-movement of all the series, which are highly cross-sectionally correlated, and the idiosyncratic is thus literally the individual movement of each component that has smaller correlations with others. For financial data, low common-to-idiosyncratic variance ratios are typical for each series. However, not matter how small the values of these ratios, the contribution of the idiosyncratics, as $N \to \infty$, is eventually surpassed by that of the common component due to the pervasiveness of the factors (co-movements).}

Most existing factor models then can be obtained by imposing further restrictions on this representation result: assuming a finite-dimensional common space yields the  models proposed by  \citet{stock:watson:2002a,stock:watson:2002b} and \cite{bai:ng:2002}, additional sparse covariance assumptions characterize the approaches by \cite{fan2011,fan:2013:jrssb-discussion}, and  the dimension-reduction approach by  \cite{Lam2011}  and \cite{lam:yao:2012} is based on the assumption of  i.i.d.~idio\-syncratic components.
For further references on  dynamic factor models, we refer to the survey paper by \cite{BNg08} or   the monographs by \cite{SW11} and  \cite{Hallinetal2020}; for the state-of-the-art on general dynamic factor models, see \citet{FHLZ:2015,FHLZ:2017}.

Factor models for \emph{univariate} functional data have already been studied: these are based on models which consider functional factors with scalar loadings \citep{hays:shen:huang:2012,kokoszka:miao:zhang:2015,young:2016,bardsley2017change,kokoszka2017testing,horvath2020time}, scalar factors with functional loadings \citep{kokoszka2018dynamic}, functional factors with loadings that are operators \citep{tian:2020}, or scalar factors and loadings in a smoothing model \citep{gao2021factor}.

For \emph{multivariate} functional data (moderate fixed dimension), the only factor models   we are aware of were introduced by \citet{castellanos2015multivariate}, who consider a multivariate Gaussian   factor model with functional factors and scalar loadings, by \citet{kowal2017bayesian}, with scalar factors and functional loadings within a dynamic linear model, and by \citet{white2020multivariate}   with functional factors and scalar loadings within a multi-level model. 
There exists, also, a substantial literature on modelling multivariate functional data, see for instance \citet{happ_greven_multivariateFPCA,gorecki2018selected,wong2019partially,li2020fast,zhang2020dynamic,qiu2021two}. 
Except for  \cite{wong2019partially}, all these contributions, however, are limited to fixed-dimension vectors of   functional data defined on the same domain.

The increasing availability of \emph{high-dimensional FTS} data, requiring high-dimensional asymptotics under which   the dimension of the functional objects is   diverging with the sample size, recently has   generated much interest: \citet{zhou2020statistical} consider statistical inference for sums of high-dimensional FTS, \citet{fang2020new} explore  new perspectives on dependence for high-dimentional FTS, \citet{chang2020autocovariance} study a learning framework based on autocovariances, and \citet{hu2021sparse} investigate a sparse functional principal component analysis in high dimensions under Gaussian assumptions. Again, all  functional components involved, in these papers, are required to take values in the same functional space.

On the other hand, the \emph{factor model} approach in the statistical analysis of high-dimensional FTS remains largely unexplored.
\citet{gao:shang:2018} are using 
factor model techniques in the forecasting of high-dimensional FTS. They adopt a heuristic two-stage approach 
combining a truncated-PCA dimension reduction\footnote{conducted via an eigendecomposition of the long-run covariance operator, as opposed to the lag-$0$ covariance operator considered here.}  and separate scalar factor-model analyses of the resulting (scalar) panels of   scores. 

Our approach,   inspired by the time-series econometrics literature, is radically different. Indeed, we start (Section~\ref{sub:representation_theorem}) with a stochastic representation result  establishing the existence, under a functional version of the traditional   assumptions on dynamic panels, of a factor model representation of the observations with scalar  factors and functional loadings.   Theorems~\ref{thm:equiv_factormodel_eigenvalues}
and~\ref{thm:factormodel_uniqueness} are  relating the existence and the form of 
a high-dimensional functional factor model representation to the asymptotic behavior   of the eigenvalues of the panel covariance operator of the observations; they actually are the functional counterparts of    
classical results by \cite{chamberlain:rothschild:1983} and \cite{chamberlain:1983}.  Contrary to  \citet{gao:shang:2018}, our   approach, thus, is principled: the existence and the form of our factor model representation    are neither an assumption nor  the description of a presupposed data-generating process but follow from a mathematical result. It avoids the   loss of information related with Gao et~al.'s   truncation of the PCA decomposition and their separate analysis of interrelated panels of principal component scores. 
Its   scalar factors and   functional loadings, moreover, allow for  the analysis of panels comprising   time series of different types, functional and scalar.   

Drawing inspiration from
\citet{stock:watson:2002a},  \cite{stock:watson:2002b}, and \cite{bai:ng:2002},
we then develop an estimation theory
for the unobserved factors, loadings, and common components 
(Theorems~\ref{theorem:factors_average_bound},
\ref{theorem:factors_average_bound_up_to_sign}, \ref{theorem:loadings_average_bound}, and
\ref{theorem:common_component_average_bound}). This is  laying the theoretical
foundations for an analysis of high-dimensional FTS. Since our methods can deal with mixed  panels of scalar and functional
time series, our results encompass and extend, sometimes under weaker assumptions, the classical ones by 
\cite{chamberlain:rothschild:1983,bai:ng:2002,stock:watson:2002a,fan:2013:jrssb-discussion}. 

Our paper also derives a forecasting method for high-dimensional FTS. Through a panel of Japanese mortality curves, we show the superior accuracy of our method compared to alternative forecasting methods.

The paper is organized as follows.  
Section~\ref{sec:model_and_representation} is devoted to our representation result: roughly, a factor-model representation, with $r$ scalar factors and functional loadings, exists if and only if  the number of diverging (as $N\to\infty$) eigenvalues of the  covariance operator of the $N$-dimensional observation is~$r$. 
Section~\ref{sec:estimation} is dealing with estimation procedures: Section~\ref{sub:estimation_of_factors_loadings_and_common_component} introduces our 
estimators of the factors, loadings, and common component through a least squares
approach and Section~\ref{sub:inferring_the_number_of_factors} 
provides a family of criteria for  the identification of the number $r$ of
factors. Section~\ref{sec:theoretical_results}  is about the consistency properties of the procedures described in Section~\ref{sec:estimation}: consistency  (Section~\ref{sub:consistent_estimation_of_the_number_of_factors}) of the identification method of Section~\ref{sub:inferring_the_number_of_factors}   and consistency   rate results for the estimators described in Section~\ref{sub:estimation_of_factors_loadings_and_common_component}. The latter take the form of average (Section~\ref{sub:average_error_bounds}) and uniform (Section~~\ref{sub:uniform_error_bounds}) error bounds, respectively. In
Section~\ref{s:simu}, we conduct some numerical experiments, and provide the forecasting
application in
Section~\ref{sec:forecasting_mortality_curves}. We conclude in
Section~\ref{sec:discussion} with a discussion. Technical results and all the
proofs, as well as additional simulation results, are contained in the
\thesupplement.

\section{A representation Theorem}
\label{sec:model_and_representation}


\subsection{Notation}
\label{sub:notation_for_model}

Throughout, we denote by    
\begin{equation}
    \label{eq:calX_NT}
    {\cal X}_{N,T} \defeq \{x_{it},\  i=1,\ldots,N, \; t=1,\ldots,T\}\quad N\in\mathbb{N},\ T\in\mathbb{N}
\end{equation}
an observed $N\times T$ panel of time
series, where the random ele\-ments~$\{ x_{it} : t \geq 1 \}$ take values in a real separable
Hilbert space   $H_{i}$ equipped with the inner product $\sc{\cdot, \cdot}_{H_i}$ and norm~$\hnorm{\cdot}_{H_i} \defeq   \sc{\cdot, \cdot}_{H_i}^{1/2}$, $i=1,\ldots,N$.
These series can be of different types. A case of interest is the one for which some series are scalar~($H_i=\mathbb{R}$)
and some others are square-integrable functions from some closed real interval to $\mathbb R$, that is, $H_i=L^2([a_i, b_i], \bbR),$ where~$a_i, b_i \in \bbR$ with $a_i < b_i$.\footnote{Even though each interval $[a_i, b_i]$ could   be changed to $[0,1]$, we keep this notation to emphasize that curves for different values of~$i$ are fundamentally of different nature, and hence cannot,  in general,  be added up.} The typical inner-product on~$L^2([a_i, b_i], \bbR)$ is $\sc{f,g}_{H_i} \defeq \int_{a_i}^{b_i} f(\tau) g(\tau) d\tau$ for~$f,g \in L^2([a_i, b_i], \bbR)$.\footnote{Practitioners could also use other norms, such as  $\sc{f,g}_{H_i} \defeq \int_{a_i}^{b_i} f(\tau) g(\tau) w(\tau) d\tau$, where~$w(\tau) > 0$ for all~$\tau \in [a_i, b_i]$. Our setup can also handle series with values in multidimensional spaces, such as 2-dimensional or~3-dimensional images, since these can be viewed as functions in $L^2([0,1]^d, \bbR)$, $d \geq 1$.}
While the results could be derived for these special cases only, the generality of our approach allows the practitioner to choose the Hilbert spaces $H_1,\ldots, H_N$ to suit their specific application\footnote{For instance, a different inner product could be chosen for the spaces $L^2([a_i, b_i], \bbR)$, or $H_i$ could be chosen to be a reproducing kernel Hilbert space.}.

We tacitly assume that all $x_{it}$'s are random elements
with mean zero and finite second-order moments defined on some common
probability space $(\Omega,\mathcal{F},\pp)$; we also assume that~${\cal
X}_{N,T}$ constitutes the finite realization of some 
second-order $t$-stationary\footnote{In the case $H_i = L^{2}([a_i, b_i], \bbR)$, this means that each $x_{it}$ is a random function $\tau \mapsto x_{it}(\tau), \tau \in [a_i, b_i]$, that $\ee x_{it}(\tau)$ does not depend on $t$,  and that $\eee{x_{it}(\tau_1) x_{i't'}(\tau_2)}$ depends on $t,t'$ only through $t - t'$ for\linebreak  all~$i, i' \geq 1$,   $\tau_1 \in [a_i, b_i]$, $\tau_2 \in [a_{i'}, b_{i'}]$.} double-indexed pro\-cess~${\cal X}\defeq\{x_{it},\  i\in\mathbb{N}, \; t\in\mathbb{Z}\}$. 
{The mean zero assumption is not necessary, but simplifies the exposition---See Section~\ref{sec:results_no_mean_zero} of the \thesupplement\ for more general, non zero-mean results. In practice, each observed series $\{ x_{it}\}$ is centered at its empirical mean at the start of the statistical analysis.}

Define $\bH_N \defeq H_{1} {\scriptstyle\bigoplus} H_{2}
{\scriptstyle\bigoplus} \cdots {\scriptstyle\bigoplus} H_{N}$ as the
(orthogonal) direct sum of the Hilbert \linebreak spaces~$H_1,\ldots, H_N$. The elements of
$\bH_N$ are of the form $ \b v \defeq ( v_1, v_2, \ldots , v_N)^\tp $ \linebreak  or~$ \b w
\defeq (w_1, w_2 , \ldots , w_N)^\tp$, where $v_i, w_i \in H_i, i=1, \ldots, N$, and $\cdot ^\tp$ denotes transposition. 
The $i$-th component of an element of $\bH_N$ is denoted by~$(\cdot)_i$; for instance,~$(\bv)_i$ stands for~$v_i$. 
The space $\bH_{N}$,  naturally
equipped with the inner product 
\[ 
    \sc{ \b v, \b w}_{\bH_N} \defeq \sum_{i=1}^{N} \sc { v_{i}, w_{i} }_{H_i},
\]
is a real separable Hilbert space.  When no confusion is
possible, we write
$\sc{\cdot, \cdot}$ for~$\sc{\cdot, \cdot}_{\bH_N}$
 and  $\hnorm{\cdot} \defeq  { \sc{\cdot, \cdot}}^{1/2}$ for the
resulting norm.  Let $\bounded{H_1, H_2}$ denote the space of bounded (linear)
operators from $H_1$ to $H_2$, and use the shorthand notation~$\bounded{H}$ for
$\bounded{H, H}$. Denote the operator norm of $V \in \bounded{H_1, H_2}$ by
\[ 
    \opnorm{V} \defeq \sup_{x \in H_1, x \neq 0} \hnorm{Vx}_{H_2}/\hnorm{x}_{H_1},
\]
and write $V^\adjoint$ for the adjoint of $V$, which satisfies $\sc{V u_1, u_2}_{H_2} = \sc{u_1,
V^\adjoint u_2}_{H_1}$ for all~$u_1 \in~\!H_1$ and~$u_2 \in H_2$. In particular, we have 
\[
    \opnorm{V} = \opnorm{V^\adjoint} = \opnorm{V^\adjoint V}^{1/2},
\]
see \citet{hsing:eubank:2015}.
We denote by $\Fnorm{V}$ the {Hilbert--Schmidt norm} of an operator~$V \in \compact{H_1, H_2}$ 
defined by $\Fnorm{V} \defeq \big[\sum_{j \geq 1} \hnorm{Ve_i}_{H_2}^2\big]^{1/2}$ where $\{e_i\} \subset H_1$ is an orthonormal basis of $H_1$, the sum might be finite or infinite, and is independent of the choice of the basis. If $V \in \bbR^{p \times q}$, then the Hilbert--Schmidt norm is often called the Frobenius norm.
For $u_1 \in H_1, u_2 \in H_2$, define $u_1 \tensor u_2 \in \bounded{H_2, H_1}$ by $(u_1 \tensor u_2)(v_2) = \sc{u_2,v_2}_{H_2} u_1$ for all~$v_2 \in H_2$. If $\alpha \in \bbR, u \in H$, we define $u \alpha \defeq \alpha u$.
We shall denote by $\bbR^{p \times q}$ the space of $p \times q$ real matrices, and will identify this space with the space of mappings $\bounded{\bbR^q, \bbR^p}$. Notice that for $\b A \in \bbR^{p \times q}$, $\b A^\tp = \b A^\adjoint$. We write $\bbN$ for the set of positive integers, that is,~$\bbN \defeq \{1,2, \ldots \}$, and denote the Euclidean norm in $\bbR^p$ by $\rpnorm{\cdot}$.

When working with the panel data,  let~$\bx_{t}\defeq (x_{1t}, x_{2t},\ldots, x_{Nt})^{\tp}~\!\in~\!\b H_{N}$ where, in order to keep the presentation simple, the dependence   on $N$ 
does not explicitly appear.  
We denote by $\lambda_{N,1}^{x}, \lambda_{N,2}^{x},\ldots $ the eigenvalues,  in decreasing order of magnitude,  of~$\bx_t$'s
covariance operator $\eee{ (\bx_t - \ee \bx_t) \tensor (\bx_t - \ee \bx_t)} \in \bounded{\bH_N}$,
which maps $\by \in \bH_N$ to
\[
    \eee{\sc{\bx_t - \ee \bx_t, \by} (\bx_t - \ee \bx_t)} \in \bH_N,
\]
\citep[][Definition~7.2.3]{hsing:eubank:2015}.
In view of stationarity,  these eigenvalues do not
depend on $t$.
 Unless otherwise mentioned, convergence of sequences of random variables
is in  mean square. 

\subsection{A functional factor model}


The basic idea in all factor-model approaches to the analysis of high-dimensional
time series  consists in decomposing the observation $x_{it}$ into the sum~$\chi_{it}
+ \xi_{it}$ of two unobservable and mutually orthogonal components, the  {\it common}
component $\chi_{it}$ and the {\it idiosyncratic} one $\xi_{it}$. The various factor models
that are found  in the literature only differ in the way~$\chi_{it}$ and~$\xi_{it}$
are characterized. The characterization we are adopting here is inspired from
\citet{forni:lippi:2001}.

\begin{Definition}
    \label{def:static-factor-model} The functional zero-mean second-order stationary stochastic process
    \[
        {\cal X} \defeq \{x_{it}:\,  i\in~\!\mathbb{N}, \; t\in~\!\mathbb{Z}\}
    \]
    admits a   (high-dimensional)    {\em functional factor model representation with $r \in \bbN$
    factors} 
if there exist {\em loadings}  
    $b_{il} \in H_i$, $i\in\bbN$, $l = 1,\ldots, r$,  $H_i$-valued
    processes $\{\xi_{it}:\ t\in\bbZ \}$, $i \in \bbN$,   and a real $r$-dimensional second-order stationary  process~$\{\bu_{t}=(u_{1t},
    \ldots, u_{rt} )^\tp :\ t\in\bbZ \}$, co-stationary with $\mathcal X$, such that
    \begin{equation} \label{eq:r-factor-model} 
        x_{it} = \chi_{it} + \xi_{it} = b_{i1} u_{1t} + b_{i2} u_{2t} + \cdots + b_{ir} u_{rt} + \xi_{it}, \qquad  i \in\bbN, \:\: t \in \bbZ
    \end{equation}
    ($\chi_{it} $ and $\xi_{it}$ unobservable) holds with
    \begin{enumerate} 
        \item[(i)]  $\ee{\bu}_t={\b 0}$ and $\eee{{\bu}_t{\bu}^\tp_t\,}= \boldsymbol{\Sigma}_{\boldsymbol{u}} \in \bbR^{r \times r}$ is positive definite;
        \item[(ii)] 
            \label{it:property-uncorrelated}
            $\eee{u_{lt}\xi_{it}} =  0$ and $\eee{\xi_{it} }=0$  for all $t\in\bbZ$, $l=1,\ldots,r$, and $i\in\bbN$; 
        \item[(iii)] denoting by $\lambda^{\xi}_{N,l}$ the $l$-th  (in decreasing order of magnitude) eigenvalue of the covariance operator $\eee{ \bxi_t \tensor \bxi_t}$   of $\b \xi_{t} \! \defeq  (\xi_{1t}, \ldots, \xi_{Nt})^\tp$, 
           $\lambda^{\xi}_{1}\!  \defeq  \sup_N \lambda^\xi_{N,1}~\!\!<~\!\infty$; 
    \item[(iv)] denoting by $\lambda^{\chi}_{N,l}$ the $l$-th  (in decreasing order of magnitude) eigenvalue of the covariance operator $\eee{ \bchi_t \tensor \bchi_t}$   of $\b \chi_{t} \! \defeq  (\chi_{1t}, \ldots, \chi_{Nt})^\tp\!\!$,    $\lambda^{\chi}_{r}  \defeq \sup_N \lambda^\chi_{N,r}\!~\!=~\!\infty$.
    \end{enumerate}
    The~$r$ scalar random variables~$u_{lt}$ are called {\em factors}; the $b_{il}$'s are the  {\em
        (functional) factor loadings};~$\chi_{it}$ and $\xi_{it}$ are called $x_{it}$'s   \emph{common} and  \emph{idiosyncratic component}, respectively. 
\end{Definition}
  
Notice that the number $r \in \bbN$ is crucial in this definition, which calls for some remarks and comments. 
\begin{enumerate}
    \item[(a)] In the terminology of \citet{hallin:lippi:2013},
         equation~\eqref{eq:r-factor-model}, where the factors are loaded contemporaneously,
        is  a {\em static functional factor representation}, as opposed to the {\em
        general dynamic factor representation}, where the~$b_{ij}$'s   are
        linear one-sided square-summable  filters of the form $b_{ij}(L)= \sum_{k=0}^\infty b_{ijk}L^k$ ($L$ the lag operator), and the~$u_{jt}$'s are  mutually orthogonal  second-order white noises (the {\em
        common shocks}) satisfying~$\eee{u_{lt} \xi_{is}} = 0$ for all~$t,s \in \bbZ$ and $l=1,\ldots, r$. The static $r$-factor
        model where the~$r$ factors are driven by~$q\leq r$  shocks  is   a particular case of the  general dynamic one    with~$q\leq r$ common shocks. When the idiosyncratic processes~$\{\xi_{it}:\ t~\!\in~\!\bbZ\}$ themselves are mutually orthogonal at all leads
        and lags, static and general dynamic factor models are called {\em exact};
        with this assumption relaxed into (iv) above, they sometimes are called
        {\em approximate}. In the sequel, what we call {\em factor models} all are
        {\em  approximate static factor models}. 
    \item[(b)] The functional factor representation  \eqref{eq:r-factor-model} also can be written, with obvious notation~${\b\chi}_{t}$ and~${\b\xi}_{t}$, in vector form
        \[
            \bx_{t} = \b \chi_{t} + \b \xi_{t} = \b B_N \bu_t + \b\xi_{t},
        \]
        where $\bB_N \in \bounded{\bbR^r, \bH_N}$ is the linear operator mapping the $l$-th canonical basis vector of~$\bbR^r$ to $(b_{1l}, b_{2l}, \ldots, b_{Nl})^\tp \in \bH_N$.
    \item[(c)] Condition (iii) essentially requires that  cross-correlations among
        the components of the process~$\{\b \xi_{t}:\ t\in\bbZ \}$ are not pervasive as
        $N\to\infty$; this is the characterization of idiosyncrasy. A sufficient assumption 
         for condition (iii) to hold is
        \[
            \sum_{j = 1}^\infty \opnorm{ \eee{\xi_{it} \tensor \xi_{jt}} } < M < \infty, \quad
            \forall i=1, 2, \ldots
        \]
        and $\ee \hnorm{\bxi_t}^2 < \infty$ for all $N \geq 1$, 
        see Lemma~\ref{lma:bai_Ng_rth_largest_eigenvalue_of_covariance_diverges} in
        the \thesupplement.
    \item[(d)] Condition  (iv) requires pervasiveness,  as $N\to\infty$, of
        (instantaneous) correlations among the components of~$\{\b \chi_{t}:\
        t\in\bbZ \}$; this is the characterization of commonness---the  factor
        loadings~$\b B_N$ should be such that factors (common shocks) are loaded again and
        again  as $N\to\infty$. 
        A sufficient condition for this is that, as $T\to\infty$,  
        \[
            \sum_{i=1}^T \bu_t \bu_t^\tp/T \to \b \Sigma_\bu  \quad \text{and} \quad \bB_N^\adjoint \bB_N/N = \left(N^{-1} \sum_{i=1}^N \sc{b_{il}, b_{ik}}_{H_i} \right)_{lk}\!\! \!\rightarrow~\!\b \Sigma_{\b B},
        \]
        where $\b \Sigma_\bu \in \bbR^{r \times r}$  and $\b \Sigma_{\bB} \in \bbR^{r \times r}$ are  positive definite, and where we have identified the ope\-rator~$\bB_N^\adjoint \bB_N/N \in \bounded{\bbR^r}$ with the corresponding $r \times r$ real matrix. 
    \item[(e)]  It follows from Lemma~\ref{lma:eigenvalue_inequalities}  in
        the \thesupplement\ that if $\cal X$ has a
         functional factor representation with $r$ factors, then  
        $\lambda^{x}_{r+1} \defeq \sup_N \lambda^x_{r+1} < \infty$. This in turn implies
        that the number~$r$  of factors is uniquely defined.
    \item[(f)] The factor loadings and the factors are only jointly identifiable 
        since, for any 
        invertible matrix $\b Q \in \bbR^{r \times r}$,
      $ \b B_N \bu_t = (\b B_N \b Q^{-1})(\b Q \bu_t),$
        so that $\b v_t = \b Q \bu_t$ provides  the same decomposition of $\cal X$ into common plus idiosyncratic  as~\eqref{eq:r-factor-model}.
    \item[(g)] It is often assumed that $\{\bu_{t}: \ t\in\mathbb{Z}\}$ is an 
        $r$-dimensional autoregressive (VAR) process driven by~$q\leq r$ white noises
        \citep{Amengual2007}, but this is not required here. 
\end{enumerate}

\subsection{Representation Theorem}%
\label{sub:representation_theorem}

The following results shows that, under  adequate condiions on its covariance operator, a process  $\cal X$ admits a  functional   factor model representation  (in the sense of Definition~\ref {def:static-factor-model})  with $r \in \bbN$ factors. These conditions are  functional versions of the conditions considered in the scalar case, and involve the eigenvalues~$\lambda_{N,j}^x$ of the covariance operator of the observations~$\bx_{t}$---while Definition~\ref {def:static-factor-model} involves the eigenvalues $\lambda_{N,j}^\chi$ and $\lambda_{N,j}^\xi$ of the covariance operators of   unobserved common and idiosyncratic components.  Moreover, when $\cal X$ admits a functional   factor model representation of the form \eqref{eq:r-factor-model}, its  decomposition   into a common and an 
idiosyncratic component is unique.

Let $\lambda_{j}^{x} \defeq \lim_{N \to \infty} \lambda_{N,j}^{x}= \sup_{N} \lambda_{N, j}^{x}$. This limit (which is possibly infinite) exists, as~$\lambda_{N,j}^{x}$ is monotone increasing with $N$. 
\begin{Theorem}
    \label{thm:equiv_factormodel_eigenvalues} 
    The process $\cal X$ admits a (high-dimensional) functional~factor model
    representation with $r$ factors if and only if
    $\lambda_{r}^x = \infty$ and  $\lambda_{r+1}^x < \infty$.
\end{Theorem}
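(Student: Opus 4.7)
The proof splits naturally in two directions. The forward implication (``factor model $\Rightarrow$ eigenvalue conditions'') is straightforward from the eigenvalue inequalities of Lemma~\ref{lma:eigenvalue_inequalities}; the converse is where the real content lies.

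\emph{Sufficiency.} Suppose \eqref{eq:r-factor-model} holds. Orthogonality in~(ii) gives the covariance splitting $\bSig^x_N = \bSig^\chi_N + \bSig^\xi_N$ with both summands positive. Since $\bchi_t=\bB_N\bu_t$ is valued in the $r$-dimensional range of $\bB_N$, the operator $\bSig^\chi_N$ has rank at most $r$, hence $\lambda^\chi_{N,r+1}=0$. The Weyl-type bound in Lemma~\ref{lma:eigenvalue_inequalities} then yields $\lambda^x_{N,r+1}\le \lambda^\chi_{N,r+1}+\lambda^\xi_{N,1}\le \lambda^\xi_1<\infty$, while the min--max characterization applied to $\bSig^x_N\ge \bSig^\chi_N$ gives $\lambda^x_{N,r}\ge \lambda^\chi_{N,r}$; conditions (iii) and (iv) then deliver $\lambda^x_{r+1}<\infty$ and $\lambda^x_r=\infty$, respectively.

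\emph{Necessity.} Assume $\lambda^x_r=\infty$ and $\lambda^x_{r+1}<\infty$. For each $N$, diagonalize $\bSig^x_N=\sum_{l\ge 1}\lambda^x_{N,l}\,\b p_{N,l}\tensor \b p_{N,l}$ and form the normalized principal-component scores
\[
    u_{lt}^{(N)} \defeq (\lambda^x_{N,l})^{-1/2}\sc{\b p_{N,l},\bx_t},\qquad l=1,\ldots,r,
\]
which are orthonormal in $L^2(\Omega)$ and span an $r$-dimensional subspace $\cal U_N\subset L^2(\Omega)$. The key step is to show that $\cal U_N$ converges, in the gap metric on $r$-dimensional subspaces, to a limit $\cal U$ as $N\to\infty$. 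For $N_1<N_2$, the cross-section $\bx_t^{(N_1)}$ is a coordinate projection of $\bx_t^{(N_2)}$, so $\bSig^x_{N_2}$ dominates (in the Loewner order) a zero-padded version of $\bSig^x_{N_1}$; a Davis--Kahan / sin-$\Theta$ estimate for the top-$r$ spectral projections bounds the gap between $\cal U_{N_1}$ and $\cal U_{N_2}$ by the reciprocal of $\lambda^x_{N_1,r}-\lambda^x_{N_1,r+1}$, which diverges by hypothesis. Hence $\{\cal U_N\}$ is Cauchy and its limit $\cal U$ is well defined. Fix any orthonormal basis $\bu_t=(u_{1t},\ldots,u_{rt})^\tp$ of $\cal U$, and let $b_{il}\in H_i$ be the unique elements such that $\sum_{l=1}^r b_{il}u_{lt}$ is the $L^2(\Omega)$-projection of $x_{it}$ onto $\cal U$.

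Setting $\chi_{it}=\sum_l b_{il}u_{lt}$ and $\xi_{it}=x_{it}-\chi_{it}$ then makes (ii) automatic (orthogonality of projections in $L^2(\Omega)$) and (i) is the orthonormality of $\bu_t$. Conditions (iv) and (iii) follow because the covariance operator of the so-constructed $\bchi_t$ coincides with the compression of $\bSig^x_N$ to its top-$r$ eigenspace (up to vanishing corrections from $\cal U_N\ne \cal U$), so $\lambda^\chi_{N,r}\sim \lambda^x_{N,r}\to\infty$ and $\lambda^\xi_{N,1}\sim \lambda^x_{N,r+1}\le \lambda^x_{r+1}<\infty$. The main obstacle is exactly this subspace-convergence claim: individual eigenvectors $\b p_{N,l}$ are defined only up to sign and within-eigenspace rotation, so convergence has to be formulated at the level of the score spaces $\cal U_N\subset L^2(\Omega)$, and it is the infinite spectral gap $\lambda^x_r=\infty$ against $\lambda^x_{r+1}<\infty$ that supplies the Davis--Kahan denominator needed to close the argument and to render $\cal U$ independent of the ordering and enumeration of the cross-section.
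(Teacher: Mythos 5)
Your sufficiency (\emph{``only if''}) argument is correct and matches the paper's: the covariance splits as $\boldsymbol\Sigma^x_N=\boldsymbol\Sigma^\chi_N+\boldsymbol\Sigma^\xi_N$, the common covariance has rank at most $r$, and Lemma~\ref{lma:eigenvalue_inequalities} delivers both inequalities.

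The necessity direction has the right overall shape---build factors as limits of normalized principal component scores, identified via a Cauchy-type argument on subspaces of $L^2(\Omega)$---but the pivotal step does not go through as you state it. A Davis--Kahan $\sin\Theta$ estimate bounds the angle between eigenspaces of two operators by $\|E\|/\delta$, where $E$ is the perturbation and $\delta$ the relevant spectral gap; in your comparison of $\boldsymbol\Sigma^x_{N_1}$ (zero-padded into $\mathbf{H}_{N_2}$) against $\boldsymbol\Sigma^x_{N_2}$, the perturbation $E$ contains all the new rows/columns and has operator norm of the same order as the top eigenvalues being added, so $\|E\|$ itself diverges with $N_2$. A bound of the form ``reciprocal of $\lambda^x_{N_1,r}-\lambda^x_{N_1,r+1}$'' simply omits $\|E\|$, and the ratio $\|E\|/\delta$ need not vanish. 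The paper avoids this: Lemma~\ref{lma:largest-eigenvalue-of-residual} works directly with the residual $\boldsymbol\rho_{MN}$ of the $L^2$-projection of $\boldsymbol\psi_M$ onto $\boldsymbol\psi_N$, exploiting the algebraic identity $\boldsymbol\Phi_N=\mathbf Q_N^\adjoint\boldsymbol\Sigma_N\mathbf Q_N$ to obtain the sharper bound $\lambda_{N,r+1}/\lambda_{M,r}$, whose numerator stays bounded \emph{by hypothesis} ($\lambda^x_{r+1}<\infty$) while the denominator diverges. That specific structure---a bounded numerator rather than an unbounded perturbation norm---is what makes the argument close; a generic $\sin\Theta$ theorem does not supply it.

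Two further omissions: (a) you need the limiting span $\mathcal U$ and its orthonormal basis $\mathbf u_t$ to be co-stationary with $\mathcal X$, so that the loadings $b_{il}=\mathbb E[u_{lt}x_{it}]$ are $t$-free; the paper's Proposition~\ref{prop:construction-of-basis-of-D}(ii) handles this explicitly by tracking the $t$-uniformity of the Cauchy estimate, whereas your construction is done at fixed $t$. (b) Verifying (iii) and (iv) for the constructed $\boldsymbol\chi_t,\boldsymbol\xi_t$ is not just ``vanishing corrections'': the paper needs Lemmas~\ref{lma:projection-onto-cauchy-sequence-of-space-converge}--\ref{lma:delta-is-idiosyncratic} to pass from eigenvalues of $\boldsymbol\Sigma^x_N$ restricted to $\mathcal U_N$ to eigenvalues of the covariance of the \emph{limit} idiosyncratic $\boldsymbol\delta_{\infty}$, using continuity of $\lambda_1$ in operator norm and a compression inequality. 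These steps are precisely where the cross-sectional structure (rather than a per-$N$ statement) enters.
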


The following result tells us that the common component $\chi_{it}$ is asymptotically
identifiable, and provides its expression in terms of an $\LLP$ projection.
\begin{Theorem}
    \label{thm:factormodel_uniqueness}
    Let $\cal X$ admit  (in the sense of Definition~\ref {def:static-factor-model}) the   functional  factor model representa\-tion~$
    x_{it} = \chi_{it} + \xi_{it}$, $i \in \bbN$, $t \in \bbZ,$ with $r$ factors.
    Then (see the Appendix, Section~\ref{sec:orthogonal_projections} for a formal definition of $ \proj_{H_i}$),
    \[
        \chi_{it} = \proj_{H_i}(x_{it} | \mathcal D_t), \quad \forall i \in \bbN, t \in \bbZ,
    \]
    where 
    \[
        \mathcal D_t  \defeq  \left\{ \mathfrak{p} \in \LLP | \: \mathfrak{p} = \lim_{N \rightarrow \infty}
            \sc{\ba_N, \bx_{t} }_{\bH_N},   \ba_N \in \b H_N, \hnorm{\ba_N}_{\bH_N}
        \stackrel{N \rightarrow \infty}{\longrightarrow}  0 \right\} \subset \LLP
    \]
    (here ``$\lim_{N \to \infty}$'' is the limit in quadratic mean).
    The common and the idiosyncratic parts of the factor model representation thus are unique, and asymptotically identified. 
\end{Theorem}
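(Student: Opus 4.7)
The plan is to verify that the decomposition $x_{it}=\chi_{it}+\xi_{it}$ realises the orthogonal projection onto $\mathcal D_t$: that is, to check (a) $\sc{\chi_{it},h}_{H_i}\in\mathcal D_t$ for every $h\in H_i$ and (b) $\xi_{it}$ is $\mathcal D_t$-orthogonal in the sense that $\ee[\sc{\xi_{it},h}_{H_i}\mathfrak{p}]=0$ for all $h\in H_i$ and $\mathfrak{p}\in \mathcal D_t$. Uniqueness of the decomposition then follows from uniqueness of the Hilbert space projection. Before anything else, one should record (by a standard diagonal extraction in $N$) that $\mathcal D_t$ is a closed linear subspace of $\LLP$.

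I would begin with (b), which is the easier direction. For $\mathfrak{p}=\lim_N\sc{\ba_N,\bx_t}$ with $\|\ba_N\|\to 0$, split $\sc{\ba_N,\bx_t}=\sc{\ba_N,\bchi_t}+\sc{\ba_N,\bxi_t}$. The first piece rewrites as $\sum_{l=1}^{r}\sc{\ba_N,\bb_{l,N}}u_{lt}$ with $\bb_{l,N}\defeq(b_{1l},\ldots,b_{Nl})^{\tp}$, and is uncorrelated with $\sc{\xi_{it},h}_{H_i}$ by property~(ii) of Definition~\ref{def:static-factor-model} (read coordinate-wise as $\ee[u_{lt}\sc{\xi_{it},h}_{H_i}]=0$). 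The second piece has variance bounded by $\lambda_1^{\xi}\|\ba_N\|^{2}\to 0$ thanks to property~(iii), so Cauchy--Schwarz kills its covariance with $\sc{\xi_{it},h}_{H_i}$. Passing to the limit delivers (b).

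Step (a) is the core of the proof. Let $\bw_{N,1},\ldots,\bw_{N,r}\in \bH_N$ be an orthonormal system of eigenvectors of $\eee{\bchi_t\tensor\bchi_t}=\bB_N\bSig_{\bu}\bB_N^{\adjoint}$ corresponding to its (at most) $r$ positive eigenvalues $\mu_{N,l}=\lambda_{N,l}^{\chi}$. Property~(iv), together with the monotonicity of $\lambda_{N,l}^{\chi}$ in $N$ and the ordering $\mu_{N,1}\ge\cdots\ge\mu_{N,r}$, forces each $\mu_{N,l}\to\infty$. Setting $\tilde\ba_{N,l}\defeq\bw_{N,l}/\sqrt{\mu_{N,l}}$ gives $\|\tilde\ba_{N,l}\|\to 0$, while
\[
\sc{\tilde\ba_{N,l},\bx_t}=\sum_{l'=1}^{r}c_{N,l,l'}u_{l't}+o_{L^{2}}(1),
\]
and the matrix $C_N=(c_{N,l,l'})$ satisfies $C_N\bSig_{\bu}C_N^{\tp}=I_r$, reflecting the unit variance and mutual orthogonality of the scaled principal-component scores of $\bchi_t$. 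Hence both $C_N$ and $C_N^{-1}$ are uniformly bounded, and a Bolzano--Weierstrass extraction yields $C_N\to C_\infty$ with $C_\infty\bSig_{\bu}C_\infty^{\tp}=I_r$; in particular $C_\infty$ is invertible. Consequently $\sum_{l'}(C_\infty)_{l,l'}u_{l't}\in\mathcal D_t$ for each $l$, and linearity of $\mathcal D_t$ combined with the invertibility of $C_\infty$ places each individual $u_{lt}$ inside $\mathcal D_t$. Since $\sc{\chi_{it},h}_{H_i}=\sum_{l=1}^{r}\sc{b_{il},h}_{H_i}u_{lt}$, property (a) follows.

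The main obstacle is step (a): recovering the latent scalar factors as limits of linear functionals of the observations whose coefficient norms vanish. The difficulty is geometric, since the sequences $\tilde\ba_{N,l}$ are defined through the covariance of the \emph{unobserved} common component, and one must verify that the resulting $r$ limits span a non-degenerate copy of the factor space rather than collapsing to a lower-dimensional one. Positive-definiteness of $\bSig_{\bu}$ (property~(i)) is what ultimately guarantees invertibility of $C_\infty$ and hence the non-degeneracy, while property~(iv) is what permits the rescaling $\bw_{N,l}/\sqrt{\mu_{N,l}}$ to have vanishing norm. The remaining ingredients (closedness and linearity of $\mathcal D_t$, uniqueness of the Hilbert-space projection, and the orthogonality argument in step (b)) are routine.
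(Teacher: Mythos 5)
Your proof is correct and takes a genuinely more direct route than the paper's. The paper's Theorem~\ref{thm:factormodel_uniqueness} is proved as a corollary of the machinery in Section~\ref{s:proof_rep}: Proposition~\ref{prop:construction-of-basis-of-D} constructs the basis of $\mathcal D_t$ via a delicate Cauchy sequence of \emph{rotated} eigenvectors of $\cov(\bx_{t})$ (the rotations $\b F_q$ chosen so that successive rescaled principal-score vectors $\b\psi_{s_q,t}$ line up), Lemma~\ref{lma:dim-Dt-exactly-r} pins down $\dim\mathcal D_t=r$, and Lemma~\ref{lma:projection-onto-psiN-asymptotically-equiv-to-proj-onto-D} passes projections to the limit. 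That heavy construction is forced on the paper because it also serves Theorem~\ref{thm:equiv_factormodel_eigenvalues}, where no factor representation may be assumed and one must extract the factors from $\cov(\bx_t)$ alone. Since Theorem~\ref{thm:factormodel_uniqueness} \emph{assumes} the representation, you are free to use $\cov(\bchi_t)=\bB_N\bSig_\bu\bB_N^\adjoint$, whose rank is exactly $r$; the rescaled eigenvectors $\bw_{N,l}/\sqrt{\mu_{N,l}}$ have vanishing norm by property~(iv), the resulting scores have identity covariance, and positive-definiteness of $\bSig_\bu$ forces $C_N$ and $C_N^{-1}$ to be uniformly bounded, so a subsequential limit recovers an invertible rotate of $\bu_t$. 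This bypasses the rotation/Cauchy argument entirely. Two small points worth recording explicitly. First, the subsequential convergence of $C_N$ must be upgraded to genuine $N\to\infty$ convergence in the sense of Definition of $\mathcal D_t$; this is routine (for $N$ between successive indices $N_k$ and $N_{k+1}$, embed $\tilde\ba_{N_k,l}$ into $\bH_N$ by zero-padding, which changes neither the norm nor the inner product with $\bx_t$), but should be stated. Second, for $\proj_{H_i}(\cdot\,|\,\mathcal D_t)$ to be well-defined per Appendix~\ref{sec:orthogonal_projections}, one needs $\mathcal D_t$ finite-dimensional; your step~(a) gives only $\dim\mathcal D_t\geq r$. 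The reverse inequality falls out of the same decomposition used in step~(b): for $\mathfrak p=\lim_N\sc{\ba_N,\bx_t}\in\mathcal D_t$, the idiosyncratic contribution $\sc{\ba_N,\bxi_t}$ vanishes in $L^2$, while $\sc{\ba_N,\bchi_t}\in\vspan(u_{1t},\ldots,u_{rt})$, a closed $r$-dimensional space, so $\mathfrak p$ lies in it too. With that noted, uniqueness of the decomposition and of the projection follow as you indicate.
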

To clarify the definition of $\mathcal D_t$, let us give an example of $\ba_N \in \bH_N$ such $\hnorm{\ba_N}_{\bH_N} \to~\!0$. 
Choosing $v_i \in H_i$,   $i=1,\ldots, N$,  such that $\hnorm{v_i}_{H_i} = 1$, set~$\ba_N = (v_1/{N}, \ldots, v_N/{N}) \in \bH_N$, and notice that~$\hnorm{\ba_N}_{\bH_N} \to 0$ as $N \to \infty$. Now, assuming that \eqref{eq:r-factor-model} holds, 
$$\sc{\bx_t, \ba_N}_{\bH_N} = \sc{\bB_N \bu_t, \ba_N}_{\bH_N} + \sc{\bxi_t, \ba_N}_{\bH_N},$$
 and straightforward calculations show that $\eee{\sc{\bxi_t, \ba_N}_{\bH_N}^2} \leq \lambda^\xi_{N,1} \hnorm{\ba_N}_{\bH_N}^2 \to 0$ as $N \to~\!\infty$. Therefore, 
\[
    \lim_{N \to \infty} \sc{\bx_t, \ba_N}_{\bH_N} = \lim_{N \to \infty} \sc{\bB_N \bu_t, \ba_N}_{\bH_N} = \lim_{N \to \infty} \sc{\bu_t, \bB_N^\adjoint \ba_N}_{\bH_N},
\]
provided the limits exist. This shows that the condition $\hnorm{\ba_N}_{\bH_N} \to 0$ is sufficient for the idiosyncratic component to vanish asymptotically from $\sc{\bx_t, \ba_N}_{\bH_N}$.

The proofs of Theorems~\ref{thm:equiv_factormodel_eigenvalues}
and~\ref{thm:factormodel_uniqueness} are provided in the \thesupplement, Section~\ref{s:proof_rep}; they are inspired from \citet{forni:lippi:2001}---see also \citet{chamberlain:1983} and \citet{chamberlain:rothschild:1983}. 
Notice, however, that, unlike these references, our results do not require the minimal eigenvalue of
the covariance of~$\bx_t$ to be bounded from below---an unreasonable assumption in the functional setting.

The logical importance of Theorems~\ref{thm:equiv_factormodel_eigenvalues}
and~\ref{thm:factormodel_uniqueness}, which   establish the status of~\eqref{eq:r-factor-model} as a canonical\footnote{ Actually, canonical up to an identification constraint disentangling $b_{ik}$ and $u_{kt}$, $k=1,\ldots,r$ since only the product $b_{i1}u_{1t} + \cdots + b_{ir} u_{rt}$ is identified. That identification constraint, which boils down to the (arbitrary) choice of a basis in the space of factors, is quite irrelevant, though, in this discussion. } stochastic representation of ${\cal X}$, should not be underestimated.   That representation indeed needs not be a preassumed data-generating process for $\cal X$. The latter could take various forms, involving, for instance, a scalar matrix loading of functional shocks, yielding an alternative representation (2.1$^\prime$), say,  of ${\cal X}$. Provided that the second-order structure of $\cal X$ satisfies the assumptions of Theorems~\ref{thm:equiv_factormodel_eigenvalues}
and~\ref{thm:factormodel_uniqueness}, the two representations \eqref{eq:r-factor-model} and  (2.1$^\prime$) then may coexist; being simpler, \eqref{eq:r-factor-model} then is by far preferable for statistical inference, although~(2.1$^\prime$) might be more advantageous from the point of view of practical interpretation. 

It should be stressed, however, that a representation as (2.1$^\prime$), which involves functional factors, requires that all Hilbert spaces $H_i$ coincide, thus precluding the analysis of mixed panels of functional and scalar series.

\section{Estimation}
\label{sec:estimation}

Assuming that a functional factor model with $r \in \bbN$ factors holds for~$\mathcal
X$, we shall estimate the factors $\bu_t$ and loading operator $\bB_N$ via a least squares criterion. Section~\ref{sub:estimation_of_factors_loadings_and_common_component} describes the estimation method; see Sections~\ref {sub:average_error_bounds} and \ref{sub:uniform_error_bounds} for consistency results. 


\subsection{Estimation of Factors, Loadings and Common Component}%
\label{sub:estimation_of_factors_loadings_and_common_component}

Least squares criteria are often used for estimating factors
\citep{bai:ng:2002,fan:2013:jrssb-discussion}, but other methods are available
as well \citep{Forni1998,FHLR:2000}, see the discussion in Section~\ref{sub:proof_computing_proposition} of the \thesupplement.

Since the actual number~$r$ of factors is unknown, we shall devise estimators under the assumption that a  functional factor representation of the form \eqref{eq:r-factor-model} holds with unspecified number $r \in \{1,2,\ldots \}$ of factors; Section~\ref{sub:inferring_the_number_of_factors} proposes  a class of criteria for estimating the actual value of~$r$. 

For given $k$, 
our estimators $\bB_N^{(k)}$ of the factor loadings  and $\bU_T^{(k)} = ( \bu_{1}^{(k)},\ldots,  \bu_{T}^{(k)}) $ of  the factor values  are the minimizers, over $\bounded{\bbR^k,
\bH_N}$ and $\bbR^{k \times T}$, respectively, of 
\begin{equation}
    \label{eq:least_squares_fit_criterion}    
   P\left( \bB_N^{(k)},  \bU_T^{(k)}\right) \defeq \sum_t \hnorm{\bx_t -  \bB_N^{(k)}  \bu_{t}^{(k)}}^2.
\end{equation}
The following result gives a method for solving this minimization problem. The proof is provided  in Section~\ref{sub:proof_computing_proposition} of the \thesupplement, where the reader can also find the algorithm in the special case $H_i = \LLR$ for all $i \geq 1$.
\begin{Proposition}
    \label{prop:computing_loadings_and_factors}
    The minimum in \eqref{eq:least_squares_fit_criterion} is obtained by setting $\bB_N^{(k)} \defeq \tildeB{k}_N$ \linebreak
     and~$
    \bU_T^{(k)} \defeq \tildebU{k}_T$, where $\tildeB{k}_N$ and $\tildebU{k}_T$ are computed as follows:
    \begin{enumerate}
        \item compute $\bF \in \bbR^{T \times T}$, defined by $(\bF)_{st} \defeq \sc{\bx_s, \bx_t}/N$;
        \item compute the leading $k$ eigenvalue/eigenvector pairs $(\tilde \lambda_l, \tilde {\b f_l})$ of $\bF$, and set $$\hat \lambda_l \defeq T^{-1/2} \tilde \lambda_l \in \bbR,\ \  \hat {\b f}_l \defeq T^{1/2} \tilde {\b f}_l / \rpnorm{\tilde{\b f_l}} \in \bbR^T;$$
        \item compute $\hat \be_l \defeq \hat \lambda_l^{-1/2} T^{-1} \sum_{t=1}^T (\hat {\b f}_{l})_t \bx_t \in \bH_N$;
        \item set $\tildebU{k}_T \defeq (\hat {\b f}_1, \ldots, \hat {\b f}_k)^\tp \in \bbR^{k \times T}$ and define $\tildeB{k}_N$ as the operator in $\bounded{\bbR^k, \bH_N}$ mapping the $l$-th canonical basis vector of $\bbR^k$ to ${\hat \lambda_l}^{1/2} \hat \be_l$, $l = 1,\ldots, k$.
    \end{enumerate}
\end{Proposition}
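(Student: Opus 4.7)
The plan is to recast \eqref{eq:least_squares_fit_criterion} as a low-rank Hilbert--Schmidt approximation problem, invoke the Hilbert-space Eckart--Young--Mirsky theorem to identify the optimum with a truncated SVD, and then check by direct bookkeeping that the algorithm in the proposition produces exactly this truncated SVD.

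More concretely, first introduce the finite-rank ``data operator'' $\mathsf{X}\colon\bbR^{T}\to\bH_{N}$ defined by $\mathsf{X}\b\alpha=\sum_{t=1}^{T}\alpha_{t}\bx_{t}$, so that $\mathsf{X}\be_{t}=\bx_{t}$ for the canonical basis of $\bbR^{T}$. Since $\bU\in\bbR^{k\times T}\simeq\bounded{\bbR^{T},\bbR^{k}}$ and $\bB\in\bounded{\bbR^{k},\bH_{N}}$, the composition $\bB\bU\in\bounded{\bbR^{T},\bH_{N}}$ has rank at most $k$, and conversely every rank-at-most-$k$ operator $\bbR^{T}\to\bH_{N}$ factors in this way. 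A direct calculation gives
\[
   P\bigl(\bB,\bU\bigr)\;=\;\sum_{t=1}^{T}\hnorm{(\mathsf{X}-\bB\bU)\be_{t}}^{2}\;=\;\Fnorm{\mathsf{X}-\bB\bU}^{2},
\]
so minimizing $P$ is equivalent to finding the best rank-$k$ Hilbert--Schmidt approximant of $\mathsf{X}$.

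Because $\mathsf{X}$ is compact (indeed finite rank), it admits an SVD $\mathsf{X}=\sum_{l\ge 1}\sigma_{l}\,\be^{\star}_{l}\tensor\b f^{\star}_{l}$, with $\sigma_{1}\ge\sigma_{2}\ge\cdots\ge 0$ and orthonormal systems $\{\be^{\star}_{l}\}\subset\bH_{N}$, $\{\b f^{\star}_{l}\}\subset\bbR^{T}$. The Eckart--Young--Mirsky theorem for Hilbert--Schmidt operators \citep{hsing:eubank:2015} identifies the best rank-$k$ approximant as the truncated SVD $\bM^{\star}=\sum_{l=1}^{k}\sigma_{l}\,\be^{\star}_{l}\tensor\b f^{\star}_{l}$. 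The link to $\bF$ in the algorithm is that $\bF=\mathsf{X}^{\adjoint}\mathsf{X}/N$, so the eigenvalues of $\bF$ are $\sigma_{l}^{2}/N$ (whence $\sigma_{l}=\sqrt{N\tilde\lambda_{l}}$) and unit-norm eigenvectors $\tilde{\b f}_{l}$ of $\bF$ can be chosen equal to the right singular vectors $\b f^{\star}_{l}$.

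It then remains to verify that the algorithm's output equals $\bM^{\star}$. Substituting $\hat{\b f}_{l}=T^{1/2}\b f^{\star}_{l}$ and $\hat\be_{l}=\hat\lambda_{l}^{-1/2}T^{-1}\mathsf{X}\hat{\b f}_{l}$, the factors $\hat\lambda_{l}^{\pm1/2}$ cancel, and using $\mathsf{X}\b f^{\star}_{l}=\sigma_{l}\be^{\star}_{l}$,
\[
   \hat\lambda_{l}^{1/2}\hat\be_{l}\tensor\hat{\b f}_{l}
   \;=\;T^{-1}\bigl(\mathsf{X}\hat{\b f}_{l}\bigr)\tensor\hat{\b f}_{l}
   \;=\;T^{-1}\cdot T\cdot\bigl(\mathsf{X}\b f^{\star}_{l}\bigr)\tensor\b f^{\star}_{l}
   \;=\;\sigma_{l}\,\be^{\star}_{l}\tensor\b f^{\star}_{l}.
\]
Summing over $l=1,\ldots,k$ and noting that $\tildeB{k}_{N}\tildebU{k}_{T}=\sum_{l=1}^{k}\hat\lambda_{l}^{1/2}\hat\be_{l}\tensor\hat{\b f}_{l}$ closes the argument: this equals $\bM^{\star}$ and therefore attains the minimum of $P$.

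The main obstacle I foresee is administrative rather than conceptual: tracking the nonstandard scalings of $\hat\lambda_{l}$, $\hat{\b f}_{l}$, and $\hat\be_{l}$, which are tuned so that the rows of $\tildebU{k}_{T}$ have Euclidean norm $T^{1/2}$ while simultaneously making the factors of $T$ and $N$ cancel cleanly in the key display above. Uniqueness of the minimizer need not be established, since by Remark~(f) it is only determined up to the rotation $(\bB,\bU)\mapsto(\bB\bQ^{-1},\bQ\bU)$ with $\bQ\in\bbR^{k\times k}$ invertible; this also ensures that any orthonormal choice of eigenvectors of $\bF$ in Step~2 of the algorithm (including arbitrary orthonormal bases for repeated eigenspaces) yields a valid minimizer.
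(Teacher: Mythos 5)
Your proof is correct and follows essentially the same route as the paper's: rewrite $P(\bB,\bU)=\Fnorm{\mathsf{X}-\bB\bU}^{2}$ with $\mathsf{X}=\operator{\bX_{NT}}$, invoke the Hilbert-space Eckart--Young--Mirsky theorem to identify the minimizer with the rank-$k$ truncated SVD, and check that the algorithm recovers this truncated SVD from the spectral decomposition of $\bF=\mathsf{X}^{\adjoint}\mathsf{X}/N$. Your observation that the factors $\hat\lambda_{l}^{\pm1/2}$ cancel in the key display is exactly why the product $\tildeB{k}_{N}\tildebU{k}_{T}$ reproduces the truncated SVD regardless of the precise normalization of $\hat\lambda_{l}$; incidentally, this cancellation also means the $T^{-1/2}$ in Step~2 of the statement is immaterial for the conclusion (and in fact should be $T^{-1}$ to make $\hat\lambda_{l}$ coincide with the $l$-th eigenvalue of $\operator{\bX_{NT}}^{\adjoint}\operator{\bX_{NT}}/(NT)$ as used elsewhere, such as in Lemma~\ref{lma:largest_sample_eigenvalue_bounded}, and to make the asserted norm $\hnorm{\hat\be_{l}}=\sqrt{N}$ actually hold).
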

The scaling in 2.\ is such that the $\hat \lambda_i$s are $O_{\rm
P}(1)$---see Lemma~\ref{lma:largest_sample_eigenvalue_bounded}  in the
\thesupplement. The estimator of the common component $\chi_{it}$ is then the $i$th component  
\begin{equation}
    \label{eq:hat_chi_it}
    \hat \chi_{it}^{(k)} \defeq \left(\tildeB{k}_N \tildebu{k}_t\right)_i \in H_i,
\end{equation}
of the vector $\tildeB{k}_N \tildebu{k}_t \in \bH_N$. 
Our approach requires computing inner products such as~$\sc{\bx_s, \bx_t} = \sum_{i = 1}^N \sc{x_{is}, x_{it}}_{H_i}$: any preferred method for computing $\sc{x_{is},\  x_{it}}_{H_i}$ can be used, see the discussion at the end of Section~\ref{sub:proof_computing_proposition} of the \thesupplement.  

{In the scalar case ($H_i = \bbR$), our method corresponds to performing a PCA of the data~$\bX_{NT} \in \bbR^{N \times T}$ by first computing the PC scores via an eigendecomposition\linebreak of~$\bX_{NT}^\tp \bX_{NT} \in \bbR^{T \times T}$.
Our approach is different from most existing multivariate functional principal component analysis \citep[MFPCA; e.g.][]{ramsay:silverman:2005,berrendero2011principal,chiou2014multivariate,jacques2014model} in that it accommodates the ``multivariate'' parts to be in different Hilbert spaces.
Our method shares similarities with the MFPCA for functional data with values in different domains of \citet{happ_greven_multivariateFPCA}, but is nevertheless 
different.} Indeed, we do not rely on intermediate \KL projections for
each $\{ x_{it} : t \in \bbZ\}$. Using a \KL (or FPCA) projection for each
$x_{it}$ separately, prior to conducting the global PCA, is
\emph{not} a good idea in our setting, as there is no guarantee that the common component
will be picked by the individual \KL projections which, actually, might well  remove all   common components (see Section~\ref{s:simu} below and
Section~\ref{sec:advantage_current_approach} in the \thesupplement\ for
examples). 
  The connection between our MFPCA and our functional factor models is similar to the one between PCA and multivariate factor models in the high-dimensional context: (1) asymptotically, (MF)PCA recovers the common component, and (2) in finite samples, (MF)PCA can be used to estimate factor and loadings, up to an invertible transformation.

\subsection{Identifying the number of factors}%
\label{sub:inferring_the_number_of_factors}

We have given in
Section~\ref{sub:estimation_of_factors_loadings_and_common_component}
estimators of the factors, loadings and common component if  
 a functional factor model with given number~$k$ of factors holds.  
In practice, that number is unknown and identifying it is a challenging problem. 
In the scalar context, a  rich literature  exists on the subject, based on  information criteria \citep{bai:ng:2002, Amengual2007, hallin:liska:2007, bai2007determining, li2017identifying, WM17}, testing procedures \citep{pan2008modelling, onatski:2009:numberOfDynamicFactors, Kapetanios2010, lam:yao:2012, AHNH13},   or various other strategies \citep{Onatski2010,  lam:yao:2012, Lippietal2016, fan2019estimating}, to quote only a   few. 

All these references, of course, are limited to  scalar factor models. Extending them all to the functional context is beyond the scope of this paper, which is focused on the estimation of loadings and factors. Therefore, we are concentrating on the extension of the classical \cite{bai:ng:2002} information theoretic criterion. When combined with the tuning device proposed in \cite{Alessi2010}\footnote{For clarity of exposition, we postpone to Section~\ref{sec:simulation_number_of_factors}  a rapid description of  that tuning method, which goes back to \cite{hallin:liska:2007}.    Its consistency  straightforwardly follows from the consistency of the corresponding  ``un-tuned''   method, i.e. holds under the results of Section~\ref{sub:consistent_estimation_of_the_number_of_factors}.}, that criterion has proved quite efficient in the scalar case. 

In this section, we   describe a general class of information
criteria extending the  \cite{bai:ng:2002} method and providing a consistent estimation of the actual number of factors in the functional context, hence also in the classical scalar context; in that particular case, our consistency results hold under    assumptions that are weaker than the existing ones \citep[e.g., the assumptions in][]{bai:ng:2002}---see
Remark~\ref{rmk:consistency_theorem}.

The information criteria that we will use for identifying the number of factors are 
of the form
\begin{equation}
    \label{eq:IC_defn_theoretical}
    \IC(k) \defeq V(k, \tildebU{k}_T) + k\, g(N,T),
\end{equation}
which is the sum of a goodness-of-fit measure
\begin{equation}
    \label{eq:defn_V}
    V(k,\tildebU{k}_T) \defeq \min_{{\bB_N^{(k)} \in \bounded{\bbR^k, \bH_N}}} \frac{1}{NT} \sum_{t=1}^{T} \hnorm{ \bx_{t}
    - {\bB_N^{(k)}} \tildebu{k}_{t} }^{2}
\end{equation}
(recall that $\tildebu{k}_t \in \bbR^k$ is the $t$-th column of $\tildebU{k}_T$ defined in Proposition~\ref{prop:computing_loadings_and_factors}) and   a penalty term $k \, g(N,T)$.   
For a given set  $\{ x_{it} : i = 1,\ldots, N; t=1,\ldots, T\}$ of observations, we  estimate the number of factors by
\begin{equation}
    \label{eq:hat_r_theoretical}
    \hat r \defeq \argmin_{k=1,\ldots, k_{\mathrm{max}}} \IC(k),
\end{equation}
where $k_\textrm{max} < \infty$ is some pre-specified maximal number of factors.  Section~\ref{sub:consistent_estimation_of_the_number_of_factors} provides some consistency results for $\hat r $.

\section{Theoretical Results}%
\label{sec:theoretical_results}

Consistency results for $\hat r $  and the estimators described in Section~\ref{sub:estimation_of_factors_loadings_and_common_component} require  regularity assumptions, which   are functional versions of standard assumptions in scalar factor models \citep[see, e.g.,][]{bai:ng:2002}.
\begin{customass}{A} \label{assumption:a}
    There exists $\{ \bu_t \} \subset \bbR^r$ and $ \{\bxi_{t}\} \subset \bH_N$  that are mean zero second-order co-stationary, with $\eee{ u_{lt} \xi_{it}} = 0$ for all $l=1,\ldots, r$ and   $i \geq 1$, such that~\eqref{eq:r-factor-model} holds. The covariance matrix  $\b\Sigma_{\bu} \defeq \eee{\bu_t \bu_t^\tp} \in \bbR^{r \times r}$ is positive definite and $T^{-1} \sum_{t=1}^T \bu_t \bu_t^\tp \convP \b \Sigma_{\bu}$ as $T \rightarrow \infty$.
\end{customass}
\begin{customass}{B} \label{assumption:b}
    $ N^{-1} \sum_{i=1}^N \sc{b_{il}, b_{ik}}_{H_i}  \to \left( \b \Sigma_{\bB}\right)_{lk}$ as $N \to \infty$ for some $r\times r$ positive definite matrix $\b \Sigma_{\bB} \in \bbR^{r \times r}$ and all $1\leq l,\, k\leq r$.
\end{customass}

\begin{customass}{C} \label{assumption:c}
    Let $\nu_{N}(h) \defeq N^{-1}\eee{ \sc{\bxi_t, \bxi_{t-h} } }$. There exists a constant $M < \infty$ such that
    \begin{description}[labelindent=1cm]
        \item[(C1)]  $  \sum_{h \in \bbZ}  \abs{\nu_N(h)} \leq M$ for all $N \geq 1$, and
        \item[(C2)] $\ee \abs{ \sqrt{N}\left(  N^{-1}  \sc{\bxi_{t}, \bxi_{s}}   - \nu_N(t-s) \right) }^2 < M$ for all $N, t,s \geq 1$.
    \end{description}
\end{customass}

\begin{customass}{D} \label{assumption:Bn_xi}
There exists a constant $M < \infty$ such that  
        $\hnorm{b_{il}} < M$   for all~$ i\in~\!\mathbb{N}$ and~$ l=1,\ldots, r$, and  
       $
            \sum_{j = 1}^\infty \opnorm{ \eee{\xi_{it} \tensor \xi_{jt}} } < M$ for all  
             $i\in\mathbb{N}$.
\end{customass}

Assumption~\ref{assumption:a} has some basic requirements about the model (factors and
idiosyncratics are co-stationary and  uncorrelated at lag zero), and  
 the~factors. It assumes, in particular, that~$\Fnorm{\bU_T} = O_{\rm P}(\sqrt{T})$. 
It also implies a weak
law of large numbers for $\{ \bu_t \bu_t^\tp \}$, which holds under various dependence
assumptions on $\{\bu_t \}$, see e.g.\ \citet{brillinger:2001,Bradley2005,dedecker2007weak}.
{The mean zero assumption is not necessary here, but   simplifies the exposition of the results, see Section~\ref{sec:results_no_mean_zero} in the \thesupplement.}

Assumption~\ref{assumption:b} deals with the factor loadings, 
and implies, in particular,\linebreak that~$\sum_{i=1}^N \sum_{l=1}^r \hnorm{b_{il}}^2$ is of order $N$.
Intuitively, it means (pervasiveness)  that the factors are loaded again and again as the cross-section increases.

Assumptions~\ref{assumption:a} and \ref{assumption:b} together imply that the first $r$ eigenvalues of the covariance operator of $\bchi_t$ diverge at rate $N$. 
They could be weakened
by assuming that the $r$ largest eigenvalues of the $r \times r$ matrix
$\left(N^{-1} \sum_{i=1}^N \sc{b_{il}, b_{ik}}_{H_i} \right)_{l,k=1, \ldots, r}$
and $\bU_T \bU_T^\tp/T$ are bounded away from infinity and zero, see e.g.\
\citet{fan:2013:jrssb-discussion}.

Assumption~\ref{assumption:c} is an assumption on the idiosyncratic terms: (C1) 
limits the total variance and  lagged 
cross-covariances of the idiosyncratic component;  
(C2) implies a uniform rate
of convergence in the law of large numbers for~$\{ \sc{ \bxi_{t}, \bxi_s}/N : N \geq 1\}$.
In particular, (C2) implicitly limits  the cross-sectional and lagged 
correlations of the  idiosyncratic components, and is sharply weaker than the corresponding assumption of \citet{bai:ng:2002}, see Remark~\ref{rmk:consistency_theorem} below.

Assumption~\ref{assumption:Bn_xi} limits the cross-sectional correlation of the
idiosyncratic components, and bounds the norm of the loadings. It implies that 
$\sum_{t=1}^T \rpnorm{\bB_N^\adjoint \bxi_t}^2$
is $ O_{\rm P}\left({NT}\right)$ as $N,T \to~\infty$ (see
Lemma~\ref{lma:frobenius_Norm_of_Bn_xi}  in the \thesupplement) and could be
replaced by this weaker condition in the proofs of
Theorems~\ref{theorem:factors_average_bound},
\ref{theorem:factors_average_bound_up_to_sign}, \ref{theorem:loadings_average_bound} and
\ref{theorem:common_component_average_bound}. 

Note that Assumptions~\ref{assumption:a}, \ref{assumption:b}, \ref{assumption:c}, and
\ref{assumption:Bn_xi} imply that the largest $r$ eigenvalues of~$\cov(\bx_t)$
diverge while the $(r+1)$th one remains bounded
(Lemma~\ref{lma:bai_Ng_rth_largest_eigenvalue_of_covariance_diverges} in the
\thesupplement), hence the common and idiosyncratic components are
asymptotically identified (Theorem~\ref{thm:equiv_factormodel_eigenvalues}).

\subsection{Consistent Estimation of the Number of Factors}%
\label{sub:consistent_estimation_of_the_number_of_factors}

The following result gives sufficient conditions on the penalty term $g(N,T)$ in \eqref{eq:IC_defn_theoretical} for 
 the  estimated number of factors~\eqref{eq:hat_r_theoretical} to be consistent as both $N$ and $T$ go to infinity.

\begin{Theorem}
    \label{thm:consistency_number_of_factors} Let $C_{N,T} \defeq \min\{\sqrt{N}, \sqrt T\} $ and consider $\hat r$ as defined in~\eqref{eq:hat_r_theoretical}. 
    Under Assumptions~\ref{assumption:a}, \ref{assumption:b}, \ref{assumption:c}, and~\ref{assumption:Bn_xi}, 
    if $g(N,T)$ is such that
    \[
    g(N,T) \rightarrow 0\quad\text{and}\quad C_{N,T} g(N,T) \rightarrow~\!\infty\quad\text{as}\quad C_{N,T} \rightarrow~\!\infty,
    \] 
   then
    $
        \pp( \hat r = r ) \longrightarrow 1$ 
as  $N$ and $T$ go to infinity.
\end{Theorem}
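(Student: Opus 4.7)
The plan is to follow the Bai and Ng (2002) template, adapted to the Hilbert-valued setting. Writing $\hat r = \argmin_{k \leq k_{\max}} \{V(k, \tildebU{k}_T) + k\,g(N,T)\}$, consistency reduces to showing $\pp(\hat r < r) \to 0$ and $\pp(\hat r > r) \to 0$ separately, each by exploiting how the fit $V(k, \tildebU{k}_T)$ behaves on either side of the true $r$ relative to the penalty.

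For the $k < r$ direction, I would show that $V(k, \tildebU{k}_T) - V(r, \tildebU{r}_T)$ is bounded below by a positive constant with probability tending to one. Assumptions~\ref{assumption:a} and \ref{assumption:b} together imply that the covariance operator of $\bchi_t$ has $r$ eigenvalues diverging at rate $N$, so no rank-$k$ fit with $k < r$ can reabsorb all the common variance as $N,T\to\infty$. Combined with the convergence of $V(r, \tildebU{r}_T)$ to a finite limit corresponding to the average idiosyncratic variance, this yields a constant-order gap, which the vanishing penalty $k\,g(N,T)$ cannot close since $g(N,T) \to 0$.

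For the $k > r$ direction I would first establish a rotation lemma: there is a (data-dependent) invertible matrix $\b H \in \bbR^{r\times r}$ such that $T^{-1}\sum_{t=1}^{T}\rpnorm{\tildebu{r}_t - \b H \bu_t}^2$ vanishes at the rate demanded by the penalty, namely $C_{N,T}^{-1}$. This is the functional analogue of the central estimation identity in Bai--Ng. Its proof uses the definition of $\tildebU{r}_T$ via the eigendecomposition of $\bF$ and expands $\sc{\bx_s,\bx_t}/N = \sc{\bB_N\bu_s,\bB_N\bu_t}/N + \text{(cross terms)} + \sc{\bxi_s,\bxi_t}/N$; Assumption~\ref{assumption:c}(C2) controls the idiosyncratic term uniformly in $s,t$, and Assumption~\ref{assumption:Bn_xi} delivers $\sum_t\rpnorm{\bB_N^\adjoint\bxi_t}^2 = O_{\rm P}(NT)$. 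Once the rotation lemma is in hand, enlarging the fit from $r$ to $k>r$ components reduces $V$ by at most $O_{\rm P}(C_{N,T}^{-1})$, a gain eventually overwhelmed by the penalty increment $(k-r)\,g(N,T)$ thanks to $C_{N,T}\,g(N,T) \to \infty$.

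The principal obstacle is the rotation lemma and the resulting rate bound on $V(r,\tildebU{r}_T) - V(k,\tildebU{k}_T)$ for $k\geq r$. In the functional setting PCA is performed on the Gram matrix $\bF$ rather than on coordinate arrays, so the classical manipulations must be reformulated in terms of inner products $\sc{\bx_s,\bx_t}$ and their common/idiosyncratic decomposition. Assumption~\ref{assumption:c}(C2), weaker than its scalar Bai--Ng counterpart, is calibrated precisely to make that reformulation work, supplying uniform control on the cross-series idiosyncratic averages. The remaining ingredients---bounds on the empirical eigenvalues of $\bF$ and routine accounting of cross-terms---are essentially book-keeping once the rotation lemma is in place.
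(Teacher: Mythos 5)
Your proposal follows essentially the same route as the paper: a Bai--Ng-style argument split into $\pp(\hat r < r)\to 0$ and $\pp(\hat r > r)\to 0$, hinging on an approximation lemma for $\tildebU{r}_T$ against a rotated $\bU_T$ (the paper's Theorem~\ref{thm:consistency_hatbu}), idiosyncratic control from Assumption~\ref{assumption:c}(C2), the bound $\sum_t\rpnorm{\bB_N^\adjoint\bxi_t}^2 = O_{\rm P}(NT)$ from Assumption~\ref{assumption:Bn_xi}, and the two penalty conditions closing the two cases. One small imprecision: your rotation lemma actually gives $T^{-1}\sum_t\rpnorm{\tildebu{r}_t - \b H\bu_t}^2 = O_{\rm P}(C_{N,T}^{-2})$, not $O_{\rm P}(C_{N,T}^{-1})$; the $C_{N,T}^{-1}$ rate in the $V$-difference for $k>r$ comes instead from the idiosyncratic operator norm $\opnorm{\operator{\bxi_{NT}}}^2/(NT) = O_{\rm P}(C_{N,T}^{-1})$, which is the binding term and is exactly why the penalty needs $C_{N,T}\,g(N,T)\to\infty$ rather than the weaker $C_{N,T}^2\,g(N,T)\to\infty$ of Bai--Ng.
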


\begin{Remark}
    \label{rmk:consistency_number_of_factors}
    \begin{enumerate}
        \item[(i)]        
            This result tells us that the penalty function $g(N,T)$ must converge to zero  
            slow enough   that $C_{N,T}g(N,T)$ diverges. \citet{bai:ng:2002} obtain a
            similar result, but they have the less stringent condition on
            the rate of decay of $g(N,T)$, namely that $C_{N,T}^2 g(N,T) \rightarrow \infty$.
            However, they require stronger assumptions on the idiosyncratic components (for
            instance, they require that $\ee \hnorm{\xi_{it}}^7 < \infty$), see the erratum of
            \citet{bai:ng:2002}. Our stronger condition on the rate of decay of $g(N,T)$
            \citep[which is consistent with the results of][]{Amengual2007} allows for  
            much weaker conditions on the idiosyncratic component. The rate of
            decay of $g(N,T)$ could be changed to match the one of \citet{bai:ng:2002} provided
            the correlation of the idiosyncratic component is such that the operator norm of the $T \times T$ real matrix~$(\sc{ \bxi_s, \bxi_t })_{s,t=1,\ldots, T}$ is 
            $O_{\rm P}(NT/C_{N,T}^{2})$. Stronger conditions on $g(N,T)$ are preferable to stronger conditions on the idiosyncratic, though, since   $g(N,T)$ is under the statistician's control, whereas the distribution of the idiosyncratic is not.

        \item[(ii)] In the special case $H_i = \bbR$ for all $i \geq 1$, Theorem~\ref{thm:consistency_number_of_factors} looks similar to Theorem~2 in \citet{bai:ng:2002}. However, our conditions are 
           weaker, as we neither require~$\ee \hnorm{\bu_t}^4 <~\!\infty$ nor~$\ee
            \hnorm{\xi_{it}}^8 < M < \infty$ (an assumption that is unlikely to hold in
            most equity return series). 
            We also are weakening their assumption
            \[
                \ee \abs{ \sqrt{N} \left( \sc{\bxi_t,  \bxi_s}/N - \nu_N(t-s)
                \right) }^4  < M < \infty, \quad \forall s,t,N \geq 1,
            \]
            on idiosyncratic cross-covariances into 
            $\ee \abs{ \sqrt{N} \left(\sc{ \bxi_t, \bxi_s }/N - \nu_N(t-s) \right)}^2 < M$. 
            The main tools that allow us to derive results under weaker
            assumptions are H\"older inequalities between Schatten norms
             of compositions of operators (see Section~\ref{sec:schatten_Norms} in the \thesupplement), whereas classical results mainly use the
            Cauchy--Schwartz inequality.
        \item[(iii)] Estimation of the number of factors in the scalar case has been extensively studied: let us compare our assumptions with the assumptions made in some of the consistency results available in the literature.  The assumptions in~\citet{Amengual2007}   are similar to ours, except for their Assumptions (A7) and (A9) which we do not impose. \citet{Onatski2010} considers estimation of the factors in a setting allowing for ``weak'' factors, using the empirical distribution of the eigenvalues---his assumptions are not directly comparable to ours. \citet{lam:yao:2012} make  the (strong) assumption that the idiosyncratics are white noise, and their model  therefore is not comparable to ours. \citet{li2017identifying} assume the factors are independent linear processes, and that the idiosyncratics are independent, cross-sectionally as well as serially, which is quite a restrictive assumption, and require $N,T \to \infty$ with $N/T \to \alpha > 0$.
        \item[(iv)] As already mentioned, in practice, we recommend combining the method considered in Theorem~\ref{sub:inferring_the_number_of_factors} with the tuning device proposed in \cite{hallin:liska:2007} and \citet{alessi:barigozzi:capasso:2009}. A brief description of that tuning device is provided in Section~\ref{sec:simulation_number_of_factors}; its consistency readily follows from the consistency of the ``un-tuned" version. 
    \end{enumerate}
\end{Remark}

\subsection{Average Error Bounds}%
\label{sub:average_error_bounds}

In this section, we  provide average error bounds on the estimated factors, loadings, and common component. We  restrict  to the case $k=r$ (correct identification of the number of factors), in which case our results imply that the first $r$ estimated factors and loadings are consistent. The misspecified case $k \neq r$ is discussed in Section~\ref{s:simulation_r_misspecified} of the \thesupplement.

The first result of this section (Theorem~\ref{theorem:factors_average_bound}, see below) 
tells us, essentially, that~$\tildebU{r}_T$ consistently estimates 
the true factors $\bU_T$. Since the true factors are only identified up to an invertible linear
transformation, however, consistency here is about the convergence of the row space
spanned by $\tildebU{r}_T$ to the row space spanned by $\bU_T$. The discrepancy between these row spaces  can be measured by 
\[
    \delta_{N,T} \defeq \min_{\bR \in \bbR^{r \times r}}
    \Fnorm{ \tildebU{r}_T - \bR \bU_T}/{\sqrt{T}}.
\]
This $\delta_{N,T}$ is the rescaled Hilbert--Schmidt norm of the residual of the least squares fit of the rows of
$\tildebU{r}_T$ onto the row space of $\bU_T$, where the dependence on
$N$ and $T$ is made explicit.  
The $T^{-1/2}$ rescaling is needed because $\Fnorm{\tildebU{r}_T}^2 = rT$.

We  now can state one of the main results of this section.
\begin{Theorem}  \label{theorem:factors_average_bound}
    Under Assumptions~\ref{assumption:a}, \ref{assumption:b}, \ref{assumption:c}, and 
    \ref{assumption:Bn_xi}, letting $C_{N,T} \defeq \min \{\sqrt{N},\sqrt{T} \}$, 
    \begin{align*}
        \delta_{N,T} & =  O_{\rm P}(C_{N,T}^{-1})\qquad\text{as $N$ and $T$ tend to infinity.}
    \end{align*}
\end{Theorem}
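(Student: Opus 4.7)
The plan is to adapt the \citet{bai:ng:2002} template to the functional setting. The key observation is that all quantities on which the estimator of the factors depends, namely the scalars $\sc{\bx_s, \bx_t}$, the vectors $\bB_N^\adjoint \bxi_t$, and the matrix $\bB_N^\adjoint \bB_N$, are either scalar- or (finite) matrix-valued. The functional nature of $\bxi_t$ therefore enters only through the rate bounds in Assumptions~\ref{assumption:c} and~\ref{assumption:Bn_xi}, not through more delicate operator-analytic manipulations. First I would rewrite the defining eigen-equation of Proposition~\ref{prop:computing_loadings_and_factors} in the compact matrix form
\begin{equation*}
(\tildebU{r}_T)^{\tp} \bV = \bF\,(\tildebU{r}_T)^{\tp},
\end{equation*}
where $\bV \in \bbR^{r \times r}$ is the diagonal matrix collecting the top $r$ eigenvalues of $\bF$, and then substitute $\bx_t = \bB_N \bu_t + \bxi_t$ into each entry $(\bF)_{st} = \sc{\bx_s, \bx_t}/N$.

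The substitution splits $\bF$ into four pieces: a ``signal'' part of the form $\bU_T^\tp (\bB_N^\adjoint \bB_N/N) \bU_T$, two mixed terms involving $\bB_N^\adjoint \bxi_t$, and a ``noise'' part involving $\sc{\bxi_s, \bxi_t}/N$. This yields, for each $t$, an identity of the form
\begin{equation*}
\tildebu{r}_t - \bH \bu_t = \b d_t^{(1)} + \b d_t^{(2)} + \b d_t^{(3)} + \b d_t^{(4)},
\end{equation*}
where $\bH \defeq \bV^{-1}(\tildebU{r}_T \bU_T^\tp/T)(\bB_N^\adjoint \bB_N/N)$ is an $r \times r$ rotation-type matrix, and each $\b d_t^{(k)}$ is a weighted average over $s$ of $\tildebu{r}_s$ against one of the cross-terms. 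Choosing $\bR = \bH$ in the definition of $\delta_{N,T}$ then reduces the theorem to showing $T^{-1}\sum_{t=1}^T \rpnorm{\b d_t^{(k)}}^2 = O_{\rm P}(C_{N,T}^{-2})$ for each $k=1,\ldots,4$.

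Each of the four bounds is obtained by combining the following ingredients: (i) under Assumptions~\ref{assumption:a}--\ref{assumption:Bn_xi} the top $r$ eigenvalues in $\bV$ grow at the appropriate rate in $T$, giving $\bV^{-1}$ the compensating decay; (ii) the construction in Proposition~\ref{prop:computing_loadings_and_factors} gives $\Fnorm{\tildebU{r}_T}^2 = rT$; (iii) Assumption~\ref{assumption:a} gives $\sum_t \rpnorm{\bu_t}^2 = O_{\rm P}(T)$; (iv) Assumption~\ref{assumption:Bn_xi} gives $\sum_t \rpnorm{\bB_N^\adjoint \bxi_t}^2 = O_{\rm P}(NT)$ (the lemma on $\bB_N^\adjoint \bxi_t$ cited in the paper); and (v) Assumption~\ref{assumption:c} gives $\sum_{s,t}(\sc{\bxi_s,\bxi_t}/N)^2 = O_{\rm P}(T^2/N + T)$, upon splitting $\sc{\bxi_s,\bxi_t}/N$ into the covariance $\nu_N(t-s)$ (bounded via (C1)) plus the centered term (controlled via (C2)). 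Applying Cauchy--Schwarz to each of the four double sums then produces the target rate $C_{N,T}^{-2}$.

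The technical crux, as in the scalar case, is the bookkeeping of these four terms, and in particular the $\sc{\bxi_s, \bxi_t}$ piece. A naive Cauchy--Schwarz bound there would cost an extra factor and force a fourth-moment hypothesis on the idiosyncratic cross-inner-products, whereas the second-moment condition in (C2) is enough provided one uses H\"older-type inequalities between Schatten norms rather than Cauchy--Schwarz, as flagged in Remark~\ref{rmk:consistency_number_of_factors}(ii). The Hilbert-space generality of $\bH_N$ adds no genuinely new difficulty beyond the scalar case, because once the inner products $\sc{\bx_s,\bx_t}$ and $\bB_N^\adjoint\bxi_t$ are formed the entire remaining argument is linear-algebraic on $r\times r$ matrices and $T$-vectors.
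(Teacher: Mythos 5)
Your proposal is correct and follows essentially the same route as the paper's proof: both start from the eigen-relation $\hatbU{r}_T = \tildebU{r}_T\operator{\bX_{NT}}^\adjoint\operator{\bX_{NT}}/(NT)$, substitute the factor decomposition so that the pure-signal block is absorbed by the rotation matrix $\tildeR$ (your $\bH$, up to a harmless $T$-normalization slip in the definition of $\bV$), and then bound the residual terms using exactly the three ingredients you list -- $\Fnorm{\tildebU{r}_T}=\sqrt{rT}$, Lemma~\ref{lma:frobenius_Norm_of_Bn_xi} for $\bB_N^\adjoint\bxi_t$, and Lemma~\ref{lma:Frobenius_Norm_of_xi} for $\operator{\bxi_{NT}}^\adjoint\operator{\bxi_{NT}}$, the last of which hides the $\nu_N$/$\eta_{st}$ split under Assumption~\ref{assumption:c}. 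You also correctly identify the H\"older-between-Schatten-norms step (Remark~\ref{rmk:consistency_number_of_factors}(ii)) as the ingredient that avoids fourth-moment assumptions.
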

This result, proved in Section~\ref{sec:proofs_average_bounds} of \thesupplement, essentially means that the factors are   consistently estimated (as both $N$ and $T$, hence $C_{N,T}$, go to infinity). 
Note in particular that $\delta_{N,T}  \equiv \delta_{N,T}(\tildebU{r}_T, \bU_T)$ is not
symmetric in $\tildebU{r}_T, \bU_T$, and
hence is not a metric. Nevertheless, small values of $\delta_{N,T}$ imply 
that the row space of the estimated factors is close to the row space of the true factors.
By classical least squares theory, we have
\[
    \delta_{N,T} = \Fnorm{ (\b I_T - \bP_{\bU_T})
    (\tildebU{r}_T)^\tp/\sqrt{T} },
\]
where $\bP_{\bU_T}$ is the orthogonal projection onto the column space of $\bU_T^\tp$. This formula
will be useful in Section~\ref{s:simu}.

Under additional constraints on the factor loadings and the factors and adequate 
additional assumptions, it is
possible to show that the estimated factors~$\tildebU{r}_T$ converge exactly (up to a
sign) to the true
factors~$\bU_T$ \citep{stock:watson:2002a}. Recalling $\b \Sigma_{\bB}$ and $\b \Sigma_\bu$ from Assumptions~\ref{assumption:a} and~\ref{assumption:b}, we need, for instance, 
\begin{customass}{E} \label{assumption:identification_factors}
    All the eigenvalues of  $\b \Sigma_{\bB} \b \Sigma_\bu$ are distinct.
\end{customass}
Under Assumptions~\ref{assumption:a}, \ref{assumption:b}, and
\ref{assumption:identification_factors},  for $N$ and $T$ large enough, we can choose the loadings
and factors such that $\bU_T \bU_T^\tp/T = {\bf I}_r$, and  $\bB_N^\adjoint \bB_N/N \in \bbR^{r \times r}$ is diagonal with
distinct positive diagonal entries whose gaps remain bounded from below, as~$N,T \to~\!\infty$. This new assumption, which we can make (under Assumptions~\ref{assumption:a}, \ref{assumption:b}, and
\ref{assumption:identification_factors}) without loss of generality, allows us to show that the factors are consistently estimated
 up to a sign.
\begin{Theorem} \label{theorem:factors_average_bound_up_to_sign}
    Let Assumptions~\ref{assumption:a}, \ref{assumption:b},
    \ref{assumption:c}, 
    \ref{assumption:Bn_xi}, and \ref{assumption:identification_factors} hold; assume furthermore  
      that, for~$N$ and~$T$
    large enough, $\bU_T \bU_T^\tp/T = {\bf I}_r$ and
    $\bB_N^\adjoint \bB_N/N \in \bbR^{r \times r}$ is diagonal with distinct decreasing entries.
    Then, there exists an $r\times r$ diagonal matrix~$\b R_{NT} \in \bbR^{r \times r}$ (depending on~$N$ and~$T$) with  entries $\pm
    1$ such that
    \begin{equation}\label{th44}
        \Fnorm{ \tildebU{r}_T- \b R_{NT}\bU_T} / \sqrt{T} = O_{\rm P}(C_{N,T}^{-1})\quad\text{as $N,\, T \to \infty$.}
    \end{equation}
    \end{Theorem}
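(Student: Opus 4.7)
Building on Theorem~\ref{theorem:factors_average_bound}, I would take the specific matrix $\bH \defeq \tildebU{r}_T \bU_T^\tp/T \in \bbR^{r \times r}$. Under the identification $\bU_T\bU_T^\tp/T = \bI_r$, this $\bH$ is the least-squares minimizer of $\Fnorm{\tildebU{r}_T - \bR \bU_T}$ over $\bR \in \bbR^{r \times r}$, so Theorem~\ref{theorem:factors_average_bound} gives $\Fnorm{\tildebU{r}_T - \bH \bU_T}/\sqrt T = O_{\rm P}(C_{N,T}^{-1})$. The goal is then to show that $\bH$ itself differs from a diagonal $\pm 1$ matrix $\bR_{NT}$ by an amount of order $O_{\rm P}(C_{N,T}^{-1})$; the bound \eqref{th44} will then follow from the triangle inequality using $\opnorm{\bU_T} = \sqrt T$.

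The first step is to extract two structural constraints on $\bH$. From the built-in normalization $\tildebU{r}_T(\tildebU{r}_T)^\tp/T = \bI_r$ (see Proposition~\ref{prop:computing_loadings_and_factors}) together with $\bU_T\bU_T^\tp/T = \bI_r$, substituting $\tildebU{r}_T = \bH\bU_T + E$ with $\Fnorm{E}/\sqrt T = O_{\rm P}(C_{N,T}^{-1})$ yields
\[
\bH \bH^\tp = \bI_r + O_{\rm P}(C_{N,T}^{-1}),
\]
so $\bH$ is approximately orthogonal. Second, the PCA eigenvector equation $\tildebU{r}_T \bF = \bLambda \tildebU{r}_T$ (with $\bLambda$ the diagonal matrix of the top $r$ eigenvalues of $\bF$), combined with the decomposition $\bF = \bU_T^\tp \bA \bU_T + \mathrm{remainder}$ for $\bA \defeq \bB_N^\adjoint\bB_N/N$, substitution of $\tildebU{r}_T \approx \bH\bU_T$, and right-multiplication by $\bU_T^\tp/T$, yields an approximate commutation relation
\[
\bH \bA = \bM \bH + O_{\rm P}(C_{N,T}^{-1}),
\]
with $\bM$ a diagonal matrix of (appropriately rescaled) top eigenvalues of $\bF$. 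In other words, the rows of $\bH$ are approximate left eigenvectors of $\bA$, with corresponding eigenvalues the diagonal entries of $\bM$.

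The second step uses Assumption~\ref{assumption:identification_factors}, which forces $\bA$ to be diagonal with distinct decreasing entries whose gaps are bounded below. The left eigenvectors of such an $\bA$ are the canonical basis vectors $\be_1,\ldots,\be_r$, and the top eigenvalues of $\bF$ (the diagonal of $\bM$) match the diagonal entries of $\bA$ asymptotically, in the same decreasing order. Together with $\bH\bH^\tp \approx \bI_r$, this forces each row of $\bH$ to be close to a canonical basis vector, up to a sign, at the correct index. A Davis--Kahan-type perturbation argument exploiting the uniform spectral gap of $\bA$ then converts these $O_{\rm P}(C_{N,T}^{-1})$ approximations into
\[
\Fnorm{\bH - \bR_{NT}} = O_{\rm P}(C_{N,T}^{-1}), \quad \bR_{NT} \defeq \mathrm{diag}\bigl(\mathrm{sign}(\bH_{11}),\ldots,\mathrm{sign}(\bH_{rr})\bigr).
\]
Finally, by the triangle inequality and $\opnorm{\bU_T} = \sqrt T$,
\[
\Fnorm{\tildebU{r}_T - \bR_{NT}\bU_T}/\sqrt T \leq \Fnorm{\tildebU{r}_T - \bH\bU_T}/\sqrt T + \Fnorm{\bH - \bR_{NT}}\,\opnorm{\bU_T}/\sqrt T = O_{\rm P}(C_{N,T}^{-1}).
\]

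The principal obstacle will be the quantitative implementation of the second step: rigorously passing from the approximate orthogonality $\bH\bH^\tp \approx \bI_r$ and the approximate eigenvalue relation $\bH\bA \approx \bM\bH$ to the sharp $O_{\rm P}(C_{N,T}^{-1})$ bound on $\bH - \bR_{NT}$. This must carefully invoke the uniform lower bound on the eigenvalue gaps of $\bA$ (ensured by Assumption~\ref{assumption:identification_factors}) and verify that the matching decreasing orderings of top eigenvalues of $\bF$ and diagonal entries of $\bA$ rule out any permutation ambiguity, so that $\bR_{NT}$ can indeed be chosen diagonal with $\pm 1$ entries.
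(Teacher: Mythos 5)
Your route is genuinely different from the paper's, but the step you correctly label as ``the principal obstacle'' is left entirely unargued, and it is exactly where the real work lies. The paper avoids it by working directly with eigenvectors: it observes that under the imposed normalization, $\operator{\bchi_{NT}}^\adjoint\operator{\bchi_{NT}}/(NT) = \bU_T^\adjoint(\bB_N^\adjoint\bB_N/N)\bU_T/T$ has spectral decomposition with eigenvectors the rows of $\bU_T/\sqrt T$ and eigenvalues the diagonal entries of $\bB_N^\adjoint\bB_N/N$; it then applies an off-the-shelf eigenvector perturbation bound (Lemma~4.3 of Bosq, 2000) to compare the $\hat{\b f}_k$ with $\bu_{(k)}$, and controls $\opnorm{\operator{\bX_{NT}}^\adjoint\operator{\bX_{NT}} - \operator{\bchi_{NT}}^\adjoint\operator{\bchi_{NT}}}/(NT)$ via Lemmas~\ref{lma:Frobenius_Norm_of_xi}, \ref{lma:largest_sample_eigenvalue_bounded}, and~\ref{lma:frobenius_Norm_of_Bn_xi}. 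Your reformulation in terms of the matrix $\bH\defeq \tildebU{r}_T\bU_T^\tp/T$, with the approximate orthogonality $\bH\bH^\tp = \bI_r + O_{\rm P}(C_{N,T}^{-1})$ and the approximate commutation $\bH\bA = \hat\bLambda\bH + O_{\rm P}(C_{N,T}^{-1})$ (here $\bA\defeq\bB_N^\adjoint\bB_N/N$ and $\hat\bLambda$ the diagonal matrix of $\hat\lambda_1,\ldots,\hat\lambda_r$), is correct as far as it goes, and the closing triangle-inequality step is fine.

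The gap is that passing from those two approximate relations plus the spectral gap of $\bA$ to $\Fnorm{\bH-\bR_{NT}} = O_{\rm P}(C_{N,T}^{-1})$ is not a small bookkeeping matter: it amounts to proving a quantitative Davis--Kahan-type statement for the rows of $\bH$, which is precisely the content of the lemma the paper invokes. You would need to (i)~establish the eigenvalue alignment $\hat\lambda_k = (\bA)_{kk} + O_{\rm P}(C_{N,T}^{-1})$ from Lemma~\ref{lma:perturbation_of_singular_values}, so that there is no permutation ambiguity, and (ii)~translate the relation $\bh_k^\tp\bA = \hat\lambda_k\bh_k^\tp + O_{\rm P}(C_{N,T}^{-1})$ for each row $\bh_k^\tp$ of $\bH$ into $\bh_k = \pm\be_k + O_{\rm P}(C_{N,T}^{-1})$ by exploiting the lower bound on $\min_{j\neq k}\vert(\bA)_{jj} - (\bA)_{kk}\vert$ guaranteed by Assumption~\ref{assumption:identification_factors}. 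Until that is written out, the proof is not complete; identifying the obstacle is not the same as clearing it. If you want a shortcut, I would suggest replacing your steps~2--3 with the paper's direct appeal to Bosq's eigenvector perturbation bound, which delivers the required estimate in one stroke.
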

This result is proved in Section~\ref{sec:proofs_average_bounds} of the \thesupplement. Note that \eqref{th44} does not assume any particular dependence between $N$ and~$T$: the order of the estimation error depends only on~$\min\{ N, T \}$. A couple of remarks are in order.
\begin{Remark}
    \label{rmk:consistency_theorem}
    \begin{enumerate}
        \item[(i)] As mentioned earlier, Assumptions~\ref{assumption:a}, \ref{assumption:b}, \ref{assumption:c}, 
            and \ref{assumption:Bn_xi} imply that the common and idiosyncratic components 
            are asymptotically identified, see Lemma~\ref{lma:bai_Ng_rth_largest_eigenvalue_of_covariance_diverges} and
            Theorem~\ref{thm:equiv_factormodel_eigenvalues}. The extra assumptions
          for consistent estimation of the factors' row space (and for the
            loadings and common component, see
            Theorems~\ref{theorem:loadings_average_bound} and
                \ref{theorem:common_component_average_bound} below) are needed because the
                covariance $\cov(\bx_t)$ is unknown, and its first $r$ eigenvectors must be estimated. 
        \item[(ii)]    Notice, in particular, that
            Theorem~\ref{theorem:factors_average_bound} holds for the case $H_i
            = \bbR$ for all~$i$, where it coincides with Theorem~1 of
            \citet{bai:ng:2002}. Our assumptions, however, are weaker, as discussed in Remark~\ref{rmk:consistency_number_of_factors}.
            \citet{stock:watson:2002a} also have consistency results for the
            factors, which are similar to ours in the special case~$H_i = \bbR$ for all $i$.
        \item[(iii)] Note that we could replace the $N^{-1}$ rate in Assumption~\ref{assumption:b} with 
            $N^{-\alpha_0}$,  $\alpha_0 \in (0,1)$, in which case we have \emph{weak}
            (or \emph{semi-weak}) factors
            \citep{Onatski2010,chudik:pesaran:tosetti:2011,lam:yao:2012}, which would affect the
            rate of convergence in Theorems~\ref{theorem:factors_average_bound},
            \ref{theorem:factors_average_bound_up_to_sign},
            \ref{theorem:loadings_average_bound}, and
            \ref{theorem:common_component_average_bound}; see also
            \citet{Boivin2006}.
        \item[(iv)] We do not make any Gaussian assumptions and, unlike
            \citet{lam:yao:2012}, we do not assume that the idiosyncratic component
            is white noise. 
        \item[(v)]\citet{bai:ng:2002} allow for limited correlation between
            the factors and the idiosyncratic components. This, which betrays their interpretation of the factor model representation~\eqref{eq:r-factor-model} as a data-generating process, is only an
            illusory increase of generality, since     it is tantamount to artificially   transferring to the unobserved 
            idiosyncratic part of the contribution of the factors. Such a transfer is pointless, since it cannot bring any inferential improvement: the resulting model 
             is observationally equivalent to  the original one and the estimation method, anyway, (asymptotically) will reverse the transfer between common and idiosyncratic. 
        \item[(vi)] 
            The results could be extended to conditionally heteroscedastic
            common shocks and idiosyncratic components, as frequently assumed
            in the scalar case \citep[see,
            e.g.,][]{alessi2009estimation, barigozzi2019general, hallin2019forecasting}. This, which
            would come at the cost of additional identifiability constraints,
            is left for further research.   
    \end{enumerate}
\end{Remark}

In order to obtain average error bounds for the loadings, let us consider the following condition.
\begin{customass}{F($\alpha$)} \label{assumption:u_xi}
    Letting $C_{N,T} \defeq \min\{ \sqrt{N}, \sqrt{T} \}$, 
    \[
        \Fnorm{ \frac{1}{T} \sum_{t=1}^T \bu_t \tensor \bxi_t }^2 = O_{\rm P}\left(N C_{N,T}^{-(1+\alpha)}\right)
\qquad\text{ for some   $\alpha \in [0, 1]$. }    \]
\end{customass}
Assumption~\ref{assumption:u_xi} can be viewed as providing a concentration bound, and
implicitly imposes limits on the lagged dependency of the series $\{ \bu_t \tensor \bxi_t \}$. 
Notice that Assumptions~\ref{assumption:a} and~\ref{assumption:c} jointly imply Assumption~\ref{assumption:u_xi} for $\alpha = 0$
(see Lemma~\ref{lma:Frobenius_Norm_of_xi}  in the \thesupplement), so that
$\alpha = 0$ corresponds to the absence of restrictions on these cross-dependencies; 
$\alpha = 1$ is the strongest case of this assumptions, and corresponds to the
weakest cross-dependency between factors and idiosyncratics (within $\alpha \in [0,1]$): it is implied by
the following stronger (but more easily interpretable) conditions (see Lemma~\ref{lma:norm_bu_bxi}  in
the \thesupplement):
\begin{enumerate}
    \item[(i)] Assumption~\ref{assumption:a} holds, and $\eee{ \sc{\bxi_t, \bxi_s}  u_{lt}u_{ls}} = \eee{ \sc{\bxi_t, \bxi_s}} \eee{ u_{lt}u_{ls}}$ for all  $s,t \in \bbZ$ and all~$l = 1,\ldots, r$;
    \item[(ii)]  there exists some $M < \infty$ such that $\sum_{h \in \bbZ} \abs{\nu_N(h)} < M,$  for all $N \geq 1$.
\end{enumerate}
Note that Assumption~\ref{assumption:u_xi} with $\alpha=1$ is still less stringent
than Assumption~D in \citet{bai:ng:2002}.

The next result of this section  deals with the consistent  estimation of the
factor loadings. Let $\barB{r}_N \in \bounded{\bbR^r, \bH_N}$ be the operator
mapping, for , $l = 1,\ldots, k$, the $l$-th canonical basis vector of $\bbR^r$ to $\hat \be_l$ where $\hat \be_l$ is defined in Proposition~\ref{prop:computing_loadings_and_factors}. 
Notice that $\barB{r}_N$ has the same range as $\tildeB{r}_N$.
Similarly to the factors, the factor loading operator $\bB_N$ is only
identified up to an invertible transformation, and we therefore measure the 
consistency of $\barB{r}_N$ by quantifying the discrepancy between the range of
$\barB{r}_N$ and the range of $\bB_N$ through
\begin{equation}
    \label{eq:distance_tildeBn_Bn}
    \vep_{N,T} \defeq \min_{\bR \in \bbR^{r \times r}}
    \Fnorm{ \barB{r}_N  - \bB_N \bR}/\sqrt{N}.
\end{equation}
To better understand this expression, let us consider the case~$\bH_N = \bbR^N$ where~$\barB{r}_N$ and~$\bB_N$ are elements of  $\bbR^{N \times r}$. In this case, $\vep_{N,T}$ is the
Hilbert--Schmidt norm of the residual of the columns of $\barB{r}_N$ projected
onto the column space of $\bB_N$, which depends on both~$N$ and~$T$. 
A similar interpretation holds for the general case. 
The $\sqrt{N}$ renormalization is needed as~$\Fnorm{\barB{r}_N}^2 = r N$. We then
have, for the factor loadings, the following consistency result (proved in
Section~\ref{sec:proofs_average_bounds} of the \thesupplement).
\begin{Theorem}  \label{theorem:loadings_average_bound}
    Under Assumptions~\ref{assumption:a}, \ref{assumption:b}, \ref{assumption:c},
    \ref{assumption:Bn_xi}, and \ref{assumption:u_xi},
    \begin{align*}
        \vep_{N,T} & =  O_{\rm P}\left(C_{N,T}^{-\frac{1+\alpha}{2}}\right),
    \end{align*}
    where we recall $C_{N,T} \defeq \min \{\sqrt{N},\sqrt{T} \}$.
\end{Theorem}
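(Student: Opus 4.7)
The plan is to exploit the closed-form expression for the estimator given by Proposition~\ref{prop:computing_loadings_and_factors} in order to write $\barB{r}_N - \bB_N \bR^\ast_{NT}$, for a specific choice of $\bR^\ast_{NT}$, in a form involving only the idiosyncratic component and the estimated factors. The resulting rate will then follow by combining the row-space approximation from Theorem~\ref{theorem:factors_average_bound} with the cross-covariance control provided by Assumption~\ref{assumption:u_xi}.

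Set $\bX \defeq [\bx_1 | \cdots | \bx_T]$ and $\b\Xi_N \defeq [\bxi_1 | \cdots | \bxi_T]$, viewed as linear operators $\bbR^T \to \bH_N$, and let $\hat{\b\Lambda} \defeq \mathrm{diag}(\hat\lambda_1, \ldots, \hat\lambda_r)$. From Proposition~\ref{prop:computing_loadings_and_factors} one checks that $\tildeB{r}_N = T^{-1} \bX (\tildebU{r}_T)^\tp$, hence $\barB{r}_N = T^{-1} \bX (\tildebU{r}_T)^\tp \hat{\b\Lambda}^{-1/2}$. Substituting $\bX = \bB_N \bU_T + \b\Xi_N$ and defining $\bR^\ast_{NT} \defeq T^{-1} \bU_T (\tildebU{r}_T)^\tp \hat{\b\Lambda}^{-1/2} \in \bbR^{r\times r}$ yields the basic identity
\[
    \barB{r}_N - \bB_N \bR^\ast_{NT} = T^{-1} \b\Xi_N (\tildebU{r}_T)^\tp \hat{\b\Lambda}^{-1/2}.
\]
Since $\vep_{N,T}$ is a minimum over $\bR \in \bbR^{r\times r}$, taking $\bR = \bR^\ast_{NT}$ and applying the Schatten H\"older inequality $\Fnorm{AB} \leq \Fnorm{A}\opnorm{B}$ give $N\vep_{N,T}^2 \leq T^{-2}\,\opnorm{\hat{\b\Lambda}^{-1}}\,\Fnorm{\b\Xi_N (\tildebU{r}_T)^\tp}^2$.

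Next, let $\bR_{NT}$ achieve the minimum in the definition of $\delta_{N,T}$ and write $\tildebU{r}_T = \bR_{NT}\bU_T + \bD_{NT}$, so that $\Fnorm{\bD_{NT}}^2 = T\delta_{N,T}^2 = O_P(TC_{N,T}^{-2})$ by Theorem~\ref{theorem:factors_average_bound}. The natural split
\[
    \Fnorm{\b\Xi_N (\tildebU{r}_T)^\tp}^2 \leq 2\,\opnorm{\bR_{NT}}^2 \Fnorm{\b\Xi_N \bU_T^\tp}^2 + 2\,\opnorm{\b\Xi_N}^2 \Fnorm{\bD_{NT}}^2
\]
reduces the problem to bounding each factor. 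Rewriting $\Fnorm{\b\Xi_N \bU_T^\tp}^2 = T^2 \Fnorm{T^{-1} \sum_t \bu_t \tensor \bxi_t}^2$, Assumption~\ref{assumption:u_xi} directly gives the main term $O_P(T^2 N C_{N,T}^{-(1+\alpha)})$. The auxiliary facts $\opnorm{\hat{\b\Lambda}^{-1}} = O_P(1)$ and $\opnorm{\bR_{NT}} = O_P(1)$ follow from Assumptions~\ref{assumption:a}--\ref{assumption:b} (via the supplementary lemma on sample eigenvalues) and from combining the positive-definiteness of $\b\Sigma_\bu$ with $\Fnorm{\bR_{NT}\bU_T} \leq \Fnorm{\tildebU{r}_T} + \Fnorm{\bD_{NT}} = O_P(\sqrt T)$, respectively.

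The main potential obstacle is the control of $\opnorm{\b\Xi_N}^2$ in the general Hilbert-space setting, but this turns out not to be severe: $\opnorm{\b\Xi_N}^2 \leq \Fnorm{\b\Xi_N}^2 = \sum_t \hnorm{\bxi_t}^2$ has expectation $NT\,\nu_N(0) \leq M NT$ by Assumption~\ref{assumption:c}(C1), hence $\opnorm{\b\Xi_N}^2 = O_P(NT)$, which is sufficient for our purpose (a sharper $O_P(N+T)$ bound is also available via matrix concentration on the Gram matrix $(\sc{\bxi_s,\bxi_t})_{s,t}$ combining Assumption~\ref{assumption:c}(C2) with a Toeplitz-norm bound, but is not required for this rate). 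Assembling everything, the error piece contributes $O_P(NT) \cdot O_P(TC_{N,T}^{-2}) = O_P(NT^2 C_{N,T}^{-2})$, which after dividing by $NT^2$ is $O_P(C_{N,T}^{-2})$; since $\alpha \leq 1$ implies $C_{N,T}^{-2} \leq C_{N,T}^{-(1+\alpha)}$, the overall rate is $\vep_{N,T}^2 = O_P(C_{N,T}^{-(1+\alpha)})$, that is, $\vep_{N,T} = O_P(C_{N,T}^{-(1+\alpha)/2})$.
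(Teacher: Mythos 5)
Your proof is correct, and it takes a genuinely different route from the paper's. The paper works with the rotation $\tildeR^{-1}$, where $\tildeR = \hat\bLambda^{-1}\tildebU{r}_T\bU_T^\adjoint\bB_N^\adjoint\bB_N/(NT)$: starting from $\tildeB{r}_N = \operator{\bX_{NT}}(\tildebU{r}_T)^\adjoint/T$, it inserts and subtracts $\bB_N\tildeR^{-1}\tildebU{r}_T(\tildebU{r}_T)^\adjoint/T$ (using $\tildebU{r}_T(\tildebU{r}_T)^\adjoint/T = \bI_r$) to get a three-term decomposition whose pieces are then bounded by the $\tildeR$-version of Theorem~\ref{theorem:factors_average_bound}, Lemma~\ref{lma:Frobenius_Norm_of_xi}, and Assumption~\ref{assumption:u_xi}; this requires the invertibility and norm control for $\tildeR$ established in Lemma~\ref{lma:bound_singular_values_tildeR}. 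You instead pick the rotation $\bR^\ast_{NT} \defeq T^{-1}\bU_T(\tildebU{r}_T)^\tp\hat\bLambda^{-1/2}$, under which $\barB{r}_N - \bB_N\bR^\ast_{NT} = T^{-1}\operator{\bxi_{NT}}(\tildebU{r}_T)^\tp\hat\bLambda^{-1/2}$ is an exact single-term identity; this bypasses Lemma~\ref{lma:bound_singular_values_tildeR} entirely because nothing needs to be inverted. You then split $\tildebU{r}_T$ via Theorem~\ref{theorem:factors_average_bound}, which is the same control the paper uses but deployed at a different point in the argument. The only place you are looser than the paper is in bounding $\opnorm{\operator{\bxi_{NT}}}^2$ by the crude $\Fnorm{\operator{\bxi_{NT}}}^2 = O_{\rm P}(NT)$ rather than the paper's sharper $O_{\rm P}(NT/C_{N,T})$ from Lemma~\ref{lma:Frobenius_Norm_of_xi}; as you note, this does not affect the final rate because the factor cross-covariance term of Assumption~\ref{assumption:u_xi} is the bottleneck for $\alpha\le 1$. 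The paper's heavier machinery pays off elsewhere (the same $\tildeR$ is reused in the uniform-bound Theorems~\ref{thm:unif_bound_factors}--\ref{thm:uniform_bound_common_component}), but for this theorem in isolation your choice of $\bR^\ast_{NT}$ is the more economical one.
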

 The rate of convergence for the loadings thus  crucially depends on the value of~$\alpha \in~\![0,1]$ in
 Assumption~\ref{assumption:u_xi}. The larger $\alpha$ (in particular, the weaker the lagged dependencies in~$\{ \bu_t \tensor \bxi_t \}$), the better the rate.  Unless $\alpha = 1$, that rate is slower than for the estimation of the
factors. As in Theorem~\ref{theorem:factors_average_bound_up_to_sign},  it could be shown that, under additional identification
constraints, the loadings can be estimated consistently up to a
sign. Details are left to the reader. 

We can  turn now to the estimation of the common
component $\{\bchi_t\}$ itself. Recall the definition of $\hat \chi_{it}^{(r)}$ from \eqref{eq:hat_chi_it}.
Using Theorems~\ref{theorem:factors_average_bound} and
\ref{theorem:loadings_average_bound}, we obtain the following result, the proof of which is in Section~\ref{sec:proofs_average_bounds} of the \thesupplement.
\begin{Theorem} \label{theorem:common_component_average_bound}
    Under Assumptions~\ref{assumption:a}, \ref{assumption:b}, \ref{assumption:c},
    \ref{assumption:Bn_xi}, and \ref{assumption:u_xi}, 
        \[
            \frac{1}{{NT}}{ \sum_{i=1}^N \sum_{t=1}^T \hnorm{ \chi_{it} - \hat \chi_{it}^{(r)} }_{H_i}^2 } =
            O_{\rm P}\left(C_{N,T}^{-(1+\alpha)} \right),\qquad \alpha \in [0,1].
    \]
\end{Theorem}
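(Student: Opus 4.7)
The strategy is to recast the target quantity in matrix form via the identity $\tildeB{r}_N\tildebU{r}_T = \bX\bP_{\widetilde{\bU}_T}$, where $\bX \defeq (\bx_1, \ldots, \bx_T)$ is regarded as the operator $\bbR^T \to \bH_N$ sending $\be_t$ to $\bx_t$, and $\bP_{\widetilde{\bU}_T} \defeq T^{-1}(\tildebU{r}_T)^{\!\tp}\tildebU{r}_T$ is the orthogonal projection in $\bbR^T$ onto the row span of $\tildebU{r}_T$. This identity follows from Proposition~\ref{prop:computing_loadings_and_factors} together with the normalization $\tildebU{r}_T(\tildebU{r}_T)^{\!\tp}/T = \bI_r$. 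Writing $\b\Xi \defeq (\bxi_1, \ldots, \bxi_T)$ for the idiosyncratic analogue and substituting $\bX = \bB_N\bU_T + \b\Xi$ gives
\[
    \bX\bP_{\widetilde{\bU}_T} - \bB_N\bU_T = \bB_N\bU_T(\bP_{\widetilde{\bU}_T} - \bI) + \b\Xi\bP_{\widetilde{\bU}_T},
\]
so, using $(a+b)^2 \leq 2(a^2+b^2)$, it suffices to bound $\Fnorm{\bB_N\bU_T(\bI - \bP_{\widetilde{\bU}_T})}^2$ and $\Fnorm{\b\Xi\bP_{\widetilde{\bU}_T}}^2$, each divided by $NT$.

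For the first term, Assumption~\ref{assumption:b} gives $\opnorm{\bB_N}^2 = O(N)$ (via $\bB_N^\adjoint\bB_N/N \to \b\Sigma_\bB$ positive definite), and a standard trace manipulation---expanding $\bU_T^{\!\tp}\bU_T$ in its spectral basis and bounding the resulting trace by the top eigenvalue of $\bU_T\bU_T^{\!\tp}/T$---yields $\Fnorm{\bU_T(\bI - \bP_{\widetilde{\bU}_T})}^2 \leq T\,\opnorm{\bU_T\bU_T^{\!\tp}/T}\,\delta_{N,T}^2$. Assumption~\ref{assumption:a} keeps the spectral factor $O_{\rm P}(1)$ and Theorem~\ref{theorem:factors_average_bound} yields $\delta_{N,T} = O_{\rm P}(C_{N,T}^{-1})$, so this first contribution is $O_{\rm P}(C_{N,T}^{-2})$, absorbed into $O_{\rm P}(C_{N,T}^{-(1+\alpha)})$ for $\alpha \in [0,1]$. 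For the second term I split $\Fnorm{\b\Xi\bP_{\widetilde{\bU}_T}}^2 \leq 2\Fnorm{\b\Xi\bP_{\bU_T}}^2 + 2\Fnorm{\b\Xi(\bP_{\widetilde{\bU}_T} - \bP_{\bU_T})}^2$, with $\bP_{\bU_T}$ the orthogonal projection onto the row span of $\bU_T$. The residual piece is bounded by $\Fnorm{\b\Xi}^2\opnorm{\bP_{\widetilde{\bU}_T} - \bP_{\bU_T}}^2$; Assumption~\ref{assumption:c} gives $\Fnorm{\b\Xi}^2 = \sum_t\hnorm{\bxi_t}^2 = O_{\rm P}(NT)$ (since $\eee{\hnorm{\bxi_t}^2}/N = \nu_N(0) = O(1)$), while $\opnorm{\bP_{\widetilde{\bU}_T} - \bP_{\bU_T}}^2 \leq \Fnorm{\bP_{\widetilde{\bU}_T} - \bP_{\bU_T}}^2 = 2\delta_{N,T}^2 = O_{\rm P}(C_{N,T}^{-2})$, so this residual contributes $O_{\rm P}(C_{N,T}^{-2})$ after division by $NT$.

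The delicate piece is $\Fnorm{\b\Xi\bP_{\bU_T}}^2$. Using $\bP_{\bU_T} = \bU_T^{\!\tp}(\bU_T\bU_T^{\!\tp})^{-1}\bU_T$ together with the PSD-trace inequality $\trace(AB) \leq \opnorm{B}\,\trace(A)$ yields $\Fnorm{\b\Xi\bP_{\bU_T}}^2 \leq T^{-1}\,\opnorm{(\bU_T\bU_T^{\!\tp}/T)^{-1}}\,\Fnorm{\b\Xi\bU_T^{\!\tp}}^2$, with the spectral factor $O_{\rm P}(1)$ by Assumption~\ref{assumption:a}. The key identification is $\b\Xi\bU_T^{\!\tp} = \bigl(\sum_t\bu_t\tensor\bxi_t\bigr)^{\!\adjoint}$, which by Hilbert--Schmidt adjoint-invariance gives $\Fnorm{\b\Xi\bU_T^{\!\tp}}^2 = T^2\Fnorm{T^{-1}\sum_t\bu_t\tensor\bxi_t}^2 = O_{\rm P}(T^2 N C_{N,T}^{-(1+\alpha)})$ by Assumption~\ref{assumption:u_xi}. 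Thus $\Fnorm{\b\Xi\bP_{\bU_T}}^2/(NT) = O_{\rm P}(C_{N,T}^{-(1+\alpha)})$, the dominant contribution, which yields the claimed rate. The main obstacle is precisely this adjoint identification, since Assumption~\ref{assumption:u_xi} controls the ``factor $\times$ idiosyncratic'' cross-covariance sitting on the opposite side of the adjoint from $\b\Xi\bU_T^{\!\tp}$; the entire decomposition is arranged so that no sharper control of $\opnorm{\b\Xi}$ than the crude Frobenius bound $O_{\rm P}(NT)$ is needed.
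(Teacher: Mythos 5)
Your proof is correct, and it takes a genuinely different route from the paper's. The paper writes the error as a telescoping sum
\[
  \operator{\hat\bchi_{NT}} - \bB_N\bU_T = (\tildeB{r}_N - \bB_N\tildeR^{-1})\,\tildebU{r}_T + \bB_N\tildeR^{-1}(\tildebU{r}_T - \tildeR\bU_T),
\]
with $\tildeR$ defined in \eqref{eq:defn_tildeR_paper}, and then reads off the rate directly from Theorem~\ref{theorem:loadings_average_bound} (whose proof is where Assumption~\ref{assumption:u_xi} actually enters), Theorem~\ref{theorem:factors_average_bound}, and Lemma~\ref{lma:bound_singular_values_tildeR}; the argument is essentially one line modulo those three references. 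You instead exploit the exact PCA identity $\tildeB{r}_N\tildebU{r}_T = \operator{\bX_{NT}}\bP_{\tildebU{r}_T}$ and split into the factor-space misalignment $\bB_N\bU_T(\bI-\bP_{\tildebU{r}_T})$ and the noise leakage $\operator{\bxi_{NT}}\bP_{\tildebU{r}_T}$, further peeling the latter into a $\bP_{\bU_T}$ piece, which is where Assumption~\ref{assumption:u_xi} directly produces the $C_{N,T}^{-(1+\alpha)}$ rate, and a residual controlled by $\Fnorm{\bP_{\tildebU{r}_T}-\bP_{\bU_T}}^2 = 2\delta_{N,T}^2$. Your intermediate claims check out: the factor-space piece does satisfy $\Fnorm{\bU_T(\bI-\bP_{\tildebU{r}_T})}^2 \le T\opnorm{\bU_T\bU_T^\tp/T}\,\delta_{N,T}^2$ because $\bU_T^\tp\bU_T$ is supported on $\image(\bP_{\bU_T})$ and $\Fnorm{\bP_{\bU_T}(\bI-\bP_{\tildebU{r}_T})}^2 = r - \trace(\bP_{\bU_T}\bP_{\tildebU{r}_T}) = \delta_{N,T}^2$; the identity $\sum_t\bu_t\tensor\bxi_t = \bU_T\operator{\bxi_{NT}}^\adjoint$ is the same one used in the paper's Lemma~\ref{lma:Frobenius_Norm_of_xi}, and the adjoint-invariance of the Hilbert--Schmidt norm resolves the ``wrong side'' concern you flag. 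What your route buys is independence from Theorem~\ref{theorem:loadings_average_bound} and from the construction of $\tildeR$: everything is re-derived from the factors' consistency rate $\delta_{N,T}$, Assumptions~\ref{assumption:a}--\ref{assumption:c}, and a single direct application of Assumption~\ref{assumption:u_xi}, making it more transparent where the $\alpha$-dependence comes from. What the paper's route buys is brevity, by delegating the Assumption~\ref{assumption:u_xi} bookkeeping to the loadings theorem, which has to be proven anyway.
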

The  ${NT}$ renormalization is used because $\sum_{i=1}^N  \sum_{t=1}^T \hnorm{\chi_{it}}_{H_i}^2$ is of order ${NT}$. 
Again, the rate of convergence depends on $\alpha$, which quantifies the strength of
lagged dependencies in~$\{ \bu_t \tensor \bxi_t \}$.  \bigskip

\subsection{Uniform Error Bounds}%
\label{sub:uniform_error_bounds}

In this section, we provide uniform error bounds for the estimators of factors, in the spirit of \citet[][Theorem~4 and Corollary~1]{fan:2013:jrssb-discussion}. The proofs of the results are in Section~\ref{sec:proofs_uniform_bounds} of the \thesupplement. Our first result gives uniform bounds for the estimated factors. For this, we need the following regularity assumptions.
\begin{customass}{G($\kappa$)} \label{assumption:uniform_bound_factors}
    For some integer $\kappa \geq 1$, there exists $M_2 < \infty$ such that
    \[\max\left(
        \ee \abs{ \sqrt{N}\left(  N^{-1}  \sc{\bxi_{t}, \bxi_{s}}   - \nu_N(t-s) \right) }^{2 \kappa},\ \ee \rpnorm{N^{-1/2} \bB_N^\adjoint \bxi_t}^{2 \kappa},\  \ee \rpnorm{\bu_t}^{2 \kappa}
        \right) < M_2
    \]
    for all $N,\, t,\, s \geq 1$. 
\end{customass}
Let 
\begin{equation}
    \label{eq:defn_tildeR_paper}
    \tilde \bR \defeq \hat \bLambda^{-1} \tildebU{r}_T \bU_T^\adjoint \bB_N^\adjoint \bB_N/(NT) \in \bbR^{r \times r},
\end{equation}
where $\hat \bLambda \in \bbR^{r \times r}$ is the diagonal   matrix with diagonal elements~$\hat \lambda_1,\ldots, \hat \lambda_r$.
\begin{Theorem}
    \label{thm:unif_bound_factors}
    Under Assumptions~\ref{assumption:a}, \ref{assumption:b}, \ref{assumption:c}, \ref{assumption:Bn_xi}, and~\ref{assumption:uniform_bound_factors}, 
    \[
        \max_{t=1,\ldots, T} \rpnorm{ \tilde \bu_t - \tilde \bR \bu_t } = O_{\rm P}\left( \max\left\{ \frac{1}{\sqrt{T}}, \frac{T^{1/(2 \kappa)}}{\sqrt{N}} \right\}\right). 
    \]
\end{Theorem}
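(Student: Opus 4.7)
The proof follows the standard high-dimensional factor-model strategy, adapted to the functional setting. The starting point is the eigenvalue identity obeyed by $\tildebU{r}_T$: since the rows of $\tildebU{r}_T$ are, up to rescaling, the top-$r$ eigenvectors of the Gram matrix $\bF$ with entries $\bF_{st}=\sc{\bx_s,\bx_t}/N$, writing this identity out component by component yields, for every $t\in\{1,\ldots,T\}$,
\[
    \hat\bLambda\,\tilde\bu_t \;=\; \frac{1}{NT}\sum_{s=1}^{T}\tilde\bu_s\,\sc{\bx_s,\bx_t}.
\]
Substituting $\bx_s=\bB_N\bu_s+\bxi_s$ splits the inner product into four pieces; the pure-factor piece $\bu_s^{\tp}\bB_N^{\adjoint}\bB_N\bu_t$ reproduces exactly $\hat\bLambda\,\tilde\bR\,\bu_t$ by the definition~\eqref{eq:defn_tildeR_paper} of~$\tilde\bR$. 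Rearranging and multiplying by $\hat\bLambda^{-1}$ gives the decomposition
\[
    \tilde\bu_t-\tilde\bR\,\bu_t \;=\; \hat\bLambda^{-1}\bigl(\mathrm{I}_t+\mathrm{II}_t+\mathrm{III}_t+\mathrm{IV}_t\bigr),
\]
with $\mathrm{I}_t\defeq T^{-1}\sum_s\tilde\bu_s\,\nu_N(s-t)$, $\mathrm{II}_t\defeq T^{-1}\sum_s\tilde\bu_s\,\zeta_{st}$, $\mathrm{III}_t\defeq\bigl(T^{-1}\sum_s\tilde\bu_s\bu_s^{\tp}\bigr)\bB_N^{\adjoint}\bxi_t/N$ and $\mathrm{IV}_t\defeq\bigl(T^{-1}\sum_s\tilde\bu_s(\bB_N^{\adjoint}\bxi_s)^{\tp}\bigr)\bu_t/N$, where $\zeta_{st}\defeq\sc{\bxi_s,\bxi_t}/N-\nu_N(s-t)$.

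Next I would bound each of the four summands in uniform ($\max_t$) norm. Using $\Fnorm{\tildebU{r}_T}^2=rT$, Cauchy--Schwarz and Assumption~\ref{assumption:c}(C1) give $\max_t\rpnorm{\mathrm{I}_t}=O_{\rm P}(T^{-1/2})$. For $\mathrm{II}_t$, Cauchy--Schwarz yields $\rpnorm{\mathrm{II}_t}^2\leq r\cdot T^{-1}\sum_s\zeta_{st}^2$; Assumption~\ref{assumption:uniform_bound_factors} combined with Jensen's inequality produces $\ee\bigl(T^{-1}\sum_s\zeta_{st}^2\bigr)^{\kappa}\leq M_2/N^{\kappa}$, so Markov and a union bound over~$t$ deliver $\max_t\rpnorm{\mathrm{II}_t}=O_{\rm P}(T^{1/(2\kappa)}/\sqrt{N})$. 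For $\mathrm{III}_t$ and $\mathrm{IV}_t$, the same Markov/union-bound device applied to the $2\kappa$-th moments in Assumption~\ref{assumption:uniform_bound_factors} gives $\max_t\rpnorm{\bu_t}=O_{\rm P}(T^{1/(2\kappa)})$ and $\max_t\rpnorm{\bB_N^{\adjoint}\bxi_t}/\sqrt{N}=O_{\rm P}(T^{1/(2\kappa)})$, while the matrix factors $T^{-1}\sum_s\tilde\bu_s\bu_s^{\tp}$ and $T^{-1}\sum_s\tilde\bu_s(\bB_N^{\adjoint}\bxi_s/\sqrt{N})^{\tp}$ are both $O_{\rm P}(1)$ in operator norm (the latter via another Cauchy--Schwarz and the uniform second-moment bound on $\rpnorm{\bB_N^{\adjoint}\bxi_s/\sqrt{N}}^2$ furnished by Assumption~\ref{assumption:uniform_bound_factors}). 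Thus $\max_t(\rpnorm{\mathrm{III}_t}+\rpnorm{\mathrm{IV}_t})=O_{\rm P}(T^{1/(2\kappa)}/\sqrt{N})$. Since $\opnorm{\hat\bLambda^{-1}}=O_{\rm P}(1)$ (the top-$r$ eigenvalues of~$\bF$ are separated from zero by the divergence of the corresponding population eigenvalues forced by Assumptions~\ref{assumption:a}--\ref{assumption:b}; cf.~Lemma~\ref{lma:largest_sample_eigenvalue_bounded}), combining the four estimates delivers the stated rate.

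\textbf{Main obstacle.} The delicate ingredient is the uniform control of $\mathrm{II}_t$: turning a $\kappa$-th moment bound on $T^{-1}\sum_s\zeta_{st}^2$ into a probability bound uniform over $t\in\{1,\ldots,T\}$ is precisely why Assumption~\ref{assumption:uniform_bound_factors} upgrades the second-moment bound of Assumption~\ref{assumption:c}(C2) to a $2\kappa$-th moment; the very same device governs $\max_t\rpnorm{\bu_t}$ and $\max_t\rpnorm{\bB_N^{\adjoint}\bxi_t}/\sqrt{N}$, and is what produces the~$T^{1/(2\kappa)}/\sqrt{N}$ term in the rate (one pays a $T^{1/(2\kappa)}$ factor for taking the maximum over~$T$ items). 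A secondary difficulty is establishing $\opnorm{\hat\bLambda^{-1}}=O_{\rm P}(1)$ in the general functional/mixed setting, which is addressed by an operator-perturbation argument comparing the top-$r$ sample eigenvalues of~$\bF$ with their (diverging) population counterparts.
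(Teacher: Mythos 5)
Your proof is correct and takes essentially the same route as the paper's: the identity $\hat\bLambda\,\tilde\bu_t=(NT)^{-1}\sum_s\tilde\bu_s\sc{\bx_s,\bx_t}$ is precisely the column-wise form of the decomposition $\hatbU{r}_T-\bQ_r\bU_T$ used in the proof of Theorem~\ref{thm:consistency_hatbu}, your four pieces $\mathrm{I}_t,\ldots,\mathrm{IV}_t$ correspond exactly to the paper's three summands (you split the idiosyncratic cross-term into its mean $\nu_N$ and centred parts up front, whereas the paper performs that split inside Lemma~\ref{lma:max_bxiNT_bxi_t}), and your Markov/union-bound device upgrading a $\kappa$-th moment bound to a uniform $O_{\rm P}(T^{1/(2\kappa)})$ control is the paper's Lemma~\ref{lma:sup_of_time_series}, applied there together with Lemmas~\ref{lma:frobenius_Norm_of_Bn_xi} and~\ref{lma:r_th_sample_eigenvalue_away_from_zero}. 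Only the bookkeeping of the $\nu_N$ term and the ordering of the Cauchy--Schwarz steps differ; the decomposition, the key lemmas, and the source of each of the two rate terms are identical.
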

We see that if $N,T \to \infty$ with $T = o(N^ \kappa)$, the estimator of the factors is uniformly consistent. 
For $ \kappa=2$ and the  special case $\bH_N = \bbR^N$, we recover exactly the same rate of convergence as \citet[][Theorem~4]{fan:2013:jrssb-discussion}. Our Assumption~\ref{assumption:uniform_bound_factors}, for~$\kappa=2$, is similar to their Assumptions~4. Some of our other assumptions, however, are weaker: we do not need   lower bounds on the variance of the idiosyncratics, do not assume that the factors and idiosyncratics are sub-Gaussian, but impose instead the weaker assumption $\ee \rpnorm{\bu_t}^4 < \infty$). We also do not need to assume exponential $\alpha$-mixing for the factors and idiosyncratics, but assume instead that $T^{-1} \sum_{t=1}^T \bu_t \bu_t^\tp \convp \b \Sigma_\bu$. 
Notice moreover that our bound is more general than the one in \citet{fan:2013:jrssb-discussion}, since it shows the dependency on the rate at which $T$ can diverge with respect to $N$ as a function of the number of moments in Assumption~\ref{assumption:uniform_bound_factors}.

We also provide some uniform error bounds on the estimated loadings and common component in Section~\ref{sec:uniform_bounds_loadings_commonComponent} of the \thesupplement.

\section{Numerical Experiments}\label{s:simu}

In this section, we assess the finite-sample performance of our estimators on simulated panels $\{ x_{it}: 1 \leq i \leq N, 1 \leq t \leq T\}$.

Panels of size $(N=100)\times (T=200)$   were generated as follows from a functional factor
model with three factors. All functional time series in the panel   are represented in an
orthonormal basis of dimension 7, with basis functions  $ \vfi_{1},   \ldots,  \vfi_{7}$;
the particular choice of the orthonormal functions $\{ \vfi_j \}$ has no influence on the results.
Each of the three factors is independently generated from a Gaussian AR(1) process
with coefficient~$a_{k}$ and variance~$1- a_{k}^{2}$,~$k=1,2,3$. These coefficients
are picked at random from a uniform on $(-1,1)$ at the beginning of the simulations
and kept fixed across the 500 replications. The $a_k$s are then rescaled so
that the operator norm of the companion matrix of the three-dimensional VAR process
$\bu_{t}$ is $0.8$.

The factor loading operator $\bB_N\! \in\! \bounded{\bbR^3\!, \bH_N}$ was chosen such that $(u_{1t}, u_{2t}, u_{3t})^\tp\!\! \in~\!\!\bbR^3$ is mapped to the vector in $\bH_N$ whose $i$-th entry is $\sum_{l=1}^3 u_{lt} \tilde b_{il} \vfi_l$ where~$\tilde b_{il} \in\! \bbR$.
This implies that each common component $\chi_{it}$ always belongs to the space spanned by 
the first three basis functions~$\vfi_l$, $l=1,2,3$.
The coefficient matrix $\tilde{\bf B} = (\tilde b_{il}) \in \bbR^{N \times 3}$ 
uniquely defines the loading operator $\bB_N$.
Those $3 N$ coefficients were generated as follows:  
first pick a value at random
from a uniform over~$[0,1]^{3N}$,  then rescale each fixed-$i$ triple  to have unit Euclidean
norm. This rescaling implies that the total variance of each common component (for
each $i$) is equal to 1;  $\tilde{\bf B}$ is kept fixed across replications.

The idiosyncratic components belong to the space spanned by $\vfi_1,\ldots,\vfi_7$; their
coeffi\-cients~$(\sc{\xi_{it},  \vfi_{j}})_j$ were generated from  
\[
   (\sc{\xi_{it},  \vfi_{1}},\ldots,\sc{\xi_{it},  \vfi_{7}} )^\tp
   \stackrel{\text{i.i.d.}}{\sim} \mathcal{N}(\b 0, c \cdot \b E/\trace(\b E)), \; i=1,\ldots,N, \;
    t=1,\ldots,T.
\]
($\b E$ a $7\times 7$ real matrix). Since the total variance of each common component  is one, the constant $c $ 
is the
relative amount of idiosyncratic noise:  $c=1$ means equal common and idiosyncratic variances, while  
 larger values of $c$ makes estimation of factors, loadings, and
common components more difficult. 
We considered  four Data Generating Processes (DGPs): 
\begin{enumerate}
    \item[]\texttt{DGP1}: $c=1$, $\b E = \mathrm{diag}(1, 2^{-2}, 3^{-2},\ldots,
        7^{-2})$,
    \item[]\texttt{DGP2}: $c=1$, $\b E = \mathrm{diag}(7^{-2}, 6^{-2}, \ldots, 1)$,
    \item[]\texttt{DGP3}: $c=7$, $\b E = \mathrm{diag}(1, 2^{-2}, 3^{-2},\ldots,
        7^{-2})$,
   \item[]\texttt{DGP4}: $c=7$, $\b E = \mathrm{diag}(7^{-2}, 6^{-2}, \ldots, 1)$.
\end{enumerate}

In \texttt{DGP1} and  \texttt{DGP3}, we have chosen to  align 
  the largest idiosyncratic variances with the span of the factor loadings (spanned by~$\{\varphi_1,\varphi_2,
\varphi_3\}$).  In this case,
$\tildeB{3}_N \tildebU{3}_T$
is picking  the three 
common shocks, but also the idiosyncratic components (which have large
variances). On the contrary, in \texttt{DGP2}, \texttt{DGP4}, we have chosen the directions of  
largest idiosyncratic variance to be orthogonal to the span of the factor loadings.

The variance of
the idiosyncratic components (for each $i$) is equal to $1$ for \texttt{DGP1} and  \texttt{DGP2},
  equal to $7$ for \texttt{DGP3} and  \texttt{DGP4}; the latter thus  are more
difficult. 
In particular, while one might feel that performing a \KL truncation for each
separate FTS (each~$i$) is a good idea, this  actually would perform quite
poorly in \texttt{DGP4}, where the first (population) eigenfunctions (for each
$i$) are \emph{exactly} orthogonal to the common component. This phenomenon is corroborated by our application to forecasting, see Section~\ref{sec:forecasting_mortality_curves}.

\subsection{Estimation of the factors, loadings, and common component}%
\label{sub:estimation_of_the_factors_loadings_and_common_component}

We only  consider here the average bounds  of Section~\ref{sub:average_error_bounds}.  
For $N=10,25,50, 100$ and~$T=50, 100, 200$, we have considered the subpanels of the first $N$ and $T$ observation from the ``large'' $100\times 200$ panel. 
For each replication and each choice of $N$ and $T$, we estimated the factors and
factor loadings using the least squares approach described in Proposition~\ref{prop:computing_loadings_and_factors} from  the $N \times T$  panel, assuming that the number of factors is known to be $3$. We have then computed the approximation error $\delta_{N,T}^2$ for the factors,~$\vep_{N,T}^2$ for the 
loadings, and $\phi_{N,T} $ for the common component  (see Section~\ref{sub:average_error_bounds}), with 
\begin{equation}
    \label{eq:phiNT}
    \phi_{N,T} \defeq (NT)^{-1} \sum_{i=1}^N \sum_{t=1}^T \hnorm{ \chi_{it} - \hat{ \chi}^{(3)}_{it}}_{H_i}^2.
\end{equation}

The results, averaged over the 500 replications, are shown in
Figures~\ref{fig:DGP1}--\ref{fig:DGP2} and
Figures~\ref{fig:DGP3}--\ref{fig:DGP4} below, for \texttt{DGP1}, \texttt{DGP2},
\texttt{DGP3}, and \texttt{DGP4}, respectively.  A careful inspection of these
figures allows one to infer whether the asymptotic regime predicted by the
theoretical results (see Section~\ref{sub:average_error_bounds}) has been
reached.  We will give a detailed description of this for \texttt{DGP1},
Figure~\ref{fig:DGP1}.

Looking at the left plot in Figure~\ref{fig:DGP1}, 
the local slope $a$ of the curve $\log_2(N) \mapsto
\log_2\delta_{N,T}^2$ for fixed $T$ tells us that the error rate
is $N^{a}$, for fixed $T$. Here, $a \approx -1$; hence,  the error rates for
the factors is about~$N^{-1}$ for each  $T$.
For $N$ fixed, the spacings $b$ between $\log_2\delta_{N,T}^2$ from~$T=~\!50$ to~$100$ indicate that the error rate is $T^{b}$ for $N$ fixed. Since $0 \leq - b <
0.25$, the error rate for fixed $N$ is less than $T^{-0.25}$.
 The simulation results    give
us insight into which of the terms~$T^{-1}$ or~$N^{-1}$ is dominant, and
for the factors in \texttt{DGP1}, the dominant term in $N^{-1}$ for~$T \in [50,
200]$.
We do expect to see an error rate  $T^{-1}$
for large fixed $N$ large, 
and simulations (with $N=1000$, not shown here) confirm that this is indeed the case.
The middle plot of Figure~\ref{fig:DGP1} shows the error rate for the loadings. Since the factors and the idiosyncratic component are independent in our simulations, we expect to have the same~$O_{\rm P}(\max(T^{-1}, N^{-1}))$ error rates as for the factors. For the larger values of $N$, it is clear that the dominant term is $T^{-1}$. Smaller values of $N$ actually exhibit a transition from a $N^{-1}$   to a~$T^{-1}$ regime: the spacings $b$ between the lines become more uniform and are  close to~$-1$ as~$N$ increases, and the slope $a$  decreases in magnitude as $N$ increases, and seems to converge to zero.  The right sub-figure shows the error rates for the common component, for which we expect, in this setting,  the same $O_{\rm P}(\max(T^{-1}, N^{-1}))$ error rates as for the loadings.  Inspection reveals similar effects as for the factor loadings: for~$T=200$, the error rate is close to $N^{-1}$ for small values of~$N$. For $N = 100$, it is almost $T^{-1}$ for small~$T$ values. Figure~\ref{fig:DGP2} (\texttt{DGP2}) can be interpreted in a similar fashion. 

\begin{figure}[htbp]
    \centering
    \begin{subfigure}[b]{1\textwidth}
        \includegraphics[width=\linewidth]{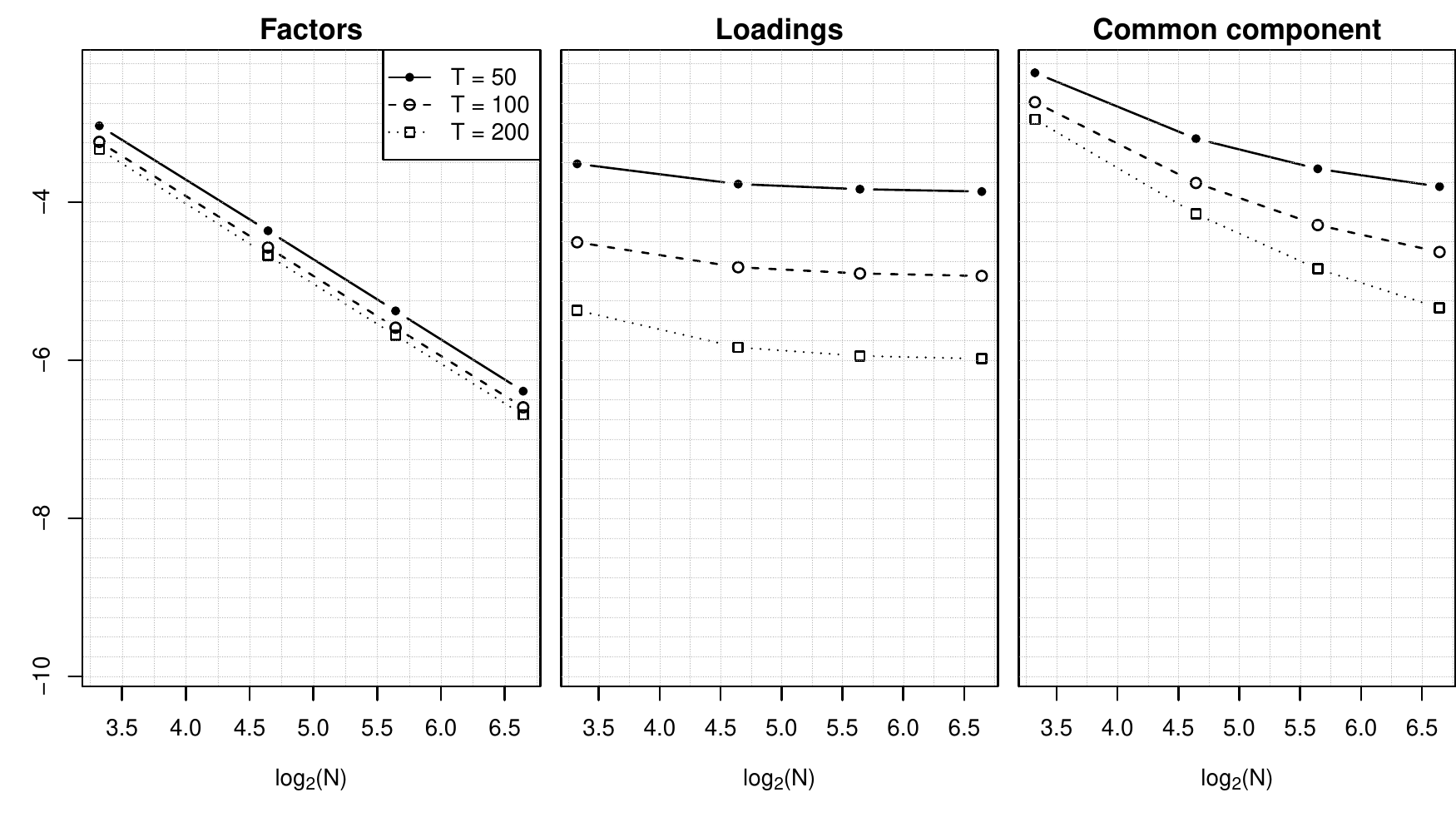}
        \caption{Simulation scenario \texttt{DGP1}.}
        \label{fig:DGP1}
    \end{subfigure}
    \vskip 11pt
    \begin{subfigure}[b]{1\textwidth}
        \includegraphics[width=\linewidth]{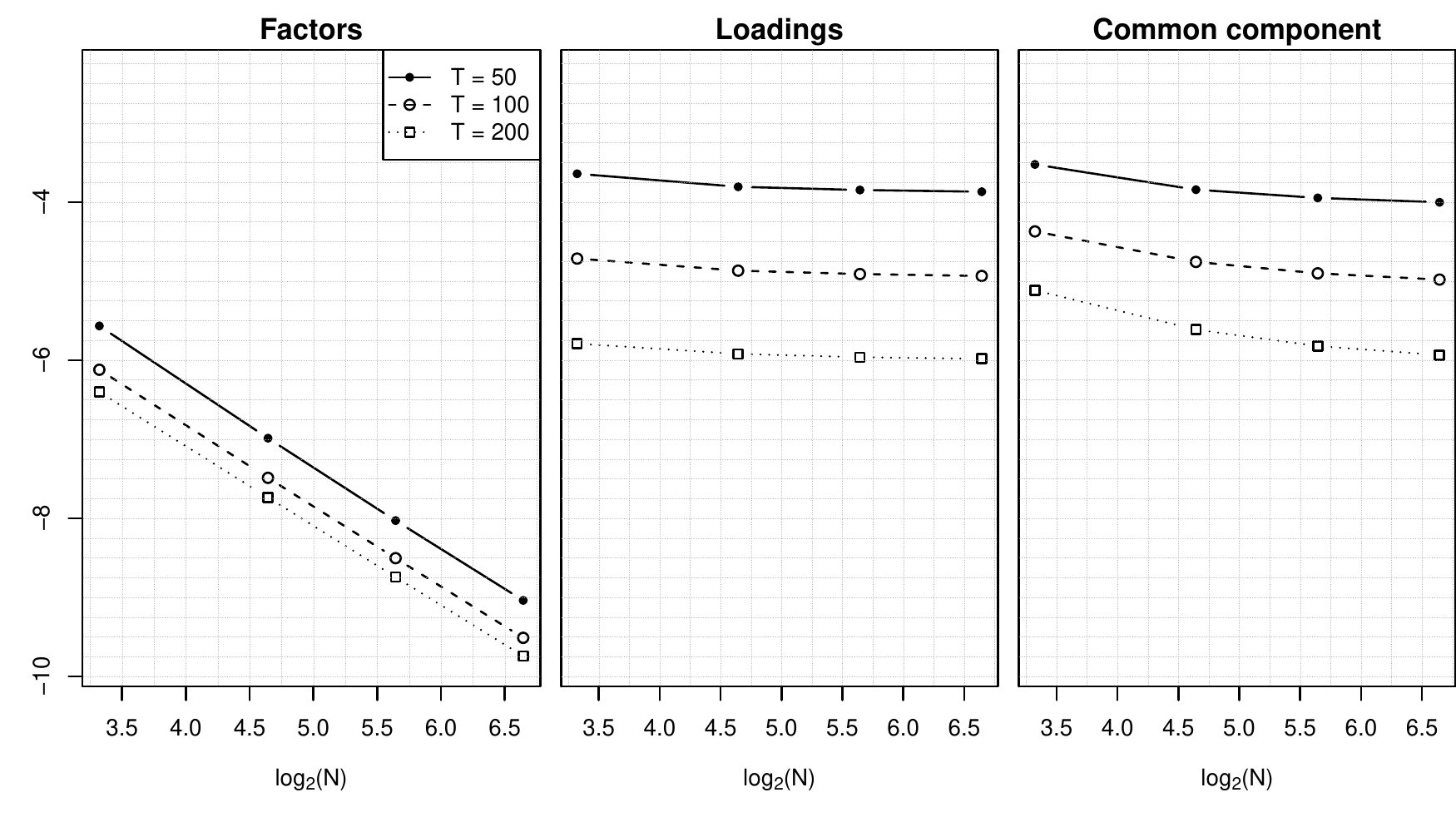}
        \caption{Simulation scenario \texttt{DGP2}.}
        \label{fig:DGP2}
    \end{subfigure}
    \caption{Estimations errors (in $\log_2$ scale) for \texttt{ DGP1 } (subfigure
        (a)) and \texttt{ DGP2 }  (subfigure (b)). For each subfigure, we provide the
        estimation error for the factors ($\log_2 \delta_{N,T}^2$, left), loadings
        ($\log_2 \vep_{N,T}^2$, middle), and common component ($\log_2 \phi_{N,T}$,
        right, $\phi_{N,T}$ defined in~\eqref{eq:phiNT}) as functions of $\log _2N$.
        The   scales on the  vertical axes are the same. Each curve corresponds to
        one value of $T \in \{50, 100, 200\}$, sampled for~$N \in \{10, 25, 50,
        100\}$.   
    }
    \label{fig:estimation-errors-DGP1-2. }
\end{figure}

%

\begin{figure}[htbp]
    \centering
    \begin{subfigure}[b]{1\textwidth}
        \includegraphics[width=\linewidth]{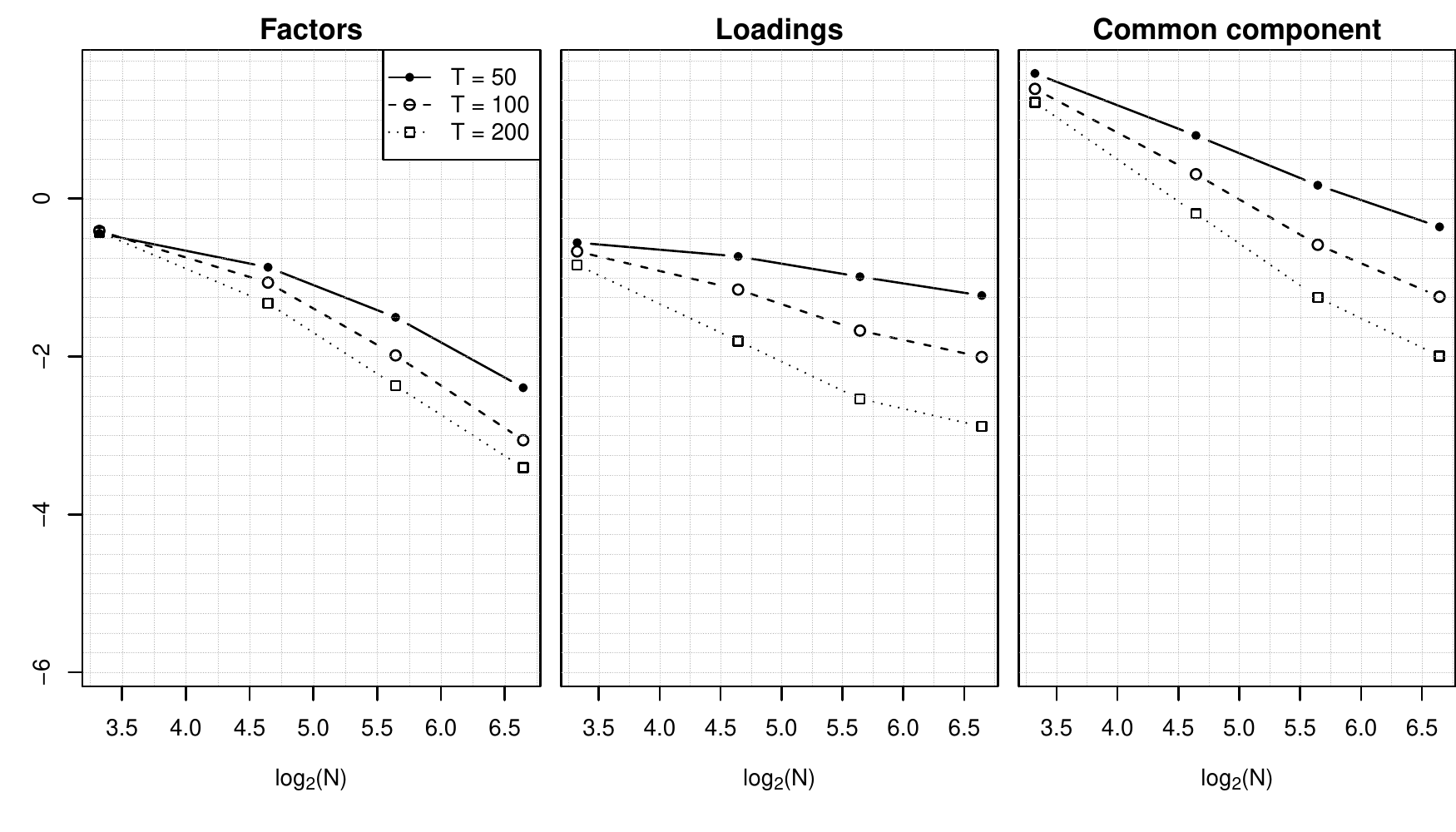}
        \caption{Simulation scenario \texttt{DGP3}.}
        \label{fig:DGP3}
    \end{subfigure}
    \vskip 11pt
    \begin{subfigure}[b]{1\textwidth}
        \includegraphics[width=\linewidth]{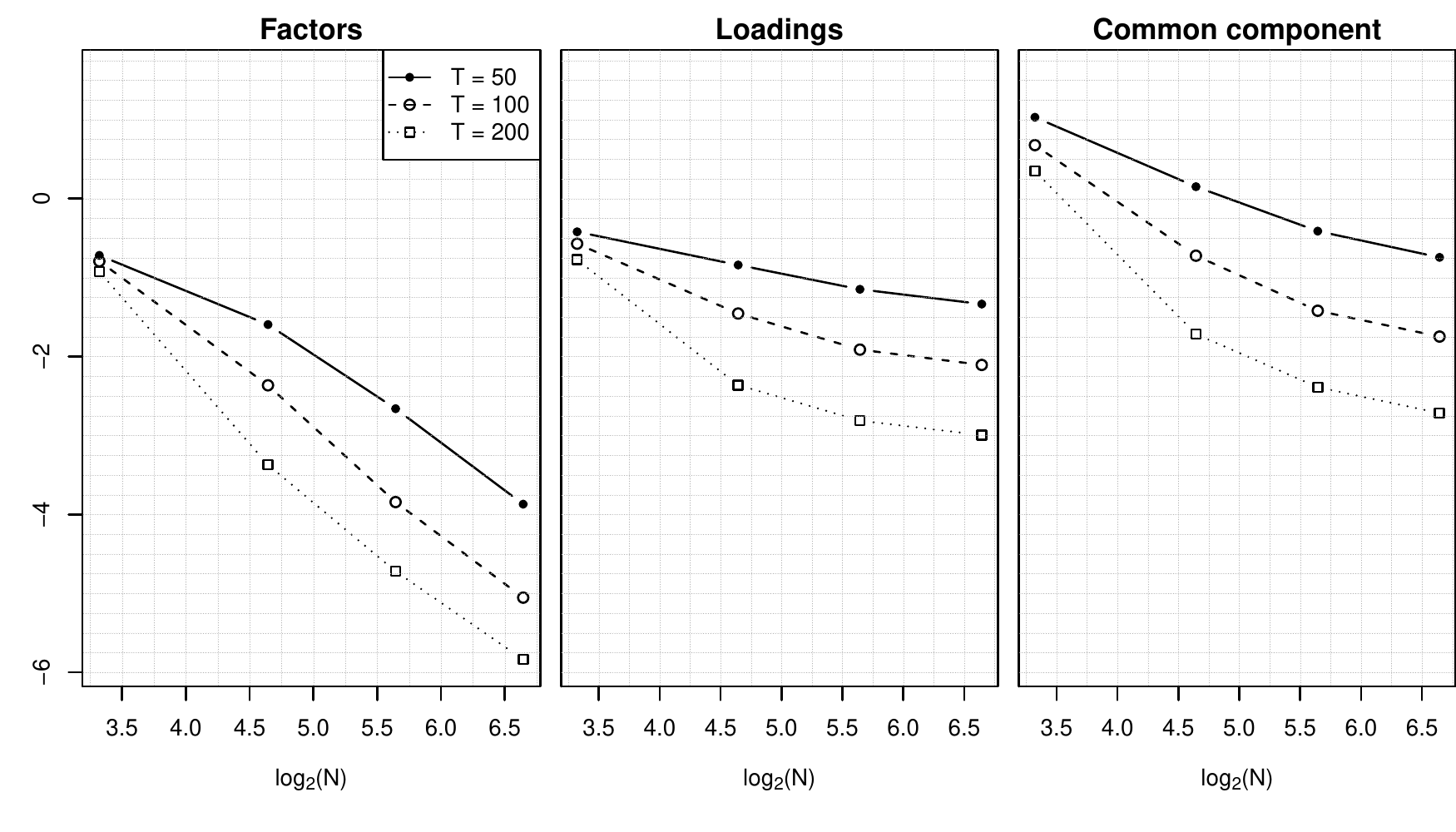}
        \caption{Simulation scenario \texttt{DGP4}.}
        \label{fig:DGP4}
    \end{subfigure}
    \caption{Estimations errors (in $\log_2$ scale) for \texttt{ DGP3 } (subfigure
        (a)) and \texttt{ DGP4 }  (subfigure (b)). For each subfigure, we have the
        estimation error for the factors ($\log_2 \delta_{N,T}^2$, left),
        loadings ($\log_2 \vep_{N,T}^2$, middle), and common component ($\log_2 \phi_{N,T}$,
        right, $\phi_{N,T}$ defined in~\eqref{eq:phiNT}) as functions of $\log _2N$. The   scales of the  vertical axes are the same. Each curve
        corresponds to one value of $T \in \{50, 100, 200\}$, sampled for  $N \in
    \{10, 25, 50, 100\}$. }
    \label{fig:estimation-errors-DGP3-4}
\end{figure}

Results for \texttt{DGP3} and  \texttt{DGP4} are shown in
Figure~\ref{fig:estimation-errors-DGP3-4}. A comparison between \texttt{DGP3}, \texttt{DGP4} and  \texttt{DGP1}, \texttt{DGP2}  is interesting because they only differ by  the scale of the idiosyncratic components. We see that the errors are much higher for \texttt{DGP3}, \texttt{DGP4} 
than for \texttt{DGP1}, \texttt{DGP2}, as expected. Notice that the dominant term for the
factors is no longer  of order $N^{-1}$ over all values of~$N,T$: it seems
to kick in for $N \in [25,100]$ in \texttt{DGP3}, \texttt{DGP4}, but looks slightly higher than $N^{-1}$ for $N \in [10, 25]$ and $T \in [100,200]$ in \texttt{DGP4} (noticeably so for~$T=200$).
A similar phenomenon occurs for the loadings and common component in \texttt{DGP4}, for $N \in [10,25]$
and~$T=200$. These rates do not contradict the theoretical results of
Section~\ref{sub:average_error_bounds}, which hold for~$N,T \to \infty$, so \texttt{DGP4}, in
particular, indicates that the values of $N$ considered there are too small for the
asymptotics to have kicked in, and prompts further theoretical investigations about the
estimation error rates  in finite samples.

\subsection{Estimation of the number of factors}
\label{sec:simulation_number_of_factors}

Following the results of Section~\ref{sub:inferring_the_number_of_factors},
we suggest a data-driven identification of the number  $r$ of factors. Recall the definition of ${\cal X}_{N,T}$ in \eqref{eq:calX_NT}.
The  criteria that we will use for identifying the number of factors are combining the information-theoretic criteria  described in Section~\ref{sub:inferring_the_number_of_factors} with the tuning and permutation-enhanced method of \citet{alessi:barigozzi:capasso:2009} (\texttt{ABC} below). That method considers  criteria 
of the form
\begin{equation}
    \label{eq:IC_defn_with_scaling}
    \IC(c,{\cal X}_{N,T},k) \defeq V(k, \tildebU{k}_T) + c\, k\, g(N,T),
\end{equation}
which is the same as \eqref{eq:IC_defn_theoretical}, but with a tuning $c > 0$ of the penalty
term. 
 Indeed, if a penalty function $g(N, T)$ satisfies the assumptions of
Theorem~\ref{thm:consistency_number_of_factors}, then $c\, g(N,T)$ also satisfies 
these assumptions. The choice of  the tuning parameter $c$ itself is data-driven, which  allows the 
information criterion to be calibrated for each dataset. 

 In the \texttt{ABC} methodology, one estimates $r$ for a grid  $C=\{c_{i}: \; i=1,\ldots,I\}$ of tuning parameter values. For every point in the grid, we shuffle the cross-sectional order several times  then create sub-panels of sizes $\{(N_{j},T_{j}), \, j=1,\ldots,J\}$ with $(N_{J},T_{J})=(N,T)$ by selecting the top-left part of the panel. For every $i$ and $j$, we repeat these procedures $P$ times on random permutations of the cross-sectional order. For every $i=1,\ldots,I$, $j=1,\ldots,J$, and  every permutation $p=1,\ldots,P$, we compute  
\begin{equation}
    \label{eq:hat_r_data-driven}
    \hat r(c_{i},j,p) \defeq \argmin_{k=1,\ldots, k_{\mathrm{max}}} \IC(c_{i},{\cal X}_{N_{j},T_{j}}^{(p)} ,k).
\end{equation}
Define now the sample $\hat r_{i,p}\defeq \{ \hat r(c_{i},j,p): j=1,\ldots,J\}$. One finally selects the optimal scaling $\hat{c}_{i,p}$ as the middle value of the second plateau of the function $c_{i} \rightarrow \mathrm{Var}\big[\hat r_{i,p}\big]$ and  the number $\hat r$ of factors  as the median of $\{ \hat r(\hat{c}_{i,p},J,p): p=1,\ldots, P\}$. The consistency of~$\hat r$ readily follows from that of the ``un-tuned'' method---see  
 \cite{hallin:liska:2007} for details. 

For our simulation, we have adopted $k_{\mathrm{max}} \defeq 10$, $P \defeq 5$, $N_j \defeq \left\lfloor
\frac{4N}{5}+N(j-1)/45\right\rfloor$  $(j=1,\ldots, 10)$,  and $T_{j} \defeq T$,  with a grid   $C = \{0,0.05,0.1,\ldots,10\}$ for the values of $c$ and 
the following adaptations of the \texttt{IC1} and \texttt{IC2} penalty functions of
\cite{bai:ng:2002}:
\begin{align*}
g_{\texttt{IC1a}}(N,T) &\defeq  \sqrt\frac{N+T}{NT} \log\bigg(\frac{NT}{N+T}\bigg),  \\
g_{\texttt{IC2a}}(N,T) &\defeq  \sqrt\frac{N+T}{NT} \log C_{N,T}^2 .
\end{align*}

In order to analyze the performance of the corresponding identification procedures, we have
generated data based on \texttt{DGP1} for $N \in \{10, 25, 50, 100\}$ and $T \in \{25, 50, 100, 200\}$.
 Among 100 replications, we have counted the number of overestimations and
underestimations of $r$. The results are displayed in Table~\ref{t:estimation_r}.
We see that the number of factors is almost perfectly estimated if $N \geq 50$ or $T \geq 100$, or if $N \geq 25$ and $T \geq 50$. Furthermore, that number is almost never overestimated. For   smaller panel sizes ($N=10, T=25$---but this is really very small) the criteria give a correct estimate about half of the time, with the other half yielding an underestimation.
%
\begin{table}[ht]
\centering
\small
\begin{tabular}{l|rrrr||rrrr}
            & \multicolumn{4}{c||}{\texttt{IC1a}} & \multicolumn{4}{c}{\texttt{IC2a}} \\
 & $T\!=\! 25$ & $T\!=\! 50$ & $T\!=\! 100$ & $T\!=\! 200$ & $T\!=\! 25$ & $T\!=\! 50$ & $T\!=\! 100$ & $T\!=\! 200$ \\ 
  \hline
  $N=10$ &  52 (0)&  17 (0)&   1 (0)&   0 (0)&  54 (0)&  20 (0)&   1 (0)&   0 (0)\\ 
  $N=25$ &  13 (0)&   0 (0)&   0 (0)&   0 (0)&  15 (0)&   0 (0)&   0 (0)&   0 (0)\\ 
  $N=50$ &   2 (1)&   0 (0)&   0 (0)&   0 (0)&   2 (0)&   0 (0)&   0 (0)&   0 (0)\\ 
  $N=100$ &   0 (1)&   0 (0)&   0 (0)&   0 (0)  &   0 (1)&   0 (0)&   0 (0)&   1 (0)\\ 
\end{tabular}
\caption{Number of underestimation (overestimation) of the number of factors using \texttt{IC1a}, \texttt{IC2a} among 100 replications of \texttt{DGP1}. }
\label{t:estimation_r}
\end{table}

\subsection{Estimation with Misspecified Number of Factors}
\label{sub:estimation_misspecified}

The influence on the estimation of the common component  of a misspecification of the number of factors  is investigated  in Section~\ref{s:simulation_r_misspecified} of the \thesupplement.  The estimation error  appears to be  smallest when the estimated number of factor is correctly identified, provided that $N$ and $T$ are ``large enough,'' where ``large enough'' depends on the ratio between   total    idiosyncratic  and    total   common variances; see  Section~\ref{s:simulation_r_misspecified}  in the \thesupplement\ for  a detailed discussion.

\section{Forecasting of mortality curves}
\label{sec:forecasting_mortality_curves}

In this section, we show how the theory developed can be used to accurately forecast a high-dimensional vector of FTS. We consider the \citet{japonese_dataset} dataset  that contains age-specific and gender-specific mortality rates for the 47 Japanese prefectures from~1975  through 2016. 
For every prefecture and every year, the mortality rates are grouped by gender (Male or Female) and by age (from 0 to~109). People aged 110+ are grouped together in a single rate value. \citet{gao:shang:2018} used this specific dataset to show that their method outperforms existing FTS methods. We will use the same dataset to show that our method in turn outperforms   \citet{gao:shang:2018}. 

Due to sparse observations at old ages, we have grouped   the observations related to the age groups of 95 or more by averaging the mortality rates and   denote 
  by $x_{it}(\tau)$, $i=1,\ldots,47$; $t=1,\ldots, 42$; $\tau = 0,1,\ldots,95$ 
  the log mortality rate of people aged $\tau$ living in prefecture~$i$ during year~$1974 + t$.  The dataset contains many missing values which we replaced with the rates of the previous age group. We conducted separate analyses for males and females, making our forecasting results  comparable to those in \citet{gao:shang:2018}.

The three $h$-step ahead forecasting methods to be compared are\vspace{-2mm} 
\begin{enumerate}
    \item[(GSY)] The method of \citet{gao:shang:2018},  implemented via the dedicated functions of their~R package \texttt{ftsa} \citep{ftsaArticle}, with the smoothing technique and the parameter values they suggested for this dataset.
    \item[(CF)]   Componentwise forecasting. For each of the $N$ functional time series in the panel, we fitted a functional model independently of the other functional time series. As it is common in functional time series, we obtain the forecast by decomposing every function on its principal components, forecasting every functional principal component score separately and recomposing the functional forecast based on the score forecasts \citep[cf. for instance][]{hyndman:ullah:2007,hyndman2009forecasting}. For this exercise, we have used six functional principal components and ARMA models to predict the scores.
    \item[(TNH)]  Our~$h$-step ahead forecasting method, which we now describe.
\end{enumerate}

The smoothing technique we have used differs from the one considered by \citet{gao:shang:2018} since the latter results in a 96-points representation of the curves while our method is computationally more efficient with data represented in a functional basis. Thus, in order to implement our method, we transform the regularly-spaced data points into curves represented on a 9-dimensional B-splines basis. In our 
 notation, 
  $H_{i} \defeq L
^{2}([0,\, 95], \bbR)$, $i=1,\ldots,47$. Our~$h$-step ahead forecasting algorithm, based on the data $\{ x_{it} : i=1,\ldots, 47;\, t=1,\ldots, T\}$ with~$T \leq 42$, works as follows.

\begin{enumerate}
    \item Compute $\hat r $  using the \textrm{permutation-enhanced} \texttt{ABC} methodology with the same parameters as in Section~\ref{sec:simulation_number_of_factors} and  the  \texttt{IC2a} penalization function. 
    \item For $i = 1,\ldots, N$, compute the $i$th sample mean $\hat \mu_i \defeq \sum_{t=1}^T x_{it} / T$ and the  centered panel~$y_{it} \defeq x_{it} - \hat{\mu}_{i}$.
    \item Compute the estimated factors $\{ \tildebu{\hat r}_t : t=1,\ldots, T\}$ and factor loadings $\tildeB{\hat r}_N$ using Proposition~\ref{prop:computing_loadings_and_factors}, and compute the common component 
 $\hat{\chi}_{it}^{(\hat r )}$  using \eqref{eq:hat_chi_it}; compute the esti\-ma\-tors~$\hat{\xi}^{(\hat r ) }_{it} \defeq y_{it}-\hat{\chi}_{it}^{(\hat r )}$, $i=1,\ldots,N$ and $t=1,\ldots, T$  of the idiosyncratic components.
    \item For each $l \in \{ 1,\ldots, \hat r \}$, select the ARIMA model that best models the $l$-th\linebreak factor~$\{ u_{l,t}^{(\hat r )} : t=1,\ldots, T\}$ according to the BIC criterion; forecast $\tildebu{\hat r}_{T+h}$ based on these fitted models, using past values $\{u_{l,t}^{(\hat r )} : t=1,\ldots, T\}$.
    \item For every $i \in \{1,\ldots, N\}$, pick the ARIMA model that best models the idiosyncratic component $\{ \hat{\xi}^{(\hat r ) }_{it} : t = 1, \ldots, T \}$ according to the BIC criterion, and forecast $\hat{\xi}^{(\hat r )}_{i,T+h}$ based on the fitted model, using past values $ \hat{\xi}^{(\hat r ) }_{it}$ up to~$t=T$.
    \item The final forecast is
     $\hat{x}_{i,T+h} \defeq \hat{\mu}_{i} +  \tilde{b}^{(\hat r  )}_{i} \tildebu{\hat r}_{T+h} + \hat{\xi}^{(\hat r) }_{i,T+h},$
    where $\tilde{b}^{(\hat r  )}_{i} \defeq \singleProj_i \tildeB{\hat r}_N$, \linebreak and $\singleProj_i: \bH_N \to H_i$ maps $(v_1,\ldots, v_N)^\tp \in \bH_N$ to $v_i \in H_i$.
\end{enumerate}

In order to properly assess the forecasting accuracy of each method, we   split the dataset into a training set and a test set. The training set consists of the observations   from 1975 to~$1974 + \Delta$, which translates into $t=1, \ldots, \Delta$. The test set consists of the observations for  the years  $1974 + \Delta+1$ and subsequent  ($t \geq \Delta+1$). For every rolling window $t=1,\ldots, \Delta$, 
we   re-estimate the parameters of our model and those from \citet{gao:shang:2018}. 

To ensure that our forecasts imply positive mortality rates, we have applied the log transform to the mortality curves. Log transforms are  quite classical in that  area of statistical demography   \citep[cf.][]{hyndman:ullah:2007,hyndman2009forecasting,he:huang:yang}. \citet{gao:shang:2018} have also used the log curves for estimating their model but, in the last stage of their forecasting analysis, they compute the exponentials of their  forecasted log curves and base their metrics on it. However, the female mortality rate for $95$+  is~$1645$ times larger than the mortality rate for age $10$ and $345$ times larger than the mortality rate at age 40 (averaged across time and the Japanese regions). Similar results hold for male mortality curves. This fact just illustrates that the scale of the mortality rates differs greatly from one age to another. Hence, in the analysis of \citet{gao:shang:2018}, the contribution of the young ages to the averaged errors is negligible. Assessing the quality of a model's predictions based on a metric that disregards most of the predictions may be misleading.    In accordance with the demographic practice, we thus decided to establish our comparisons on a metric  based on the log curves. For the sake of completeness, we also present the results based on  the methodology of \citet{gao:shang:2018}   in Section~\ref{sec:additional_application} of the \thesupplement.

Comparisons are conducted using the mean absolute forecasting error ($\MAFE$) defined as
\begin{equation*}
\MAFE(h) \defeq \frac{1}{47 \times 89 \times (17-h)} \sum_{i=1}^{47} \sum_{\Delta=26}^{42-h} \sum_{\tau=0}^{89} \Big\vert{ \hat{x}_{i,\Delta+h}(\tau /89)-x_{i,\Delta+h}(\tau /89)}\Big\vert, 
\end{equation*}
and the mean squared forecasting error (MSFE) defined as
\begin{equation*}
    \MSFE(h) \defeq \frac{1}{47 \times 89 \times (17-h)} \sum_{i=1}^{47} \sum_{\Delta=26}^{42-h} \sum_{\tau =0}^{89} \Big [\hat{x}_{i,\Delta+h}(\tau/89) - x_{i,\Delta+h}(\tau/89 ) \Big ]^{2}.
\end{equation*}
Notice that $\Delta=26$ corresponds to the year $2000$, and that $\Delta=42-h$ corresponds to the year $2016-h$.
These forecasting errors are given in Table~\ref{t:mortality_forecasting}.
The results indicate that our   method  markedly outperforms the other methods for all $h$,    for both measures of performance ($\MAFE$ and $\MSFE$), an for both the male and female panels. This is not surprising, since our method does not lose cross-sectional information by separately estimating  factor models for the various principal components of the cross-section as the GSY method does; nor does it ignore the interrelations between the $N$ functional time series as CF does.

\begin{table}[ht]
    \centering
    \scriptsize
    \begin{tabular}{l||rrr|rrr||rrr|rrr|}
            & \multicolumn{6}{c||}{Female} & \multicolumn{6}{c|}{Male} \\
            & \multicolumn{3}{c|}{MAFE} & \multicolumn{3}{c||}{MSFE} & \multicolumn{3}{c|}{MAFE} & \multicolumn{3}{c|}{MSFE}  \\
            & GSY & CF & TNH & GSY & CF& TNH & GSY& CF & TNH & GSY & CF & TNH  \\
            \hline
        $h=1$  & 296 & 286 & \textbf{250} & 190 & 166 & \textbf{143} & 268 & 232 & \textbf{221} & 167 & 124 & \textbf{122} \\ 
        $h=2$  & 295 & 294 & \textbf{252} & 187 & 171 & \textbf{145} & 271 & 243 & \textbf{224} & 171 & 131 & \textbf{124} \\ 
        $h=3$  & 294 & 301 & \textbf{254} & 190 & 176 & \textbf{148} & 270 & 252 & \textbf{227} & 170 & 136 & \textbf{126} \\
        $h=4$  &300 & 305 & \textbf{258} & 195 & 178 & \textbf{152} & 274 & 259 & \textbf{230} & 177 & 141 & \textbf{129} \\ 
        $h=5$  & 295 & 308 & \textbf{259} & 190 & 179 & \textbf{154} & 270 & 268 & \textbf{233} & 169 & 146 & \textbf{131} \\ 
        $h=6$  & 295 & 313 & \textbf{259} & 194 & 181 & \textbf{156} & 271 & 278 & \textbf{235} & 169 & 152 & \textbf{134} \\ 
        $h=7$  & 302 & 321 & \textbf{263} & 200 & 187 & \textbf{161} & 266 & 289 & \textbf{240} & 164 & 160 & \textbf{138} \\ 
        $h=8$  & 298 & 329 & \textbf{269} & 192 & 193 & \textbf{167} & 266 & 302 & \textbf{245} & 161 & 168 & \textbf{142} \\
        $h=9$  & 303 & 339 & \textbf{275} & 203 & 199 & \textbf{172} & 277 & 315 & \textbf{251} & 169 & 178 & \textbf{148} \\
        $h=10$ & 308 & 347 & \textbf{280} & 209 & 205 & \textbf{177} & 283 & 327 & \textbf{254} & 174 & 186 & \textbf{150} \\  \hline
        Mean   & 299 & 314 & \textbf{262} & 195 & 183 & \textbf{157} & 272 & 277 & \textbf{236} & 169 & 152 & \textbf{134} \\  
        Median & 297 & 311 & \textbf{259} & 193 & 180 & \textbf{155} & 271 & 273 & \textbf{234} & 169 & 149 & \textbf{133} \\ 
    \end{tabular}
    \caption{Comparison of the forecasting accuracies of our method (denoted as TNH ) with \cite{gao:shang:2018} (denoted as GSY) and with componentwise FTS forecasting (denoted CF) on the Japanese mortality curves for~$h=1,\ldots, 10$,  female and male curves. Boldface is used for the winning \emph{MAFE} and \emph{MSFE} values, which uniformly correspond to the TNH  forecast. Both metrics have been multiplied by $1000$ for readability.}
    \label{t:mortality_forecasting}
\end{table}

\section{Discussion}
\label{sec:discussion}

This paper proposes a new paradigm for the analysis of high-dimensional  time series with functional (and possibly also scalar) components. The approach is based on a new concept of  \emph{(high-dimensional) functional
factor model} which, in the particular case of purely scalar series reduces to the the well-established
concepts studied by  \cite{stock:watson:2002a, stock:watson:2002b} and \cite{bai:ng:2002}, albeit under weaker assumptions.
We extend to the functional context 
 the classical representation results of
\citet{chamberlain:1983,chamberlain:rothschild:1983} and propose consistent
estimation procedures for  the common
components, the factors, and the factor loadings as both the size $N$ of the
cross-section and the length~$T$ of the  observation period  tend to infinity, with no constraints
on their relative rates of divergence. We also propose a consistent identification method for the number of factors, extending to the functional context the methods of  \cite{bai:ng:2002} and \cite{Alessi2010}.

This is, however, only a first step in the development of a full-fledged toolkit for the analysis of high-dimensional time series, and a long list of important issues remains on the agenda of future research. This includes, but is not limited to, the following points.
\begin{enumerate} 
    \item[(i)] This paper only considers   consistency of the estimates. Further asymptotic analysis should be developed in the direction of consistency rates  and asymptotic distributions, on the model of \cite{bai2003inferential, FHLR:2004, lam:yao:2012} or \cite{FHLZ:2017}. 
    \item[(ii)] The factor model developed  here is an extension of the {\it static} scalar models by \cite{stock:watson:2002a, stock:watson:2002b} and \cite{bai:ng:2002}, where factor loadings are matrices. That factor model is only a particular case of the {\it general} (also called {\it generalized}) {\it dynamic factor model} introduced by \cite{FHLR:2000}, where factors are loaded in a fully dynamic way via filters (see \cite{hallin:lippi:2013} for the advantages that generalized approach). An extension of the general dynamic factor model to the functional setting  is, of course, highly desirable, but requires a more general representation result involving filters with operatorial coefficients and the concept of functional {\it dynamic} principal components \citep{panaretos:tavakoli:2013SPA, hormann:kidzinski:hallin:2015}. 
    \item[(iii)] The problem of identifying the number of factors is only briefly considered here, and requires further attention. In particular,  functional versions of the eigenvalue-based methods by \cite{onatski:2009:numberOfDynamicFactors, 
        Onatski2010} or the hypothesis testing-based ones by \citet{pan2008modelling, Kapetanios2010, lam:yao:2012, AHNH13} deserve being developed and compared to the method we are proposing here.
    \item[(iv)] Another crucial issue is the analysis of volatilities. In the scalar case, the challenge is in the estimation and forecasting of high-dimensional conditional covariance matrices and such quantities as values-at-risk or expected shortfalls: see \cite{fan:2008:highdimcov, fan2011, fan:2013:jrssb-discussion, barigozzi:hallin:2016generalized, barigozzi2017generalized, hallin2019forecasting}. In the functional context,  these matrices, moreover, are replaced with high-dimensional operators.
\end{enumerate}

\paragraph{Acknowledgements} We would like to thank Yoav Zemel, Gilles Blanchard and Yuan Liao for helpful technical discussions, as well as Yuan Gao and Hanlin Shang for details of the implementation of the forecasting methods described in \citet{gao:shang:2018}. Finally, we thank the referees and the associate editor for comments leading to an improved version of the paper.

\paragraph{Code}
    The R code  reproducing the numerical experiments of Section~\ref{s:simu},  as well as
        code used for Section~\ref{sec:forecasting_mortality_curves}, can be obtained by contacting
    the authors by email.

\bibliographystyle{agsm}
\citestyle{agsm}
\bibliography{journalsAbbr,factor-models}

\appendix

\section{Orthogonal Projections}%
\label{sec:orthogonal_projections}

Let $H$ be a separable Hilbert space. Denote by~$\LLPH{H}$  the space of~$H$-valued random elements $X: \Omega \rightarrow H$
with $\ee\! X \!= 0$ and~$\ee\! \hnorm{X}_H^2\! <~\! \infty$. For any finite-dimensional subspace~$\mathcal U \subset \LLP$, where $\LLP$ is the set of random variables with mean zero and finite variance, let 
%
\begin{equation}
    \label{eq:definition_spanH}
    \vspan_H(\mathcal U) \defeq \left\{ \sum_{j=1}^m u_j b_j : b_j \in H,\,  u_j \in \mathcal U,\, m
    =1,2, \ldots \right\}.
\end{equation}
Since $\mathcal U$ is finite-dimensional,   $\vspan_H(\mathcal U) \subseteq \LLPH{H}$
is a closed subspace. Indeed, denoting by~$u_1,\ldots, u_r \in~\!{\mathcal U}$  ($r <
\infty$) an orthonormal basis, 
\[
    \ee \hnorm{u_1 b_1 + \cdots + u_r b_r}_H^2 = \hnorm{b_1}_H^2 + \cdots + \hnorm{b_r}_H^2.
\]
By the orthogonal decomposition Theorem \cite[Theorem~2.5.2]{hsing:eubank:2015}, for any~$X \in \LLPH{H}$, there exists a unique $U[X] \in \vspan_H(\mathcal U)$ such that
\begin{equation} \label{eq:orthogonal_decomposition}
    X = U[X] + V[X],
\end{equation}
where $V[X] = X - U[X] \in \vspan_H(\mathcal U)^\perp$; hence $\eee{uV[X]} = 0$ for all $u
\in \mathcal U$ and 
\begin{equation}
    \label{eq:orthogonal_decomposition_pythagoras}
    \ee \hnorm{X}_H^2 = \ee \hnorm{U[X]}_H^2 + \ee \hnorm{V[X]}_H^2.
\end{equation}
Consider the following definition.
\begin{defn}
    \label{defn:orthogonal_projection}
The decomposition \eqref{eq:orthogonal_decomposition} of  $X \in \LLPH{H}$ into $U[X] $ and $ V[X]$ is called the \emph{orthogonal
decomposition} of $X$ onto $\vspan_H(\mathcal U)$ and its orthogonal
comple\-ment;~$U[X] = \vcentcolon \proj_H(X | \mathcal U)$ is the \emph{orthogonal projection} of $X$ onto $\vspan_H(\mathcal U)$.
\end{defn}

If $u_1,\ldots, u_r \in \LLP$ form a basis of $\mathcal U$, then $\proj_H(X |
\mathcal U) =\sum_{l=1}^r u_l b_l$ for some unique~$b_1, \ldots, b_r \in H$; if 
they form an  orthonormal basis, then~$b_l = \eee{u_l X }$,~$l=1,\ldots, r$. We shall also
use the notation
\[
    \proj_H(X | \bu) \defeq \proj_H(X | u_1,\ldots, u_r) \defeq \proj_H(X | \mathcal
    U),
\]
and 
\[
    \vspan_H(\bu) \defeq \vspan_H(\mathcal U),
\]
where $\mathcal U  = \vspan(u_1,\ldots, u_r)$ and $\bu = (u_1,\ldots, u_r)^\tp \in \bbR^r$.
Notice that if $\eee{ \bu \bu^\tp } = \b I_r $ (with~$\b I_r$ the $r \times r$ identity matrix), then
\begin{equation}
    \label{eq:tensorform_orthogonal_projection}
    \proj_H(X | \bu ) = \eee{X \tensor \bu} \bu.
\end{equation}

\section{Advantage of Proposed Approach}%
\label{sec:advantage_current_approach}

This section provides examples  where the approach proposed by  \citet{gao:shang:2018}, contrary to ours, runs into serious problems.  
 In \citet{gao:shang:2018}, each series $\{ x_{it} : t \in \bbZ \}$ is first projected onto the first $p_0$ eigenfunctions of the long-run covariance operator,
\[
    C^{(x_i)} \defeq \sum_{t \in \bbZ} \eee{x_{it} \tensor x_{i0}}.
\]
Consider the high-dimensional functional factor model with $r \in \bbN$ factors 
\[
    x_{it} = b_{i1} u_{1t} + \cdots + b_{ir} u_{rt} +  \xi_{it}, \quad i=1, 2, \ldots, \  t \in \bbZ,
\]
where   $b_{i1}, \ldots, b_{ir} \in H_i$ are deterministic, $i \geq 1$. If we assume that $\eee{ \bu_t \tensor \bxi_s } = {\bf 0}$ for all~$s,t \in \bbZ$,  the long-run covariance operator of $\{ x_{it} : t \in \bbZ \}$ is
\begin{align*}
    C^{(x_i)} =  \sum_{t \in \bbZ}  \eee{x_{it} \tensor x_{i0}}
                              & =  \sum_{t \in \bbZ}  \left( \eee{ (\b b_i \bu_t) \tensor (\b b_i \bu_0)} +  \eee{\xi_{it} \tensor \xi_{i0}} \right) 
                             \\ & = \b b_i C^{(u)} \b b_i^\adjoint +  C^{(\xi_{i})},
\end{align*}
where $\b b_i \in \bounded{\bbR^r, H_i}$ maps $(u_1,\ldots, u_r)^\tp \in \bbR^r$ to $u_1 b_{i1} + \cdots + u_r b_{ir} \in H_i$,  
\[
    {\bf C}^{(u)} \defeq  \sum_{t \in \bbZ}  \eee{\bu_t \tensor \bu_0},\quad\text{
and}\quad 
    C^{(\xi_i)} \defeq  \sum_{t \in \bbZ}  \eee{\xi_{it} \tensor \xi_{i0}}. 
\]
Let 
\[
    {\bf C}^{(u)} = \sum_{j=1}^r \lambda_j^{(u)} {\bf v}_j^{(u)} \tensor {\bf v}_j^{(u)} \quad\text{and}\quad
    C^{(\xi_i)} = \sum_{j=1}^\infty \lambda_j^{(\xi_i)} \vfi_j^{(\xi_i)} \tensor \vfi_j^{(\xi_i)} 
\]
denote the eigen-decompositions  of ${\bf C}^{(u)}$ and $C^{(\xi_i)}$, respectively. Then,
\begin{equation}
    \label{eq:specdensity_xi_decomposed}
    C^{(x_i)} = \sum_{j=1}^r \lambda_j^{(u)} {\bf v}_j^{(u)} \tensor {\bf v}_j^{(u)} +   \sum_{j=1}^\infty \lambda_j^{(\xi_i)} \vfi_j^{(\xi_i)} \tensor \vfi_j^{(\xi_i)} .
\end{equation}
If $\vfi_j^{(\xi_i)}$ is orthogonal to $b_{i1},\ldots, b_{ir} \in H_i$, $j \geq 1$, we can choose $\lambda_j^{(\xi_i)}, j = 1,\ldots, J$, large enough  that
\begin{enumerate}
    \item[(i)] the first $J$ eigenvectors of $C^{(x_i)}$ are $\vfi_j^{(\xi_i)}$, with associated eigen\-values~$\lambda_j^{(\xi_i)}$, $j=1,\ldots J$;
    \item[(ii)] the proportion  $\sum_{j=1}^J \lambda_j^{(\xi_i)} / \trace( C^{(x_i)} )$ of variance explained by the first $J$ principal components of $\{ x_{it} : t \in \bbZ \}$  is arbitrarily close to $1$.
\end{enumerate}

In such a case, the preliminary FPCA approximation of each series $\{ x_{it} : t \in \bbZ \}$ in \citet{gao:shang:2018} (at population level, based on the true long-run covariance of $C^{(x_i)}$) would completely discard the   common component. Since we do not perform such preliminary FPCA approximation,  the common component would not get lost with our method (see results for \texttt{DGP4}, Figure~\ref{fig:DGP4}, for the empirical performance of our method in a similar situation).
A similar point can be made if one performs, as a preliminary step, the usual FPCA projection of each series   based on the lag-$0$ covariance operators $\eee{x_{i0} \tensor x_{i0}}$ (as opposed to the long-run covariance), or even a more sophisticated dynamic FPCA projection \citep{panaretos:tavakoli:2013SPA,hormann:kidzinski:hallin:2015}.

\section{Uniform error bounds on loadings and the common component}
\label{sec:uniform_bounds_loadings_commonComponent}

In this Section, we provide uniform error bounds for the loadings and the common component. All the results in this Section are proved in Section~\ref{sec:proofs_uniform_bounds}.

We first turn our attention to the loadings.
Let $\bb_i \in \bounded{\bbR^r, H_i}$ be the linear operator mapping
$(\alpha_1,\ldots, \alpha_r)^\tp$ to $\sum_{l=1}^r \alpha_l b_{il}$. The estimator of $\bb_i$
is $\tildeb{r}_i\! = \singleProj_i \tildeB{r}\!$, where~$\singleProj_i:~\!\bH_N \to~\!H_i$ is the canonical projection, mapping $(v_1,\ldots, v_N)^\tp \in \bH_N$ to
$v_i \in H_i$, $i = 1,\ldots, N$. With this notation, notice that $\bb_i = \singleProj_i \bB_N$. To get uniform error bounds for the loadings, we need some technical assumptions, which rely on the concept of $\tau$-mixing:
for a sequence of random elements $\{Y_t\} \subset H_0$ such that  $\hnorm{Y_t}_{H_0} \leq c$ almost surely, where $H_0$ is a real separable Hilbert space with norm $\hnorm{\cdot}_{H_0}$, we define the $\tau$-mixing coefficients \citep[][Example~2.3]{blanchard2019concentration} 
\begin{align}
    \label{eq:tau_mixing}
    \MoveEqLeft
    \tau(h) \defeq \sup\Big\{ \eee{Z ( \vfi(Y_{t+h}) - \ee \vfi(Y_{t+h}) )} ~\Big|~ t \geq 1, Z \in L^1(\Omega, \mathcal{M}^Y_t, \pp), 
    \\ & \hspace{5cm} \ee \rpnorm{Z} \leq 1,  \vfi \in \LipschitzSpace(H_0, c) \Big\}, \nonumber
\end{align}
where $\mathcal{M}^Y_t$ is the $\sigma$-algebra generated by $Y_1,\ldots, Y_t$ and, denoting by~$B_c(H_0) \subset H_0$  the ball of radius $c > 0$ centered at zero, 
\[
    \LipschitzSpace(H_0, c) \defeq \Big\{ \vfi: B_c(H_0) \to \bbR \text{ such that } \rpnorm{\vfi(x) - \vfi(y)} \leq \hnorm{x - y}_{H_0}, \forall x,y \in H_0 \Big\}.
\]
 Notice that by the Kirszbraun Theorem \citep[][Theorem~1.31]{schwartz1965nonlinear}, the definition of~$\tau(h)$ is independent of the choice of upper bound $c > 0$. 
 
For obtaining uniform bounds on the loadings,  we make the following assumptions. 
\begin{customass}{H($\gamma$)} \label{assumption:uniform_bound_loadings} \mbox{}

    \begin{description}[labelindent=1cm]
        \item[(H1)] For all~$l=~\!1,\ldots, r$ and all $i,t \geq 1$,  $\eee{u_{lt} \xi_{it}} = 0,$
        $$ \abs{u_{lt}} < c_u < \infty \quad \text{and}\quad\hnorm{ \xi_{it}}_{H_i} \leq c_\xi < \infty\quad\text{almost surely. }$$
        \item[(H2)] There exists $\alpha, \theta, \gamma > 0$ such that, 
             for all $l=1,\ldots, r$ and all $i \geq~\!1$, the process 
             $$\{ (u_{lt}, \xi_{it}) : t = 1, 2, \ldots \} \subset \bbR \oplus H_i$$
              is $\tau$-mixing with coefficient  $\tau(h) \leq~\!\alpha \exp(- (\theta h)^{\gamma})$.
     \end{description}
\end{customass}
Assumption~\ref{assumption:uniform_bound_loadings} is needed in order to apply  Bernstein-type concentration inequalities for $T^{-1} \sum_{s=1}^T u_{ls} \xi_{is}$ \citep{blanchard2019concentration}, and could be replaced by any other technical condition that yields such concentration bounds. Notice that we do not assume joint $\tau$-mixing on the cross-section of idiosyncratics and factors. Assumption (H2) is implied by the stronger (joint $\tau$-mixing) assumption that 
\[
    \Big\{ (u_{1t}, \ldots, u_{rt}, \xi_{1t}, \xi_{2t}, \ldots) : t \geq 1 \Big\} \subset \bbR^r \oplus \bigoplus_{i \geq 1} H_i
\]
is exponentially $\tau$-mixing. Such a joint assumption on all factors and idiosyncratics is made in \citet{fan:2013:jrssb-discussion}, but with $\alpha$-mixing instead of $\tau$-mixing. 
Note that $\alpha$-mixing coefficients are not comparable to $\tau$-mixing coefficients. However, $\tau$-mixing  holds for some simple AR$(1)$ models that are not $\alpha$-mixing. Examples of $\tau$-mixing processes can be found in \citet{blanchard2019concentration}.
Other types of mixing coefficients equivalent to $\tau$-mixing when the random elements are almost surely bounded are the $\theta_\infty$-mixing coefficients \citep[][Definition~2.3]{dedecker2007weak} or the $\tau_\infty$-mixing coefficients \citep[][p.492]{wintenberger2010deviation}.

We can now state a uniform error bound result for the estimators of $\bb_i, i=1,\ldots, N$.
\begin{Theorem}
    \label{thm:unif_bound_loadings}
    Let Assumptions~\ref{assumption:a}, \ref{assumption:b}, \ref{assumption:c}, \ref{assumption:Bn_xi} and~\ref{assumption:uniform_bound_loadings} hold.
        Then,
    \[
        \max_{i=1,\ldots, N} \Fnorm{\tildeb{r}_i - \bb_i \tilde \bR^{-1}} = O_{\rm P}\left( \max\left\{ \frac{1}{\sqrt{N}}, \frac{\log(N) \log(T)^{1/{2\gamma}} }{\sqrt{T}} \right\} \right).
    \]
\end{Theorem}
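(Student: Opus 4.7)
My plan is to obtain a closed form for $\tildeb{r}_i$, decompose the loading error into four terms in the spirit of \citet{bai:ng:2002} and \citet{fan:2013:jrssb-discussion}, and control each separately. From Proposition~\ref{prop:computing_loadings_and_factors} combined with the orthonormalization $\tildebU{r}_T (\tildebU{r}_T)^\tp / T = \b I_r$, one reads $\tildeB{r}_N = T^{-1}\sum_t \bx_t (\tildebu{r}_t)^\tp$, so that $\tildeb{r}_i = T^{-1}\sum_t x_{it}(\tildebu{r}_t)^\tp$. Substituting $x_{it} = \bb_i \bu_t + \xi_{it}$ and $\tildebu{r}_t = \tilde \bR \bu_t + (\tildebu{r}_t - \tilde \bR \bu_t)$ yields $\tildeb{r}_i - \bb_i \tilde \bR^{-1} = A_i + B_i + C_i + D_i$ with
\[
    A_i = \bb_i\bigl[\tfrac{1}{T}\textstyle\sum_t \bu_t\bu_t^\tp\, \tilde \bR^\tp - \tilde \bR^{-1}\bigr],\qquad B_i = \bb_i\cdot \tfrac{1}{T}\textstyle\sum_t \bu_t(\tildebu{r}_t - \tilde \bR \bu_t)^\tp,
\]
\[
    C_i = \tfrac{1}{T}\textstyle\sum_t \xi_{it}\bu_t^\tp \tilde \bR^\tp,\qquad D_i = \tfrac{1}{T}\textstyle\sum_t \xi_{it}(\tildebu{r}_t - \tilde \bR \bu_t)^\tp.
\]

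Three of the four summands will contribute only $O_{\rm P}(C_{N,T}^{-1})$ uniformly in $i$. For $A_i$, I would use that the identity $\tildebU{r}_T(\tildebU{r}_T)^\tp/T = \b I_r$ combined with Theorem~\ref{theorem:factors_average_bound} forces $\tilde \bR(T^{-1}\sum_t \bu_t\bu_t^\tp)\tilde \bR^\tp = \b I_r + O_{\rm P}(C_{N,T}^{-1})$, so the bracketed quantity is $O_{\rm P}(C_{N,T}^{-1})$ in Frobenius norm; combined with $\Fnorm{\bb_i} \le \sqrt{r}\,M$ from Assumption~\ref{assumption:Bn_xi}, this handles $\max_i \Fnorm{A_i}$. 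For $B_i$ and $D_i$, Cauchy--Schwarz in $t$ gives
\[
    \max_i \Fnorm{B_i} \le \sqrt{r}\,M\Big(\tfrac{1}{T}\textstyle\sum_t \rpnorm{\bu_t}^2\Big)^{1/2} \Big(\tfrac{1}{T}\textstyle\sum_t \rpnorm{\tildebu{r}_t - \tilde \bR \bu_t}^2\Big)^{1/2},
\]
and an analogous bound for $D_i$ with $\hnorm{\xi_{it}}_{H_i} \le c_\xi$ from Assumption~\ref{assumption:uniform_bound_loadings}(H1) replacing $\rpnorm{\bu_t}$; Theorem~\ref{theorem:factors_average_bound} drives the last factor down to $O_{\rm P}(C_{N,T}^{-1})$, and none of these three bounds depends on $i$.

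The crux of the argument is to prove $\max_i \Fnorm{C_i} = O_{\rm P}(\log(N)\log(T)^{1/(2\gamma)}/\sqrt{T})$. Since $\Fnorm{\tilde \bR^\tp} = O_{\rm P}(1)$, it suffices to control $\max_{i,l}\hnorm{T^{-1}\sum_t u_{lt}\xi_{it}}_{H_i}$. The centered, bounded, exponentially $\tau$-mixing structure of $\{(u_{lt},\xi_{it})\}_t$ supplied by Assumption~\ref{assumption:uniform_bound_loadings} is precisely what is needed to invoke the Bernstein-type concentration inequality of \citet{blanchard2019concentration} for Hilbert-valued $\tau$-mixing sequences. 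That inequality will yield, with probability at least $1-\delta$,
\[
    \hnorm{T^{-1}\textstyle\sum_t u_{lt}\xi_{it}}_{H_i} \le K\Bigl(\tfrac{\log(1/\delta)^{1/(2\gamma)}}{\sqrt T} + \tfrac{\log(1/\delta)^{1/\gamma}}{T}\Bigr),
\]
with the exponent $1/(2\gamma)$ arising from inverting the sub-geometric mixing rate $\alpha\exp(-(\theta h)^\gamma)$. Choosing $\delta \asymp (rN)^{-c}$ and union-bounding over $i \in \{1,\ldots,N\}$ and $l \in \{1,\ldots,r\}$ produces the $\log(N)\log(T)^{1/(2\gamma)}/\sqrt T$ rate, which combined with the $O_{\rm P}(C_{N,T}^{-1})$ estimates for $A_i, B_i, D_i$ gives exactly the advertised bound.

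The main obstacle will be this concentration step: one must maintain the Hilbert-space structure of $\xi_{it}$ (taking values in a potentially infinite-dimensional $H_i$) and keep precise track of the dependence on $\gamma$ so that the sub-Gaussian part of the deviation carries the exponent $1/(2\gamma)$ rather than a coarser $1$. A secondary technical point is to verify that $\tilde \bR$ is invertible with $\Fnorm{\tilde \bR^{-1}} = O_{\rm P}(1)$; this should follow from the nondegeneracy of $\b \Sigma_\bu$ and $\b \Sigma_\bB$ (Assumptions~\ref{assumption:a}--\ref{assumption:b}) together with the identification argument already used in the proof of Theorem~\ref{theorem:factors_average_bound}.
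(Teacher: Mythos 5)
Your proposal is correct and matches the paper's argument: the decomposition $\tildeb{r}_i - \bb_i\tilde\bR^{-1}$ into terms controlled by the average factor-error bound plus the key cross term $C_i = T^{-1}\sum_t \xi_{it}\bu_t^\tp\tilde\bR^\tp$, and the union bound over $i,l$ combined with the \citet{blanchard2019concentration} Bernstein inequality for exponentially $\tau$-mixing Hilbert-valued sequences, are exactly the steps of the paper's proof (cf.\ Lemma~\ref{lma:uniform_bound_bxi_ut} and the proof of Theorem~\ref{theorem:loadings_average_bound}). The only cosmetic difference is that you split the paper's single summand $\bb_i(\bU_T - \tilde\bR^{-1}\tildebU{r}_T)(\tildebU{r}_T)^\adjoint/T$ into your $A_i + B_i$, which is algebraically equivalent.
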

We see that if $N,T \to \infty$ with $\log(N) = o(\sqrt{T}/\log(T)^{1/{2\gamma}})$, the estimator of the loadings is uniformly consistent. 
We notice that in the special case $\bH_N = \bbR^N$, \citet{fan:2013:jrssb-discussion} do not impose boundedness assumptions on the factors and idiosyncratics, but rather impose exponential tail bounds on them. They impose exponential $\alpha$-mixing conditions on the factors and idiosyncratics (jointly), whereas we impose exponential $\tau$-mixing on the product of factors and idiosyncratics.  Their rate of convergence is similar, except for the $\log(N) \log(T)^{1/{2\gamma}}$ factor, which appears as~$\sqrt{\log(N)}$ in their case. The reason for our different rates of convergence is that we rely on concentration inequalities for weakly dependent time series in Hilbert spaces \citep{blanchard2019concentration}, which are weaker and require stronger assumptions than their scalar counterparts \citep{merlevede2011bernstein}. Having said that, whether these rates are optimal or whether the assumptions here are the weakest possible ones remain open questions.

The next result gives uniform bounds for the estimation of the common component. Recall the definition of $\hat \chi_{it}^{(r)}$ in \eqref{eq:hat_chi_it}.
\begin{Theorem}
    \label{thm:uniform_bound_common_component}
    Under the assumptions of Theorems~\ref{thm:unif_bound_factors} and~\ref{thm:unif_bound_loadings},         \begin{align*}
            \MoveEqLeft
            \max_{t=1,\ldots T} \max_{i=1,\ldots, N} \hnorm{ \hat \chi^{(r)}_{it} - \chi_{it} }_{H_i} = O_{\rm P}\Big( \max\Big\{ \frac{T^{1/(2 \kappa)}}{\sqrt{N} }, \frac{\log(N)\log(T)^{1/{2\gamma}}}{ \sqrt{N}\, T^{( \kappa-1)/(2 \kappa)} } ,
            \\ & \hspace{7cm} \frac{\log(N)\log(T)^{1/{2\gamma}}}{\sqrt{T}} \Big\} \Big).
        \end{align*}
\end{Theorem}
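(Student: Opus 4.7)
The plan is to reduce the bound to the conjunction of the uniform bounds already established for the factors in Theorem~\ref{thm:unif_bound_factors} and for the loadings in Theorem~\ref{thm:unif_bound_loadings}, by inserting the rotation matrix $\tilde\bR$ of~\eqref{eq:defn_tildeR_paper} so that the two preceding theorems apply directly. The natural algebraic identity is
\begin{align*}
    \hat\chi^{(r)}_{it} - \chi_{it}
    &= \tildeb{r}_i \tildebu{r}_t - \bb_i \bu_t\\
    &= \bigl(\tildeb{r}_i - \bb_i\tilde\bR^{-1}\bigr)\bigl(\tildebu{r}_t - \tilde\bR\bu_t\bigr)
    + \bigl(\tildeb{r}_i - \bb_i\tilde\bR^{-1}\bigr)\tilde\bR\bu_t
    + \bb_i\tilde\bR^{-1}\bigl(\tildebu{r}_t - \tilde\bR\bu_t\bigr),
\end{align*}
and the idea is to take $\hnorm{\cdot}_{H_i}$ on both sides, apply the operator-norm inequality $\hnorm{Av}_{H_i}\le\opnorm{A}\rpnorm{v}$, and take the double maximum.

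Before estimating each piece, I would first collect three auxiliary facts. (i) From the arguments underlying Theorem~\ref{thm:unif_bound_factors}, the random $r\times r$ matrix $\tilde\bR$ satisfies $\opnorm{\tilde\bR},\,\opnorm{\tilde\bR^{-1}}=O_{\rm P}(1)$; combined with Assumption~\ref{assumption:Bn_xi} this gives $\max_i\opnorm{\bb_i\tilde\bR^{-1}}=O_{\rm P}(1)$. (ii) Under Assumption~\ref{assumption:uniform_bound_factors}, the moment bound $\ee\rpnorm{\bu_t}^{2\kappa}\le M_2$ and a Markov union bound over the $T$ values of $t$ yield $\max_{t\le T}\rpnorm{\bu_t}=O_{\rm P}(T^{1/(2\kappa)})$. (iii) Denoting by $\mathrm{L}_{NT}\defeq\max\{1/\sqrt N,\log(N)\log(T)^{1/(2\gamma)}/\sqrt T\}$ and $\mathrm{F}_{NT}\defeq\max\{1/\sqrt T,\,T^{1/(2\kappa)}/\sqrt N\}$ the rates coming out of Theorems~\ref{thm:unif_bound_loadings} and~\ref{thm:unif_bound_factors}, the third summand is controlled uniformly by $O_{\rm P}(1)\cdot O_{\rm P}(\mathrm{F}_{NT})$ and the first (cross) summand by $O_{\rm P}(\mathrm{L}_{NT}\mathrm{F}_{NT})$, which is of strictly smaller order than the other two and is absorbed.

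The remaining second summand is the one that produces the dominant logarithmic contributions of the theorem, and is where the bookkeeping is subtle. A crude bound, $\opnorm{\tildeb{r}_i-\bb_i\tilde\bR^{-1}}\cdot\opnorm{\tilde\bR}\cdot\rpnorm{\bu_t}\le O_{\rm P}(\mathrm{L}_{NT})\cdot O_{\rm P}(T^{1/(2\kappa)})$, yields $T^{1/(2\kappa)}/\sqrt N$ (matching the first term of the max) and a contribution $\log(N)\log(T)^{1/(2\gamma)}T^{1/(2\kappa)}/\sqrt T$. To sharpen the latter to the two separate terms $\log(N)\log(T)^{1/(2\gamma)}/(\sqrt N\,T^{(\kappa-1)/(2\kappa)})$ and $\log(N)\log(T)^{1/(2\gamma)}/\sqrt T$ of the theorem, I would distribute the $\max$ inside $\mathrm{L}_{NT}$ separately: the $1/\sqrt N$-piece of the loading rate multiplied by $\max_t\rpnorm{\bu_t}$ produces $T^{1/(2\kappa)}/\sqrt N$, while the $\log(N)\log(T)^{1/(2\gamma)}/\sqrt T$-piece needs to be paired not with the uniform $T^{1/(2\kappa)}$ but with $\rpnorm{\bu_t}$ handled more carefully.

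The main obstacle is precisely this pairing in the second summand. To avoid the spurious factor $T^{1/(2\kappa)}$ in the third term of the stated $\max$, I would reopen the proof of Theorem~\ref{thm:unif_bound_loadings}: the loading error's $\log(N)\log(T)^{1/(2\gamma)}/\sqrt T$ contribution arises from a Bernstein-type $\tau$-mixing concentration applied to sums of the form $T^{-1}\sum_s u_{ls}\xi_{is}$, so that $(\tildeb{r}_i-\bb_i\tilde\bR^{-1})\tilde\bR\bu_t$ expands into $T^{-1}\sum_s\xi_{is}(\bu_s^\tp \bM\bu_t)$ for an $O_{\rm P}(1)$ matrix $\bM$. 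For each fixed $t$ I would apply the same Bernstein-type bound to the scalar-coefficient sequence $\{\bu_s^\tp\bM\bu_t\,\xi_{is}\}_s$, whose $\tau$-mixing envelope is inherited from Assumption~\ref{assumption:uniform_bound_loadings} up to an $O_{\rm P}(\rpnorm{\bu_t})$ factor, and then union-bound over $t=1,\ldots,T$; this costs a second $\log(T)$ factor but replaces the $T^{1/(2\kappa)}$-envelope of $\rpnorm{\bu_t}$ by an $O_{\rm P}(1)$ one pointwise in $t$, giving the $\log(N)\log(T)^{1/(2\gamma)}/\sqrt T$ contribution. Combining the three pieces then produces exactly the $\max$ appearing in the theorem.
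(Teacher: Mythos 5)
Your decomposition of $\hat\chi^{(r)}_{it}-\chi_{it}$ is exactly the one the paper uses, and your treatment of the first (cross) and third summands, as well as the facts $\opnorm{\tilde\bR^{\pm 1}}=O_{\rm P}(1)$ and $\max_i\opnorm{\bb_i}=O(1)$, is fine. The gap is in your point (ii) and everything you build on it: you reach for the moment bound in Assumption~\ref{assumption:uniform_bound_factors} to conclude $\max_{t\le T}\rpnorm{\bu_t}=O_{\rm P}(T^{1/(2\kappa)})$ and then launch an elaborate repair of the second summand to kill the resulting spurious $T^{1/(2\kappa)}$ factor. However, Theorem~\ref{thm:unif_bound_loadings} is among the assumptions, and its hypotheses include Assumption~\ref{assumption:uniform_bound_loadings}(H1), which asserts $\abs{u_{lt}}< c_u$ almost surely for all $l,t$. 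Hence $\max_{t\le T}\rpnorm{\bu_t}\le\sqrt{r}\,c_u=O(1)$ deterministically, the second summand is simply $O_{\rm P}(\mathrm{L}_{NT})\cdot O_{\rm P}(1)\cdot c_u$, and the stated rate follows immediately by combining the three pieces; no re-opening of the Bernstein argument is needed.

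Two further points on the attempted repair. First, applying a conditional Bernstein bound to $\{\bu_s^\tp\bM\bu_t\,\xi_{is}\}_s$ for each fixed $t$ and then union-bounding over $t=1,\ldots,T$ is unlikely to recover the theorem's rate: the union over $T$ values contributes an extra $\log(T)$ factor that does not appear in the statement, and the $\tau$-mixing envelope of a sequence with multipliers depending on $\bM=\tilde\bR$ (a data-dependent quantity built from the entire sample) is not given by Assumption~\ref{assumption:uniform_bound_loadings} without further argument. Second, your claim that the cross summand $O_{\rm P}(\mathrm{L}_{NT}\mathrm{F}_{NT})$ is strictly smaller and absorbed is imprecise: the product of the two logarithmic pieces, namely $\log(N)\log(T)^{1/(2\gamma)}T^{1/(2\kappa)}/\sqrt{NT}=\log(N)\log(T)^{1/(2\gamma)}/(\sqrt{N}\,T^{(\kappa-1)/(2\kappa)})$, is precisely the second entry of the theorem's $\max$ and is not dominated by the other two unless one restricts $N,T$ to a regime where both factor and loading estimators are already uniformly consistent.
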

If $N,T \to \infty$ with $T = o(N^\kappa)$ and $\log(N) = o(\sqrt{T}/\log(T)^{1/{2\gamma}})$, the estimator of the common component is uniformly consistent, and the rate simplifies to
\[
O_{\rm P}( \max\{ T^{1/(2 \kappa)} N^{-1/2} ,\  \log(N)\log(T)^{1/{2\gamma}} T^{-1/2} \} ),
\] which yields some similarities to the rate in \citet[][Corollary~1]{fan:2013:jrssb-discussion}, though slightly different from it due to  differences between our regularity assumptions.

\section{Proofs}

Let 
$v_i \in H_i$: we write, with a slight abuse of notation,~$v_i \in~\!\bounded{\bbR, H_i}$
for the mapping $v_i: \alpha \mapsto v_i \alpha \defeq \alpha v_i$ from $\bbR$ to $H_i$. 
Applying this notation for an ele- \linebreak ment~$ \b v = (v_1, \ldots, v_N)^\tp$ of~$\bH_N$, 
we have that $\b v \in \bounded{\bbR, \bH_N}$ is the mapping~$ a \mapsto a \b v$ from~$\bbR $ to~$\bH_N$;  in particular, $ \bv a = (a v_1,\ldots, a v_N)^\tp$.  
Finally, let $\rpnorm{\cdot}$ denote the Euclidean norm in~$\bbR^p, p \geq 1$.

\subsection{Proofs of   Theorems~\ref{thm:equiv_factormodel_eigenvalues} and~\ref{thm:factormodel_uniqueness}} \label{s:proof_rep}

All proofs in this section are for some arbitrary but fixed $t$, with 
 $N \rightarrow
\infty$. Whenever possible, we therefore omit the index~$t$, and write~$\b X_N$ for $\bX_{N,t} \defeq (X_{1t}, \ldots, X_{Nt})^\tp$, etc. 
Denote by $\b \Sigma_N = \eee{\bX_N \tensor \bX_N}$ the covariance operator of $\bX_N$ which, in view of stationarity, 
 does not depend on $t$. 
Denoting by $\b p_{N,i} \in~\!\bH_N$ the~$i$th eigenvector of $\b
\Sigma_N$ and by $\lambda_{N,i}$ the corresponding eigenvalue, we have the   eigendecomposition 
\[
  \b \Sigma_N= \sum_{i=1}^{\infty} \lambda_{N,i} \b p_{N,i} \tensor \b p_{N,i}.
\] 

The eigenvectors $\{\b p_{N,i} : i \geq 1\} \subset \bH_N$ form an orthonormal basis of  the\linebreak im\-age~$\image(\b \Sigma_N) \subseteq~\bH_N$ of $\b \Sigma_N$. We can extend this 
set of eigenvectors of $\b \Sigma_N$ to form an orthonormal basis of $\bH_N$. With a slight abuse of 
 notation, we shall denote this basis by~$\{ \b p_{N,i} : i \geq 1 \}$,  possibly
reordering the eigenvalues: we might no longer have non-increasing and positive eigenvalues, but still can 
enforce $\lambda_{N,1} \geq \cdots \geq \lambda_{N, r+1} \geq \lambda_{N, r+1+j}$  for all $j \geq 1$.
Define  
\[
    \b P_N = \sum_{k=1}^r \bp_{N,k} \tensor \bz_k \in \bounded{\bbR^r, \bH_N},
\]
where $\bz_k$ is the~$k$-th vector in the canonical basis  of $\bbR^r$. 
Let 
\[
    \ell_2\defeq \left\{\b \alpha= (\alpha_1, \alpha_2, \ldots) : \alpha_i \in \bbR, \sum_i \alpha_i^2 < +\infty \right\},
\]
and let $\b \alpha_k \in \ell_2$ be the $k$-th canonical basis vector, with $(\b \alpha_k)_k=1$  and $(\b \alpha_k)_i=0$ for~$i\neq k$. Define
\[
    \b Q_N \defeq \sum_{k=1}^\infty \bp_{N, r+k} \tensor \b \alpha_k \in \bounded{\ell_2, \bH_N}.
\]
Denote by $\b \Lambda_N \in \bounded{\bbR^r}$ 
the diagonal matrix with diagonal elements $\lambda_{N,1}, \ldots, \lambda_{N,r}$, that is, 
\[
    \b \Lambda_N \defeq \sum_{k=1}^r \lambda_{N,k} \bz_k \tensor \bz_k,
\]
and define $\b \Phi_N \in \bounded{\ell_2}$ as
\[
    \b \Phi_N \defeq \sum_{k = 1}^\infty \lambda_{N, r+k} \b \alpha_k \tensor \b \alpha_k.
\]
With this notation, we have
\begin{equation}
 \label{eq:eigenvector-decompositions}
 \b \Sigma_N = \b P_N \b \Lambda_N \b P_N^\adjoint + \b Q_N \b \Phi_N \b Q_N^\adjoint \in \bounded{\bH_N} \quad\text{and}\quad
 \b P_N \b P_N^\adjoint + \b Q_N \b Q_N^\adjoint =   \identity_N ,
\end{equation}
where $\identity_N$ is the identity operator on $\bH_N$. 
Let 
\begin{equation}
    \label{eq:defn-of-psi^N}
    \b \psi_N \defeq \b \Lambda_N^{-1/2} \b P_N^\adjoint \b X_N = (\lambda_{N,1}^{-1/2} \sc{\b p_{N,1}, \b X_N }, \ldots, \lambda_{N,r}^{-1/2} \sc{\b p_{N,r}, \b X_N})^\tp \in \bbR^r
\end{equation}
(note
that, by Lemma~\ref{lma:eigenvalue_inequalities}, $\b \psi_N$ is well defined for $N$ large
enough since~$\lambda_{N,r} \to \infty$ as ${N \to \infty}$). Using \eqref{eq:eigenvector-decompositions}, we get
\begin{equation}
  \label{eq:x_N-orthogonal-decomposition}
  \b X_N = \b P_N \b P_N^\adjoint \b X_N + \b Q_N \b Q_N^\adjoint \b X_N = \b P_N \b
  \Lambda_N^{1/2} \b \psi_N + \b Q_N \b Q_N^\adjoint \b X_N,
\end{equation}
where the two summands are uncorrelated since~$\b P_N^\adjoint \b \Sigma_N \b Q_N = 0 \in
\bounded{\ell_2, \bbR^r}$, the zero operator: \eqref{eq:x_N-orthogonal-decomposition}
 in fact is the orthogonal decomposition of $\b X_N$ into $\vspan_{\bH_N}(\b \psi_N)$ and its orthogonal complement, as defined in Appendix~\ref{sec:orthogonal_projections}. 
Let $\canonicalProj_{M,N}$ be the canonical projection of  $\bH_N$ onto $\bH_M$, $M < N$, namely, 
\begin{equation}
    \label{eq:canonical_projection}
    \canonicalProj_{M,N} \begin{pmatrix}
        v_1 \\ \vdots \\ v_M \\ v_{M+1} \\ \vdots \\ v_N
    \end{pmatrix}
    = \begin{pmatrix}
       v_1 \\ \vdots \\ v_M 
    \end{pmatrix}.
\end{equation}
Let $\mathcal O(r) \subset \bbR^{r \times r}$ denote the set of all $r \times r$ orthogonal matrices, i.e. the $r \times r$ matrices~$\b C$ such
that $\b C \b C^\adjoint = \b C^\adjoint \b C =\b I_r \in \bbR^{r \times r}$. 
 For $\b C \in
\mathcal O(r)$, left-multiplying both sides of \eqref{eq:x_N-orthogonal-decomposition}   by $\b C \b \Lambda_M^{-1/2} \b P_M^\adjoint \canonicalProj_{M,N}$, with $M < N$, yields
\begin{align}
  \b C \b \psi_M & = \b C \b \Lambda_M^{-1/2} \b P_M^\adjoint \canonicalProj_{M,N} \b P_N \b \Lambda_N^{1/2} \b
  \psi_N + \b C \b \Lambda_M^{-1/2} \b P_M^\adjoint \canonicalProj_{M,N} \b Q_N \b Q_N^\adjoint \b X_N  
  \nonumber
  \\ &= \b D \b \psi_N + \b R \b X_N \in \bbR^r,
  \label{eq:C-psi_m-orthogonal-decomposition}
\end{align}
which is the orthogonal decomposition of $\b C \b \psi_M$ onto the span of $\b
\psi_N$ and its orthogonal complement, since the two
summands in the right-hand side of  \eqref{eq:C-psi_m-orthogonal-decomposition} are uncorrelated. Notice that~$\b D  \in \bbR^{r \times r}$ and  $\b R
 \in \bounded{\bH_N, \bbR^r}$ both depend on $\b C, M$, and $N$, which we emphasize by writing~$\b D = \b D[\b C, M, N] $ and  $\b R
= \b R[\b C, M, N] $.

The following Lemma gives a bound on the residual term $\b R[ \b C, M, N] \b X_N$ in
\eqref{eq:C-psi_m-orthogonal-decomposition}. 
\begin{lma}
  \label{lma:largest-eigenvalue-of-residual}
  The largest eigenvalue of the covariance matrix of~$\b R[ \b C, M, N] \b X_N$  is bounded by $\lambda_{N, r+1}/\lambda_{M, r}$.
  \begin{proof}
    Write   $\b A \succeq \b B$
if $\b A - \b B$ is non-negative definite.
 We know that 
 $$\identity_N \succeq \b Q_N \b Q_N^\adjoint\quad\text{ and}\quad\lambda_{N, r + 1} \b Q_N \b
    Q_N^\adjoint \succeq \b Q_N \b \Phi_N \b Q_N^\adjoint.$$
     Hence, $\lambda_{N, r+1} \identity_N
    \succeq \b Q_N \b \Phi_N \b Q_N^\adjoint$. Multiplying to the left by $\b C \b
    \Lambda_M^{-1/2} \b P_M^\adjoint  \canonicalProj_{M,N}$ and to the right by its adjoint, and using the
    fact that $\b \Phi_N = \b Q_N^\adjoint \b \Sigma_N \b Q_N$, we get 
    \begin{align*}
        \lambda_{N, r+1} \b C \b \Lambda_M^{-1} \b C^\adjoint 
        &\succeq \b C \b \Lambda_M^{-1/2} \b P_M^\adjoint \canonicalProj_{M,N} \b Q_N \b Q_N^\adjoint \b \Sigma_N \b Q_N \b Q_N^\adjoint \b P_M \canonicalProj_{M,N}^\adjoint \b \Lambda_M^{-1/2} \b C^\adjoint 
     \\ &= \b R \b \Sigma_N \b R^\adjoint.
    \end{align*}
  Lemma~\ref{lma:eigenvalue_inequalities} completes the proof since the largest eigenvalue on the left-hand side is~$\lambda_{N,r+1}/ \lambda_{M, r}$.
  \end{proof}
\end{lma}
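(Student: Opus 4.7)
The plan is to reduce the bound to a Loewner (positive-semidefinite) inequality on $\bH_N$ and then conjugate it by the operator that builds $\bR[\bC,M,N]$. Writing $\bR \defeq \bR[\bC,M,N]$ for brevity, the covariance of $\bR \bX_N$ is $\bR \bSigma_N \bR^\adjoint$ (everything is mean zero), so the claim is equivalent to bounding the operator norm of this $r\times r$ matrix by $\lambda_{N,r+1}/\lambda_{M,r}$.

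The first step I would carry out is to exploit the spectral splitting $\bSigma_N = \bP_N \bLambda_N \bP_N^\adjoint + \bQ_N \bPhi_N \bQ_N^\adjoint$ together with the orthogonality $\bQ_N^\adjoint \bP_N = 0$ guaranteed by the eigenbasis construction, which gives $\bQ_N \bQ_N^\adjoint \bSigma_N \bQ_N \bQ_N^\adjoint = \bQ_N \bPhi_N \bQ_N^\adjoint$. Because $\bR = \bC \bLambda_M^{-1/2} \bP_M^\adjoint \canonicalProj_{M,N} \bQ_N \bQ_N^\adjoint$ carries the factor $\bQ_N \bQ_N^\adjoint$ on its right, this identity lets me replace $\bSigma_N$ inside $\bR \bSigma_N \bR^\adjoint$ by $\bQ_N \bPhi_N \bQ_N^\adjoint$.

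Next I would establish the key inequality $\lambda_{N,r+1} \identity_N \succeq \bQ_N \bPhi_N \bQ_N^\adjoint$. Since $\bPhi_N$ is diagonal with entries $\lambda_{N,r+1}, \lambda_{N,r+2}, \ldots$ and those entries are, by the reordering convention introduced just before the lemma, all at most $\lambda_{N,r+1}$, I have $\bPhi_N \preceq \lambda_{N,r+1} \bI_{\ell_2}$. Conjugating by $\bQ_N$ yields $\bQ_N \bPhi_N \bQ_N^\adjoint \preceq \lambda_{N,r+1} \bQ_N \bQ_N^\adjoint$, and the completeness relation $\bP_N \bP_N^\adjoint + \bQ_N \bQ_N^\adjoint = \identity_N$ together with $\bP_N \bP_N^\adjoint \succeq 0$ gives $\bQ_N \bQ_N^\adjoint \preceq \identity_N$. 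Chaining the two bounds produces the desired inequality.

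The final step is to sandwich $\lambda_{N,r+1} \identity_N \succeq \bQ_N \bQ_N^\adjoint \bSigma_N \bQ_N \bQ_N^\adjoint$ between $\bC \bLambda_M^{-1/2} \bP_M^\adjoint \canonicalProj_{M,N}$ and its adjoint, which preserves the Loewner order. The right-hand side collapses exactly to $\bR \bSigma_N \bR^\adjoint$, while the left-hand side simplifies to $\lambda_{N,r+1} \bC \bLambda_M^{-1} \bC^\adjoint$ after using $\canonicalProj_{M,N} \canonicalProj_{M,N}^\adjoint = \identity_M$ (canonical projection followed by its adjoint inclusion) and $\bP_M^\adjoint \bP_M = \bI_r$ (the eigenvectors forming the columns of $\bP_M$ are orthonormal). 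Since $\bC$ is orthogonal, the largest eigenvalue of $\bC \bLambda_M^{-1} \bC^\adjoint$ equals the largest eigenvalue of $\bLambda_M^{-1}$, namely $1/\lambda_{M,r}$, so Lemma~\ref{lma:eigenvalue_inequalities} converts the Loewner bound into the required scalar bound $\lambda_{N,r+1}/\lambda_{M,r}$. The only real hazard in the argument is the domain bookkeeping around $\canonicalProj_{M,N}$ and $\bP_M$: one must track that $\canonicalProj_{M,N}\canonicalProj_{M,N}^\adjoint$ is the identity on $\bH_M$, not a projection on $\bH_N$, and that the reordering of the $\lambda_{N,j}$ still makes $\lambda_{N,r+1}$ the spectral top of $\bPhi_N$.
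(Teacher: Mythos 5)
Your proposal follows essentially the same route as the paper's proof: establish the Loewner bound $\lambda_{N,r+1}\,\identity_N \succeq \b Q_N \b \Phi_N \b Q_N^\adjoint$ via the two inequalities $\identity_N \succeq \b Q_N \b Q_N^\adjoint$ and $\b Q_N \b \Phi_N \b Q_N^\adjoint \preceq \lambda_{N,r+1}\b Q_N \b Q_N^\adjoint$, then conjugate by $\b C \b \Lambda_M^{-1/2} \b P_M^\adjoint \canonicalProj_{M,N}$ and identify the two sides with $\lambda_{N,r+1} \b C \b \Lambda_M^{-1} \b C^\adjoint$ and $\b R \b \Sigma_N \b R^\adjoint$, finishing with the minimax monotonicity of eigenvalues under the Loewner order. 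The only cosmetic difference is that you explicitly pre-compute $\b Q_N \b Q_N^\adjoint \b \Sigma_N \b Q_N \b Q_N^\adjoint = \b Q_N \b \Phi_N \b Q_N^\adjoint$ before conjugating, whereas the paper substitutes $\b \Phi_N = \b Q_N^\adjoint \b \Sigma_N \b Q_N$ after conjugating; the result is the same, and your bookkeeping of $\canonicalProj_{M,N} \canonicalProj_{M,N}^\adjoint = \identity_M$, $\b P_M^\adjoint \b P_M = \b I_r$, and the reordering convention making $\lambda_{N,r+1}$ the top of $\b \Phi_N$'s spectrum is correct.
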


Let us now take  covariances on both sides of
\eqref{eq:C-psi_m-orthogonal-decomposition}. This yields 
\[
 \b I_r = \b D \b D^\adjoint + \b R \b \Sigma_N \b R^\adjoint.
\]
Denoting by $\delta_i$ the $i$th largest eigenvalue of $\b D \b D^\adjoint$,
we have, by Lemmas~\ref{lma:largest-eigenvalue-of-residual} and~\ref{lma:eigenvalue_inequalities},  
\begin{equation}
  \label{eq:bound-on-delta_i}
  1 - \frac{\lambda_{N, r+1}}{\lambda_{M, r}} \leq \delta_i \leq 1.
\end{equation}
Thus, for $N > M \geq M_0$, all eigenvalues $\delta_i$ 
are strictly positive and, since $\lambda_{r+1} < \infty$ and~$\lambda_r = \infty$,
$\delta_i$ can be made arbitrarily close to $1$ by choosing $M_0$ large enough.
Denoting by $\bW \b \Delta^{1/2} \b V^\adjoint$ the singular decomposition of $\b D$,
where $\b \Delta$ is the diagonal matrix of $\b D \b D^\adjoint$'s eigenvalues~$\delta_1 \geq \delta_2 \geq
\cdots \geq \delta_r$,  define 
\begin{equation}
  \label{eq:defn-of-F}
  \b F  \defeq \b F[\b D] = \b F[\b C, M, N] = \bW \b V^\adjoint, \qquad \b D = \bW \b
  \Delta^{1/2} \b V^\adjoint.
\end{equation}
Note that \eqref{eq:bound-on-delta_i} implies that $\b F$ is well defined for $M,N$ large enough, and that~$\b F \!\in~\!\!\mathcal O(r)$.
The following lemma shows that $\b C \b \psi_M$ is well approximated by $\b F \b
\psi_N$.
\begin{lma}
  \label{lma:largest-eigenvalue-of-residual-with-F}
  For every $\vep > 0$, there exists an $M_\vep$ such that, for all $N > M \geq
  M_\vep$, $\b F = \b F[\b C, M, N]$ is well defined and the largest eigenvalue of the covariance of 
  \[ 
  \b C \b \psi_M - \b F \b \psi_N = \b C \b \psi_M - \b F[\b C, M, N] \b \psi_N
  \]
   is smaller than $\vep$ for all $N > M \geq M_\vep$.
  \begin{proof}
    First notice that it suffices to take $M_\vep > M_0$ for $\b F$ to be
    well defined. We have 
    \[
      \b C \b \psi_M  - \b F \b \psi_M = \b R \b X_N + (\b D - \b F) \b \psi_N,
    \]
    and, since the two terms on the right-hand side are uncorrelated, the covariance
    of their sum is the sum of their covariances. Denoting by $\b S$ the covariance operator of
    the left-hand side and by $\opnorm{\b S}$  the
     operator norm of $\b S$, and
    noting that 
    $$\b D - \b F = \bW (\b \Delta^{1/2} - \b I_r) \b V^\adjoint,$$ we get
    \begin{align}
      \opnorm{\b S} &\leq \opnorm{\b R \b \Sigma_N \b R^\adjoint} + \opnorm{\bW (\b
      \Delta^{1/2} - \b I_r) \b V^\adjoint \b V (\b \Delta^{1/2} - \b I_r) \bW^\adjoint }
      \nonumber
      \\ &= \opnorm{\b R \b \Sigma_N \b R^\adjoint} + \opnorm{\b \Delta^{1/2} - \b I_r}^2,
      \label{eq:bound-on-opnorm-S}
    \end{align}
    since $\bW$ and $\b V$ are unitary matrices. The first summand of
    \eqref{eq:bound-on-opnorm-S} can be made smaller than~$\vep/2$ for $M$ large
    enough (by Lemma~\ref{lma:largest-eigenvalue-of-residual}), and the second summand
    can be made smaller than $\vep/2$ in view of \eqref{eq:bound-on-delta_i}.
  \end{proof}
\end{lma}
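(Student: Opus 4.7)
My plan is to start from the decomposition \eqref{eq:C-psi_m-orthogonal-decomposition} already established and rewrite the quantity of interest as
\[
    \b C \b \psi_M - \b F \b \psi_N = \b R[\b C, M, N] \b X_N + (\b D - \b F) \b \psi_N,
\]
so that the analysis splits into two independent pieces. Since \eqref{eq:C-psi_m-orthogonal-decomposition} is an orthogonal decomposition, $\b R \b X_N$ is uncorrelated with every linear function of $\b \psi_N$; hence the covariance operator $\b S$ of the left-hand side equals the sum of the covariances of the two summands, and $\opnorm{\b S}$ is bounded by the sum of the two individual operator norms. It will then suffice to show that each piece can be made smaller than $\vep/2$ uniformly in $N > M$ by taking $M$ large enough.

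First I would verify that $\b F$ is well defined for $M$ large. Its definition through the SVD $\b D = \b W \b \Delta^{1/2} \b V^\adjoint$ requires that all the singular values of $\b D$ be strictly positive, which the lower bound $\delta_i \geq 1 - \lambda_{N,r+1}/\lambda_{M,r}$ from \eqref{eq:bound-on-delta_i}, combined with $\lambda_{r+1} < \infty$ and $\lambda_r = \infty$, guarantees for all $N > M \geq M_0$ with $M_0$ large enough. For the residual, Lemma~\ref{lma:largest-eigenvalue-of-residual} already gives that the largest eigenvalue of the covariance of $\b R \b X_N$ is at most $\lambda_{N,r+1}/\lambda_{M,r}$, which can be made smaller than $\vep/2$ by taking $M$ large. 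For the matrix term, observe that by the construction of $\b \psi_N$ in \eqref{eq:defn-of-psi^N} one has $\cov(\b \psi_N) = \b I_r$, so the covariance of $(\b D - \b F) \b \psi_N$ equals $(\b D - \b F)(\b D - \b F)^\adjoint$, whose operator norm is $\opnorm{\b D - \b F}^2$. Using the SVD of $\b D$ and the unitarity of $\b W$ and $\b V$, this simplifies to $\opnorm{\b \Delta^{1/2} - \b I_r}^2 = \max_i (\sqrt{\delta_i} - 1)^2$, which is made arbitrarily small by \eqref{eq:bound-on-delta_i}.

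No single step is genuinely hard; the main subtlety is to ensure that the choice of $M_\vep$ works uniformly in all $N > M_\vep$. This is precisely what the two-sided bound \eqref{eq:bound-on-delta_i} and the uniform estimate $\lambda_{N,r+1} \leq \lambda_{r+1} < \infty$ deliver, since both the residual bound and the bound on the $\delta_i$ depend on $M$ and $N$ only through the ratio $\lambda_{N,r+1}/\lambda_{M,r}$. Combining the two $\vep/2$ bounds yields $\opnorm{\b S} < \vep$, which is the claim.
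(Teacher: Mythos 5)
Your proposal is correct and follows essentially the same route as the paper: the same decomposition into $\b R \b X_N + (\b D - \b F)\b\psi_N$, uncorrelatedness of the two pieces, Lemma~\ref{lma:largest-eigenvalue-of-residual} for the residual, and the two-sided bound \eqref{eq:bound-on-delta_i} for $\opnorm{\b\Delta^{1/2}-\b I_r}^2$. You also spell out the intermediate fact $\cov(\b\psi_N)=\b I_r$, which the paper leaves implicit, and write $\b F\b\psi_N$ where the paper's proof has a typo ($\b F\b\psi_M$).
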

A careful inspection of the proofs of these results shows that they hold for all
values of~$t$, i.e., writing $\b \psi_{N,t} = \b \Lambda_N^{-1/2} \b P_N^\adjoint \b
X_{N,t}$, the result of Lemma~\ref{lma:largest-eigenvalue-of-residual-with-F} holds
for the diffe\-rence~$\b D \b \psi_{M,t} - \b F \b \psi_{N,t}$,  with a value of~$M_\vep$ that
does not depend on $t$, by stationarity.  Recall $\mathcal D_t$, defined in
Theorem~\ref{thm:factormodel_uniqueness}. The following results provides a
constructive proof of the existence of the process $\b u_t$.
\begin{prop}
\label{prop:construction-of-basis-of-D}
  There exists an $r$-dimensional second-order stationary process~$\b u_t =
  (u_{1t}, \ldots, u_{rt})^\tp \in \bbR^r$ such that
  \begin{enumerate}
      \item[(i)] $u_{lt} \in \mathcal D_t$ for $l = 1,\ldots, r$ and all $t \in \bbZ$, 
    \item[(ii)] $\eee{ \b u_t \b u_t^\tp} = \b I_r$, $\b u_t$ is second-order  stationary, and $\b u_t$ and $\bX_{N,t}$ are second-order  co-stationary.
  \end{enumerate}
  \begin{proof}
    Recall that $M_\vep$ is defined in
    Lemma~\ref{lma:largest-eigenvalue-of-residual-with-F}. The idea of the proof is
    that~$\b \psi_{M,t}$ is   converging after suitable rotation.
    \begin{description}
      \item[Step 1:] Let $s_1 \defeq M_{1/2^2}$,  let  $\b F_1 \defeq{ \b I_r}$, and let 
              $\b u^{1,t} \defeq \b F_1 \b \psi_{s_1,t}$.
      \item[Step 2:] Let $s_2 \defeq \max\left\{ s_1, M_{1/2^4} \right\}$, let $\b F_2
        \defeq \b F[\b F_1, s_1, s_2]$, and let $ \b u^{2,t} \defeq \b F_2 \b \psi_{s_2, t}$.

        $\quad \vdots$

      \item[Step $q+1$:] Let $s_{q+1} \defeq \max \left\{ s_q, M_{1/s^{2(q+1)} } \right\}$,
        let
         $\b F_{q+1} \defeq \b F[\b F_q, s_q, s_{q+1}]$, and\linebreak  let~$\b u^{q+1, t} \defeq \b
        F_{q+1} \b \psi_{s_{q+1},t}$.

        $\quad \vdots$

    \end{description}
    Denoting by $u_l^{q,t}$ the $l$th coordinate
    of $\b u^{q,t}$, we have
    \[
      \ee \rpnorm{u_l^{q, t} - u_l^{q+1,t} }^2  = \ee \rpnorm{ \b F_{q} \b \psi_{s_q, t} - \b F[ \b
      F_q, s_q, s_{q+1}] \b \psi_{s_{q+1}, t} }^2 \leq \frac{1}{2^{2q}},
    \]
    and thus 
    \[
        \sqrt{\ee \rpnorm{ u_l^{q, t} - u_l^{q+h, t} }^2 } \leq \sum_{j=1}^{h} \sqrt{\ee \rpnorm{ u_l^{q+j-1, t}
            - u_l^{q+j,t} }^2 } \leq \frac{1}{2^{q-1}}.
    \]
    Therefore $(u_l^{q,t})_{q \geq 1}$ is a Cauchy sequence and consequently converges in
    $\LLP$ to some limit~$u_{lt}$, $l=1,\ldots,r$. 
    Note that $\eee{ \b u^{q,t} (\b u^{q,t})^\tp } = \b I_r$ for each $q$ since~$\b
    F_q \in~\! \mathcal O(r)$,  so that
    \[
        \eee{ \b u_t \b u_t^\tp } = \lim_{q \rightarrow \infty} \eee{ \b u^{q,t} (\b
        u^{q,t})^\tp } =  \b I_r.
    \]
    Furthermore, $\eee{ \b u_t \b u_{t+h}^\tp }$ is well defined (and finite) for every
    $h \in \bbZ$, and 
    \[
      \ee \b u_t \b u_{t+h}^\tp = \lim_{q \rightarrow \infty} \ee \b u^{q,t} (\b
      u^{q,t+h})^\tp = \lim_{q \rightarrow \infty} \b F_q \eee{ \b  \psi_{s_q, t} (\b \psi_{s_q,
      t + h})^\tp } \b F_q^\tp. 
    \]
    The term inside the limit is independent of $t$ (since $\bX_{N,t}$ is
    second-order stationary), and hence $\ee \b u_t \b u_{t+h}^\tp$ does not depend on
    $t$, and $(\b u_t)_{t \in \bbZ}$ thus is second-order stationary. Furthermore,
    \[
        \eee{ x_{it} \tensor \b u_{t+s} } = \lim_{q \to \infty} \eee{ x_{it} \tensor \b u^{q, t+s} }
        = \lim_{q \rightarrow \infty }  \eee{ x_{it} \tensor \bX_{s_q, t+s}} \b P_{s_q}^\adjoint \b \Lambda_{s_q}^{-1} \b F_q^\adjoint,
    \]
    and since the term inside the limit does not depend on $t$, it follows that $\b
    u_t$ is co-stationary with $\bX_{N,t}$, for all $N$.
    
    Let us now show that
    $u_{lt} \in \mathcal D_t$. We have 
    \begin{align*}
        \b u^{q,t} = \b F_q \b \psi_{s_q, t} &= \begin{pmatrix}
                    \row_1(\b F_q) \b \psi_{s_q, t} 
                    \\ \vdots
                    \\ \row_r(\b F_q) \b \psi_{s_q, t} 
                \end{pmatrix} 
                = \begin{pmatrix}
                    \sc{\col_1(\b F_q^\tp), \b \psi_{s_q, t}}
                    \\ \vdots
                    \\ \sc{\col_r(\b F_q^\tp), \b \psi_{s_q, t}}
                \end{pmatrix}
                \\ &
                  = \begin{pmatrix}
                    \sc{\col_1(\b F_q^\tp), \b \Lambda^{-1/2}_{s_q} \b P_{s_q}^\adjoint \bX_{N,t}}
                    \\ \vdots
                    \\ \sc{\col_r(\b F_q^\tp), \b \Lambda^{-1/2}_{s_q} \b P_{s_q}^\adjoint \bX_{N,t}}
                \end{pmatrix}
                  = \begin{pmatrix}
                    \sc{\b P_{s_q} \b \Lambda_{s_q}^{-1/2}  \col_1(\b F_q^\tp),  \bX_{N,t}}
                    \\ \vdots
                    \\ \sc{\b P_{s_q} \b \Lambda_{s_q}^{-1/2}  \col_r(\b F_q^\tp),  \bX_{N,t}}
                \end{pmatrix},
%
    \end{align*}
    where $\row_l(A)$ and $\col_l(A)$ denote the $l$th row   and the $l$th column of $A$, respectively.
    We   need to show that the norm of $\b P_{s_q} \b \Lambda_{s_q}^{-1/2} \col_l(\b F_q)$, $l=1,\ldots, r$,  converges to zero. We have
    \begin{align*}
        \hnorm{\b P_{s_q} \b \Lambda_{s_q}^{-1/2}  \col_l(\b F_q^\tp)}^2
        &= \sc{\b P_{s_q} \b \Lambda_{s_q}^{-1/2}  \col_l(\b F_q^\tp), \b P_{s_q} \b \Lambda_{s_q}^{-1/2}  \col_l(\b F_q^\tp)}
     \\ &= \sc{\b \Lambda_{s_q}^{-1/2}  \col_l(\b F_q^\tp),\b P_{s_q}^\adjoint  \b P_{s_q} \b \Lambda_{s_q}^{-1/2}  \col_l(\b F_q^\tp)}
     \\ &= \sc{\b \Lambda_{s_q}^{-1/2}  \col_l(\b F_q^\tp), \b \Lambda_{s_q}^{-1/2}  \col_l(\b F_q^\tp)}
     \\ &= (\b F_q \b \Lambda_{s_q}^{-1} \b F_q^\tp )_{ll}
   \leq \opnorm{\b F_q \b \Lambda_{s_q}^{-1} \b F_q^\tp }
     = \opnorm{\b \Lambda_{s_q}^{-1}}
= \lambda_{s_q,r}^{-1}.
    \end{align*}
    Since $\lim_{q \rightarrow \infty} \lambda_{s_q, r} = \infty$, it follows that $u_{lt} \in \mathcal D_t$.
  \end{proof}
\end{prop}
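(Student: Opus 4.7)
The idea is to construct $\bu_t$ as an $L^2$-limit of orthogonal transformations of the normalized principal-component scores $\b\psi_{N,t}=\b\Lambda_N^{-1/2}\b P_N^\adjoint \bX_{N,t}\in\bbR^r$. These scores are only defined up to a rotation, so I will inductively choose a subsequence $s_1<s_2<\cdots$ and rotations $\b F_q\in\mathcal O(r)$ so that the sequence $\bu^{q,t}\defeq \b F_q\b\psi_{s_q,t}$ is Cauchy in $L^2(\Omega,\bbR^r)$. Lemma~\ref{lma:largest-eigenvalue-of-residual-with-F} is exactly the tool that allows this: given $\b F_q$ already defined on scale $s_q$, it produces, for every larger scale $N>s_q$ sufficiently large, a rotation $\b F[\b F_q,s_q,N]$ whose associated residual covariance is arbitrarily small. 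The construction is therefore essentially a standard ``diagonalization'' argument.

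\textbf{Step 1 (construction).} Set $s_1=M_{1/4}$ and $\b F_1=\b I_r$, then, inductively, pick $s_{q+1}\ge\max\{s_q,M_{2^{-2(q+1)}}\}$ and $\b F_{q+1}=\b F[\b F_q,s_q,s_{q+1}]$, where $M_\vep$ comes from Lemma~\ref{lma:largest-eigenvalue-of-residual-with-F}. By that lemma, the covariance operator of $\bu^{q,t}-\bu^{q+1,t}$ has operator norm at most $2^{-2q}$, whence $\ee\rpnorm{\bu^{q,t}-\bu^{q+1,t}}^2\le r\cdot 2^{-2q}$. Summing a telescoping series shows that, for each fixed $l$ and $t$, the sequence $(u^{q,t}_l)_{q\ge1}$ is Cauchy in $\LLP$. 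Let $u_{lt}$ denote its limit and set $\bu_t=(u_{1t},\dots,u_{rt})^\tp$.

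\textbf{Step 2 (property (ii)).} Since each $\b F_q$ is orthogonal and $\eee{\b\psi_{s_q,t}\b\psi_{s_q,t}^\tp}=\b I_r$, one has $\eee{\bu^{q,t}(\bu^{q,t})^\tp}=\b I_r$ for every $q$; continuity of inner products under $L^2$-convergence gives $\eee{\bu_t\bu_t^\tp}=\b I_r$. By second-order stationarity of $\{\bX_{N,t}\}$, the matrix-valued map $t\mapsto \eee{\bu^{q,t}(\bu^{q,t+h})^\tp}= \b F_q\eee{\b\psi_{s_q,t}\b\psi_{s_q,t+h}^\tp}\b F_q^\tp$ is independent of $t$; passing to the limit gives second-order stationarity of $\bu_t$. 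The same argument applied to $\eee{x_{it}\tensor\bu^{q,t+h}}=\eee{x_{it}\tensor \bX_{s_q,t+h}}\b P_{s_q}^\adjoint\b\Lambda_{s_q}^{-1/2}\b F_q^\adjoint$ yields co-stationarity with $\bX_{N,t}$.

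\textbf{Step 3 (property (i); main obstacle).} The delicate point is showing $u_{lt}\in\mathcal D_t$, because the definition of $\mathcal D_t$ requires a single sequence $(\ba_N)_{N\ge1}\subset\bH_N$ with $\hnorm{\ba_N}_{\bH_N}\to0$ such that $\sc{\ba_N,\bx_t}_{\bH_N}\to u_{lt}$ in $\LLP$ as $N\to\infty$, whereas the construction so far only provides convergence along the subsequence $\{s_q\}$. I handle this by interpolation: rewrite
\[
u^{q,t}_l=\sc{\ba^{(q)}_l,\bX_{s_q,t}}_{\bH_{s_q}},\qquad \ba^{(q)}_l\defeq \b P_{s_q}\b\Lambda_{s_q}^{-1/2}\col_l(\b F_q^\tp)\in\bH_{s_q},
\]
and compute $\hnorm{\ba^{(q)}_l}_{\bH_{s_q}}^2=(\b F_q\b\Lambda_{s_q}^{-1}\b F_q^\tp)_{ll}\le\lambda_{s_q,r}^{-1}\to0$, using Lemma~\ref{lma:eigenvalue_inequalities} and the hypothesis $\lambda_r^x=\infty$. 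For arbitrary $N$, define $\ba_N\in\bH_N$ to be the embedding of $\ba^{(q)}_l$ into $\bH_N$ (padded with zeros in components $s_q+1,\dots,N$) for the unique $q$ with $s_q\le N<s_{q+1}$. Then $\sc{\ba_N,\bX_{N,t}}_{\bH_N}=u^{q(N),t}_l\to u_{lt}$ in $\LLP$ and $\hnorm{\ba_N}_{\bH_N}=\hnorm{\ba^{(q(N))}_l}_{\bH_{s_{q(N)}}}\to0$, which gives $u_{lt}\in\mathcal D_t$. The harder conceptual step here is realizing that the top-$r$ eigenvalue divergence (Lemma~\ref{lma:eigenvalue_inequalities}) is exactly what makes the normalizing factors $\lambda_{N,l}^{-1/2}$ inside $\b\psi_{N,t}$ drive the coefficients' norm to zero, even though the scores themselves remain $O_{\rm P}(1)$.
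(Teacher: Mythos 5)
Your proof is correct and follows essentially the same route as the paper's: the inductive construction of $s_q$ and $\b F_q$ via Lemma~\ref{lma:largest-eigenvalue-of-residual-with-F}, the telescoping Cauchy estimate, the passage to the $L^2$-limit for $\eee{\bu_t\bu_t^\tp}=\b I_r$, stationarity, and co-stationarity, and the norm computation $\hnorm{\b P_{s_q}\b\Lambda_{s_q}^{-1/2}\col_l(\b F_q^\tp)}^2\le\lambda_{s_q,r}^{-1}\to0$ all match. Your Step~3 goes slightly further than the paper by explicitly zero-padding $\ba^{(q)}_l$ to produce $\ba_N$ for \emph{every} $N$ rather than only along the subsequence $\{s_q\}$, which cleanly closes a small gap that the paper leaves implicit when verifying $u_{lt}\in\mathcal D_t$.
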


We now know that each space $\mathcal D_t$ has dimension at least $r$. The following
result tells us that this dimension is exactly $r$.
\begin{lma}
  \label{lma:dim-Dt-exactly-r}
  The dimension of $\mathcal D_t$ is $r$, and $\left\{ u_{1t}, \ldots, u_{rt}
  \right\}$ is an orthonormal basis for~it. 
  \begin{proof}
    We 
     know that the dimension of $\mathcal D_t$ is at least $r$, and that~$u_{1t}, \ldots, u_{rt} \in \mathcal D_t$ are orthonormal. To complete the proof, we only need to show 
    that the dimension of~$\mathcal D_t$ is less than or equal to $r$. 
    Dropping the index $t$ to simplify notation, assume that~$\mathcal D$ has dimension larger than~$r$. 
   Then,  there exists $d_1, \ldots, d_{r+1} \in \mathcal D$ orthonormal, with~$d_j = \lim_{N \rightarrow \infty}
    d_{jN}$ in~$\LLP$, where $d_{jN} = \sc{\b v_{jN}, \b X_N }$ abd~$\b v_{jN} \in \bH_N$ such that~$\hnorm{ \b v_{jN}
    }^2 \rightarrow 0$ as $N \rightarrow \infty$. 
    
    Let~$\b A^{(N)}$ be the   $(r+1) \times (r+1)$ matrix with $(i,j)$th
    entry  $ A^{(N)}_{ij} = \eee{ d_{iN}d_{jN} }$. On the one
    hand,   $\b A^{(N)} \rightarrow \b I_{r+1}$. On the other hand,  
    \[
        A^{(N)}_{ij} = \sc{\b v_{iN}, \b \Sigma_N \b v_{jN}} = 
        \sc{ \b v_{iN}, \b P_N \b \Lambda_N \b P_N^\adjoint \b v_{jN}}  +
        \sc{ \b v_{iN}, \b Q_N \b \Phi_N \b Q_N^\adjoint \b v_{jN}}
    \]
    and, from the Cauchy--Schwarz inequality,  
    \begin{align*}
        | \sc{\b v_{iN}, \b Q_N \b \Phi_N \b Q_N^\adjoint \b v_{jN}} | & \leq
       \opnorm{\b Q_N}^2 \opnorm{\b \Phi_N} \hnorm{\b v_{jN}}\hnorm{\b v_{iN}} 
       \\ & \leq \lambda_{N, r+1} \hnorm{\b v_{jN}}\hnorm{\b v_{iN}} 
        \rightarrow 0,
    \end{align*}
    as $N \rightarrow \infty$. Therefore, the limit of $\b A^{(N)}$ is the same as
    the limit of $\b B^{(N)}$ with  $(i,j)$th entry~$B^{(N)}_{ij} = \sc{\b v_{iN}, \b P_N \b \Lambda_N \b P_N^\adjoint \b
    v_{jN}}$. But this is impossible since~$\b B^{(N)}$ is of rank at most $r$
    for all $N$. Therefore, the	dimension of $\mathcal D$ is at most $r$. 
  \end{proof}
\end{lma}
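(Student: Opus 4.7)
The plan is to combine Proposition~\ref{prop:construction-of-basis-of-D}, which has already produced $r$ orthonormal elements $u_{1t}, \ldots, u_{rt} \in \mathcal D_t$, with a rank argument establishing the matching upper bound $\dim(\mathcal D_t) \leq r$. Once that upper bound is in hand, the $r$ orthonormal elements automatically form a basis of $\mathcal D_t$, and both assertions of the lemma follow simultaneously. For notational cleanliness I would drop the index $t$, as all arguments are at a fixed time.

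To establish the upper bound, I would argue by contradiction. Suppose $\mathcal D_t$ contains $r+1$ orthonormal elements $d_1, \ldots, d_{r+1}$. By the definition of $\mathcal D_t$, each $d_j$ is the $\LLP$-limit of $d_{jN} \defeq \sc{\b v_{jN}, \b X_N}$ for some $\b v_{jN} \in \bH_N$ with $\hnorm{\b v_{jN}} \to 0$. Continuity of the inner product in $\LLP$ then forces the $(r+1)\times(r+1)$ covariance matrix $A^{(N)}$, with entries $A^{(N)}_{ij} \defeq \eee{d_{iN} d_{jN}} = \sc{\b v_{iN}, \b \Sigma_N \b v_{jN}}$, to converge to $\b I_{r+1}$.

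The key step, and the main obstacle, is to show that the part of $\b \Sigma_N$ below the top $r$ eigenvalues contributes negligibly to $A^{(N)}$. Using the decomposition $\b \Sigma_N = \b P_N \b \Lambda_N \b P_N^\adjoint + \b Q_N \b \Phi_N \b Q_N^\adjoint$ from \eqref{eq:eigenvector-decompositions}, the Cauchy--Schwarz inequality bounds the contribution of the second summand to $A^{(N)}_{ij}$ in absolute value by $\opnorm{\b Q_N \b \Phi_N \b Q_N^\adjoint}\hnorm{\b v_{iN}}\hnorm{\b v_{jN}} \leq \lambda_{N,r+1}\hnorm{\b v_{iN}}\hnorm{\b v_{jN}}$. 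This vanishes in the limit since the hypothesis of the theorem, via remark (e) after Definition~\ref{def:static-factor-model} and Lemma~\ref{lma:eigenvalue_inequalities}, yields $\lambda^x_{r+1} < \infty$, which uniformly bounds $\lambda_{N,r+1}$, while $\hnorm{\b v_{jN}} \to 0$ by construction. Consequently $A^{(N)}$ shares its limit with $B^{(N)}_{ij} \defeq \sc{\b v_{iN}, \b P_N \b \Lambda_N \b P_N^\adjoint \b v_{jN}}$, and that limit must still be $\b I_{r+1}$.

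The contradiction is then immediate from a rank consideration: each $B^{(N)}$ is a Gram-type matrix built from $r+1$ vectors lying in the $r$-dimensional range of $\b \Lambda_N^{1/2} \b P_N^\adjoint$, so $\mathrm{rank}(B^{(N)}) \leq r$ for every $N$, yet $\b I_{r+1}$ has rank $r+1$. Because the entrywise limit of rank-at-most-$r$ matrices has rank at most $r$ (every $(r+1)\times(r+1)$ minor of $B^{(N)}$ vanishes and hence vanishes in the limit), this is impossible. The indispensable ingredient throughout is the boundedness of $\lambda_{N, r+1}$: without it, the tail eigenvalues of $\b \Sigma_N$ could generate spurious extra dimensions in $\mathcal D_t$ despite $\hnorm{\b v_{jN}} \to 0$, and the rank obstruction would break down.
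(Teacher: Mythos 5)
Your proof is correct and follows essentially the same route as the paper's: contradiction via $r+1$ orthonormal elements, splitting $\b\Sigma_N$ using \eqref{eq:eigenvector-decompositions}, Cauchy--Schwarz with $\lambda_{N,r+1}$ to kill the tail, and a rank obstruction on $\b B^{(N)}$. The only addition is your explicit justification (via vanishing minors) that entrywise limits of rank-$\leq r$ matrices have rank $\leq r$, which the paper leaves implicit.
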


Consider the 
 orthogonal decomposition
$$
    X_{it} = \gamma_{it} + \delta_{it},\quad \text{with}\quad
     \gamma_{it} = \proj_{H_i}(X_{it} | \mathcal D_t)\quad \text{and}\quad 
 \delta_{it} = X_{it} - \gamma_{it},
$$
 of $X_{it}$ into its orthogonal  projection $\proj_{H_i}( \cdot | \mathcal{D}_t )$ onto $\vspan_{H_i}(\mathcal D_t)$ and its
orthogonal complement---see Appendix~\ref{sec:orthogonal_projections} for   definitions. 
Since $\gamma_{it} \in \vspan_{H_i}(\mathcal D_t)$, we can write it as a linear combination
 \[
    \gamma_{it} = b_{i1}u_{1t} + \cdots + b_{ir} u_{rt},
\]
(with coefficients in $H_i$) of $u_{1t}, \ldots, u_{rt} \in \bbR$,
where $b_{ij} = \ee \gamma_{it} u_{jt} = \ee X_{it} u_{jt} \in H_i$ does not depend on $t$ in view of the 
co-stationarity of $\b u_t$ and $\bX_{N,t}$.

The only technical result needed 
 to prove
Theorem~\ref{thm:equiv_factormodel_eigenvalues} is that $\b \xi$ is idiosyncratic,
that is,~$\lambda^\xi_1 < \infty$. The rest of this section is devoted to the derivation of this
result.


Although $\b \psi_N$ does not necessarily converge, we know intuitively that
the projection onto the entries of $\b \psi_N$ should somehow converge. The
following notion and result formalize this intuition.
\begin{defn}
    \label{defn:cauchy-sequence-subspaces}  
    Let $(\b v_N)_N \subset{\bbR^r}$ be an $r$-dimensional process with mean zero and~$\eee{ \b v_N \b v_N^\tp} =
    \b I_r$. Consider the orthogonal decomposition
    \[
        \b v_M = \b A_{MN}  \b v_N + \b \rho_{MN},
    \]
    and let $\cov(\b \rho_{MN})$ be the covariance matrix of $\b \rho_{MN}$. We say that $\{ \b v_N \}$
    generates a \emph{Cauchy sequence of subspaces} if, for all $\vep > 0$, there is an
    $M_{\vep} \geq 1$ such that  $\trace[\cov(\b \rho_{MN})] < \vep$    for all~$ N$ and $M > M_{\vep}$.
    .
\end{defn}


\begin{lma}
    \label{lma:projection-onto-cauchy-sequence-of-space-converge}
    Let $Y \in \LLPH{H}$. If $\{ \b v_N\} \subset L^2_{\bbR^r}(\Omega)$ generates a Cauchy sequence of subspaces and 
    $
    Y_N = \proj_H(Y | \b v_N),
    $ 
    then $\{ Y_N \}$ converges in $\LLPH{H}$.
    \begin{proof}
        Without any loss of generality, we can assume that $\ee \b v_N \b v_N^\tp = \b I_r$.
        Let 
        $$
        Y =    Y_N + r_N  = \b c_{N} \b v_N + r_N\quad\text{and}\quad 
        Y_M + r_M
        = \b c_{M } \b v_M + r_M
        $$          be orthogonal decompositions, with $\b c_{k} \defeq \sum_{l=1}^r b_{kl} \tensor \bz_l \in \bounded{\bbR^r, H}$, where $\bz_l$ is the $l$-th canonical basis vector of $\bbR^r$, $b_{kl} \in H$, $k=N, M$ 	and $l=1,\ldots, r$. We therefore get
        \[
            Y_N - Y_M = \b c_{N} \b v_N  - \b c_{M } \b v_M = r_M - r_N.
        \]
        The squared norm of the left-hand side can be written as the inner product between the
        middle and right expressions. Namely, 
        \begin{align*}
            \ee \hnorm{Y_N - Y_M}^2 &= \ee \sc{ \b c_{N} \b v_N - \b c_{M } \b v_M, r_M - r_N}
                                \\  &= \ee \sc{\b c_{N  }\b v_N, r_M} + \ee \sc{\b c_{M } \b v_M, r_N}
                                \\  &=: S_{1,NM} + S_{2,MN},\ \ \text{say,}
        \end{align*}
        where the cross-terms are zero by orthogonality.
        Since $\b v_N = \b A_{NM} \b v_M + \b \rho_{NM}$ and since~$\b v_M$ is
        uncorrelated with $r_M$,
        \[
            S_{1,NM} = \ee \sc{\b c_{N} \b \rho_{NM}, r_M};
        \]
        the Cauchy--Schwarz inequality, along with simple matrix algebra and matrix norm inequalities (see Section~\ref{sec:schatten_Norms}), then yields
        \begin{equation}
            \label{eq:bound-on-Smn}
            \left| S_{1,NM} \right|^2 \leq \trace\left[  \b c_{N} ^\adjoint \b c_{N}\right]
            {\ee \hnorm{r_M}^2}  \trace\left[\cov(\b \rho_{NM}) \right].
        \end{equation}
        Notice that 
        \[
            \ee \hnorm{Y}^2 = \trace[  \b c_{N} ^\adjoint \b c_{N}] + \ee \hnorm{r_N}^2 = \trace[  \b c_{M} ^\adjoint \b c_{M}] + \ee \hnorm{r_M}^2.
        \]
        Therefore, the first two terms of the right-hand side in
        \eqref{eq:bound-on-Smn} are bounded and, since~$\b v_N$ generates a Cauchy
        sequence of subspaces, $|S_{1,NM}|$ can be made arbitrarily small for large
        $N$ and $M$. A similar argument holds for $|S_{2,MN}|$ and, therefore, $\{ Y_N \}
        \subset \LLPH{H}$ is a Cauchy sequence, and thus converges, as was to be shown.
    \end{proof}
\end{lma}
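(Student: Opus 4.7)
The plan is to exploit the Hilbert space structure of $\LLPH{H}$ (which is complete for the Bochner inner product $\langle X, Y\rangle \defeq \ee \sc{X,Y}_H$) and reduce convergence of $\{Y_N\}$ to the Cauchy property. So I would show that for every $\eg > 0$ there exists $M_\eg$ such that $\ee \hnorm{Y_N - Y_M}_H^2 < \eg$ for all $N, M > M_\eg$, which delivers the conclusion by completeness.

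First, I would write the two orthogonal decompositions $Y = Y_N + r_N = Y_M + r_M$, where $r_N \perp \vspan_H(\b v_N)$ and $r_M \perp \vspan_H(\b v_M)$ in $\LLPH{H}$, and write $Y_N = \b c_N \b v_N$, $Y_M = \b c_M \b v_M$ for some operators $\b c_N, \b c_M \in \bounded{\bbR^r, H}$ (this is possible because $\vspan_H(\b v_N)$ is finite-dimensional, via~\eqref{eq:definition_spanH}). Then $Y_N - Y_M = r_M - r_N$, and taking the inner product of the two sides,
\[
\ee \hnorm{Y_N - Y_M}_H^2 = \ee \sc{\b c_N \b v_N - \b c_M \b v_M, r_M - r_N}_H = \ee \sc{\b c_N \b v_N, r_M}_H + \ee \sc{\b c_M \b v_M, r_N}_H,
\]
where the other two cross-terms vanish by orthogonality (since $\b c_N \b v_N \in \vspan_H(\b v_N)$ is orthogonal to $r_N$, and similarly for $M$). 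By symmetry, it suffices to bound one of these terms, say $S_{NM} \defeq \ee \sc{\b c_N \b v_N, r_M}_H$.

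The crux is to transfer the Cauchy-subspace assumption on $\{\b v_N\}$ into control of $S_{NM}$. Using the orthogonal decomposition $\b v_N = \b A_{NM} \b v_M + \b \rho_{NM}$ from Definition~\ref{defn:cauchy-sequence-subspaces}, and the fact that $\b v_M$ is uncorrelated with $r_M$, one gets $S_{NM} = \ee \sc{\b c_N \b \rho_{NM}, r_M}_H$. Applying Cauchy--Schwarz in $\LLPH{H}$ and then a standard operator-norm/Schatten-norm inequality (see Section~\ref{sec:schatten_Norms}) should yield a bound of the form
\[
\rpnorm{S_{NM}}^2 \leq \trace[\b c_N^\adjoint \b c_N] \cdot \ee \hnorm{r_M}_H^2 \cdot \trace[\cov(\b \rho_{NM})].
\]
Since $\trace[\b c_N^\adjoint \b c_N] \leq \ee \hnorm{Y}_H^2$ and $\ee \hnorm{r_M}_H^2 \leq \ee \hnorm{Y}_H^2$ by Pythagoras, the first two factors are uniformly bounded, and the last factor can be made arbitrarily small by the Cauchy-subspace property. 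A symmetric argument handles $\ee \sc{\b c_M \b v_M, r_N}_H$, completing the Cauchy argument.

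The main obstacle I anticipate is the inner-product / Cauchy--Schwarz step in the Hilbert space $\LLPH{H}$: one has to carefully set up the right bilinear pairing and the right operator norm on $\b c_N$ so that the product decouples into (bounded) $\trace[\b c_N^\adjoint \b c_N]$ times $\ee\hnorm{r_M}_H^2$ times the ``small'' factor $\trace[\cov(\b \rho_{NM})]$. Once this inequality is in place, the rest is a routine $\eg/2 + \eg/2$ argument using the definition of a Cauchy sequence of subspaces together with Pythagorean identities.
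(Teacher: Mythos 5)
Your proposal is correct and follows essentially the same route as the paper's proof: the same orthogonal decompositions of $Y$ onto $\vspan_H(\b v_N)$ and $\vspan_H(\b v_M)$, the same cross-term identity $\ee\hnorm{Y_N - Y_M}^2 = \ee\sc{\b c_N \b v_N, r_M} + \ee\sc{\b c_M \b v_M, r_N}$, the same reduction of each term via $\b v_N = \b A_{NM}\b v_M + \b\rho_{NM}$, and the same Cauchy--Schwarz / Pythagoras bounding. The step you flag as the "main obstacle'' is indeed exactly inequality~\eqref{eq:bound-on-Smn} in the paper, so your assessment of where the real work lies is accurate.
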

We now show that the process $\{ \b \psi_N \}$, defined in \eqref{eq:defn-of-psi^N}, generates a Cauchy
sequence of subspaces. 
\begin{lma}
    \label{lma:psi-generates-cauchy-sequence-of-spaces} The process 
    $\{ \b \psi_N \}$ generates a Cauchy sequence of subspaces.
   \begin{proof}
       For $N > M$, we already have the orthogonal decomposition
       \begin{equation}
           \label{eq:decomposition-of-psi-into-psi-and-rho} 
           \b \psi_M = \b D \b \psi_N + \b \rho_{MN},
       \end{equation}
       with $\b D = \b \Lambda^{-1/2}_M \b P_M^\adjoint \canonicalProj_{M,N} \b P_N \b \Lambda_N^{1/2}$.
       Lemma~\ref{lma:largest-eigenvalue-of-residual} gives $\trace( \cov(\b
       \rho_{MN}) ) \leq r \lambda_{N, r+1}/ \lambda_{M,r}$, which can be made
       arbitrarily small for $N,M$ large enough. We now need to show that the
       residual of the projection of~$\b \psi_N$ onto $\b \psi_M$ is also
       small. By \eqref{eq:tensorform_orthogonal_projection}, the orthogonal projection of $\b \psi_N$ onto $\vspan_{\bbR^r}(\b \psi_M)$ is
       $\eee{ \b \psi_N \tensor \b \psi_M } \b \psi_M$. Expanding the expectation,  we get
       \begin{align*}
           \eee{ \b \psi_N \tensor \b \psi_M } 
            &= \eee{ ( \b \Lambda_N^{-1/2}\b P_N^\adjoint \b X_N ) \tensor (\b \Lambda_M^{-1/2} \b P_M^\adjoint  \b X_M  ) }
         \\ &= \eee{ \b \Lambda_N^{-1/2}\b P_N^\adjoint (\b X_N \tensor \b X_M) \b P_M \b \Lambda_M^{-1/2} ) }
       \\   &=   \b \Lambda_N^{-1/2}\b P_N^\adjoint \b \Sigma_N \canonicalProj_{M,N}^\adjoint \b P_M \b \Lambda_M^{-1/2} 
          \\   &=   \b \Lambda_N^{-1/2}\b P_N^\adjoint \left( 
            \b P_N \b \Lambda_N \b P_N^\adjoint + \b Q_N \b \Phi_N \b Q_N^\adjoint
        \right) \canonicalProj_{M,N}^\adjoint \b P_M \b \Lambda_M^{-1/2} 
         \\\   &=   \b  \Lambda_N^{-1/2}  \b \Lambda_N \b P_N^\adjoint \canonicalProj_{M,N}^\adjoint \b P_M \b
       \Lambda_M^{-1/2} 
            = \b D^\adjoint,
       \end{align*}
       where the third equality comes from the fact that $\bX_M = \canonicalProj_{M,N} \bX_N$,
       and the fourth one follows from
       \eqref{eq:eigenvector-decompositions}. We therefore have $\b \psi_N  = \b
       D^\adjoint \b \psi_M + \b \rho_{NM}$. Taking covariances, we get 
       \[
           \b I_r = \b D^\adjoint \b D + \cov(\b \rho_{NM})
           = \b D \b D^\adjoint + \cov( \b \rho_{MN} )
       \]
       where the second equality follows from
       \eqref{eq:decomposition-of-psi-into-psi-and-rho}. Taking traces yields
       $$\trace( \cov( \b \rho_{MN} ) ) = \trace( \cov( \b \rho_{NM} )),$$
        which
 completes the proof.
   \end{proof} 
\end{lma}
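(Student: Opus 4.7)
The strategy is to leverage the orthogonal decomposition and eigenvalue bound already established in the proof of Lemma~\ref{lma:largest-eigenvalue-of-residual}, together with a duality between the two possible projection directions, so as to control $\trace[\cov(\b \rho_{MN})]$ uniformly for $M, N$ large.

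First, for $N > M$, I would reuse the orthogonal decomposition
\[
\b \psi_M = \bD\,\b \psi_N + \b \rho_{MN}, \qquad \bD = \b \Lambda_M^{-1/2}\bP_M^\adjoint \canonicalProj_{M,N}\bP_N\b \Lambda_N^{1/2},
\]
which has already appeared in the proof of Lemma~\ref{lma:largest-eigenvalue-of-residual-with-F}. Applying Lemma~\ref{lma:largest-eigenvalue-of-residual} with $\b C = \b I_r$ bounds each of the $r$ eigenvalues of $\cov(\b \rho_{MN})$ by $\lambda_{N,r+1}/\lambda_{M,r}$, so that
\[
\trace[\cov(\b \rho_{MN})] \leq r\,\lambda_{N,r+1}/\lambda_{M,r} \leq r\,\lambda_{r+1}^x/\lambda_{M,r}.
\]
Since $\mathcal X$ admits an $r$-factor representation, $\lambda_{r+1}^x<\infty$ while $\lambda_{M,r}\to\infty$, so the right-hand side can be made smaller than any prescribed $\vep$ once $M$ is large enough, uniformly over $N>M$.

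Next, I would handle the reverse direction (projecting $\b \psi_N$ onto $\b \psi_M$ with $N>M$) by computing the cross-covariance directly. Using $\bX_M=\canonicalProj_{M,N}\bX_N$ together with the eigendecomposition $\b \Sigma_N=\bP_N\b \Lambda_N\bP_N^\adjoint+\bQ_N\b \Phi_N\bQ_N^\adjoint$ from~\eqref{eq:eigenvector-decompositions}, the contribution of the $\bQ_N$ term is killed by $\bP_N^\adjoint\bQ_N=0$, leaving $\eee{\b \psi_N\tensor\b \psi_M}=\bD^\adjoint$. This produces the companion decomposition $\b \psi_N=\bD^\adjoint\b \psi_M+\b \rho_{NM}$, and taking covariances of the two decompositions gives
\[
\b I_r = \bD\bD^\adjoint + \cov(\b \rho_{MN}) = \bD^\adjoint\bD + \cov(\b \rho_{NM}).
\]
Taking traces and using $\trace(\bD\bD^\adjoint)=\trace(\bD^\adjoint\bD)$ yields the identity $\trace[\cov(\b \rho_{MN})]=\trace[\cov(\b \rho_{NM})]$, which transfers the bound of the first step to the reverse direction and, after a relabelling $M\leftrightarrow N$, also takes care of the case $M>N$ in the definition of a Cauchy sequence of subspaces.

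The only delicate step I foresee is the explicit identification $\eee{\b \psi_N\tensor\b \psi_M}=\bD^\adjoint$: it is elementary linear algebra but requires careful bookkeeping of which canonical projection ($\canonicalProj_{M,N}$ vs.\ $\canonicalProj_{M,N}^\adjoint$) appears where, together with the orthogonality $\bP_N^\adjoint\bQ_N=0$ inherent in the eigendecomposition~\eqref{eq:eigenvector-decompositions}. Once this identity is in hand, everything else reduces to the trace equality above combined with the already-established eigenvalue bound.
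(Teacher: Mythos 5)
Your proposal is correct and follows essentially the same route as the paper's own proof: the same orthogonal decomposition $\b\psi_M = \bD\,\b\psi_N + \b\rho_{MN}$, the same application of Lemma~\ref{lma:largest-eigenvalue-of-residual} to control $\trace[\cov(\b\rho_{MN})]$ for $N>M$, the same computation $\eee{\b\psi_N\tensor\b\psi_M}=\bD^\adjoint$ (killing the $\bQ_N$ term via $\bP_N^\adjoint\bQ_N=0$), and the same trace equality $\trace(\bD\bD^\adjoint)=\trace(\bD^\adjoint\bD)$ to transfer the bound to the reverse projection.
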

We now know that $\{ \b \psi_{ N,t } : N \in \bbN \} \subset \bbR^r$ generates a Cauchy sequence of subspaces. Let us show
that the projection onto $\vspan_H( \b \psi_{N,t})$ converges to the
projection onto~$\vspan_H(\mathcal D_t)$.
\begin{lma}
    \label{lma:projection-onto-psiN-asymptotically-equiv-to-proj-onto-D}
    For each $i \geq 1$ and $t \in \bbZ$,
    \[
        \lim_{N \rightarrow \infty} \proj_{H_i}( x_{it} | \b \psi_{N,t} ) = \proj_{H_i}( x_{it} | \mathcal D_t).
    \]
    \begin{proof}
        Let 
        \begin{equation}
            \label{eq:defn-gamma}
            \gamma_{N,it} \defeq \proj_{H_i}( x_{it} | \b \psi_{N,t} ).
        \end{equation}
        We know by Lemmas~\ref{lma:projection-onto-cauchy-sequence-of-space-converge} and~\ref{lma:psi-generates-cauchy-sequence-of-spaces} that
        \begin{equation}
            \label{eq:defn-of-gamma_infty}
            \gamma_{N,it} \rightarrow \gamma_{\infty,it} \in H_i.
            \qquad \text{as } N\rightarrow \infty,
        \end{equation}
        Let us show that $\gamma_{\infty,it} = \proj_{H_i}(x_{it} | \mathcal D_t)$. 
        Using \eqref{eq:tensorform_orthogonal_projection}, we have
        \begin{align*}
            \gamma_{\infty, it} &= \lim_{N \to \infty} \proj_{H_i}(x_{it} | \b \psi_{N,t})
                             = \lim_{N \to \infty} \eee{x_{it} \tensor \b \psi_{N,t}} \b \psi_{N,t}
                             \\ &= \lim_{N \to \infty} \eee{x_{it} \tensor \b \psi_{s_N,t}} \b \psi_{s_N,t}
                             = \lim_{N \to \infty} \eee{(x_{it} \tensor \b \psi_{s_N,t}) \b F_N^\adjoint} \b F_N \b \psi_{s_N,t}
                             \\ &= \lim_{N \to \infty} \eee{x_{it} \tensor (\b F_N \b \psi_{s_N,t}) } \b F_N \b \psi_{s_N,t}
                             = \eee{x_{it} \tensor \bu} \bu
                             = \proj_{H_i}(x_{it} | \mathcal D_t)
        \end{align*}
        where we have used the fact that $\b F_N \in \bbR^{r \times r}$, defined
        in the proof of Proposition~\ref{prop:construction-of-basis-of-D}, is
        a deterministic orthogonal matrix,  that $\lim_{N \to \infty} \b F_N \b \psi_{s_N,t} = \bu_t$ in $L^2_{\bbR^r}(\Omega)$, and Lemma~\ref{lma:dim-Dt-exactly-r}.
    \end{proof}
\end{lma}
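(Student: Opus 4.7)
The plan is to first establish convergence of the projections via the preceding two lemmas, then identify the limit by relating $\b \psi_{N,t}$ to the generating process $\b u_t$ constructed in Proposition~\ref{prop:construction-of-basis-of-D}. First, I would set $\gamma_{N,it} \defeq \proj_{H_i}(x_{it} | \b \psi_{N,t})$. By Lemma~\ref{lma:psi-generates-cauchy-sequence-of-spaces}, $\{\b \psi_{N,t}\}$ generates a Cauchy sequence of subspaces, so Lemma~\ref{lma:projection-onto-cauchy-sequence-of-space-converge} (applied with $Y = x_{it}$, $H = H_i$) yields the existence of a limit $\gamma_{\infty,it} \in \LLPH{H_i}$. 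Since the full sequence converges, it suffices to identify this limit along any convenient subsequence; I would choose the subsequence $N = s_q$ that appeared in the construction of $\b u_t$.

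Since $\eee{\b \psi_{N,t} \b \psi_{N,t}^\tp} = \b I_r$ by \eqref{eq:defn-of-psi^N}, formula \eqref{eq:tensorform_orthogonal_projection} gives the closed form $\gamma_{N,it} = \eee{x_{it} \tensor \b \psi_{N,t}} \b \psi_{N,t}$. The key step is a \emph{rotational invariance} observation: for any $\b F \in \mathcal O(r)$, inserting $\b F^\adjoint \b F = \b I_r$ yields
\[
\eee{x_{it} \tensor \b \psi_{N,t}} \b \psi_{N,t} = \eee{x_{it} \tensor (\b F \b \psi_{N,t})} (\b F \b \psi_{N,t}).
\]
Applying this with $(N, \b F) = (s_q, \b F_q)$ from Proposition~\ref{prop:construction-of-basis-of-D}, and recalling that $\b F_q \b \psi_{s_q,t} \to \b u_t$ in $\LLPH{\bbR^r}$, I would pass to the limit as $q \to \infty$ to obtain $\gamma_{\infty,it} = \eee{x_{it} \tensor \b u_t} \b u_t$. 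By Lemma~\ref{lma:dim-Dt-exactly-r}, $\{u_{1t},\ldots,u_{rt}\}$ forms an orthonormal basis of $\mathcal D_t$, so a final application of~\eqref{eq:tensorform_orthogonal_projection} identifies this limit with $\proj_{H_i}(x_{it} | \mathcal D_t)$, closing the argument.

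The main technical obstacle is the continuity step $\eee{x_{it} \tensor (\b F_q \b \psi_{s_q,t})} (\b F_q \b \psi_{s_q,t}) \to \eee{x_{it} \tensor \b u_t} \b u_t$ in $H_i$. This is not deep but requires a telescoping Cauchy--Schwarz estimate exploiting the uniform bound $\ee \rpnorm{\b F_q \b \psi_{s_q,t}}^2 = r$, the finiteness of $\ee \hnorm{x_{it}}_{H_i}^2$, and the $\LLPH{\bbR^r}$ convergence of $\b F_q \b \psi_{s_q,t}$ to $\b u_t$. Splitting the difference into a term where the right factor is replaced and a term where the expectation is replaced, each piece can be bounded using the fact that the tensor-projection map is continuous on $\LLPH{\bbR^r}$ when the left factor $x_{it}$ is fixed in $\LLPH{H_i}$.
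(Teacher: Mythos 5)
Your proposal follows the paper's proof essentially step for step: the same closed-form expression $\gamma_{N,it} = \eee{x_{it}\tensor\b\psi_{N,t}}\b\psi_{N,t}$ via \eqref{eq:tensorform_orthogonal_projection}, the same insertion of $\b F_q^\adjoint\b F_q = \b I_r$ to exploit the rotational invariance, passing to the subsequence $s_q$, and identifying the limit with $\proj_{H_i}(x_{it}|\mathcal D_t)$ through Lemma~\ref{lma:dim-Dt-exactly-r}. The one point where you are slightly more careful than the written proof is in explicitly flagging the continuity of $(\b v) \mapsto \eee{x_{it}\tensor\b v}\b v$ on $\LLPH{\bbR^r}$ as a step that needs verification; the paper passes to the limit in the displayed chain without comment, though the telescoping Cauchy--Schwarz bound you sketch is exactly what justifies it.
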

Let $\delta_{N,i} \defeq x_{it} - \proj_{H_i}(x_{it} | \b \psi_N)$ and $\delta_{\infty,i} \defeq \lim_{N \to \infty} \delta_{N,i}$, which exists and is equal to~$x_{it} - \proj_{H_i}(x_{it} | \mathcal D_t)$ by Lemma~\ref{lma:projection-onto-psiN-asymptotically-equiv-to-proj-onto-D}.
We can now
show that the largest eigenvalue~$\lambda_{N,1}^{\delta_\infty}$ of the covariance operator of 
$ (\delta_{\infty,1t}, \ldots, \delta_{\infty,Nt})^\tp \in \bH_N$  is bounded.

\begin{lma}
    \label{lma:delta-is-idiosyncratic}
    For all $N \geq 1$, $\lambda_{N,1}^{\delta_\infty} \leq \lambda_{r+1}^x$. In particular, if $\lambda_{r+1}^x < \infty$,
    $\b \delta_\infty$ is idiosyncratic, i.e.\  $\sup_{N \geq 1} \lambda^{\delta_\infty}_{N,1} < \infty$.
    \begin{proof}
        We shall omit the subscript $t$ for simplicity. Let $\b \Sigma_M^{\delta_N}$ be the covariance operator of $(\delta_{N,1},
        \ldots,\delta_{N,M})^\tp$, for $N \geq M$ or $N = \infty$. Since $\delta_{N,i} $  converges to~$ \delta_{\infty,i}$ in~$L^2_{H_i}(\Omega)$, $\b \Sigma_M^{\delta_N}$ converges to $ \b \Sigma_M^{\delta_\infty}$ as $N \to \infty$.
        This implies that  the largest eigenvalue~$\lambda_{M,1}^{\delta_N}$  of the covariance operator of $(\delta_{N,1},\ldots, \delta_{N,M})^\tp$ converges to~$
        \lambda_{M,1}^{\delta_\infty}$ as~$N\rightarrow~\!\infty$, since
        $$ \left| \lambda_{M,1}^{\delta_N} - \lambda_{M,1}^{\delta_\infty} \right| \leq
        \opnorm{ \b \Sigma_M^{\delta_N} - \b \Sigma_M^{\delta_\infty} }$$
         \citep{hsing:eubank:2015}.
         Since~$\b \Sigma_M^{\delta_N}$ has the same eigenvalues as a {\it compression} of $\b \Sigma_N^{\delta_N}$ (see Lemma~\ref{lma:eigenvalue_inequalities} for the definition of  a {\it compression}), Lemma~\ref{lma:eigenvalue_inequalities} implies
        \[
            \lambda_{M,1}^{\delta_N} \leq \lambda_{N,1}^{\delta_N} = \lambda_{N,
            r+1}^{x},
        \]
        where we have used the fact that, by definition, $\lambda^{\delta_N}_{N,1} =
        \lambda^x_{N,r+1}$, see \eqref{eq:x_N-orthogonal-decomposition}. Taking the limit as $N \rightarrow \infty$, we get
        \[
            \lambda^{\delta_\infty}_{M,1} \leq \lambda^x_{r+1} < \infty,
        \]
        and, since this holds true for each $M$, it follows that $\lambda^{\delta_\infty}_1
        \leq \lambda^x_{r+1} < \infty$.
    \end{proof}
\end{lma}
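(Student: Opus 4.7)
The plan is to prove the bound for arbitrary fixed cross-section size $N$ by approximation, and obtain the idiosyncrasy statement by taking a supremum. The key idea is to exploit the well-understood spectral structure of the finite-projection residual $\b X_{M,t} - \b P_M \b \Lambda_M^{1/2} \b \psi_{M,t}$ and transfer it to the limit via an $L^2$-continuity argument.

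First, I would fix $N \geq 1$ and, for each $M \geq N$, study the covariance of $(\delta_{M,1t},\ldots,\delta_{M,Nt})^\tp$. By the orthogonal decomposition \eqref{eq:x_N-orthogonal-decomposition}, the full $M$-vector $(\delta_{M,1t},\ldots,\delta_{M,Mt})^\tp$ equals $\b Q_M \b Q_M^\adjoint \b X_{M,t}$ and thus has covariance $\b Q_M \b \Phi_M \b Q_M^\adjoint$, whose nonzero eigenvalues are precisely $\lambda_{M,r+1}^x, \lambda_{M,r+2}^x, \ldots$ Since the covariance of the first $N$ coordinates is a compression of this operator, Lemma~\ref{lma:eigenvalue_inequalities} gives that its top eigenvalue is at most $\lambda_{M,r+1}^x \leq \lambda_{r+1}^x$.

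Next, I would let $M \to \infty$. By construction (and by Lemma~\ref{lma:projection-onto-psiN-asymptotically-equiv-to-proj-onto-D}), $\delta_{M,it} \to \delta_{\infty,it}$ in $L^2_{H_i}(\Omega)$ for each $i = 1,\ldots, N$. Since only finitely many coordinates are involved, this upgrades to convergence in operator norm of the covariance on $\bH_N$. Lipschitz continuity of the top eigenvalue with respect to the operator norm \citep{hsing:eubank:2015} then yields $\lambda_{N,1}^{\delta_\infty} = \lim_{M \to \infty} \lambda_{N,1}^{\delta_M} \leq \lambda_{r+1}^x$. The ``in particular'' assertion follows at once by taking the supremum over $N$, since finiteness of $\lambda_{r+1}^x$ then forces $\sup_N \lambda_{N,1}^{\delta_\infty} < \infty$.

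I do not expect a serious obstacle. The only point requiring mild care is the passage from entrywise $L^2$ convergence to operator-norm convergence of the $N$-dimensional covariance operator, but with $N$ fixed this is immediate because the operator norm is dominated by the Hilbert--Schmidt norm, which is in turn controlled by a finite sum of entrywise covariance-operator norms.
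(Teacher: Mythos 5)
Your proof is correct and follows essentially the same approach as the paper's: a compression argument gives the eigenvalue bound $\lambda_{r+1}^x$ for each finite truncation level, and $L^2$-convergence of the residuals transfers this bound to the limiting residual via continuity of the top eigenvalue in operator norm. The only difference is notational (you fix the cross-section size and let the truncation level diverge, swapping the roles of $M$ and $N$ relative to the paper), and you are slightly more explicit about two points the paper treats as standard: identifying the covariance of the full truncated residual as $\b Q_M \b \Phi_M \b Q_M^\adjoint$ with top eigenvalue $\lambda_{M,r+1}^x$, and justifying the passage from entrywise $L^2$-convergence to operator-norm convergence of the finite-dimensional covariance via the Hilbert--Schmidt bound.
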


\begin{proof}[Proof of Theorem~\ref{thm:equiv_factormodel_eigenvalues}]
    The ``only if'' part follows directly from Lemma~\ref{lma:eigenvalue_inequalities} since~$\b \chi_t$ and~$\bxi_t$ are uncorrelated. For the ``if'' part, let us assume   $\lambda^x_r =
    \infty$, $\lambda^x_{r+1} < \infty$. Then we know that $x_{it}$ has the representation
    \[
        x_{it} = \gamma_{it} + \delta_{it},
        \quad\text{with}\quad  \gamma_{it} = \proj_{H_i}(X_{it} | \mathcal D_t) 
        \quad	\text{and}\quad  \delta_{it} = X_{it} - \gamma_{it}.
    \]
    We know that 
    $$\gamma_{it} = b_{i1}u_{1t} + \cdots + b_{ir}u_{rt}, b_{il} \defeq \eee{u_{lt} x_{it}}$$
     (Lemma~\ref{lma:dim-Dt-exactly-r}) and is
    co-stationary with $\mathcal X$ (Proposition~\ref{prop:construction-of-basis-of-D}).
    From Lemma~\ref{lma:projection-onto-psiN-asymptotically-equiv-to-proj-onto-D},
    we also know that~$\gamma_{it} = \lim_{N \to \infty} \proj_{H_i}(x_{it} | \b \psi_N)$.
    Lemma~\ref{lma:delta-is-idiosyncratic} (with $\delta_{it} =
    \delta_{\infty, it}$) implies that~$\lambda_{N,1}^\delta \leq
    \lambda^x_{r+1}$; using Lemma~\ref{lma:eigenvalue_inequalities}, we get
    $\lambda^\chi_{N,r} \geq \lambda^x_{N,r} - \lambda^\delta_{N,1}$, and thus
    $\lambda^\chi_{N,r} \rightarrow \infty$ as $N \rightarrow \infty$.
\end{proof}

\begin{proof}[Proof of Theorem~\ref{thm:factormodel_uniqueness} ]
    Assume that $\chi_{it} = \tilde b_{i1} v_{1t} + \cdots + \tilde b_{ir} v_{rt}$, with $\tilde b_{il} \in H_i$ and~$ v_{lt} \in \LLP$ for all $i \geq 1$, $l=1,\ldots, r$, and $t \in \bbZ$.
    Also assume that $\mathfrak{p} \in \mathcal{D}_t$, so\linebreak that~$\mathfrak{p} = \lim_N \sc{\b a_N, \bX_{N}}$ for~$\b a_N \in \bH_N$ with 
    $\hnorm{\b a_N} \rightarrow 0$. Since $\lambda_1^\xi < \infty$, the non-correlation of $\chi$ and $\xi$ yields 
    $\mathfrak{p}  = \lim_N \sc{\b a_N, \b \chi_{N}}$, which implies that $\mathfrak{p} \in \vspan(\b v_t)$
    for~$\b v_t = (v_{1t}, \ldots, v_{rt})^\tp$, where $\b \chi_N \defeq
    (\chi_{1t},\ldots, \chi_{Nt})^\tp$. Therefore, 
    $$\vspan(\b v_t) \supset \vspan( \mathcal D_t ) 
    = \vspan( \b u_t ),$$
     where $\b u_t$ is constructed in Proposition~\ref{prop:construction-of-basis-of-D}.
    But $\vspan(\b v_t)$ and $\vspan(\b u_t)$ both have dimension $r$, so they are equal, and therefore
    \[
		\chi_{it} = \proj_{H_i}(X_{it} | \vspan(\b v_t) ) = \proj_{H_i}(X_{it} | \mathcal D_t ).
    \]
    If $X_{it} = \gamma_{it} + \delta_{it}$ is another functional factor
    representation with $r$ factors, 
    then we have 
    $$\gamma_{it} = \proj_{H_i}(X_{it} | \mathcal D_t) = \chi_{it}\quad\text{and}\quad \delta_{it} = X_{it} - \gamma_{it} 
    = X_{it} - \chi_{it} = \xi_{it},$$
     which shows the uniqueness of the decomposition.
\end{proof}

Before   turning to the next proofs, let us recall some operator theory.

\subsection{Classes of Operators}%
\label{sec:classes_of_operators}
\label{sec:schatten_Norms}

Let us first recall some basic definitions and properties of classes of operators on
separable Hilbert spaces \cite[Chapter~7]{Weid:80}.
Let $H_1$ and~$ H_2$ be separable (real) Hilbert spaces.
Denote by $\compact{H_1, H_2}$ the space of compact (linear) operators from
$H_1$ to~$H_2$. The space $\compact{H_1, H_2}$ is a subspace of~$\bounded{H_1, H_2}$
and consists 
 of all the operators $A \in \bounded{H_1, H_2}$ that admit a
singular value decomposition
\[
    A = \sum_{j \geq 1} s_j[A] u_j \tensor v_j,
\]
where $\{ s_j[A] \} \!\subset\! [0, \infty)$ are the singular values of $A$, ordered in decreasing order of magnitude and
satisfying $\lim_{j \rightarrow \infty} s_j[A] = 0$, $\{ u_j\} \!\subset\! H_1$ 
and $\{ v_j \} \!\subset \!H_2$ are orthonormal vectors. 
 
An operator $A\! \in\! \compact{H_1, H_2}$ satisfying $\tracenorm{A}\! \defeq\! \sum_j s_j[A]
\!<\! \infty$ is called a \emph{trace-class} operator;  the subspace of trace-class
operators is denoted by $\tc{H_1, H_2}$.~We have that~$\opnorm{A}\! \leq\! \tracenorm{A} \! =\!
\tracenorm{A^\adjoint}$ and,  if~$C\! \in\! \bounded{H_2, H}$, then $\tracenorm{CA}\! \leq\! \opnorm{C}
\tracenorm{A}$. 

An~operator $A \in \compact{H_1, H_2}$
satisfying $\Fnorm{A} \defeq \sqrt{\sum_{j} (s_j[A])^2} < \infty$ is called
\emph{Hilbert--Schmidt}; the subspace of Hilbert--Schmidt operators is denoted by
$\HS{H_1, H_2}$. We have that $\opnorm{A} \leq \Fnorm{A} = \Fnorm{A^\adjoint}$;  if $C \in \bounded{H_2,
H}$, then $\Fnorm{CA} \leq \opnorm{C}
\Fnorm{A}$. Furthermore, if $B \in \HS{H_2, H}$, then $\tracenorm{BA} \leq
\Fnorm{B} \Fnorm{A}$ and if $A \in \tc{H_1, H_2}$, \linebreak then~$\Fnorm{A} \leq
\tracenorm{A}$.
We shall use the shorthand notation $\tc{H}$ for $\tc{H, H}$, and similarly for
$\HS{H}$. If $A \in \tc{H}$,   we define its \emph{trace} by
\[
    \trace(A) \defeq \sum_{i \geq 1} \sc{A e_i, e_i},
\]
where $\{e_i\} \subset H$ is a complete orthonormal sequence (COS). The sum does not depend
on the choice of the COS, and $\abs{\trace(A)} \leq \tracenorm{A}$. If $A$ is symmetric
positive semi-definite (i.e.\ $A = A^\adjoint$ and $\sc{A u, u} \geq 0, \forall u \in H$), then
$\trace(A) = \tracenorm{A}$.  If $A \in \bounded{H_1, H_2}$ \linebreak and $B \in
\bounded{H_2, H_1}$ and either~$\tracenorm{A} < \infty$ or~$\Fnorm{A} +
\Fnorm{B} < \infty$, we\linebreak have $\trace(AB) = \trace(BA)$. For any $B
\in \HS{H}, \Fnorm{B} = \sqrt{ \trace(B B^\adjoint) }$. 

The spaces
$\compact{H}$, $\HS{H}$, and~$\tc{H}$ are also called \emph{Schatten spaces}.

\subsection{Matrix Representation of Operators}
\label{sec:matrices_and_operators}

We shall be dealing with $N \times L$ matrices whose $(i,l)$th entries, $ l=1,\ldots, L$ are
elements of the Hilbert space $H_i$, $i=1,\ldots, N$. If~$\bY$
is such a matrix,   letting  $\bz_l \in \bbR^L$ denote the $l$th canonical basis
vector of $\bbR^L$,   notice that $\bY$ induces a unique operator
$\operator{\bY} \in \bounded{\bbR^L, \bH_N}$ defined by $\operator{\bY}\bz_l =
\col_l(\bY)$, $l=1,\ldots, L$ where $\col_l(\bY)$ is the $l$th column of $\bY$, which is an
element of $\bH_N$. In other words, if 
\[
    \b \alpha = (\alpha_1,\ldots, \alpha_L)^\tp = \sum_{l=1}^L \alpha_l \bz_l \in \bbR^L, 
\]
then
\[
    \operator{\bY} \b \alpha = \sum_{l=1}^L \alpha_l \col_l(\bY) \in \bH_N.
\]
Thus, $\operator{\bY} \b \alpha$ is obtained by mimicking matrix multiplication. Notice that if $\bY_1$ is of the same nature as $\bY$, defining the sum $\bY + \bY_1$ through  entry-wise summation, we get 
\[
    \operator{\bY + \bY_1} = \operator{\bY} + \operator{\bY_1}.
\]
Conversely, an operator $C \in \bounded{\bbR^L, \bH_N}$ induces a unique $N \times L$ matrix $\matrixOf{C}$ with columns in $\bH_N$, where
\[
    \col_l( \matrixOf{C} ) \defeq C \bz_l \in \bH_N.
\]
If $C_1 \in \bounded{\bbR^L, \bH_N}$, 
$$\matrixOf{C + C_1} = \matrixOf{C} + \matrixOf{C_1}.$$
Since $\operator{ \matrixOf{C} } = C$ and~$\matrixOf{ \operator{ \bY } } = \bY$ for generic operators $C$ and matrices $\bY$, they yield a bijection between $N \times L$ matrices with columns in $\bH_N$ and operators in~$\bounded{\bbR^L, \bH_N}$:
\begin{equation}
    \label{eq:bijection_operators_and_matrices}
    \xymatrix{
        \{ N \times L \text{ matrices with columns in } \bH_N \} \ar@<2ex>[dd]^{\operator{\cdot}} 
        \\
        \\ \bounded{\bbR^L, \bH_N}  \ar@<2ex>[uu]^{\matrixOf{\cdot}} 
    }
\end{equation}
Notice that $\operator{\bY}^\adjoint \operator{\bY} \in \bounded{\bbR^L}$ and therefore it can be identified with a matrix in~$\bbR^{L \times L}$, and straightforward calculations yield that the $(l,k)$th entry of this matrix is~$\sc{\col_l(\bY), \col_k(\bY)}$.

\subsection{Proof of Proposition~\ref{prop:computing_loadings_and_factors}}
\label{sub:proof_computing_proposition}


Let $\bX_{NT}, \bchi_{NT},$ and $\bxi_{NT}$ denote the $N \times T$ matrices with $(i,t)$th entries  $x_{it}, \chi_{it},$ and  $\xi_{it}$,  respectively.
 The factor model then can be written under matrix form as
\[
    \bX_{NT} = \b \chi_{NT}+ \bxi_{NT}= \matrixOf{\bB_{N} \bU_T} + \bxi_{NT}, 
\]
following the notation of Section~\ref{sec:matrices_and_operators}.

In order to prove Proposition~\ref{prop:computing_loadings_and_factors}, notice that $\bX_{NT}$ induces (see Section~\ref{sec:matrices_and_operators}) a linear operator in~$\operator{\bX_{NT}} \in \bounded{\bbR^T, \bH_N}$. Recall that $\bB_N \in \bounded{\bbR^r, \bH_N}$ is the linear operator mapping the $l$-th canonical basis vector of $\bbR^r$ to $(b_{1l}, b_{2l}, \ldots, b_{Nl})^\tp \in \bH_N$. Thus, using the notation introduced in Section~\ref{sec:matrices_and_operators}, 
\begin{equation}
    \label{eq:bX_operator_notation}
    \operator{\bX_{NT}} =  \bB_N \bU_T + \operator{\bxi_{NT}}.
\end{equation}
\begin{proof}[Proof of Proposition~\ref{prop:computing_loadings_and_factors}]
    Notice that the objective function  \eqref{eq:least_squares_fit_criterion} can be re-written~as 
    \[
        P( \bB^{(k)}_N,  \bU^{(k)}_T) = \Fnorm{ \operator{\bX_{NT}} -  \bB_N^{(k)}  \bU_T^{(k)}}^2
    \]
    where $\Fnorm{\cdot}$ stands for the Hilbert--Schmidt norm (see
    Section~\ref{sec:schatten_Norms}). Since $\bB_N \bU_T$
    has rank at most $r$, the  problem of minimizing 
     \eqref{eq:least_squares_fit_criterion} has a clear solution: by the
    Eckart--Young--Mirsky Theorem
    \cite[Theorem~4.4.7]{hsing:eubank:2015},   the value of $\bB_N \bU_T$ minimizing  \eqref{eq:least_squares_fit_criterion}  is 
    the $r$-term truncation of the singular value decomposition of~$\operator{\bX_{NT}}$. 
    Let us write the singular value decomposition of~$\operator{ \bX_{NT} }$ under the form 
    \begin{equation} \label{eq:svdX}
        \operator{\bX_{NT}} = \sum_{l=1}^{N} \hat{\lambda}_{l}^{1/2} \hat{\b e}_{l} \tensor \hat{\b f}_{l}
    \end{equation}
    with $\hat \lambda_1 \geq \hat \lambda_2 \geq \cdots \geq 0$, where $\hat \be_l$
    (belonging to $\bH_N$) is rescaled to have norm~$\sqrt N$  and~$\hat {\b f}_l$s (belonging to $\bbR^T$) is rescaled to have norm~$\sqrt T$,  $l\geq 1$;  
     $\hat \lambda_l$, thus, is a rescaled singular values---we show in
    Lemma~\ref{lma:largest_sample_eigenvalue_bounded}
     that this rescaling is such that $\hat \lambda_1 =
    O_{\rm P}(1)$. To keep the notation simple, the sum
    is ranging over $l=1,\ldots,N$: if $N>T$, the last~$(N-T)$ values of~$\hat \lambda_{l}$ are set to zero. 

    To complete the proof, note that the steps listed in Proposition~\ref{prop:computing_loadings_and_factors} are just the steps involved in computing the singular value decomposition of $\operator{\bX_{NT}}$ from the spectral decomposition of $\operator{\bX_{NT}}^\adjoint \operator{\bX_{NT}} \in \bounded{\bbR^T}$; setting
    \begin{equation} \label{eq:tildebU_k} 
        \tildebU{k}_T  \defeq  \begin{pmatrix}
            \hat{\b f}_{1}^{\tp}
            \\ \vdots\vspace{1mm}
            \\ \hat{\b f}_{k}^{\tp}
        \end{pmatrix} \in    \mathbb{R}^{k \times T},
    \end{equation}
    and defining  $\tildeB{k}_N \in \bounded{\bbR^k, \bH_N}$ as  mapping $\hat{\b e}_l$, the $l$th canonical basis vector of $\bbR^k$,  to~$ \hat \lambda_l^{1/2} \hat{\b e}_l \in \bH_N$, 
    the composition $\tildeB{k}_N \tildebU{k}_T$ yields the $k$-term truncation of the singular value decomposition of $\operator{\bX_{NT}}$. 
\end{proof}
For the sake of completeness, let us provide the explicit form of the four steps in Proposition~\ref{prop:computing_loadings_and_factors} in the special case where~$H_i \equiv \LLR$ for all~$i \geq 1$, with the traditional inner-product $\sc{f,g}_{H_i} \defeq \int_0^1 f(\tau) g(\tau) d\tau$:
\begin{enumerate}
    \item compute $\bF \in \bbR^{T \times T}$, defined by $(\bF)_{st} \defeq \sum_{i=1}^N \int_0^1 x_{is}(\tau) x_{it}(\tau) d\tau /N$; 
    \item compute the leading $k$ eigenvalue/eigenvector pair $(\tilde \lambda_l, \tilde {\b f_l})$ of $\bF$, and set 
    \[\hat \lambda_l \defeq T^{-1/2} \tilde \lambda_l \in \bbR,  \qquad \hat {\b f}_l \defeq T^{1/2} \tilde {\b f}_l / \hnorm{\b f_l} \in \bbR^T;
    \]
    \item compute $\hat \be_l \defeq (\hat e_{1l}, \ldots, \hat e_{Nl})^\tp$, where $\hat e_{il} \in \LLR$ is defined by
        \[
            \hat e_{il}(\tau) \defeq \hat \lambda_l^{-1/2} T^{-1} \sum_{t=1}^T (\hat {\b f}_{l})_t x_{it}(\tau), \quad \tau \in [0,1],\ \ i=1,\ldots, N;
        \]
    \item set $\tildebU{k}_T \defeq (\hat {\b f}_1, \ldots, \hat {\b f}_k)^\tp \in \bbR^{k \times T}$ and define $\tildeB{k}_N$ as the operator in $\bounded{\bbR^k, \bH_N}$ mapping the $l$-th canonical basis vector of $\bbR^k$ to ${\hat \lambda_l}^{1/2} \hat \be_l$, $l = 1,\ldots, k$.
\end{enumerate}
Some remarks are in order. 
The reason for this choice is the following: $\tildebU{k}_T$ can be obtained by computing the
first $r$ eigenvectors of $\operator{\bX_{NT}}^\adjoint \operator{\bX_{NT}}$, and rescaling them by~$\sqrt T$.
Note that computing $\operator{\bX_{NT}}^\adjoint \operator{\bX_{NT}}$ requires computing $O(T^2)$ inner
products in $\bH_N$, and then computing the leading $k$ eigenvectors of a real $T \times T$ matrix. 
The loadings $\tildeB{k}_N$ can  be obtained by an eigendecomposition of $\operator{\bX_{NT}}\operator{\bX_{NT}}^\adjoint$.
However, this would require the eigendecomposition of an operator in $\bounded{\bH_N}$,
which could be computationally much more demanding than performing an eigendecomposition of
$\operator{\bX_{NT}}^\adjoint \operator{\bX_{NT}}$ to obtain~$\tildebU{k}_T$,   then multiplying it with $\bX_{NT}$
to obtain~$\tildeB{k}_N$.

Since our approach requires computing inner products~$\sc{\bx_s,
\bx_t} = \sum_{i = 1}^N \sc{x_{is}, x_{it}}_{H_i}$, any preferred method for
computing $\sc{x_{is},\  x_{it}}_{H_i}$ can be used. For instance, if $H_i\! = \LLR$
and~$x_{is},\  x_{it} \in \LLR$ are only observed on a grid with noise, i.e.\
\[
    y_{isk} = x_{is}(\tau_{ik}) + \vep_{isk}, \quad k=1,\ldots, K,
\]
then the raw observations $(\tau_{ik}, y_{isk})_k$ can be smoothed in order to
compute $$\sc{ x_{is}, x_{it}}_{H_i} = \int_{0}^1 x_{is}(\tau) x_{it}(\tau) d\tau.$$ 
If the curves are only sparsely observed, as in the setup of
\citet{yao:2005:sparse}, then the inner-products $\sc{ x_{is}, x_{it} }_{H_i}$ can
be computed by first obtaining FPC scores for the~$\{ x_{it}: t \in \bbZ\}$ processes, see
\citet{yao:2005:sparse} and \citet{rubin2020sparsely} for details.

\subsection{Proof of Theorem~\ref{thm:consistency_number_of_factors}}
\label{sub:proof_of_theorem_consistency_number_of_factors}

Before proceeding with the proof of  Theorem~\ref{thm:consistency_number_of_factors}, let us establish a couple of technical lemmas. 
Recall that  $\bxi_{NT}$ is the $N \times T$ matrix with  $(i,t)$th entry $\xi_{it}$ and that 
 $\operator{ \bxi_{NT} } \in \bounded{\bbR^T, \bH_N}$, see Section~\ref{sec:matrices_and_operators}.

\begin{Lemma}
    \label{lma:Frobenius_Norm_of_xi}
    Under Assumptions~\ref{assumption:c}, as $N$ and $T$ go to infinity, 
    \begin{equation}\label{eq:operator_norm_bxi_tp_bxi}
        \Fnorm{\operator{\bxi_{NT} }^\adjoint \operator{\bxi_{NT} } } = O_{\rm P}(N T /C_{N,T}).
    \end{equation}
    In particular, $\opnorm{ \operator{\bxi_{NT} } } = O_{\rm P}(\sqrt{N T/C_{N,T}})$.
    Additionally, if Assumption~\ref{assumption:a} holds, then Assumption~\ref{assumption:u_xi} holds with $\alpha=0$.
\end{Lemma}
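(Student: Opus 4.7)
My plan is to work directly with the matrix representation and the decomposition of $\sc{\bxi_s,\bxi_t}$ into its mean plus a centered part.

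First I would observe that, identifying $\operator{\bxi_{NT}}^\adjoint \operator{\bxi_{NT}} \in \bounded{\bbR^T}$ with a $T \times T$ matrix, its $(s,t)$ entry is $\sc{\bxi_s,\bxi_t}$, so
\[
    \Fnorm{\operator{\bxi_{NT}}^\adjoint \operator{\bxi_{NT}}}^2 = \sum_{s,t=1}^T \sc{\bxi_s,\bxi_t}^2.
\]
Decomposing $\sc{\bxi_s,\bxi_t} = N\nu_N(t-s) + R_{st}$, the inequality $(a+b)^2 \le 2a^2 + 2b^2$ gives
\[
    \ee \sc{\bxi_s,\bxi_t}^2 \le 2 N^2 \nu_N(t-s)^2 + 2\, \ee R_{st}^2.
\]
By Assumption~\ref{assumption:c}~(C2), $\ee R_{st}^2 = N\, \ee |\sqrt N (N^{-1}\sc{\bxi_s,\bxi_t}-\nu_N(t-s))|^2 \le NM$. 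By (C1), $\sum_h \nu_N(h)^2 \le (\max_h |\nu_N(h)|)\sum_h |\nu_N(h)| \le M^2$, so $\sum_{s,t} \nu_N(t-s)^2 \le T M^2$. Summing over $s,t$ therefore yields
\[
    \ee \Fnorm{\operator{\bxi_{NT}}^\adjoint \operator{\bxi_{NT}}}^2 \le 2 M^2 N^2 T + 2 M N T^2.
\]

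Next I would convert this into the claimed rate. Taking square roots,
\[
    \Fnorm{\operator{\bxi_{NT}}^\adjoint \operator{\bxi_{NT}}} = O_{\rm P}\bigl(\sqrt{NT(N+T)}\bigr) = O_{\rm P}(NT/C_{N,T}),
\]
using $N + T \asymp \max(N,T) = NT/\min(N,T) = NT/C_{N,T}^2$. The operator-norm bound follows at once from $\opnorm{V}^2 = \opnorm{V^\adjoint V} \le \Fnorm{V^\adjoint V}$ applied to $V = \operator{\bxi_{NT}}$.

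For the final statement, writing $A \defeq T^{-1}\sum_t \bu_t \tensor \bxi_t$ and computing the composition $(\bu_s\tensor\bxi_s)(\bxi_t\tensor\bu_t) = \sc{\bxi_s,\bxi_t}(\bu_s\tensor\bu_t)$, one obtains
\[
    \Fnorm{A}^2 = \trace(AA^\adjoint) = T^{-2}\sum_{s,t=1}^T \sc{\bxi_s,\bxi_t}\,\sc{\bu_s,\bu_t}_{\bbR^r}.
\]
By Cauchy--Schwarz this is bounded by $T^{-2}\Fnorm{\operator{\bxi_{NT}}^\adjoint \operator{\bxi_{NT}}}\,\Fnorm{\bU_T^\tp \bU_T}$, where $\Fnorm{\bU_T^\tp \bU_T} = \Fnorm{\bU_T\bU_T^\tp} = O_{\rm P}(T)$ by Assumption~\ref{assumption:a}. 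Combining with the bound just proved gives $\Fnorm{A}^2 = O_{\rm P}(N/C_{N,T})$, which is Assumption~\ref{assumption:u_xi} with $\alpha=0$.

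The only nontrivial bookkeeping is the equivalence $\sqrt{NT(N+T)} \asymp NT/C_{N,T}$; the rest is routine once the mean/residual split and the identity for $\Fnorm{A}^2$ in terms of the Gram matrices of $\{\bxi_t\}$ and $\{\bu_t\}$ are in place.
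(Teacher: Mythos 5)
Your proof is correct and follows essentially the same route as the paper: identify the entries of $\operator{\bxi_{NT}}^\adjoint\operator{\bxi_{NT}}$ as $\sc{\bxi_s,\bxi_t}$, split into the deterministic part $N\nu_N(t-s)$ (handled via (C1)) and the centered part (handled via (C2)), then sum. For the last claim, you use the Cauchy--Schwarz bound $\abs{\trace(\Xi U)} \leq \Fnorm{\Xi}\Fnorm{U}$ on the Gram matrices, while the paper factors $\sum_t \bu_t \tensor \bxi_t = \bU_T \operator{\bxi_{NT}}^\adjoint$ and applies the operator-norm bound for $\operator{\bxi_{NT}}$ just derived; both give the same rate, so this is a cosmetic variation rather than a different argument.
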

\begin{proof}
    We have
    \[
        \Fnorm{ (NT)^{-1} \operator{\bxi_{NT} }^\adjoint \operator{\bxi_{NT} } }^2 = \sum_{t,s=1}^T (\sc{\bxi_t, \bxi_s}/N)^2/T^2 \leq 2 T^{-2}\sum_{t,s} ( \nu_N(t-s)^2 + \eta_{st}^2 ),
    \]
    where $\eta_{st}  \defeq   N^{-1} \sc{\bxi_{t}, \bxi_{s}} - \nu_{N}(t-s).$
    First, by Assumption~\ref{assumption:c},  
    \[
        T^{-2 }\sum_{t,s} \nu_N(t-s)^2 = O(T^{-1}), \quad N,T \to \infty.
    \]
    Second, by Assumption~\ref{assumption:c} and the Markov inequality,  for any $c > 0$, 
    \[
        \pp\left( T^{-2} \sum_{t,s} \eta_{st}^2 > c \right) \leq c^{-1} T^{-2} \sum_{s,t} \ee \eta_{st}^2 \leq c^{-1} M_1/N.
    \]
 Therefore, 
    \[
        T^{-2} \sum_{t,s} \eta_{st}^2 = O_{\rm P}(N^{-1}), \quad N,T \to \infty,
    \]
    which entails \eqref{eq:operator_norm_bxi_tp_bxi}. 
    The second statement of the lemma then 
    follows from the fact that~$\opnorm{\operator{\bxi_{NT}} }^2 = \opnorm{ \operator{\bxi_{NT} }^\adjoint \operator{\bxi_{NT} } }$.
    The last statement holds by noticing that
    $$\sum_{t=1}^T \bu_t \tensor \bxi_t = \bU_T \operator{\bxi_{NT}}^\adjoint,$$
     and applying the second statement of the lemma. 
\end{proof}

\begin{Lemma}
    \label{lma:frobenius_Norm_of_Bn_xi}
    Under Assumption~\ref{assumption:Bn_xi}, there exists $M_1 < \infty$
such that
    \[
        (NT)^{-1} \ee{ \Fnorm{\b B_N^\adjoint \operator{ \bxi_{NT} }}^2 } \leq M_1 \quad\text{ for all $N, T \geq 1$.}
    \]
    In particular,
    \(
    \Fnorm{\b B_N^\adjoint \operator{ \bxi_{NT} }} = O_{\rm P}(\sqrt{NT}),
    \)
    as $N,T \to \infty$.
\end{Lemma}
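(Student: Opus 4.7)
The plan is to estimate the expectation directly by expanding the squared Frobenius norm in coordinates, and then converting the cross-covariance expression into a bilinear form controlled by the operator-norm summability provided by Assumption~\ref{assumption:Bn_xi}.

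First, I would use the identification in Section~\ref{sec:matrices_and_operators} to write the $(l,t)$-entry of the $r\times T$ matrix $\bB_N^\adjoint \operator{\bxi_{NT}}$ as
\[
\big(\bB_N^\adjoint \operator{\bxi_{NT}}\big)_{l,t} = \sc{\bB_N \bz_l,\, \bxi_t} = \sum_{i=1}^N \sc{b_{il},\, \xi_{it}}_{H_i},
\]
so that
\[
\Fnorm{\bB_N^\adjoint \operator{\bxi_{NT}}}^2 = \sum_{l=1}^r \sum_{t=1}^T \sum_{i,j=1}^N \sc{b_{il},\, \xi_{it}}_{H_i}\sc{b_{jl},\, \xi_{jt}}_{H_j}.
\]

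Next I would take expectations termwise and rewrite each summand using the tensor product definition: for $u\in H_i,\, v\in H_j$ we have $\ee \sc{u,\xi_{it}}_{H_i} \sc{v,\xi_{jt}}_{H_j} = \sc{u,\, \ee[\xi_{it}\tensor \xi_{jt}]\, v}_{H_i}$. Then by Cauchy--Schwarz in $H_i$ and the submultiplicative property of the operator norm,
\[
\big|\ee \sc{b_{il},\xi_{it}}_{H_i}\sc{b_{jl},\xi_{jt}}_{H_j}\big| \le \hnorm{b_{il}}_{H_i}\,\opnorm{\ee[\xi_{it}\tensor \xi_{jt}]}\,\hnorm{b_{jl}}_{H_j}\le M^2 \opnorm{\ee[\xi_{it}\tensor \xi_{jt}]},
\]
using the uniform bound $\hnorm{b_{il}}<M$ from Assumption~\ref{assumption:Bn_xi}. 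Summing over $j$ for each fixed $i$, the second part of Assumption~\ref{assumption:Bn_xi} gives $\sum_{j=1}^N \opnorm{\ee[\xi_{it}\tensor\xi_{jt}]} \le M$, hence $\sum_{i,j=1}^N |\cdots| \le M^3 N$. The outer sums over $l \in \{1,\ldots,r\}$ and $t\in\{1,\ldots,T\}$ then yield
\[
\ee \Fnorm{\bB_N^\adjoint \operator{\bxi_{NT}}}^2 \le r M^3\, NT,
\]
which is the desired bound with $M_1 \defeq r M^3$.

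Finally, the stochastic statement $\Fnorm{\bB_N^\adjoint \operator{\bxi_{NT}}} = O_{\rm P}(\sqrt{NT})$ follows immediately from Markov's inequality applied to the nonnegative random variable $(NT)^{-1}\Fnorm{\bB_N^\adjoint \operator{\bxi_{NT}}}^2$. No real obstacle is expected here; the only point requiring care is the bookkeeping around the tensor-product-to-operator-norm reduction, i.e.\ making sure that the cross-covariance inner product is packaged as $\sc{b_{il}, \ee[\xi_{it}\tensor\xi_{jt}]b_{jl}}_{H_i}$ before applying the operator norm bound, so that the summability hypothesis in Assumption~\ref{assumption:Bn_xi} can be invoked cleanly.
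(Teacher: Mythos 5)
Your proof is correct and takes essentially the same approach as the paper: you expand $\ee\Fnorm{\bB_N^\adjoint\operator{\bxi_{NT}}}^2$ over the cross-section, rewrite the scalar cross-term as $\sc{b_{il}, \ee[\xi_{it}\tensor\xi_{jt}]\, b_{jl}}_{H_i}$, and bound it by $\hnorm{b_{il}}\opnorm{\ee[\xi_{it}\tensor\xi_{jt}]}\hnorm{b_{jl}}$ before invoking the summability in Assumption~\ref{assumption:Bn_xi}, which is exactly the paper's estimate (there packaged as $\trace(\b b_j\b b_i^\adjoint\ee[\xi_{is}\tensor\xi_{js}])\le\tracenorm{\b b_j\b b_i^\adjoint}\opnorm{\ee[\xi_{is}\tensor\xi_{js}]}$). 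The only difference is notational: you work with the scalar entries directly and plain Cauchy--Schwarz, whereas the paper works with the $\bbR^r$-valued vectors $\b b_i^\adjoint\xi_{is}$ and a trace-norm/operator-norm H\"older inequality, arriving at the same constant $M_1=rM^3$.
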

\begin{proof} Recall that $\rpnorm{\cdot}$ stands for the Euclidean norm and that $\bB_N^\adjoint \bxi_s \in \bbR^r$. 
    We have 
    $$\Fnorm{ \b B_N^\adjoint \operator{ \bxi_{NT} }}^2  = \sum_{s=1}^T \rpnorm{\b B_N^\adjoint \b \xi_s}^2.$$  
     Let $\singleProj_i: \bH_N \to H_i$ be the canonical projection mapping $(v_1,\ldots, v_N)^\tp \in \bH_N$\linebreak to $v_i \in H_i$, $i = 1,\ldots, N$ and define $\b b_i \defeq \singleProj_i \bB_N \in \bounded{\bbR^r, H_i}$. Since 
     $$ \bB_N\! =\! \sum_{i=1}^N \singleProj_i^\adjoint \singleProj_i \bB_N\! =\! \sum_{i=1}^N \singleProj_i^\adjoint \b b_i,$$
      we get
    $\b B_N^\adjoint \b \xi_s\! =\! \sum_{i=1}^N \b b_i^\adjoint \xi_{is}$. Since $\b b_i^\adjoint \xi_{is}\! \in\!\bbR^r$,
        \begin{align*}
        N^{-1} \ee \rpnorm{\b B_N^\adjoint \b \xi_s}^2 &= N^{-1}  \eee{ \left(\sum_{i=1}^N \b b_{i}^\adjoint \xi_{is} \right)^\tp \left( \sum_{j=1}^N \b b_{j}^\adjoint \xi_{js} \right) } 
                                                   \\ &
                                                   = N^{-1}  \sum_{i,j=1}^N \eee{ \left( \b b_{i}^\adjoint \xi_{is} \right)^\tp \left(  \b b_{j}^\adjoint \xi_{js} \right) } 
                                                \quad\   = N^{-1}  \sum_{i,j=1}^N \eee{ \trace\left( \left( \b b_{i}^\adjoint \xi_{is} \right) \tensor \left(  \b b_{j}^\adjoint \xi_{js} \right) \right) } 
                                                   \\ &= N^{-1}  \sum_{i,j=1}^N \eee{ \trace\left( \b b_{i}^\adjoint \left( \xi_{is} \tensor  \xi_{js} \right)\b b_{j} \right) } 
                                                   = N^{-1}  \sum_{i,j=1}^N \trace\left( \b b_{j} \b b_{i}^\adjoint \eee{ \xi_{is} \tensor  \xi_{js}} \right)  
                                                   \\ &\leq  N^{-1}  \sum_{i,j=1}^N \tracenorm{ \b b_{j} \b b_{i}^\adjoint  } \opnorm{ \eee{ \xi_{is} \tensor  \xi_{js}} }
                                                   \\ &
                                                   \leq  N^{-1}  \max_{i,j=1,\ldots, N} \tracenorm{ \b b_{j} \b b_{i}^\adjoint  } \sum_{i,j=1}^N  \opnorm{ \eee{ \xi_{is} \tensor  \xi_{js}} }
                                                   \\ &\leq  \left( \max_{i,j=1,\ldots, N} \Fnorm{ \b b_{j} } \Fnorm{ \b b_{i} } \right) N^{-1} \sum_{i,j=1}^N  \opnorm{ \eee{ \xi_{is} \tensor  \xi_{js}} }
                                                   \leq r M^3
        \end{align*}
        where we have used H\"older's inequality for operators.
        The claim follows directly since the bound $r M^3$ is independent of $s$.
\end{proof}

Let 
\begin{equation} \label{eq:hatU_k} 
    \hatbU{k}_T \defeq 
    \begin{pmatrix}
        \hat \lambda_1 \hat{\b f_1 }^\tp
        \\ \vdots
        \\ \hat \lambda_k \hat{\b f_k}^\tp
    \end{pmatrix} \in    \mathbb{R}^{k\times T};
\end{equation}
recall $\tildebU{k}$ from \eqref{eq:tildebU_k} 
and that $\bU_T \in \bbR^{r \times T}$ is the matrix containing the actual $r$ factors. 
Notice that the following equation links $\hatbU{k}_T$ and $\tildebU{k}_T$:
\begin{equation}
    \label{eq:relation-u_hat-u_tilde}
    \hatbU{k}_T = \tildebU{k}_T \operator{\bX_{NT}}^{\adjoint} \operator{\bX_{NT}}/(NT).
\end{equation}
Let us also define the   $k \times r$  matrix
\begin{equation}
    \label{eq:Qk}
    \bQ_k \defeq \tildebU{k}_T \bU_T^\adjoint \bB_N^\adjoint \bB_N/(NT) \in \bbR^{k \times r}.
\end{equation}
The next result is key in subsequent proofs.  Recall   that~$C_{N,T} \defeq \min\{ \sqrt{N}, \sqrt{T} \}$ and let~$\hatbu{k}_t \in \bbR^k$ be the $t$-th column of $\hatbU{k}_T$.
\begin{Theorem}  \label{thm:consistency_hatbu}
    Under Assumptions~\ref{assumption:a}, \ref{assumption:c}, \ref{assumption:Bn_xi}, for any fixed $k
    \in \{1, 2, \ldots \}$, 
    \[
        T^{-1} \Fnorm{\hatbU{k}_T - \bQ_k \bU_T}^2 = 
        \frac{1}{T}
        \sum_{t=1}^{T} \hnorm{ \hatbu{k}_{t} -\b Q_{k} \bu_{t} }^{2} =
        O_{\rm P}(C_{N,T}^{-2}),
    \]
    as $N,T \to \infty$,
    where $C_{N,T} \defeq \min \{\sqrt{N},\sqrt{T} \}$. 
\end{Theorem}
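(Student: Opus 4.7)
The plan is to exploit the identity $\operator{\bX_{NT}} = \bB_N \bU_T + \operator{\bxi_{NT}}$ in formula \eqref{eq:relation-u_hat-u_tilde} to write $\hatbU{k}_T - \bQ_k \bU_T$ as a sum of error terms that are quadratic or linear in the idiosyncratic component, and then bound each term using the two key technical lemmas (Lemma~\ref{lma:Frobenius_Norm_of_xi} and Lemma~\ref{lma:frobenius_Norm_of_Bn_xi}) already established.

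First I would expand
\[
    \operator{\bX_{NT}}^\adjoint \operator{\bX_{NT}} = \bU_T^\adjoint \bB_N^\adjoint \bB_N \bU_T + \bU_T^\adjoint \bB_N^\adjoint \operator{\bxi_{NT}} + \operator{\bxi_{NT}}^\adjoint \bB_N \bU_T + \operator{\bxi_{NT}}^\adjoint \operator{\bxi_{NT}}.
\]
Substituting into $\hatbU{k}_T = \tildebU{k}_T \operator{\bX_{NT}}^\adjoint \operator{\bX_{NT}}/(NT)$ and subtracting $\bQ_k \bU_T = \tildebU{k}_T \bU_T^\adjoint \bB_N^\adjoint \bB_N \bU_T/(NT)$, the leading term cancels and I obtain $\hatbU{k}_T - \bQ_k \bU_T = \tildebU{k}_T \bM / (NT)$, where $\bM$ is the sum of the three remaining terms.

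Next I would observe that the rescaling in Proposition~\ref{prop:computing_loadings_and_factors} yields $\tildebU{k}_T (\tildebU{k}_T)^\tp = T\, \bI_k$, so $\opnorm{\tildebU{k}_T} = \sqrt{T}$. Using $\Fnorm{AB} \leq \opnorm{A}\Fnorm{B}$, this reduces the problem to bounding $\Fnorm{\bM}/(NT)$ componentwise. For the quadratic term, Lemma~\ref{lma:Frobenius_Norm_of_xi} gives $\Fnorm{\operator{\bxi_{NT}}^\adjoint \operator{\bxi_{NT}}} = O_{\rm P}(NT/C_{N,T})$, hence a contribution of order $O_{\rm P}(C_{N,T}^{-1})$ after division by $NT$. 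For the cross term, I would apply $\Fnorm{\bU_T^\adjoint \bB_N^\adjoint \operator{\bxi_{NT}}} \leq \opnorm{\bU_T^\adjoint}\, \Fnorm{\bB_N^\adjoint \operator{\bxi_{NT}}}$, and combine Assumption~\ref{assumption:a} (which implies $\opnorm{\bU_T} = O_{\rm P}(\sqrt{T})$ via $T^{-1}\bU_T\bU_T^\tp \convP \b\Sigma_{\bu}$) with Lemma~\ref{lma:frobenius_Norm_of_Bn_xi} to obtain $O_{\rm P}(T\sqrt{N})$; after division by $NT$ this yields $O_{\rm P}(N^{-1/2}) = O_{\rm P}(C_{N,T}^{-1})$ since $C_{N,T} \leq \sqrt{N}$. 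The symmetric adjoint term is handled identically.

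Putting these bounds together yields
\[
    \Fnorm{\hatbU{k}_T - \bQ_k \bU_T} \leq \opnorm{\tildebU{k}_T}\, \Fnorm{\bM}/(NT) = \sqrt{T} \cdot O_{\rm P}(C_{N,T}^{-1}),
\]
which, upon dividing by $\sqrt{T}$ and squaring, gives the stated rate. There is no real obstacle here beyond careful bookkeeping; the only subtle point is remembering that $C_{N,T}^{-1} = \max\{N^{-1/2}, T^{-1/2}\}$, so that all three error contributions (the two cross terms of order $N^{-1/2}$ and the quadratic term of order $C_{N,T}^{-1}$) can indeed be absorbed into a common $O_{\rm P}(C_{N,T}^{-1})$ bound.
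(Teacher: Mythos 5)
Your proposal is correct and follows essentially the same route as the paper's own proof: the identical expansion of $\operator{\bX_{NT}}^\adjoint\operator{\bX_{NT}}/(NT)$ in \eqref{eq:relation-u_hat-u_tilde}, cancellation of the leading term against $\bQ_k\bU_T$, and term-by-term bounds via Lemma~\ref{lma:Frobenius_Norm_of_xi} for the quadratic $\bxi$-term and Lemma~\ref{lma:frobenius_Norm_of_Bn_xi} (together with $\opnorm{\bU_T}=O_{\rm P}(\sqrt{T})$ from Assumption~\ref{assumption:a}) for the two cross terms. The only cosmetic difference is that you factor $\opnorm{\tildebU{k}_T}=\sqrt{T}$ out of the full error matrix $\bM$ before bounding its pieces, while the paper applies the operator-norm bound to each of the three summands separately; both yield the same rates.
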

\begin{proof}
    Using \eqref{eq:relation-u_hat-u_tilde}, we get
    \begin{align*}
        \hatbU{k}_T -\b Q_{k} \bU_T  & = \frac{1}{NT}\tildebU{k}_T \operator{ \bxi_{NT} }^{\adjoint}
        \operator{ \bxi_{NT} } + \frac{1}{N T} \tildebU{k}_T \bU_T^\adjoint \bB_N^{\adjoint}
        \operator{ \bxi_{NT} } 
                                  \\ & \qquad\qquad\qquad\qquad\qquad\qquad\qquad\! +\frac{1}{NT}\tildebU{k}_T \operator{ \bxi_{NT} }^{\adjoint} \bB_N \bU_T
    \end{align*}
    and, therefore, 
    \begin{align*}
        \Fnorm{\hatbU{k}_T -\b Q_{k} \bU_T}  
            & \leq  \frac{1}{NT}\Fnorm{\tildebU{k}_T \operator{ \bxi_{NT} }^{\adjoint} \operator{ \bxi_{NT} }} + \frac{1}{N T} \Fnorm{\tildebU{k}_T \bU_T^\adjoint \bB_N^{\adjoint} \operator{ \bxi_{NT} }} 
         \\ &  \qquad \qquad\qquad\qquad\qquad\qquad\qquad + \frac{1}{NT} \Fnorm{\tildebU{k}_T \operator{ \bxi_{NT} }^{\adjoint} \bB_N \bU_T}.
    \end{align*}
    Let us consider each terms separately. For the first term,
    \begin{align*}
        \frac{1}{NT}\Fnorm{\tildebU{k}_T \operator{ \bxi_{NT} }^{\adjoint} \operator{ \bxi_{NT} }} 
         & \leq \frac{1}{NT}\opnorm{\tildebU{k}_T} \Fnorm{ \operator{ \bxi_{NT} }^{\adjoint} \operator{ \bxi_{NT} }} 
         \\ & = O_{\rm P}(\sqrt{T}C_{N,T}^{-1}),
    \end{align*}
    by Lemma~\ref{lma:Frobenius_Norm_of_xi}.
    For the second term, it follows from  Lemma~\ref{lma:frobenius_Norm_of_Bn_xi} that
    \begin{align*}
        \frac{1}{N T} \Fnorm{\tildebU{k}_T \bU_T^\adjoint \bB_N^{\adjoint} \operator{ \bxi_{NT} }}
         & \leq \frac{1}{N T} \opnorm{\tildebU{k}_T} \opnorm{ \bU_T^\adjoint } \Fnorm{\b B_{N}^{\adjoint} \operator{ \bxi_{NT} }} 
        \\ & = O_{\rm P}(\sqrt{T/N}),
    \end{align*}
    For the third term,
    \begin{align*}
        \frac{1}{NT} \Fnorm{\tildebU{k}_T \operator{ \bxi_{NT} }^{\adjoint} \bB_N \bU_T}
         & \leq \frac{1}{NT} \opnorm{\tildebU{k}_T} \opnorm{ \operator{ \bxi_{NT} }^{\adjoint} \b B_{N}} \Fnorm{ \bU_T} 
         \\& = O_{\rm P}(\sqrt{T/N}),
    \end{align*}
    again by Lemma~\ref{lma:frobenius_Norm_of_Bn_xi}.
    Therefore, 
    \begin{align}
    \frac{1}{T} \sum_{t=1}^T \hnorm{ \hatbu{k}_t - \bQ_k \bu_t }^2 = T^{-1} \Fnorm{\hatbU{k}_T - \bQ_k \bU_T}^2 & = O_{\rm P}(C_{N,T}^{-2}) + O_{\rm P}(N^{-1}) \nonumber
        \\ &
         = O_{\rm P}(C_{N,T}^{-2}). \label{eq:theorem_consistency_proof_bound}\vspace{-4mm}
    \end{align}\end{proof}

The next result tell us that the (rescaled) sample eigenvalue $\hat \lambda_1$ is bounded.
\begin{Lemma}
    \label{lma:largest_sample_eigenvalue_bounded}
    Under Assumptions~\ref{assumption:a}, \ref{assumption:b}, and \ref{assumption:c}, as $N,T \to \infty$,
    \[
        \hat \lambda_1 = O_{\rm P}(1) \qquad \text{and} \qquad \opnorm{ \operator{\bX_{NT}} } =
        O_{\rm P}(\sqrt{NT}).
    \]
    In particular, $\Fnorm{\hatbU{k}_T} \!\!= O_{\rm P}(\sqrt{T})$, where $\hatbU{k}_T\!\!$ is
    defined in the proof of Theorem~\ref{theorem:factors_average_bound}.
\end{Lemma}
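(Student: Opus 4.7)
The plan is to leverage the decomposition $\operator{\bX_{NT}} = \bB_N \bU_T + \operator{\bxi_{NT}}$ from \eqref{eq:bX_operator_notation}, bound each piece in operator norm, and then translate a bound on $\opnorm{\operator{\bX_{NT}}}$ into bounds on the rescaled eigenvalues $\hat\lambda_l$.

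First, I would bound the common part. By the triangle inequality, $\opnorm{\operator{\bX_{NT}}} \leq \opnorm{\bB_N}\,\opnorm{\bU_T} + \opnorm{\operator{\bxi_{NT}}}$. Since $\opnorm{\bB_N}^2 = \opnorm{\bB_N^\adjoint \bB_N}$ and, by Assumption~\ref{assumption:b}, $\bB_N^\adjoint \bB_N/N \to \b\Sigma_{\bB} \in \bbR^{r \times r}$ (a fixed positive definite matrix), we have $\opnorm{\bB_N} = O(\sqrt{N})$. Similarly $\opnorm{\bU_T}^2 = \opnorm{\bU_T \bU_T^\adjoint}$ and, by Assumption~\ref{assumption:a}, $\bU_T \bU_T^\adjoint/T \convp \b\Sigma_{\bu}$, so $\opnorm{\bU_T} = O_{\rm P}(\sqrt{T})$. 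Hence $\opnorm{\bB_N \bU_T} = O_{\rm P}(\sqrt{NT})$.

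For the idiosyncratic part, Lemma~\ref{lma:Frobenius_Norm_of_xi} (which holds under Assumption~\ref{assumption:c}) yields $\opnorm{\operator{\bxi_{NT}}} = O_{\rm P}(\sqrt{NT/C_{N,T}}) = o_{\rm P}(\sqrt{NT})$. Combining these gives $\opnorm{\operator{\bX_{NT}}} = O_{\rm P}(\sqrt{NT})$, which is the middle claim.

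To get $\hat\lambda_1 = O_{\rm P}(1)$, I would read off the singular values of $\operator{\bX_{NT}}$ from its SVD \eqref{eq:svdX}. Since the $\hat{\b e}_l$'s have norm $\sqrt{N}$ and the $\hat{\b f}_l$'s have norm $\sqrt{T}$, the orthonormalized decomposition is $\operator{\bX_{NT}} = \sum_l (\hat\lambda_l^{1/2}\sqrt{NT})\,(\hat{\b e}_l/\sqrt N) \tensor (\hat{\b f}_l/\sqrt T)$, so the singular values equal $\hat\lambda_l^{1/2}\sqrt{NT}$. Therefore $\hat\lambda_1 = \opnorm{\operator{\bX_{NT}}}^2/(NT) = O_{\rm P}(1)$. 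Finally, from \eqref{eq:hatU_k},
\[
\Fnorm{\hatbU{k}_T}^2 = \sum_{l=1}^k \hat\lambda_l^2 \rpnorm{\hat{\b f}_l}^2 = T\sum_{l=1}^k \hat\lambda_l^2 \leq kT\,\hat\lambda_1^2 = O_{\rm P}(T),
\]
giving $\Fnorm{\hatbU{k}_T} = O_{\rm P}(\sqrt T)$. The only minor subtlety is keeping track of the non-standard normalization of the SVD factors in Proposition~\ref{prop:computing_loadings_and_factors}; otherwise the argument is a direct application of the preceding lemmas and assumptions.
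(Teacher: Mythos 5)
Your proposal is correct and follows essentially the same route as the paper: the decomposition $\operator{\bX_{NT}} = \bB_N \bU_T + \operator{\bxi_{NT}}$, the bounds $\opnorm{\bB_N}=O(\sqrt N)$ and $\opnorm{\bU_T}=O_{\rm P}(\sqrt T)$ from Assumptions~\ref{assumption:a} and~\ref{assumption:b}, Lemma~\ref{lma:Frobenius_Norm_of_xi} for the idiosyncratic part, and the identity $\hat\lambda_1=\opnorm{\operator{\bX_{NT}}}^2/(NT)$. The only cosmetic difference is that the paper bounds $\opnorm{\cdot}^2$ directly via $\opnorm{A+B}^2\le 2\opnorm{A}^2+2\opnorm{B}^2$, while you apply the triangle inequality to $\opnorm{\cdot}$ first; both are equivalent here and your careful tracking of the non-standard SVD normalization is exactly what justifies the middle step.
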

\begin{proof}
    We have, by definition of $\hat \lambda_1$, and using Assumptions~\ref{assumption:a}, \ref{assumption:b},
    \ref{assumption:c}, \eqref{eq:bX_operator_notation}, and Lemma~\ref{lma:Frobenius_Norm_of_xi},
    \begin{align*}
        \hat \lambda_1 & = \opnorm{ \operator{ \bX_{NT}}^\adjoint \operator{\bX_{NT}}/(NT)} 
                    \\ & = (NT)^{-1} \opnorm{\operator{\bX_{NT}} }^2
                    \\ & \leq 2 (NT)^{-1} ( \opnorm{\b B_N \bU_T}^2 +
                    \opnorm{ \operator{ \bxi_{NT} } }^2)
                    \\ & \leq 2 (NT)^{-1} ( \opnorm{\b B_N}^2 \opnorm{\bU_T}^2 +
                    \opnorm{ \operator{ \bxi_{NT} } }^2)
                    \\ & \leq 2 (NT)^{-1} ( O(N) O_{\rm P}(T) + O_{\rm P}(N T C_{N,T}^{-1} ))
                     = O_{\rm P}(1),
    \end{align*}
    as $N,T \to \infty$.
    The last statement of the Lemma follows from the fact that, as $N,T \to \infty$,
    \[
        T^{-1} \Fnorm{\hatbU{k}_T}^2 = \hat \lambda_1^2 + \cdots + \hat
        \lambda_k^2 \leq k \hat \lambda_1^2 =  O_{\rm P}(1).\vspace{-9mm} 
    \]
\end{proof}


For a sequence of random variables $Y_N > 0$ and a sequence of constants $a_N > 0$,
we write~$Y_N = \Omega_{\rm P}(a_N)$ for 
$Y_N^{-1} = O_{\rm P}(a_N^{-1})$. 
\begin{Lemma}
    \label{lma:r_th_sample_eigenvalue_away_from_zero}
    Under Assumptions~\ref{assumption:a}, \ref{assumption:b}, \ref{assumption:c}, and
    \ref{assumption:Bn_xi},
 $
 \hat \lambda_r = \Omega_{\rm P}(1)$.
\end{Lemma}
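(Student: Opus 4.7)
The plan is to show that the $r$-th largest eigenvalue of $\operator{\bX_{NT}}^\adjoint \operator{\bX_{NT}}/(NT)$ is bounded away from zero in probability by decomposing this Gram-type operator into a ``signal'' piece whose $r$-th eigenvalue converges to a strictly positive constant, plus a ``noise'' piece whose operator norm vanishes. Recall from \eqref{eq:bX_operator_notation} that $\operator{\bX_{NT}} = \bB_N \bU_T + \operator{\bxi_{NT}}$, so
\[
\frac{\operator{\bX_{NT}}^\adjoint \operator{\bX_{NT}}}{NT} = \underbrace{\frac{\bU_T^\adjoint \bB_N^\adjoint \bB_N \bU_T}{NT}}_{=:A_{N,T}} + E_{N,T},
\]
where $E_{N,T}$ collects the two cross-terms $\bU_T^\adjoint \bB_N^\adjoint \operator{\bxi_{NT}}/(NT)$, $\operator{\bxi_{NT}}^\adjoint \bB_N \bU_T/(NT)$ and the purely idiosyncratic term $\operator{\bxi_{NT}}^\adjoint \operator{\bxi_{NT}}/(NT)$.

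First I would control $\opnorm{E_{N,T}}$. Lemma~\ref{lma:Frobenius_Norm_of_xi} gives $\opnorm{\operator{\bxi_{NT}}^\adjoint \operator{\bxi_{NT}}/(NT)} \leq \Fnorm{\operator{\bxi_{NT}}^\adjoint \operator{\bxi_{NT}}/(NT)} = O_{\rm P}(C_{N,T}^{-1})$. For the cross-term, using $\opnorm{\bU_T} = O_{\rm P}(\sqrt{T})$ from Assumption~\ref{assumption:a} and the bound $\opnorm{\bB_N^\adjoint \operator{\bxi_{NT}}} \leq \Fnorm{\bB_N^\adjoint \operator{\bxi_{NT}}} = O_{\rm P}(\sqrt{NT})$ from Lemma~\ref{lma:frobenius_Norm_of_Bn_xi}, I obtain $\opnorm{\bU_T^\adjoint \bB_N^\adjoint \operator{\bxi_{NT}}/(NT)} = O_{\rm P}(N^{-1/2})$. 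Combining, $\opnorm{E_{N,T}} = o_{\rm P}(1)$ as $N,T \to \infty$.

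Next I would analyze $A_{N,T}$, which has rank at most $r$. Its nonzero eigenvalues coincide with those of the $r \times r$ symmetric matrix
\[
G_{N,T} \defeq (\bB_N^\adjoint \bB_N/N)^{1/2}\, (\bU_T \bU_T^\adjoint /T)\, (\bB_N^\adjoint \bB_N/N)^{1/2} \in \bbR^{r \times r}.
\]
By Assumption~\ref{assumption:b}, $\bB_N^\adjoint \bB_N/N \to \bSigma_\bB$, and by Assumption~\ref{assumption:a}, $\bU_T \bU_T^\adjoint/T \convP \bSigma_\bu$, both of which are positive definite. Hence $G_{N,T} \convP \bSigma_\bB^{1/2} \bSigma_\bu \bSigma_\bB^{1/2}$, a positive definite matrix similar to $\bSigma_\bB \bSigma_\bu$. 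Denoting by $c > 0$ its smallest eigenvalue, continuity of eigenvalues \citep[e.g.,][]{hsing:eubank:2015} gives $\lambda_r(A_{N,T}) \convP c$.

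Finally, Weyl's inequality for self-adjoint compact operators yields
\[
\hat \lambda_r = \lambda_r\!\left(\operator{\bX_{NT}}^\adjoint \operator{\bX_{NT}}/(NT)\right) \geq \lambda_r(A_{N,T}) - \opnorm{E_{N,T}},
\]
so $\hat \lambda_r \geq c/2$ with probability tending to one, proving $\hat \lambda_r^{-1} = O_{\rm P}(1)$, i.e.\ $\hat \lambda_r = \Omega_{\rm P}(1)$. The main technical subtlety is the passage from $\bB_N^\adjoint \bB_N/N \cdot \bU_T\bU_T^\adjoint/T$ (a product of limiting positive definite matrices, not symmetric in general) to the self-adjoint form $G_{N,T}$ whose eigenvalues can be controlled via continuity; everything else reduces to bounds already established in Lemmas~\ref{lma:Frobenius_Norm_of_xi} and~\ref{lma:frobenius_Norm_of_Bn_xi}.
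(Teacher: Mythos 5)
Your proposal is correct and follows essentially the same route as the paper's own proof: both decompose $\operator{\bX_{NT}}^\adjoint \operator{\bX_{NT}}/(NT)$ into the rank-$r$ signal term $A_{N,T}=\bU_T^\adjoint \bB_N^\adjoint \bB_N \bU_T/(NT)$ plus an $o_{\rm P}(1)$ perturbation (controlled via Lemmas~\ref{lma:Frobenius_Norm_of_xi} and~\ref{lma:frobenius_Norm_of_Bn_xi}), and then lower-bound $\lambda_r(A_{N,T})$. The only cosmetic difference is that the paper clinches the last step by the Courant--Fischer inequality $\lambda_r(A_{N,T}) \geq \lambda_r[\bB_N^\adjoint \bB_N/N]\,\lambda_r[\bU_T\bU_T^\adjoint/T]$ while you pass through the spectral identity relating $A_{N,T}$ to $G_{N,T}$ and invoke continuity of eigenvalues, which yields an explicit limiting constant $c>0$; both arguments rest on the same $\lambda(M^\adjoint M)=\lambda(MM^\adjoint)$ fact.
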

\begin{proof}
    Write $\lambda_k[ A ]$ for the $k$-th largest eigenvalue of a self-adjoint
    operator $A$. By definition,
    \begin{align*}
        \hat \lambda_r & \defeq \lambda_r\left[ \operator{\bX_{NT}}^\adjoint \operator{\bX_{NT}}/(NT) \right]
                    \\ &= \lambda_r\big[ \bU_T^\adjoint \bB_N^\adjoint \bB_N \bU_T/(NT)  
                    \\ & \qquad + (NT)^{-1}( \bU_T^\adjoint \bB_N^\adjoint \operator{ \bxi_{NT} } + \operator{ \bxi_{NT} }^\adjoint \bB_N \bU_T + \operator{ \bxi_{NT} }^\adjoint \operator{ \bxi_{NT} }) \big].
    \end{align*}
Since the operator norm of  $ (NT)^{-1} \bU_T^\adjoint \bB_N^\adjoint \operator{ \bxi_{NT} } $ 
 is $o_{\rm P}(1)$ under
Assumptions~\ref{assumption:a},~\ref{assumption:c}, and~\ref{assumption:Bn_xi} (see
Lemma~\ref{lma:Frobenius_Norm_of_xi}), we have, by Lemma~\ref{lma:perturbation_of_singular_values},
\[
    \abs{ \hat \lambda_r - \lambda_r[\bU_T^\adjoint \bB_N^\adjoint \bB_N \bU_T/(NT)]} = o_{\rm P}(1)
\] 
as $N,T \to \infty$.
We
therefore just need to show that $\lambda_r[\bU_T^\adjoint \bB_N^\adjoint \bB_N \bU_T/(NT)] =
\Omega_{\rm P}(1)$ as $N,T \to \infty$.
Using the Courant--Fischer--Weyl minimax characterization of eigenvalues \citep{hsing:eubank:2015}, we obtain 
\[
    \lambda_r[\bU_T^\adjoint \bB_N^\adjoint \bB_N \bU_T/(NT)] \geq \lambda_r[\bB_N^\adjoint \bB_N/N]
    \cdot
    \lambda_r[\bU_T^\adjoint \bU_T/T]. 
\]
Now, by Assumption~\ref{assumption:b}, $\lambda_r[\bB_N^\adjoint \bB_N/N] = \Omega_{\rm P}(1)$ as $N \to \infty$, and, by Assumption~\ref{assumption:a},

\[
    \lambda_r[\bU_T^\adjoint \bU_T/T] = \lambda_r[\bU_T \bU_T^\adjoint/T] = \Omega_{\rm P}(1),
\]
as $T \to \infty$.  The result follows.
\end{proof}

The next lemma is dealing with the singular values of the matrix $\bQ_k$ defined in \eqref{eq:Qk}.  Denote by $s_j[A]$ the $j$th largest singular value of a matrix $A$. 
\begin{Lemma}
    \label{lma:minimal_singular_value_Qk}
    Under Assumptions~\ref{assumption:a}, \ref{assumption:b}, \ref{assumption:c}, and \ref{assumption:Bn_xi},   letting $r_* \defeq \min(k,r)$, we have
    \[ 
        s_{r_*}[\bQ_k] = \Omega_{\rm P}(1) \quad \text{and} \quad s_{r_*}\left[\bQ_k
        \bU_T/\sqrt{T}\right] =
        \Omega_{\rm P}(1)
    \]
    as $N,T \to \infty$.
    In other words, for $k \leq r$,  the rows of $\bQ_k$ are, for $N,T$ large enough,  
    linearly independent   and, for  $k \geq r$,  $\bQ_k$ has a left generalized inverse
    $\bQ_k^-$ with $\opnorm{\bQ_k^-} = O_{\rm P}(1)$.
\end{Lemma}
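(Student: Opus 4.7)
The plan is to bootstrap the lower bound on singular values along the chain $\hatbU{k}_T/\sqrt{T} \;\leadsto\; \bQ_k \bU_T/\sqrt{T} \;\leadsto\; \bQ_k$. First I would exploit the fact that the rows of $\tildebU{k}_T/\sqrt{T}$ are orthonormal (they are rescaled eigenvectors of the symmetric matrix $\bF$, so $\tildebU{k}_T (\tildebU{k}_T)^\adjoint = T\,\bI_k$). Writing $\hatbU{k}_T = \hat \bLambda_k \tildebU{k}_T$ with $\hat \bLambda_k \defeq \mathrm{diag}(\hat \lambda_1, \ldots, \hat \lambda_k) \in \bbR^{k \times k}$, this yields
\[
(\hatbU{k}_T/\sqrt{T})(\hatbU{k}_T/\sqrt{T})^\adjoint = \hat \bLambda_k^2,
\]
so the singular values of $\hatbU{k}_T/\sqrt{T}$ are exactly $\hat \lambda_1 \geq \cdots \geq \hat \lambda_k$. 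Combined with Lemma~\ref{lma:r_th_sample_eigenvalue_away_from_zero}, which gives $\hat \lambda_r = \Omega_{\rm P}(1)$, the monotonicity of the $\hat \lambda_l$ implies, in both cases $k \leq r$ and $k \geq r$, that $s_{r_*}[\hatbU{k}_T/\sqrt{T}] \geq \hat \lambda_r = \Omega_{\rm P}(1)$.

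Next I would transfer this bound to $\bQ_k \bU_T/\sqrt{T}$ through Weyl's perturbation inequality $|s_i[A] - s_i[B]| \leq \opnorm{A-B}$. Theorem~\ref{thm:consistency_hatbu} gives $\Fnorm{\hatbU{k}_T - \bQ_k \bU_T}/\sqrt{T} = O_{\rm P}(C_{N,T}^{-1})$, hence
\[
\bigl| s_{r_*}[\hatbU{k}_T/\sqrt{T}] - s_{r_*}[\bQ_k \bU_T/\sqrt{T}] \bigr| \leq \opnorm{(\hatbU{k}_T - \bQ_k \bU_T)/\sqrt{T}} \leq \Fnorm{\hatbU{k}_T - \bQ_k \bU_T}/\sqrt{T} = o_{\rm P}(1),
\]
which immediately gives the second claim, $s_{r_*}[\bQ_k \bU_T/\sqrt{T}] = \Omega_{\rm P}(1)$.

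For the first claim, I would invoke the standard multiplicative inequality for singular values $s_i[AB] \leq s_i[A]\, s_1[B]$, applied to $A = \bQ_k$ and $B = \bU_T/\sqrt{T}$:
\[
s_{r_*}[\bQ_k \bU_T/\sqrt{T}] \leq s_{r_*}[\bQ_k]\cdot s_1[\bU_T/\sqrt{T}].
\]
By Assumption~\ref{assumption:a}, $\bU_T \bU_T^\tp/T \convP \b \Sigma_{\bu}$ with $\lambda_1(\b \Sigma_{\bu}) < \infty$, so $s_1[\bU_T/\sqrt{T}]^2 = \lambda_1(\bU_T \bU_T^\tp/T) = O_{\rm P}(1)$. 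Rearranging the display then yields $s_{r_*}[\bQ_k] = \Omega_{\rm P}(1)$. The consequences stated at the end of the lemma follow: for $k \leq r$, $s_k[\bQ_k] = \Omega_{\rm P}(1)$ implies full row rank of $\bQ_k$ with probability going to one; for $k \geq r$, $s_r[\bQ_k] = \Omega_{\rm P}(1)$ gives full column rank, so $\bQ_k^- \defeq (\bQ_k^\adjoint \bQ_k)^{-1} \bQ_k^\adjoint$ is a well-defined left inverse satisfying $\opnorm{\bQ_k^-} = 1/s_r[\bQ_k] = O_{\rm P}(1)$.

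The main obstacle, really, is just keeping track of $r_* = \min(k,r)$ uniformly across the two regimes $k \leq r$ and $k \geq r$; once the singular-value machinery (Weyl's inequality plus the multiplicative bound $s_i[AB] \leq s_i[A] s_1[B]$) is invoked, the argument is essentially a three-line reduction to the already-established Lemma~\ref{lma:r_th_sample_eigenvalue_away_from_zero} and Theorem~\ref{thm:consistency_hatbu}.
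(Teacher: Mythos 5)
Your proof is correct and follows essentially the same route as the paper's: both use the orthonormality of the rows of $\tildebU{k}_T/\sqrt{T}$ to identify the singular values of $\hatbU{k}_T/\sqrt{T}$ with $\hat\lambda_1,\ldots,\hat\lambda_k$, then invoke Lemma~\ref{lma:r_th_sample_eigenvalue_away_from_zero}, the Weyl perturbation bound (Lemma~\ref{lma:perturbation_of_singular_values}) together with Theorem~\ref{thm:consistency_hatbu}, and finally the multiplicative singular-value inequality with Assumption~\ref{assumption:a} to peel off the factor $\bU_T/\sqrt{T}$. The only difference is cosmetic — you make the identity $s_{r_*}[\hatbU{k}_T/\sqrt{T}]=\hat\lambda_{r_*}$ explicit at the outset, whereas the paper uses it implicitly in the final display.
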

\begin{proof}
The Courant--Fischer--Weyl minimax characterization of singular values
    \citep{hsing:eubank:2015} yields 
    \[ 
        s_{r_*}\left[\bQ_k \bU_T/\sqrt{T}\right] \leq s_1\left[\bU_T/\sqrt{T}\right] s_{r_*}[\bQ_k] = \left(s_1\left[\bU_T
        \bU_T^\adjoint/\sqrt{T}\right]\right)^{1/2} s_{r_*}[\bQ_k],
    \]
    hence, given that $s_1[\bU_T \bU_T^\adjoint/T] = O_{\rm P}(1)$ as $T \to \infty$, by Assumption~\ref{assumption:a},
    \[ 
        s_{r_*}[\bQ_k] \geq (s_1\left[\bU_T \bU_T^\adjoint/T\right])^{-1/2} s_{r_*}\left[\bQ_k \bU_T/\sqrt{T}\right] =
        \Omega_{\rm P}(1) s_{r_*}\left[\bQ_k \bU_T/\sqrt{T}\right],
    \]
    and, by \eqref{eq:theorem_consistency_proof_bound} and Lemma~\ref{lma:perturbation_of_singular_values},
    \[
        s_{r_*}\left[\bQ_k \bU_T/\sqrt{T}\right] = s_{r_*}\left[\hatbU{k}_T/\sqrt T
        \right] + o_{\rm P}(1) = \hat
        \lambda_{r_*} + o_{\rm P}(1). 
    \]
 Lemma~\ref{lma:r_th_sample_eigenvalue_away_from_zero} then implies that 
    $s_{r_*}\left[\bQ_k \bU_T/\sqrt{T}\right] = \Omega_{\rm P}(1)$ as $N,T \to \infty$, which completes the proof.
\end{proof}

We will use the following result (which follows directly from ordinary least
square regression theory): if $\bW$ is $k \times T$, then
\begin{equation}
    \label{eq:OLS_and_orthogonal_projection}
    V(k, \bW) = N^{-1} T^{-1} \trace( \operator{\bX_{NT}} \b M_{\bW} \operator{\bX_{NT}}^\adjoint ),
\end{equation}
where $\b M_{\bW} \defeq \b I_T -  \b P_{\bW}$  is the $T
\times T$ orthogonal projection matrix with span the null space of $\bW$ or,  in
other words, $\b P_{\bW}$ is the orthogonal projection onto the column space of $\bW^\tp$.
If $\bW$ is of full row rank, then
\begin{equation}
    \label{eq:orthogonal_projection_matrix}
    \b P_{\bW} \defeq \bW^\adjoint (\bW \bW^\adjoint)^{-1} \bW.
\end{equation}
Recall the definition of $V(k, \tildebU{k}_T)$ given in \eqref{eq:defn_V}. Assuming that $\hat \lambda_k > 0$ (otherwise it is clear that $r < k$), notice that $V(k, \tildebU{k}_T) = V(k, \hatbU{k}_T)$,   hence 
 $$\IC(k) = V(k, \hatbU{k}_T) + k\, g(N,T).$$
\begin{Lemma} \label{lemma:2}
    Under Assumptions~\ref{assumption:a}, \ref{assumption:b}, and \ref{assumption:c},
    for $1\leq k \leq r$, with $\bQ_{k}$ defined in \eqref{eq:Qk}, we have
    \[
        \abs{ V(k,\hatbU{k}_T) - V(k,\b  Q_{k}\bU_T)} = O_{\rm P}(C_{N,T}^{-1})
    \]
    as $N,T \to \infty$.
\end{Lemma}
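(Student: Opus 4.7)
The strategy is to exploit the identity~\eqref{eq:OLS_and_orthogonal_projection}, which yields
\[
V(k,\hatbU{k}_T) - V(k,\bQ_{k}\bU_T) \,=\, \frac{1}{NT} \trace\!\left( \operator{\bX_{NT}} \bigl( \bP_{\bQ_k \bU_T} - \bP_{\hatbU{k}_T} \bigr) \operator{\bX_{NT}}^\adjoint \right),
\]
so that the problem reduces to controlling the difference of two orthogonal projection matrices associated with the row spaces of $\hatbU{k}_T$ and $\bQ_k \bU_T$, both of which are $k \times T$ matrices of full row rank (with probability tending to $1$), by Lemma~\ref{lma:r_th_sample_eigenvalue_away_from_zero} and Lemma~\ref{lma:minimal_singular_value_Qk}, respectively.

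The first step is to establish a standard perturbation bound for orthogonal projections: for any two $k \times T$ matrices $\bW_1, \bW_2$ of full row rank, writing $\bP_{\bW_i} = \bW_i^\adjoint (\bW_i \bW_i^\adjoint)^{-1} \bW_i$ and telescoping, one obtains
\[
\opnorm{ \bP_{\bW_1} - \bP_{\bW_2} } \;\lesssim\; \frac{\opnorm{\bW_1 - \bW_2}}{\min\{ s_k(\bW_1), s_k(\bW_2) \}}.
\]
Applying this with $\bW_1 = \hatbU{k}_T/\sqrt{T}$ and $\bW_2 = \bQ_k \bU_T/\sqrt{T}$, Theorem~\ref{thm:consistency_hatbu} gives $\opnorm{\bW_1 - \bW_2} \leq \Fnorm{\bW_1 - \bW_2} = O_{\rm P}(C_{N,T}^{-1})$, while Lemma~\ref{lma:minimal_singular_value_Qk} and the consistency bound yield $s_k(\bW_1), s_k(\bW_2) = \Omega_{\rm P}(1)$. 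Consequently $\opnorm{\Delta} = O_{\rm P}(C_{N,T}^{-1})$, where $\Delta \defeq \bP_{\bQ_k \bU_T} - \bP_{\hatbU{k}_T}$.

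The second step exploits the fact that $\Delta$ has rank at most $2k$ as the difference of two rank-$k$ projections, so its trace norm is controlled by $\tracenorm{\Delta} \leq 2k\, \opnorm{\Delta}$. By the cyclic property of the trace and H\"older's inequality for Schatten norms (Section~\ref{sec:schatten_Norms}),
\[
\bigl| \trace( \operator{\bX_{NT}} \Delta\, \operator{\bX_{NT}}^\adjoint ) \bigr|
\,=\, \bigl| \trace( \Delta\, \operator{\bX_{NT}}^\adjoint \operator{\bX_{NT}} ) \bigr|
\,\leq\, \tracenorm{\Delta}\, \opnorm{\operator{\bX_{NT}}}^{2}
\,\leq\, 2k\, \opnorm{\Delta}\, \opnorm{\operator{\bX_{NT}}}^{2}.
\]
Combining this with Lemma~\ref{lma:largest_sample_eigenvalue_bounded}, which gives $\opnorm{\operator{\bX_{NT}}}^2 = O_{\rm P}(NT)$, yields
\[
\bigl| V(k,\hatbU{k}_T) - V(k,\bQ_{k}\bU_T) \bigr| \,\leq\, \frac{2k}{NT}\, \opnorm{\Delta}\, \opnorm{\operator{\bX_{NT}}}^{2} \,=\, O_{\rm P}(C_{N,T}^{-1}),
\]
as claimed. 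The only non-routine step is the perturbation bound on the projections, and this is essentially a consequence of the resolvent identity for $(\bW_i \bW_i^\adjoint)^{-1}$ once the smallest singular values are bounded away from zero, which is exactly the content of Lemma~\ref{lma:minimal_singular_value_Qk}.
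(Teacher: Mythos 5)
Your proof is correct, and it takes a genuinely different route from the paper's. Both arguments start from the OLS identity~\eqref{eq:OLS_and_orthogonal_projection}, which reduces the lemma to bounding $\tracenorm{\Delta}$ with $\Delta \defeq \b P_{\bQ_k\bU_T} - \b P_{\hatbU{k}_T}$, after factoring out the $O_{\rm P}(1)$ quantity $\opnorm{\operator{\bX_{NT}}^\adjoint\operator{\bX_{NT}}/(NT)}$ (Lemma~\ref{lma:largest_sample_eigenvalue_bounded}). They diverge in how $\tracenorm{\Delta}$ is controlled. The paper plugs in the explicit formula~\eqref{eq:orthogonal_projection_matrix}, writes the difference as a telescoping sum of three terms involving $\bD_k^{-1}$, $\bD_0^{-1}$, and $\bD_k^{-1}-\bD_0^{-1}$ (with $\bD_k = \hatbU{k}_T(\hatbU{k}_T)^\adjoint/T$, $\bD_0 = \bQ_k\bU_T\bU_T^\adjoint\bQ_k^\adjoint/T$), and bounds each term's trace norm by products of Frobenius norms via Theorem~\ref{thm:consistency_hatbu}; this is concrete but algebra-heavy. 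You instead bound $\opnorm{\Delta}$ directly through a single subspace-perturbation (Wedin-type $\sin\Theta$) inequality, then convert to trace norm via the rank bound $\tracenorm{\Delta}\le 2k\,\opnorm{\Delta}$, which is conceptually cleaner and avoids handling $\bD_k^{-1}-\bD_0^{-1}$ altogether. The one place you lean on an unproved assertion is the projection-perturbation inequality itself; it is standard, and a short self-contained argument would be: for $v\in V_2^\perp$ with $\rpnorm{v}=1$, $\bW_2 v = 0$ gives $\rpnorm{\bW_1 v} = \rpnorm{(\bW_1-\bW_2)v}\le\opnorm{\bW_1-\bW_2}$, while $\bW_1 v = \bW_1\b P_1 v$ yields $\rpnorm{\bW_1 v}\ge s_k(\bW_1)\rpnorm{\b P_1 v}$, so $\opnorm{\b P_1(\b I_T-\b P_2)}\le\opnorm{\bW_1-\bW_2}/s_k(\bW_1)$, and $\opnorm{\b P_1-\b P_2}=\opnorm{\b P_1(\b I_T-\b P_2)}$ when $\dim V_1=\dim V_2$. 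Both proofs rest on the same supporting results (Theorem~\ref{thm:consistency_hatbu}, Lemma~\ref{lma:r_th_sample_eigenvalue_away_from_zero} giving $\hat\lambda_k\ge\hat\lambda_r=\Omega_{\rm P}(1)$, and Lemma~\ref{lma:minimal_singular_value_Qk}), and both land on the same rate, so the difference is purely in how the projection difference is tamed.
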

\begin{proof}
    Using \eqref{eq:OLS_and_orthogonal_projection}, we get
    \begin{align*}
        \abs{ V(k,\hatbU{k}_T) - V(k,\b Q_{k} \bU_T) } & = N^{-1} T^{-1} \trace\left[ \operator{\bX_{NT}} ( \b P_{\b Q_k \bU_T} - \b P_{\hatbU{k}_T} ) \operator{\bX_{NT}}^\adjoint \right]
        \\ & \leq \tracenorm{  \b P_{\hatbU{k}_T} - \b P_{\b Q_k \bU_T} } \opnorm{
        \operator{\bX_{NT}}^\adjoint \operator{\bX_{NT}}/(NT) }
    \\ & = \tracenorm{  \b P_{\hatbU{k}_T} - \b P_{\b Q_k \bU_T} } O_{\rm P}(1)
    \end{align*}
    as $N,T \to \infty$, by Lemma~\ref{lma:largest_sample_eigenvalue_bounded}.
    We now deal with $\tracenorm{  \b P_{\hatbU{k}_T} - \b P_{\b Q_k \bU_T} }$. 
    Letting 
    $$\b D_{k} \defeq  \hatbU{k}_T (\hatbU{k}_T)^\adjoint/T\quad\text{and}\quad\b D_{0}
    \defeq \bQ_{k} \bU_T  \bU_T^\adjoint \bQ_{k}^{\adjoint} /T,$$
     we know by
    Lemmas~\ref{lma:r_th_sample_eigenvalue_away_from_zero} and
    \ref{lma:minimal_singular_value_Qk} that these two matrices are
    invertible  for $N$ and $T$ large enough, and 
    \begin{align*}
        \b P_{\hatbU{k}_T} - \b P_{\b Q_k \bU_T} 
        &= T^{-1}\left((\hatbU{k}_T)^\adjoint \bD_k^{-1} \hatbU{k}_T - (\bQ_k \bU_T)^\adjoint\bD_0^{-1} (\bQ_k \bU_T) \right)
     \\ &= T^{-1}  (\hatbU{k}_T - \bQ_k \bU_T)^\adjoint \bD_k^{-1} \hatbU{k}_T 
     \\ & \quad +  T^{-1}  (\bQ_k \bU_T)^\adjoint (\bD_k^{-1} - \bD_0^{-1}) (\hatbU{k}_T)  
     \\ &  \quad + T^{-1} (\bQ_k \bU_T)^\adjoint\bD_0^{-1} (\hatbU{k}_T - \bQ_k \bU_T).  
    \end{align*}
    Therefore,
\begin{align*}
    \tracenorm{\b P_{\hatbU{k}_T} - \b P_{\b Q_k \bU_T} } & \leq  T^{-1}
    \Fnorm{\hatbU{k}_T - \bQ_k \bU_T} \Fnorm{ \bD_k^{-1} } \Fnorm{\hatbU{k}_T }
     \\ & \quad +  T^{-1}  \Fnorm{\bQ_k \bU_T} \Fnorm{\bD_k^{-1} - \bD_0^{-1}}
     \Fnorm{\hatbU{k}_T }  
     \\ &  \quad + T^{-1} \Fnorm{\bQ_k \bU_T} \Fnorm{\bD_0^{-1}} \Fnorm{\hatbU{k}_T - \bQ_k \bU_T}  .
\end{align*}
We know from 
Theorem~\ref{thm:consistency_hatbu} that $\Fnorm{\hatbU{k}_T - \bQ_k \bU_T} = O_{\rm P}(\sqrt{T}/C_{N,T})$ 
 and, by
Lemmas~\ref{lma:r_th_sample_eigenvalue_away_from_zero} and~\ref{lma:largest_sample_eigenvalue_bounded}, that 
 $\Fnorm{ \bD_k^{-1} } = O_{\rm P}(1)$ and $\Fnorm{\hatbU{k}_T} =
O_{\rm P}(\sqrt{T})$, all as $N,T \to \infty$. Furthermore,
\begin{align*}
    \Fnorm{\bQ_k \bU_T}  & \leq \opnorm{\bQ_k} \Fnorm{\bU_T}  
    \\ &\leq \opnorm{\tildebU{k}_T} \opnorm{\bU_T} \opnorm{\b B_N^\adjoint \b B_N} \Fnorm{\bU_T}/(NT)
    = O_{\rm P}(\sqrt{T})
\end{align*}
as $N,T \to \infty$.
We are only left to bound $\Fnorm{\bD_k^{-1} - \bD_0^{-1}}$ and $\Fnorm{\bD_0^{-1}}$.
First, note that, by Lemma~\ref{lma:minimal_singular_value_Qk},  
\[ 
    \Fnorm{\bD_0^{-1}} \leq \sqrt{k} (\lambda_k[\bD_0])^{-1} = \sqrt{k}
    (\Omega_{\rm P}(1))^{-1} = O_{\rm P}(1),
\]
as $N,T \to \infty$.
Using this, we get 
\begin{align*}
    \Fnorm{\bD_k^{-1} - \bD_0^{-1}} &=  \Fnorm{\bD_k^{-1}(\bD_0 - \bD_k)\bD_0^{-1}}
                                 \\ &\leq  \Fnorm{\bD_k^{-1}} \Fnorm{\bD_0 - \bD_k} \Fnorm{\bD_0^{-1}}
                                = O_{\rm P}(C_{N,T}^{-1})
\end{align*}
as $N,T \to \infty$.
Indeed, from Theorem~\ref{thm:consistency_hatbu} and
Lemma~\ref{lma:largest_sample_eigenvalue_bounded}, as $N,T \to \infty$, we have
\begin{align*}
\MoveEqLeft {\Fnorm{\bD_0 - \bD_k}}\\ 
  &=  \Fnorm{ (\hatbU{k}_T - \bQ_k \bU_T) (\hatbU{k}_T)^\adjoint/T + \bQ_k
    \bU_T (\hatbU{k}_T - \bQ_k \bU_T )^\adjoint/T }
                       \\ & \leq \Fnorm{\hatbU{k}_T - \bQ_k \bU_T} \Fnorm{\hatbU{k}_T}/T + \opnorm{\bQ_k}
                       \Fnorm{\bU_T} \Fnorm{\hatbU{k}_T - \bQ_k \bU_T}/T
    \\ &= O_{\rm P}(\sqrt{T}/C_{N,T}) O_{\rm P}(\sqrt{T})/T + O_{\rm P}(1) O_{\rm P}(\sqrt{T})
    O_{\rm P}(\sqrt{T}/C_{N,T})/T
    = O_{\rm P}\left(C_{N,T}^{-1}\right),
\end{align*}
hence
 $ \tracenorm{\b P_{\hatbU{k}_T} - \b P_{\b Q_k \bU_T} } =
O_{\rm P}(C_{N,T}^{-1})$ and 
$$\abs{ V(k,\hatbU{k}_T) - V(k,\b Q_{k} \bU_T) } =
O_{\rm P}(C_{N,T}^{-1}).\hfill$$
$\,$\vspace{-12mm}

\end{proof}

\begin{Lemma} \label{lma:VQu_minus_Vu_bounded_below} 
    Under Assumptions~\ref{assumption:a}, \ref{assumption:b}, and  \ref{assumption:c},
    for $k<r$, 
\[
    V(k,\b Q_{k} \bU_T)-V(r,\bU_T) = \Omega_{\rm P}(1)
\]
as $N,T \to \infty$.
\end{Lemma}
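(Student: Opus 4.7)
The plan is to use the orthogonal-projection identity~\eqref{eq:OLS_and_orthogonal_projection} to rewrite
\[
V(k,\bQ_k\bU_T)-V(r,\bU_T)=(NT)^{-1}\trace\bigl(\operator{\bX_{NT}}\b G_T\operator{\bX_{NT}}^\adjoint\bigr),\qquad \b G_T\defeq \bP_{\bU_T}-\bP_{\bQ_k\bU_T},
\]
and then to exploit that, since $\bQ_k\bU_T$ has its row space inside that of $\bU_T$, the two projections are nested and $\b G_T$ itself is an orthogonal projection of rank $r-\mathrm{rank}(\bQ_k)\geq r-k\geq 1$. Substituting $\operator{\bX_{NT}}=\bB_N\bU_T+\operator{\bxi_{NT}}$ splits the trace into four summands; the aim is to show that the pure common-component summand is $\Omega_{\rm P}(1)$ while each of the three others is $o_{\rm P}(1)$.

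For the pure-common summand, letting $\b S_T\defeq\bU_T\bU_T^\adjoint/T$, I would use $\bP_{\bU_T}=T^{-1}\bU_T^\adjoint\b S_T^{-1}\bU_T$ together with the analogous formula for $\bP_{\bQ_k\bU_T}$ to compute
\[
\bU_T\b G_T\bU_T^\adjoint=T\,\b S_T^{1/2}(\bI_r-\b P)\,\b S_T^{1/2},\quad \b P\defeq \b S_T^{1/2}\bQ_k^\adjoint(\bQ_k\b S_T\bQ_k^\adjoint)^{-1}\bQ_k\b S_T^{1/2},
\]
where $\b P$ is an orthogonal projection in $\bbR^r$ of rank $\mathrm{rank}(\bQ_k)\leq k$. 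Consequently,
\[
(NT)^{-1}\trace\bigl(\bB_N\bU_T\b G_T\bU_T^\adjoint\bB_N^\adjoint\bigr)=\trace\bigl((\bI_r-\b P)\,\b S_T^{1/2}(\bB_N^\adjoint\bB_N/N)\b S_T^{1/2}\bigr)
\]
is at least $(r-k)\,\lambda_{\min}\bigl(\b S_T^{1/2}(\bB_N^\adjoint\bB_N/N)\b S_T^{1/2}\bigr)$. Assumption~\ref{assumption:a} forces $\b S_T\convp\bSig_\bu$ and Assumption~\ref{assumption:b} forces $\bB_N^\adjoint\bB_N/N\to\bSig_\bB$, both positive definite, so the smallest eigenvalue in this bound stays bounded away from zero in probability, yielding the $\Omega_{\rm P}(1)$ lower bound.

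For the three remaining terms, the key observation is that $\b G_T$ is an orthogonal projection of rank at most $r$, so that
\[
\Fnorm{\operator{\bxi_{NT}}\b G_T}^2=\trace\bigl(\b G_T\operator{\bxi_{NT}}^\adjoint\operator{\bxi_{NT}}\b G_T\bigr)\leq r\,\opnorm{\operator{\bxi_{NT}}^\adjoint\operator{\bxi_{NT}}}=O_{\rm P}(NT/C_{N,T})
\]
by Lemma~\ref{lma:Frobenius_Norm_of_xi}. Combined with $\Fnorm{\bB_N\bU_T\b G_T}\leq\Fnorm{\bB_N\bU_T}=O_{\rm P}(\sqrt{NT})$ (immediate from Assumptions~\ref{assumption:a} and~\ref{assumption:b}), a Cauchy--Schwarz bound on the trace gives the two cross-terms $O_{\rm P}(C_{N,T}^{-1/2})$ and the pure-idiosyncratic term $O_{\rm P}(C_{N,T}^{-1})$, both $o_{\rm P}(1)$.

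The main obstacle is the algebraic reduction of $\bU_T\b G_T\bU_T^\adjoint$ to the canonical form $T\b S_T^{1/2}(\bI_r-\b P)\b S_T^{1/2}$ with $\b P$ an orthogonal projection in $\bbR^r$; once that identification is in hand, the trace inequality $\trace((\bI_r-\b P)\bM)\geq \mathrm{rank}(\bI_r-\b P)\,\lambda_{\min}(\bM)$ for positive semi-definite $\bM$, together with the positive-definite limits supplied by~\ref{assumption:a} and~\ref{assumption:b}, gives the strictly positive gap of order $r-k$. A minor subtlety arises if $\bQ_k$ is rank-deficient in finite samples; in that case the same argument works with $(\bQ_k\b S_T\bQ_k^\adjoint)^{-1}$ replaced by its Moore--Penrose pseudo-inverse, since $\b P$ remains an orthogonal projection of rank at most $k$ and $\bI_r-\b P$ still has rank at least $r-k$.
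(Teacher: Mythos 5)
Your proof is correct and takes essentially the same route as the paper: split $(NT)^{-1}\trace\bigl(\operator{\bX_{NT}}\b G_T\operator{\bX_{NT}}^\adjoint\bigr)$ over $\operator{\bX_{NT}}=\bB_N\bU_T+\operator{\bxi_{NT}}$, bound the pure-common term below by a rank-deficit eigenvalue argument (the paper does this via $\trace(AB)\geq\lambda_1[A]\lambda_r[B]$ and Lemma~\ref{lma:largest_eigenvalue_of_difference_of_two_spd_matrices_with_unequal_rank}, which is exactly the same fact you package as $\trace((\bI_r-\b P)\bM)\geq(r-k)\lambda_{\min}(\bM)$ after conjugating by $\b S_T^{1/2}$), and bound the remaining terms by trace/Hilbert--Schmidt inequalities together with Lemma~\ref{lma:Frobenius_Norm_of_xi}. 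Your pseudo-inverse remark for a possibly rank-deficient $\bQ_k$ is exactly right and consistent with the paper, which likewise never needs $\bQ_k$ to be full rank for this particular lemma.
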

\begin{proof}
    Using \eqref{eq:OLS_and_orthogonal_projection} and the notation of
    \eqref{eq:orthogonal_projection_matrix}, we have
\begin{align}
    V(k,\b Q_{k}\bU_T)-V(r,\bU_T) &= N^{-1} T^{-1} \trace\left[ \operator{\bX_{NT}} (\b P_{\bU_T} - \b
    P_{\bQ_k \bU_T} ) \operator{\bX_{NT}}^\adjoint \right]  \nonumber
    \\ &= (NT)^{-1}\trace[ \bB_N \bU_T (\b P_{\bU_T} - \b P_{\bQ_k \bU_T} ) \bU_T^\adjoint
    \bB_N^\adjoint]\nonumber
    \\ & \qquad + (NT)^{-1}\trace\left[ \bB_N \bU_T( \b P_{\bU_T} - \b P_{\bQ_k \bU_T} )
    \operator{ \bxi_{NT} }^\adjoint \right]  \label{eq:difference_VQU_Vu}
    \\ & \qquad + (NT)^{-1}\trace\left[
    \operator{ \bxi_{NT} } ( \b P_{\bU_T} - \b P_{\bQ_k \bU_T} ) \operator{\bX_{NT}}^\adjoint \right]. \nonumber
\end{align}
Let us show that the first term in \eqref{eq:difference_VQU_Vu}  is $\Omega_{\rm P}(1)$, while the other two   are
$o_{\rm P}(1)$, both as~$N,T \to \infty$. 

For the first term in \eqref{eq:difference_VQU_Vu}, we have
\begin{align*}
    \MoveEqLeft (NT)^{-1}\trace[ \bB_N \bU_T ( \b P_{\bU_T} - \b P_{\bQ_k \bU_T} ) \bU_T^\adjoint \bB_N^\adjoint ]
    \\ &= (NT)^{-1}\trace[ \{\bU_T
    ( \b P_{\bU_T} - \b P_{\bQ_k \bU_T} ) \bU_T^\adjoint\} \{ \bB_N^\adjoint \bB_N \} ]
    \\ & \geq \lambda_1[ \bU_T ( \b P_{\bU_T} - \b P_{\bQ_k \bU_T} ) \bU_T^\adjoint/T ] \cdot \lambda_r[
    \bB_N^\adjoint \bB_N/N ],
\end{align*}
by Lemma~\ref{lma:trace_of_product_inequality}, which is applicable because the
matrices in brackets are symmetric positive semi-definite, since $\b P_{\bU_T} - \b
P_{\bQ_k \bU_T}$ is an orthogonal projection matrix (the row space of $\bQ_k
\bU_T$ is a subspace of the row space of $\bU_T$).
We also have that 
\begin{align*}
     \lambda_1[ \bU_T ( \b P_{\bU_T}\! &- \b P_{\bQ_k \bU_T} ) \bU_T^\adjoint/T ] 
    \\ 
    & = \lambda_1[ \bU_T \bU_T^\adjoint/T \!- \{ \bU_T \bU_T^\adjoint \bQ_k^\adjoint (\bQ_k \bU_T \bU_T^\adjoint \bQ_k^\adjoint)^{-1} \bQ_k \bU_T
    \bU_T^\adjoint/T \} ]
    \\ & \geq \lambda_r[\bU_T \bU_T^\adjoint/T] 
     = \Omega_{\rm P}(1)
\end{align*}
as $N,T \to \infty$,
where the inequality follows from
Lemma~\ref{lma:largest_eigenvalue_of_difference_of_two_spd_matrices_with_unequal_rank}, since the matrix in
brackets has rank~$k < r$;  the $\Omega_{\rm P}(1)$ is a consequence of Assumption~\ref{assumption:a}. 
We also know by Assumption~\ref{assumption:b} that  $\lambda_r[ \bB_N^\adjoint \bB_N/N ] =
\Omega_{\rm P}(1)$ and, therefore, 
\[
    (NT)^{-1}\trace[ \bB_N \bU_T ( \b P_{\bU_T} - \b P_{\bQ_k \bU_T} ) \bU_T^\adjoint \bB_N^\adjoint ]
    = \Omega_{\rm P}(1)
\]
as $N,T \to \infty$.

Turning to the second term in \eqref{eq:difference_VQU_Vu}, we have 
\begin{align*}
    \MoveEqLeft
    \abs{ (NT)^{-1}\trace\left[ \bB_N \bU_T( \b P_{\bU_T} - \b P_{\bQ_k \bU_T} ) \operator{ \bxi_{NT} }^\adjoint \right]   } 
        \\ & \leq \tracenorm{ \bB_N \bU_T( \b P_{\bU_T} - \b P_{\bQ_k \bU_T} ) \operator{ \bxi_{NT} }^\adjoint }/(NT)
        \\ & \leq \opnorm{ \bB_N }\opnorm{\bU_T} \tracenorm{ \b P_{\bU_T} - \b P_{\bQ_k \bU_T}
        } \opnorm{\operator{ \bxi_{NT} }^\adjoint}/(NT)
        \\ & = O_{\rm P}\left(C_{N,T}^{-1/2}\right) \leq o_{\rm P}(1)
    \end{align*}
    as $N,T \to \infty$,
where we have used inequalities for the trace norm of operator products, the fact
that $\b P_{\bU_T} - \b P_{\bQ_k \bU_T}$ has rank less than $r$,
Assumptions~\ref{assumption:a} and \ref{assumption:b}, and
Lemma~\ref{lma:Frobenius_Norm_of_xi}.

To conclude, let us   consider the third term in \eqref{eq:difference_VQU_Vu}:
\begin{align*}
    \MoveEqLeft
    \abs{ (NT)^{-1}\trace\left[ \operator{ \bxi_{NT} } ( \b P_{\bU_T} - \b P_{\bQ_k \bU_T} ) \operator{\bX_{NT}}^\adjoint \right]} 
         \\ & \leq  (NT)^{-1}\tracenorm{\operator{ \bxi_{NT} } ( \b P_{\bU_T} - \b P_{\bQ_k \bU_T} ) \operator{\bX_{NT}}^\adjoint}
         \\ & \leq (NT)^{-1}\opnorm{\operator{ \bxi_{NT} }} \tracenorm{ \b P_{\bU_T} - \b P_{\bQ_k \bU_T} } \opnorm{\operator{\bX_{NT}}}
          \\ & = O_{\rm P}\left(C_{N,T}^{-1/2}\right)
           \leq o_{\rm P}(1)
\end{align*}
as $N,T \to \infty$,
where we  used H\"older's inequalities  again for the trace norm of operator products, the fact
that $\b P_{\bU_T} - \b P_{\bQ_k \bU_T}$ has rank less than $r$,
and Lemmas~\ref{lma:Frobenius_Norm_of_xi} and
\ref{lma:largest_sample_eigenvalue_bounded}.
\end{proof}

\begin{Lemma} \label{lma:bound_Vuk_Vur} 
    Assume $r < k \leq k_\text{max}$.
Under Assumptions~\ref{assumption:a}, \ref{assumption:b}, \ref{assumption:c}, and \ref{assumption:Bn_xi},
we~have, as~$N,T \to \infty$, 
\begin{equation}
    \label{eq:bound_Vuk_Vur_general}
    V(k,\hatbU{k}_T)-V(r, \hatbU{r}_T)= O_{\rm P}\left(C^{-1}_{NT}\right).
\end{equation}
\end{Lemma}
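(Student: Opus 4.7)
The plan is to rewrite $V(k,\hatbU{k}_T) - V(r,\hatbU{r}_T)$ as a partial sum of sample eigenvalues of the rescaled Gram matrix, and then invoke a Weyl-type perturbation bound to control those eigenvalues.

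By \eqref{eq:OLS_and_orthogonal_projection},
\[
V(k,\hatbU{k}_T) - V(r,\hatbU{r}_T) = (NT)^{-1}\trace\bigl[\operator{\bX_{NT}}(\b P_{\hatbU{r}_T} - \b P_{\hatbU{k}_T})\operator{\bX_{NT}}^\adjoint\bigr].
\]
The rows of $\hatbU{r}_T$ and of $\hatbU{k}_T$ are built from the leading $r$ (respectively $k$) orthonormal eigenvectors $\tilde{\b f}_i$ of $\bF = \operator{\bX_{NT}}^\adjoint \operator{\bX_{NT}}/N$, so the column space of $\hatbU{r}_T$ is contained in that of $\hatbU{k}_T$ and
\[
\b P_{\hatbU{k}_T} - \b P_{\hatbU{r}_T} \;=\; \sum_{i=r+1}^{k} \tilde{\b f}_i \tilde{\b f}_i^\tp.
\]
Since $\hnorm{\operator{\bX_{NT}}\tilde{\b f}_i}^2/(NT)$ coincides with $\hat\lambda_i$, the $i$-th eigenvalue of $\operator{\bX_{NT}}^\adjoint\operator{\bX_{NT}}/(NT)$, the previous display yields
\[
V(r,\hatbU{r}_T) - V(k,\hatbU{k}_T) \;=\; \sum_{i=r+1}^{k} \hat\lambda_i \;\leq\; (k_{\mathrm{max}} - r)\,\hat\lambda_{r+1}.
\]

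It then suffices to prove $\hat\lambda_{r+1} = O_{\rm P}(C_{N,T}^{-1})$. For this I would decompose
\[
\operator{\bX_{NT}}^\adjoint \operator{\bX_{NT}}/(NT) \;=\; A + R, \qquad A \defeq \bU_T^\adjoint \bB_N^\adjoint \bB_N \bU_T/(NT),
\]
where $A$ has rank at most $r$ and $R$ gathers the two cross terms $(NT)^{-1}\bU_T^\adjoint \bB_N^\adjoint \operator{\bxi_{NT}}$ and its adjoint, together with the pure idiosyncratic term $\operator{\bxi_{NT}}^\adjoint \operator{\bxi_{NT}}/(NT)$. Weyl's inequality (as in Lemma~\ref{lma:perturbation_of_singular_values}) then yields $\hat\lambda_{r+1} \leq \lambda_{r+1}[A] + \opnorm{R} = \opnorm{R}$. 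Lemma~\ref{lma:Frobenius_Norm_of_xi} controls the pure idiosyncratic piece by $O_{\rm P}(C_{N,T}^{-1})$, while Lemma~\ref{lma:frobenius_Norm_of_Bn_xi} combined with $\opnorm{\bU_T} = O_{\rm P}(\sqrt{T})$ (which follows from Assumption~\ref{assumption:a}) yields $O_{\rm P}(N^{-1/2})$ for each cross term, which is dominated by $O_{\rm P}(C_{N,T}^{-1})$. Summing the three contributions gives the desired $\opnorm{R} = O_{\rm P}(C_{N,T}^{-1})$.

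I expect no real obstacle beyond bookkeeping: the only mildly delicate point is to keep track of the scaling between the eigendecomposition of $\bF$ and that of $\operator{\bX_{NT}}^\adjoint \operator{\bX_{NT}}/(NT)$ (so that the identity $\hnorm{\operator{\bX_{NT}}\tilde{\b f}_i}^2/(NT) = \hat\lambda_i$ is applied consistently), and to invoke Weyl's inequality in its operator-norm form for symmetric finite-rank perturbations, which is precisely the content of the perturbation lemma already used in Section~\ref{sub:proof_of_theorem_consistency_number_of_factors}.
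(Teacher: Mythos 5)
Your proof is correct, and it takes a genuinely different and substantially cleaner route than the paper's. You first identify $V(r,\hatbU{r}_T) - V(k,\hatbU{k}_T)$ exactly with the sum $\sum_{i=r+1}^{k}\hat\lambda_i$ of small sample eigenvalues of $\operator{\bX_{NT}}^\adjoint\operator{\bX_{NT}}/(NT)$ (this identity is correct once the $\hat{\b f}_i$, which have norm $\sqrt T$, are properly normalized in the projection matrix; with the paper's normalization one should write $\b P_{\hatbU{k}_T}-\b P_{\hatbU{r}_T}=\sum_{i=r+1}^k \hat{\b f}_i\hat{\b f}_i^\tp/T$). You then bound $\hat\lambda_{r+1}$ directly by a single Weyl-type argument (the paper's Lemma~\ref{lma:perturbation_of_singular_values}), splitting off the rank-$r$ part $\bU_T^\adjoint\bB_N^\adjoint\bB_N\bU_T/(NT)$ and controlling the operator norm of the remainder via Lemmas~\ref{lma:Frobenius_Norm_of_xi} and~\ref{lma:frobenius_Norm_of_Bn_xi}. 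The paper instead reduces to $\lvert V(k,\hatbU{k}_T)-V(r,\bU_T)\rvert$, introduces a generalized inverse $\bQ_k^-$ of the $k\times r$ matrix from Lemma~\ref{lma:minimal_singular_value_Qk}, and bounds three separate trace terms $S_1,S_2,S_3$, which also requires the consistency estimate of Theorem~\ref{thm:consistency_hatbu}. Your approach sidesteps all of that machinery and yields the same rate $O_{\rm P}(C_{N,T}^{-1})$; it also isolates an interesting intermediate fact---$\hat\lambda_{r+1}=O_{\rm P}(C_{N,T}^{-1})$---which is the natural sample analogue of the representation condition $\lambda^x_{r+1}<\infty$ and is informative in its own right.
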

\begin{proof}
    Since
    \begin{align*}
        \abs{ V(k,\hatbU{k}_T)-V(r,\hatbU{r}_T) } 
        & \leq \abs{ V(k,\hatbU{k}_T)-V(r,\bU_T) } + \abs{ V(r,\bU_T)-V(k,\hatbU{r}_T)} \\
        & \leq 2 \max_{r\leq k \leq k_{\text{max}} } \abs{ V(k,\hatbU{k}_T)-V(r,\bU_T) },
    \end{align*}
    it is sufficient to prove that for each $k \geq r$, under Assumptions~\ref{assumption:a}, \ref{assumption:b}, \ref{assumption:c}, \ref{assumption:Bn_xi},
    \[
        \abs{ V(k,\hatbU{k}_T)-V(r,\bU_T) } = O_{\rm P}(C_{N,T}^{-1})
    \]
    as $N,T \to \infty$ (here and below, all rates are meant as $N,T \to \infty$).

    Let $\b Q_{k}^{-}$ denote the generalized inverse of $\b Q_{k}$, satisfying 
    $\b Q_{k}^{-}\b Q_{k}=I_{r}$. This generalized inverse
     is well
    defined (for
    $N,T$ large enough) in view of Lemma~\ref{lma:minimal_singular_value_Qk}, and such that~$\opnorm{\bQ_k^-} = O_{\rm P}(1)$.
    Since 
    \[
        \operator{\bX_{NT}} = \bB_N \bU_T + \operator{\bxi_{NT}} = \bB_N \bQ_k^- \bQ_k \bU_T + \operator{\bxi_{NT}}  = \bB_N \bQ_k^-
        \hatbU{k}_T + \be
    \]
    where $\be \defeq \operator{\bxi_{NT}} - \bB_N \bQ_k^-(\hatbU{k}_T - \bQ_k \bU_T) \in \bounded{\bbR^T, \bH_N}$, we have that
    \[
        V(k, \hatbU{k}_T) = (NT)^{-1} \trace[ \operator{\bX_{NT}} \bM_{\hatbU{k}_T} \operator{\bX_{NT}}^\adjoint ] =
        (NT)^{-1} \trace[ \be \bM_{\hatbU{k}_T} \be^\adjoint ].
    \]
    We also have that
    \[
        V(r, \bU_T) = \trace[\operator{\bX_{NT}} \bM_{\bU_T} \operator{\bX_{NT}}]/(NT) = \trace[ \operator{ \bxi_{NT} } \bM_{\bU_T} \operator{ \bxi_{NT} }^\adjoint ]/(NT).
    \]
    Therefore,
    \begin{align}
        \MoveEqLeft \abs{V(r, \bU_T) - V(k, \hatbU{k}_T) } \nonumber 
        \\ & \leq 2 \abs{ (NT)^{-1} \trace[ \operator{ \bxi_{NT} } \bM_{\hatbU{k}_T} \{ \bQ_k^- (\hatbU{k}_T - \bQ_k \bU_T) \}^\adjoint \bB_N^\adjoint ]}  \nonumber
        \\ & \quad + \abs{ (NT)^{-1} \trace[ \bB_N \bQ_k^- (\hatbU{k}_T - \bQ_k \bU_T) \bM_{\hatbU{k}_T} \{ \bQ_k^- (\hatbU{k}_T - \bQ_k \bU_T) \}^\adjoint \bB_N^\adjoint ]} \label{eq:bound_Vuk_Vur_proof_steps}
        \\ & \quad + \abs{ (NT)^{-1} \trace[ \operator{ \bxi_{NT} } (\bM_{\bU_T} - \bM_{\hatbU{k}_T}) \operator{ \bxi_{NT} }^\adjoint ] }\eqdef S_1+S_2+S_3, \text{ say.}
        \nonumber
    \end{align}

    In view of Theorem~\ref{thm:consistency_hatbu} and 
    Assumption~\ref{assumption:Bn_xi}, 
    \begin{align*}
      S_1  & \leq  2 (NT)^{-1} \opnorm{\bM_{\hatbU{k}_T}} \tracenorm{ \{ \bQ_k^-
        (\hatbU{k}_T - \bQ_k \bU_T ) \}^\adjoint \bB_N^\adjoint \operator{ \bxi_{NT} } }
        \\ & \leq 2 (NT)^{-1} \opnorm{\bQ_k^-} \Fnorm{\hatbU{k}_T - \bQ_k \bU_T}
        \Fnorm{\bB_N^\adjoint \operator{ \bxi_{NT} }}
        \\ & = (NT)^{-1} O_{\rm P}(\sqrt{T}/C_{N,T}) O_{\rm P}(\sqrt{NT})
         = O_{\rm P}\left(C_{N,T}^{-2}\right).
    \end{align*}
  It follows from Theorem~\ref{thm:consistency_hatbu} and Assumption~\ref{assumption:b} that 
    \begin{align*}
  S_2      & \leq  (NT)^{-1} \opnorm{\bB_N}^2 \opnorm{\bQ_k^-}^2 \opnorm{\bM_{\hatbU{k}_T}} \Fnorm{\hatbU{k}_T - \bQ_k \bU_T }^2 
        \\ & = (NT)^{-1} O_{\rm P}(n) O_{\rm P}(1) O_{\rm P}(1) O_{\rm P}(T/C_{N,T}^2) 
        \\ & = O_{\rm P}\left(C_{N,T}^{-2}\right).
    \end{align*}
    Turning to the last summand of  \eqref{eq:bound_Vuk_Vur_proof_steps}, we have 
    \begin{align*}
    S_3    & = \abs{ (NT)^{-1} \trace[ \operator{ \bxi_{NT} } (\b P_{\bU_T} - \b P_{\hatbU{k}_T})\operator{ \bxi_{NT} }^\adjoint ] } 
         \\ & \leq (NT)^{-1} \opnorm{\bxi_{NT}}^2 \tracenorm{ \b P_{\bU_T} - \b P_{\hatbU{k}_T} }
         \\ & \leq (NT)^{-1} O_{\rm P}(NT C_{N,T}^{-1}) (r + k)
     = O_{\rm P}\left(C_{N,T}^{-1}\right).
\end{align*}
$\,$\vspace{-12.5mm}

\end{proof}

\begin{proof}[{Proof of Theorem~\ref{thm:consistency_number_of_factors}}]
    Recall that $\IC(k) = V(k,\hatbU{k}_T)+  k \, g(N,T)$.
    We will prove that 
    \[
        \lim_{N,T \to \infty} \pp(\IC(k)< \IC(r))=0, \quad k\neq r, \; k\leq k_{max}.
    \]
    We have
    \[
    \IC(k)- \IC(r)= V(k,\hatbU{k}_T)-V(r,\hatbU{r}_T)-(r-k)g(N,T). \]
Thus, it is sufficient to prove that
    \[
        \lim_{N,T \to \infty} \pp\left( V(k,\hatbU{k}_T)-V(r,\hatbU{r}_T)<(r-k)g(N,T)\right)=0.
    \]
Let us split the proof into the two cases $k<r\leq k_{max}$ and $r<k\leq k_{max}$. For the
    first case,
    \begin{align*}
        V(k,\hatbU{k}_T)-V(r,\hatbU{r}_T) & =[V(k,\hatbU{k}_T)-V(k,\b Q_{k}\bU_T)]
                                       \\ & \quad +[V(k,\b Q_{k}\bU_T)-V(r,\b Q_{r}\bU_T)]
                                       \\ &
                                        \quad
                                         + [V(r,\b Q_{r}\bU_T)-V(r,\hatbU{r}_T)].
    \end{align*}
The first and third terms are $O_{\rm P}(C_{N,T}^{-1})$ by Lemma~\ref{lemma:2}. For
    the second term, since~$\b Q_{r}\bU_T$ and $\bU_T$ are spanning the same row space
    (Lemma~\ref{lma:minimal_singular_value_Qk}),  
    $V(r,\b
    Q_{r}\bU_T)=V(r,\bU_T)$. Now,~$V(k,\b Q_{k}\bU_T)-V(r,\bU_T) = \Omega_{\rm P}(1)$ in view of  
    Lemma~\ref{lma:VQu_minus_Vu_bounded_below}. 
    We conclude that 
    \[
        \lim_{N,T \to \infty} \pp\left(V(k,\hatbU{k}_T)-V(r,\hatbU{r}_T)<(r-k)g(N,T)\right)=0,
    \]
    because $g(N,T) \to 0$. For $k>r$, 
    \begin{align*}
        \MoveEqLeft \lim_{N,T \to \infty} \pp\left(\IC(k)- \IC(r) < 0 \right) 
        \\ &= \lim_{N,T \to \infty} \pp\left(V(r,\hatbU{r}_T)-V(k,\hatbU{k}_T)>(k-r)g(N,T)\right) 
        \\ & \leq  \lim_{N,T \to \infty} \pp\left(V(r,\hatbU{r}_T)-V(k,\hatbU{k}_T)>g(N,T)\right). 
    \end{align*}
    By Lemma~\ref{lma:bound_Vuk_Vur}, $V(r,\hatbU{r}_T) - V(k,\hatbU{k}_T) =
    O_{\rm P}(C^{-1}_{NT})$  and thus, because $C_{N,T}\,~g(N,T)$ diverges, 
    \[
        \lim_{N,T \to \infty} \pp\left(\IC(k)- \IC(r)<0\right) =0.
    \]
    This concludes the proof. 
\end{proof}

\subsection{Proofs of Section~\ref{sub:average_error_bounds}}
\label{sec:proofs_average_bounds}

Recall that  
\begin{equation} \label{eq:defn_tildeR}
    \tildeR = \hat \bLambda^{-1} \tildebU{r}_T \bU_T^\adjoint \bB_N^\adjoint \bB_N/(NT) = \hat \bLambda^{-1} \bQ_r
\end{equation}
where $\hat \bLambda$ is the $r \times r$ diagonal matrix with diagonal entries $\hat \lambda_i$ defined in \eqref{eq:svdX}\linebreak   and~$\bQ_r \in \bbR^{r \times r}$ is defined in \eqref{eq:Qk}. 
\begin{proof}[Proof of Theorem~\ref{theorem:factors_average_bound}]
    Recall that $\hatbU{r}_T = \hat \bLambda \tildebU{r}_T$ and $\bQ_r = \hat \bLambda \tildeR$, where $\bQ_r$ is defined in \eqref{eq:Qk}. We have  
    \[
        \Fnorm{\tildebU{r}_T - \tildeR \bU_T} \leq \opnorm{ \hat \bLambda^{-1}}
        \Fnorm{\hatbU{k}_T - \bQ_r \bU_T}.
    \]
    The claim follows from applying Lemma~\ref{lma:r_th_sample_eigenvalue_away_from_zero} and Theorem~\ref{thm:consistency_hatbu}.
\end{proof}

\begin{proof}[Proof of Theorem~\ref{theorem:factors_average_bound_up_to_sign}]
    Recall that $\bchi_{NT}$ is the $N \times T$ matrices with $(i,t)$ entry~$\chi_{it}$. By   assumptions, for $N,T$ large enough, 
    \begin{equation}
        \label{eq:chi_transpose_chi}
        \operator{ \bchi_{NT} }^\adjoint \operator{\bchi_{NT}}/(NT) = \sum_{k=1}^r \lambda_k \bu_{(k)} \tensor \bu_{(k)}
    \end{equation}
    where the $\lambda_k$s are distinct and decreasing, and $\bu_{(k)} \in \bbR^T$ is the
    $k$th column of $\bU_T^\tp$. Note that~$\lambda_k$ and  $\bu_{(k)}$ depend on
    $N,T$, but we suppress this dependency in the notation. Notice in 
    particular that, given our identification
   constraints,~\eqref{eq:chi_transpose_chi} is in fact a spectral
    decomposition. We now recall the spectral decomposition
    $$\operator{\bX_{NT}}^\adjoint
    \operator{\bX_{NT}}/(NT) = \sum_{k \geq 1} \hat \lambda_k \hat {\b f_k} \tensor \hat {\b
    f_k}.$$
      Lemma~4.3 of \citet{bosq:2000} then yields
    \begin{align*}
        \MoveEqLeft
        \hnorm{ \hat{\b f}_k - \text{sign}\left(\sc{\hat{\b f}_{k}, {\bu_{(k)}}}\right) \bu_{(k)} }/\sqrt{T}  
        \\ &\qquad = O_{\rm P}\left( \opnorm{ \operator{\bX_{NT}}^\adjoint \operator{\bX_{NT}} -  \operator{\bchi_{NT}}^\adjoint \operator{\bchi_{NT}} }/(NT) \right)
    \end{align*}
    for $k=1,\ldots,r,$ since the gaps between  $\lambda_1, \ldots, \lambda_r$ 
    remain bounded from below by Assumption~\ref{assumption:identification_factors}.
    Now, 
    \begin{align*}
        \MoveEqLeft
        \opnorm{\operator{\bX_{NT}}^{\adjoint} \operator{\bX_{NT}}- \operator{\bchi_{NT}}^{\adjoint} \operator{\bchi_{NT}} }   
         \\ &\qquad\qquad \leq \opnorm{ \operator{ \bxi_{NT} }^{\adjoint}\operator{ \bxi_{NT} }} + 2\opnorm{ \bU_T }
        \opnorm{ \b B_{N}^{\adjoint} \operator{ \bxi_{NT} }};
    \end{align*}
  applying Lemmas~\ref{lma:Frobenius_Norm_of_xi},
    \ref{lma:largest_sample_eigenvalue_bounded}, and \ref{lma:frobenius_Norm_of_Bn_xi},
    we get
    \[ 
        N^{-1}T^{-1} \opnorm{\operator{\bX_{NT}}^{\adjoint} \operator{\bX_{NT}}- \operator{\bchi_{NT}}^{\adjoint} \operator{\bchi_{NT}} }
        = O_{p}(C_{N,T}^{-1})
    \] 
    as $N,T \to \infty$,
which completes the proof since the $k$th row of $\tildebU{k}_T$ is   $\hat {\b f_k}^\tp$ for $k=1,\ldots, r$. 
\end{proof}

Denote by $s_j[A]$ the $j$th largest singular value of a matrix $A$.  We
have the following result. 
\begin{Lemma}
    \label{lma:bound_singular_values_tildeR}
    Under Assumptions~\ref{assumption:a}, \ref{assumption:b}, \ref{assumption:c}, and~\ref{assumption:Bn_xi}, as $N,T \to \infty$,
    \[ 
        s_{1}[\tildeR] = O_{\rm P}(1) \quad         \text{and} \quad  s_{r}[\tildeR]
        = \Omega_{\rm P}(1) .
    \]
    In other words, $\tildeR$ has  bounded operator norm, is invertible, and its inverse has
    a bounded operator norm.
\end{Lemma}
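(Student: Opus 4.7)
The plan is to derive both bounds by combining the already established controls on the singular values of $\hat\bLambda$ with the bound on the singular values of $\bQ_r$, using the factorization $\tildeR = \hat\bLambda^{-1} \bQ_r$ from \eqref{eq:defn_tildeR}.

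First I would control $\hat\bLambda$. Since $\hat\bLambda$ is diagonal with entries $\hat\lambda_1 \geq \cdots \geq \hat\lambda_r$, Lemmas~\ref{lma:largest_sample_eigenvalue_bounded} and~\ref{lma:r_th_sample_eigenvalue_away_from_zero} immediately give $\opnorm{\hat\bLambda} = \hat\lambda_1 = O_{\rm P}(1)$ and $\opnorm{\hat\bLambda^{-1}} = \hat\lambda_r^{-1} = O_{\rm P}(1)$ (equivalently $\hat\lambda_r = \Omega_{\rm P}(1)$), so $\hat\bLambda$ is invertible with probability tending to one, with singular values bounded above and below in probability.

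Next I would bound $\opnorm{\bQ_r}$. Recalling $\bQ_r = \tildebU{r}_T \bU_T^\adjoint \bB_N^\adjoint \bB_N/(NT)$ from~\eqref{eq:Qk}, the construction of $\tildebU{r}_T$ in Proposition~\ref{prop:computing_loadings_and_factors} gives $\tildebU{r}_T(\tildebU{r}_T)^\adjoint = T \bI_r$, hence $\opnorm{\tildebU{r}_T} = \sqrt{T}$. Assumption~\ref{assumption:a} yields $\opnorm{\bU_T}^2 = \opnorm{\bU_T \bU_T^\adjoint} = O_{\rm P}(T)$, and Assumption~\ref{assumption:b} yields $\opnorm{\bB_N^\adjoint \bB_N}/N = O(1)$. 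Submultiplicativity of the operator norm thus gives
\[
    \opnorm{\bQ_r} \leq \opnorm{\tildebU{r}_T}\,\opnorm{\bU_T}\,\opnorm{\bB_N^\adjoint \bB_N}/(NT) = \sqrt{T} \cdot O_{\rm P}(\sqrt{T}) \cdot O(N)/(NT) = O_{\rm P}(1).
\]

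The upper bound on $s_1[\tildeR]$ then follows at once:
\[
    s_1[\tildeR] = \opnorm{\hat\bLambda^{-1} \bQ_r} \leq \opnorm{\hat\bLambda^{-1}}\,\opnorm{\bQ_r} = O_{\rm P}(1).
\]
For the lower bound on $s_r[\tildeR]$, I would use the identity $\bQ_r = \hat\bLambda\,\tildeR$ together with the standard singular value inequality $s_r[\hat\bLambda\,\tildeR] \leq s_1[\hat\bLambda]\,s_r[\tildeR]$, giving
\[
    s_r[\tildeR] \geq s_r[\bQ_r]/\hat\lambda_1.
\]
Lemma~\ref{lma:minimal_singular_value_Qk} (with $k=r$, so $r_* = r$) supplies $s_r[\bQ_r] = \Omega_{\rm P}(1)$, and $\hat\lambda_1 = O_{\rm P}(1)$ from above, so $s_r[\tildeR] = \Omega_{\rm P}(1)$, as required. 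There is no serious obstacle here: the result is essentially a bookkeeping exercise assembling Lemmas~\ref{lma:largest_sample_eigenvalue_bounded},~\ref{lma:r_th_sample_eigenvalue_away_from_zero}, and~\ref{lma:minimal_singular_value_Qk} through the factorization of $\tildeR$.
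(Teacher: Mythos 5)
Your proof is correct and follows essentially the same route as the paper: both factor through $\tildeR = \hat\bLambda^{-1}\bQ_r$, bound $\hat\bLambda^{-1}$ via Lemma~\ref{lma:r_th_sample_eigenvalue_away_from_zero} (and $\hat\bLambda$ via Lemma~\ref{lma:largest_sample_eigenvalue_bounded}), and invoke Lemma~\ref{lma:minimal_singular_value_Qk} for $s_r[\bQ_r]$. The only cosmetic difference is that the paper writes the lower bound directly as $s_r[\hat\bLambda^{-1}\bQ_r] \geq s_r[\hat\bLambda^{-1}]\,s_r[\bQ_r]$ whereas you rearrange to $s_r[\bQ_r] \leq s_1[\hat\bLambda]\,s_r[\tildeR]$; these are equivalent, and your explicit derivation of $\opnorm{\bQ_r}=O_{\rm P}(1)$ usefully spells out a step the paper leaves implicit.
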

\begin{proof}
    The first statement follows directly from 
    Lemma~\ref{lma:r_th_sample_eigenvalue_away_from_zero}. For the second statement,
    using the Courant--Fischer--Weyl minimax characterization of singular values
    \citep{hsing:eubank:2015}, we obtain 
    \[
        s_r[\tildeR ] \geq s_r[ \hat \bLambda^{-1} ] s_r[ \bQ_r ] = \Omega_{\rm P}(1),
    \]
    by Lemmas~\ref{lma:r_th_sample_eigenvalue_away_from_zero} and~\ref{lma:minimal_singular_value_Qk}.
\end{proof}

\begin{proof}[Proof of Theorem~\ref{theorem:loadings_average_bound}]
    We shall show that 
    \[
        \Fnorm{ \tildeB{r}_N - \bB_N \tildeR^{-1} } = O_{\rm P}\left(\sqrt{N/C_{N,T}^{(1+ \alpha)}}\right)
    \]
    (here and below, all rates are meant as $N,T \to \infty$)
    where $\tildeR$ is defined in \eqref{eq:defn_tildeR}, and is invertible by
    Lemma~\ref{lma:bound_singular_values_tildeR}; the desired result then
    follows, since
    \[
        \barB{r}_N - \bB_N \tildeR^{-1} \hat \bLambda^{-1/2} = (\tildeB{r}_N - \bB_N
        \tildeR^{-1})
        \hat \bLambda^{-1/2}
    \]
    and $\opnorm{ \hat \bLambda^{-1/2} } = O_{\rm P}(1)$ by
    Lemma~\ref{lma:r_th_sample_eigenvalue_away_from_zero}.

    First, notice that $\tildeB{r}_N = \operator{\bX_{NT}} (\tildebU{r}_T)^\adjoint /T $, so that  
    \begin{align*}
        \tildeB{r}_N & = \bB_N \bU_T (\tildebU{r}_T)^\adjoint/T + \operator{ \bxi_{NT} } (\tildebU{r}_T)^\adjoint/T
        \nonumber
                  \\ & = \bB_N \left(\tildeR^{-1} {\tildebU{r}_T} + \bU_T - \tildeR^{-1}
                  {\tildebU{r}_T}\right) (\tildebU{r}_T)^\adjoint/T + \operator{ \bxi_{NT} } (\tildebU{r}_T)^\adjoint/T \nonumber
                  \\ & = \bB_N \tildeR^{-1} + \bB_N\left( \bU_T - \tildeR^{-1} {\tildebU{r}_T}\right)
                  (\tildebU{r}_T)^\adjoint/T 
                  \\ & \quad + \operator{ \bxi_{NT} } (\tildebU{r}_T - \tildeR \bU_T)^\adjoint/T + \operator{ \bxi_{NT} } \bU_T^\adjoint \tildeR^\adjoint/T,
        \label{eq:theorem:consistency_loadings_eq1}
    \end{align*}
    where we have used the fact that ${\tildebU{r}_T} (\tildebU{r}_T)^\adjoint/T = \bI_r$.
    Hence,
    \begin{align*}
        \Fnorm{\tildeB{r}_N - \bB_N \tildeR^{-1}}  & \leq \frac{1}{T}\Big\{ \opnorm{\bB_N} \Fnorm{\bU_T - \tildeR^{-1} {\tildebU{r}_T}} \Fnorm{ {\tildebU{r}_T}}
                                             \\ & \quad + \opnorm{\operator{ \bxi_{NT} }} \Fnorm{ \tildebU{r}_T - \tildeR \bU_T} 
                                             \\ & \quad + \Fnorm{\operator{ \bxi_{NT} } \bU_T^\adjoint} \opnorm{\tildeR}\Big\}\eqdef S_1+S_2+S_3, \text{ say.}
    \end{align*}
    By Lemma~\ref{lma:bound_singular_values_tildeR} and Theorem~\ref{theorem:factors_average_bound},  we have
    \[
        \Fnorm{\bU_T - \tildeR^{-1} {\tildebU{r}_T}} \leq \opnorm{\tildeR^{-1}}
        \Fnorm{{\tildebU{r}_T} - \tildeR \bU_T} = O_{\rm P}(\sqrt{T}/C_{N,T});
    \]
   thus, $S_1$ is $O_{\rm P}\left(\sqrt{N/C_{N,T}^2}\, \right)$. By 
      Lemma~\ref{lma:Frobenius_Norm_of_xi} and
      Theorem~\ref{theorem:factors_average_bound},~$S_2$ is~$O_{\rm P}\left(\sqrt{N/C_{N,T}^3}\, \right)$. As for~$S_3$, it is~$O_{\rm P}\left(\sqrt{N/C_{N,T}^{1+\alpha}}\,\right)$ by Assumption~\ref{assumption:u_xi}. This completes the proof.
\end{proof}

\begin{proof}[Proof of Theorem~\ref{theorem:common_component_average_bound}]
    Recalling the definition \eqref{eq:defn_tildeR}, we have
    \[
        \Fnorm{ \hat \bchi_{NT} - \bchi_{NT} } \leq \Fnorm{ \tildeB{r}_N - \bB_N
        \tildeR^{-1} } \opnorm{{\tildebU{r}_T}} + \opnorm{\bB_N} \opnorm{\tildeR^{-1}}
        \opnorm{{\tildebU{r}_T} - \tildeR \bU_T}.
    \]
    The desired result follows from applying the results from the proofs of Theorems~\ref{theorem:factors_average_bound} and~\ref{theorem:loadings_average_bound}, and Lemma~\ref{lma:bound_singular_values_tildeR}.
\end{proof}

\subsection{Proofs of Section~\ref{sub:uniform_error_bounds}}
\label{sec:proofs_uniform_bounds}

\begin{Lemma}
    \label{lma:max_bxiNT_bxi_t}
   Let  Assumption~\ref{assumption:c} hold  and assume that, for some positive integer~$ \kappa$ and some $M_2 < \infty$,
    \[
        \ee \abs{ \sqrt{N}\left(  N^{-1}  \sc{\bxi_{t}, \bxi_{s}}   - \nu_N(t-s) \right) }^{2 \kappa} < M_2
    \]
    for all $N$, $t$, and $s \geq 1$.
    Then,
    \[
        \max_{t=1,\ldots, T} \rpnorm{ \operator{\bxi_{NT}}^\adjoint \bxi_t }^2 = O_{\rm P}(N \max(N, T^{1 + 1/ \kappa}))
    \]
    as $N,T \to \infty$.
\end{Lemma}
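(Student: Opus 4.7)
Plan for the proof.

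The key observation is that the $s$-th component of $\operator{\bxi_{NT}}^\adjoint \bxi_t \in \bbR^T$ is $\sc{\bxi_s,\bxi_t}$, so
\[
\rpnorm{\operator{\bxi_{NT}}^\adjoint \bxi_t}^2 = \sum_{s=1}^T \sc{\bxi_s,\bxi_t}^2.
\]
The plan is to split each inner product into its ``mean'' and ``fluctuation'' parts. Writing $\eta_{st}\defeq \sqrt{N}(N^{-1}\sc{\bxi_s,\bxi_t}-\nu_N(t-s))$, so that $\sc{\bxi_s,\bxi_t}=N\nu_N(t-s)+\sqrt{N}\,\eta_{st}$, the elementary inequality $(a+b)^2\leq 2a^2+2b^2$ gives
\[
\max_{t\leq T}\rpnorm{\operator{\bxi_{NT}}^\adjoint\bxi_t}^2\le 2N^2\max_{t\leq T}\sum_{s=1}^T\nu_N(t-s)^2+2N\max_{t\leq T}\sum_{s=1}^T \eta_{st}^2.
\]
I would then handle the two pieces separately.

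For the deterministic piece, Assumption~\ref{assumption:c}(C1) gives $\sum_{s}\nu_N(t-s)^2\leq (\sup_h|\nu_N(h)|)\sum_h|\nu_N(h)|\leq M^2$ uniformly in $N$ and $t$, hence the first term is $O(N^2)$.

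The main work is in the random piece $\max_t\sum_s\eta_{st}^2$. The strategy is to turn a maximum over $t$ into a sum via a $\kappa$-th power trick, then apply Jensen (power-mean) inside. Namely,
\[
\max_{t\leq T}\sum_{s=1}^T\eta_{st}^2=\Bigl(\max_{t\leq T}\Bigl(\sum_{s=1}^T\eta_{st}^2\Bigr)^{\!\kappa}\Bigr)^{\!1/\kappa}\le\Bigl(\sum_{t=1}^T\Bigl(\sum_{s=1}^T\eta_{st}^2\Bigr)^{\!\kappa}\Bigr)^{\!1/\kappa}\le T^{(\kappa-1)/\kappa}\Bigl(\sum_{s,t=1}^T\eta_{st}^{2\kappa}\Bigr)^{\!1/\kappa},
\]
where the last step uses $(\sum_{s}\eta_{st}^2)^\kappa\leq T^{\kappa-1}\sum_s\eta_{st}^{2\kappa}$ (Jensen applied to $x\mapsto x^\kappa$, $\kappa\geq 1$). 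The moment hypothesis $\ee|\eta_{st}|^{2\kappa}<M_2$ gives $\ee\sum_{s,t}\eta_{st}^{2\kappa}\leq M_2 T^2$, so Markov's inequality yields $\sum_{s,t}\eta_{st}^{2\kappa}=O_{\rm P}(T^2)$, and therefore
\[
\max_{t\leq T}\sum_{s=1}^T\eta_{st}^2=O_{\rm P}\bigl(T^{(\kappa-1)/\kappa}\cdot T^{2/\kappa}\bigr)=O_{\rm P}(T^{1+1/\kappa}).
\]
Multiplying by $N$ gives the bound $O_{\rm P}(N\,T^{1+1/\kappa})$ for the second term. Combining with the $O(N^2)$ bound for the first yields
\[
\max_{t\leq T}\rpnorm{\operator{\bxi_{NT}}^\adjoint \bxi_t}^2=O_{\rm P}\bigl(N^2\bigr)+O_{\rm P}\bigl(N\,T^{1+1/\kappa}\bigr)=O_{\rm P}\bigl(N\max(N,T^{1+1/\kappa})\bigr),
\]
as claimed.

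There is no genuine obstacle: the proof is essentially a bookkeeping exercise once one notices that the ``max over $t$'' can be swapped for a ``sum over $t$'' at the cost of a factor $T^{1/\kappa}$, exploiting the available $2\kappa$-th moments. The only step that requires a modicum of care is the Jensen/power-mean estimate $(\sum_s\eta_{st}^2)^\kappa\leq T^{\kappa-1}\sum_s\eta_{st}^{2\kappa}$, which is what produces the exponent $1+1/\kappa$ in the final rate.
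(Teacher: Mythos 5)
Your proof is correct and follows essentially the same route as the paper: the identical split $\sc{\bxi_s,\bxi_t}=N\nu_N(t-s)+\sqrt{N}\eta_{st}$, the $O(N^2)$ bound on the $\nu_N$-part from (C1), and a max-by-sum device exploiting the $2\kappa$-th moments followed by Markov for the $\eta$-part. The only cosmetic difference is that you carry out the Jensen/power-mean step and the union-type bound inline, whereas the paper packages the latter in a general auxiliary lemma (Lemma~\ref{lma:sup_of_time_series}); the two are equivalent.
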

\begin{proof}
    We know, by the proof of Lemma~\ref{lma:Frobenius_Norm_of_xi}, that
    \[
        \rpnorm{ (NT)^{-1} \operator{\bxi_{NT}}^\adjoint \bxi_t}^2 \leq 2 T^{-2} \sum_{s=1}^T ( \nu_N(t-s) )^2 + 2 T^{-2} \sum_{s=1}^T \eta_{st}^2,
    \]
    where we recall that $\eta_{st}  \defeq   N^{-1} \sc{\bxi_{t}, \bxi_{s}} - \nu_{N}(t-s).$
    The first term is $O(T^{-2})$. For the second term, Jensen's inequality yields
    \[
        \eee{ ( N T^{-1} \sum_{s=1}^T \eta_{st}^2)^ \kappa } \leq T^{-1} \sum_{s=1}^T \ee (\sqrt{N}\eta_{st})^{2 \kappa} \leq M_2
    \]
    and Lemma~\ref{lma:sup_of_time_series} implies that
    \[
        \max_{t=1,\ldots, T} T^{-2 }\sum_{s=1}^T \eta_{st}^2 = O_{\rm P}( N^{-1} T^{-1+1/ \kappa})
    \]
    as $N,T \to \infty$.
    The proof follows by piecing these results together. 
\end{proof}

\begin{proof}[Proof of Theorem~\ref{thm:unif_bound_factors}]
    Let $\tildebu{r}_t, \hatbu{r}_t \in \bbR^r$ denote the $t$-th columns of $\tildebU{r}_T$ and~$\hatbU{r}_T$, respectively. Here and below, all rates are meant as $N,T \to \infty$. Following the initial steps and notation of the proof of Theorem~\ref{theorem:factors_average_bound}, we have, from Lemma~\ref{lma:r_th_sample_eigenvalue_away_from_zero}, 
    \begin{align*}
        \max_{t=1,\ldots, T} \rpnorm{ \tildebu{r}_t - \tilde \bR \bu_t }
         & \leq \opnorm{\hat \bLambda^{-1}} \,  \max_{t=1,\ldots, T} \rpnorm{ \hatbu{r}_t - \tilde \bQ \bu_t } 
      \\ & \leq O_{\rm P}(1) \max_{t=1,\ldots, T} (NT)^{-1} \Big\{ \rpnorm{ \tildebU{r}_T \operator{\bxi_{NT}}^\adjoint \bxi_t }
      \\ & \qquad + \rpnorm{ \tildebU{r}_T \bU_T^\adjoint \bB_N^\adjoint \bxi_t}  + \rpnorm{ \tildebU{r}_T \operator{\bxi_{NT}}^\adjoint \bB_N \bu_t } \Big\}.
    \end{align*}
    Using Lemma~\ref{lma:max_bxiNT_bxi_t}, we obtain 
    \begin{align*}
        \max_{t=1,\ldots, T} (NT)^{-1} \rpnorm{ \tildebU{r}_T \operator{\bxi_{NT}}^\adjoint \bxi_t} 
        & \leq (NT)^{-1} \opnorm{ \tildebU{r}_T } \max_{t=1,\ldots, T} \rpnorm{ \operator{\bxi_{NT}}^\adjoint \bxi_t} 
     \\ & = O_{\rm P}\left( \max\left\{ \frac{1}{\sqrt{T}}, \frac{T^{1/(2 \kappa)}}{\sqrt{N}}\right\} \right).
    \end{align*}
    For the second summand, 
    \begin{align*}
        \max_{t=1,\ldots, T} (NT)^{-1} \rpnorm{ \tildebU{r}_T \bU_T^\adjoint \bB_N^\adjoint \bxi_t } 
            & \leq T^{-1} \opnorm{ \tildebU{r}_T } \opnorm{ \bU_T^\adjoint }  \max_{t=1,\ldots, T} N^{-1/2} \rpnorm{ N^{-1/2} \bB_N^\adjoint \bxi_t } 
         \\ & = O_p(T^{1/(2 \kappa)}/\sqrt{N})
    \end{align*}
    by Lemma~\ref{lma:sup_of_time_series}.
    As for the third summand, 
    \begin{align*}
        \max_{t=1,\ldots, T} (NT)^{-1} \rpnorm{ \tildebU{r}_T \operator{\bxi_{NT}}^\adjoint \bB_N \bu_t }
        & \leq (NT)^{-1} \opnorm{ \tildebU{r}_T } \opnorm{ \operator{\bxi_{NT}}^\adjoint \bB_N } \max_{t=1,\ldots, T} \rpnorm{ \bu_t }
     \\ & = O_p(T^{1/(2 \kappa)}/\sqrt{N})
    \end{align*}
    by Lemmas~\ref{lma:frobenius_Norm_of_Bn_xi} and~\ref{lma:sup_of_time_series}.
    Therefore,
    \[
        \max_{t=1,\ldots, T} \rpnorm{ \tildebu{r}_t - \tilde \bR \bu_t } = O_p\left( \max\left\{ \frac{1}{\sqrt{T}}, \frac{T^{1/(2 \kappa)}}{\sqrt{N}} \right\}\right). 
    \]
    $\,$\vspace{-13mm}
    
\end{proof}

\begin{Lemma}
    \label{lemma:product_is_tau_mixing}
    Let Assumption~\ref{assumption:uniform_bound_loadings} hold.
    Then, for each $l=1,\ldots,r$ and $i \geq 1$, the process $\{ u_{lt} \xi_{it} : t \geq 1 \}$ is~$\tau$-mixing with coefficients $\tau(h) \leq \sqrt{2} \sqrt{c_u^2 + c_\xi^2} \alpha \exp(-(\theta h)^\gamma)$.
\end{Lemma}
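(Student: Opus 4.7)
The plan is to reduce $\tau$-mixing of the scalar-valued/functional product $\{u_{lt}\xi_{it}\}$ to $\tau$-mixing of the joint process $\{Y_t\defeq(u_{lt},\xi_{it})\}$ in $H_0\defeq\bbR\oplus H_i$ by exploiting the fact that, on the relevant bounded set, the multiplication map $\Phi:H_0\to H_i$, $\Phi(u,\xi)=u\xi$ is Lipschitz. Since (H1) forces $\rpnorm{u_{lt}}\leq c_u$ and $\hnorm{\xi_{it}}_{H_i}\leq c_\xi$ almost surely, the process $\{Y_t\}$ lives almost surely in the ball $B_c(H_0)$ with $c\defeq\sqrt{c_u^2+c_\xi^2}$, while $\{u_{lt}\xi_{it}\}$ lives in $B_{c_uc_\xi}(H_i)$; these are the $c$'s I would use in the two instances of the definition \eqref{eq:tau_mixing}.

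Next, I would take an arbitrary test function $\vfi'\in\LipschitzSpace(H_i,c_uc_\xi)$ (Lipschitz constant $1$) and form the composition $\vfi\defeq \vfi'\circ\Phi$, so that $\vfi(Y_t)=\vfi'(u_{lt}\xi_{it})$. The key computation is to bound the Lipschitz constant of $\vfi$ on $B_c(H_0)$: for $(u_1,\xi_1),(u_2,\xi_2)\in B_c(H_0)$, the triangle inequality and the pointwise bounds $\abs{u_j}\leq c$, $\hnorm{\xi_j}_{H_i}\leq c$ give
\[
\hnorm{u_1\xi_1-u_2\xi_2}_{H_i}\le \abs{u_1}\hnorm{\xi_1-\xi_2}_{H_i}+\hnorm{\xi_2}_{H_i}\abs{u_1-u_2}\le c\bigl(\abs{u_1-u_2}+\hnorm{\xi_1-\xi_2}_{H_i}\bigr),
\]
and a Cauchy--Schwarz step turns the right-hand side into $c\sqrt{2}\,\hnorm{(u_1,\xi_1)-(u_2,\xi_2)}_{H_0}$. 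Hence $\vfi/(c\sqrt{2})\in\LipschitzSpace(H_0,c)$.

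Finally, I would simply plug $\vfi/(c\sqrt{2})$ into the $\tau$-mixing definition \eqref{eq:tau_mixing} applied to $\{Y_t\}$: for any $Z\in L^1(\Omega,\mathcal{M}^{Y'}_t,\pp)\subset L^1(\Omega,\mathcal{M}^Y_t,\pp)$ with $\ee\rpnorm{Z}\le 1$,
\[
\abs{\eee{Z\bigl(\vfi'(u_{l,t+h}\xi_{i,t+h})-\ee\vfi'(u_{l,t+h}\xi_{i,t+h})\bigr)}}
=\abs{\eee{Z\bigl(\vfi(Y_{t+h})-\ee\vfi(Y_{t+h})\bigr)}}\le c\sqrt{2}\,\tau_Y(h).
\]
Taking the supremum over $\vfi'$, $Z$, and $t$, and invoking Assumption~\ref{assumption:uniform_bound_loadings}(H2), yields the announced bound $\sqrt{2}\sqrt{c_u^2+c_\xi^2}\,\alpha\exp(-(\theta h)^\gamma)$. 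I do not foresee a serious obstacle: the only subtlety is making sure the $\sigma$-algebra for $Z$ is the one generated by $\{u_{l,s}\xi_{i,s}\}_{s\le t}$ rather than by $\{Y_s\}_{s\le t}$, but the former is contained in the latter, so the sup defining $\tau_{Y'}(h)$ is taken over a smaller class, preserving the inequality.
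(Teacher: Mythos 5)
Your proposal is correct and follows essentially the same route as the paper's proof: introduce the product map $\Phi(u,\xi)=u\xi$ on $B_c(\bbR\oplus H_i)$ with $c=\sqrt{c_u^2+c_\xi^2}$, bound its Lipschitz constant by $\sqrt{2}\,c$ via the triangle and Cauchy--Schwarz inequalities, compose test functions with $\Phi$, and observe that the $\sigma$-algebra generated by the products is contained in that of the joint process. The only cosmetic difference is that you work with test functions on $B_{c_u c_\xi}(H_i)$ whereas the paper uses $B_{c^2}(H_i)$, but this is immaterial since, as the paper notes after its definition of $\tau$-mixing, the coefficients do not depend on the choice of the bounding radius.
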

\begin{proof}
    Fix $l$ and $i$ and, for simplicity, write $u_t$, $\xi_t$, $H$, and $\hnorm{\cdot}$ for   $u_{lt}$,  $\xi_{it}$, $H_i$, and~$\hnorm{\cdot}_{H_i}$, respectively;   let $H_0 \defeq \bbR \oplus H$. 
    First, notice that 
    $$\LipschitzSpace(H_0, c) = \{ \vfi: B_c(H_0) \to \bbR :
        \Lipnorm{\vfi} \leq 1 \},$$ 
        where $\Lipnorm{\vfi} \defeq \sup\{
        \rpnorm{\vfi(x) - \vfi(y)}/\hnorm{x-y}_{H_0} : x \neq y \}$,  
     is the Lipschitz norm.
    We extend the definition of the Lipschitz norm to mappings $\psi: B_c(H_0) \to
    H$ by replacing the norm $\rpnorm{\cdot}$ by $\hnorm{\cdot}$ in the definition: $\Lipnorm{\psi} \defeq \sup\{
    \hnorm{\psi(x) - \psi(y)}/\hnorm{x-y}_{H_0} : x \neq y \}$. 
    Let~$c \defeq \sqrt{c_u^2 + c_\xi^2}$, and define the product mapping $\psi:
    B_{c}(\bbR \oplus H) \to H$ that maps~$(u, \xi)$ to~$u\, \xi \in H$. Notice that
    $\psi(u_t, \xi_t) = u_t \xi_t$ is almost surely well defined since the norm
    of~$(u_t, \xi_t)$ is almost surely less than $c$. Furthermore,
    $\hnorm{u_t \xi_t} = \rpnorm{u_t}\hnorm{\xi_t} \leq c^2$ almost surely.
    Straightforward calculations show that $\Lipnorm{\psi} \leq \sqrt{2}c $.
    Let us now compute the $\tau$-mixing coefficients for the process $\{ u_t \xi_t \} \subset
    H$. We have
    \begin{align*}
        \tau(h) & = \sup_{} \Big\{ \eee{Z \{ \vfi(u_t \xi_t) - \ee \vfi(u_t \xi_t)\}} ~|~ Z \in L^1(\Omega, \mathcal{M}^{u \xi}_t, \pp),
             \\ & \hspace{7cm}  \ee \rpnorm{Z} \leq 1, \vfi \in \LipschitzSpace(H,c^2) \Big\}
            \\  & = \sup_{} \Big\{ \eee{Z \{ \vfi(\psi(u_t, \xi_t)) - \ee \vfi(\psi(u_t,  \xi_t))\}} ~|~ Z \in L^1(\Omega, \mathcal{M}^{\psi(u, \xi)}_t, \pp), 
             \\ & \hspace{7cm} \ee \rpnorm{Z} \leq 1, \vfi \in \LipschitzSpace(H,c^2) \Big\},
    \end{align*}
    by definition of $\psi$. Notice that $\Lipnorm{\vfi \circ \psi} \leq
    \Lipnorm{\vfi}\Lipnorm{\psi} \leq \sqrt{2}c$, where ``$\circ$'' denotes function composition. Hence, $2^{-1/2}c^{-1} (\vfi \circ \psi) \in \LipschitzSpace(H_0, c)$. Furthermore,  since
    $\psi$ is conti\-nuous,~$\mathcal{M}^{\psi(u, \xi)}_t \subset \mathcal{M}^{(u, \xi)}_t$. Therefore,
    \begin{align*}
        \tau(h) & \leq \sqrt{2}\, c\,  \sup\Big\{ \eee{Z \{ \tilde \vfi(u_t, \xi_t) - \ee \tilde \vfi(u_t, \xi_t)\}} ~|~ Z \in L^1(\Omega, \mathcal{M}^{(u, \xi)}_t, \pp),
                 \\ & \hspace{7cm}  \ee \rpnorm{Z} \leq 1, \tilde \vfi \in \LipschitzSpace(H_0, c) \Big\}
                 \\ & \leq \sqrt{2}\, c\, \alpha\, \exp(-(\theta h)^{\gamma}),
    \end{align*}
    as was to be shown.
\end{proof}

\begin{Lemma}
    \label{lma:uniform_bound_bxi_ut}
    Let  Assumption~\ref{assumption:uniform_bound_loadings} hold. 
    Then,
    \[
        \max_{i=1,\ldots, N} \Fnorm{ \singleProj_i \operator{ \bxi_{NT} } \bU_T^\adjoint / T } = O_{\rm P}( \log(N) \log(T)^{1/{2\gamma}} /\sqrt{T} )
    \]
    as $N,T \to \infty$.
\end{Lemma}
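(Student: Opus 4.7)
Proof plan. The key observation is that the Hilbert--Schmidt norm we want to control decomposes coordinate-wise: for each $i$,
\[
    \Fnorm{ \singleProj_i \operator{ \bxi_{NT} } \bU_T^\adjoint / T }^2
      = \sum_{l=1}^r \hnorm{T^{-1} \sum_{t=1}^T u_{lt} \xi_{it}}_{H_i}^2,
\]
since the $l$-th column of $\singleProj_i \operator{ \bxi_{NT} } \bU_T^\adjoint / T \in \bounded{\bbR^r,H_i}$ is exactly $T^{-1}\sum_t u_{lt}\xi_{it}$. Because $r$ is a fixed constant, it is enough to show that
\[
    \max_{1 \leq i \leq N,\, 1 \leq l \leq r} \hnorm{T^{-1} \sum_{t=1}^T u_{lt} \xi_{it}}_{H_i}
      = O_{\rm P}\big(\log(N) \log(T)^{1/(2\gamma)}/\sqrt{T}\big).
\]

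The plan is to establish this via a Bernstein-type concentration inequality for $\tau$-mixing sequences with values in a Hilbert space, followed by a union bound. First, by Assumption~\ref{assumption:uniform_bound_loadings}(H1), for fixed $i$ and $l$ the $H_i$-valued process $\{u_{lt}\xi_{it} : t \geq 1\}$ is centered (since $\eee{u_{lt}\xi_{it}} = 0$) and almost surely uniformly bounded with $\hnorm{u_{lt}\xi_{it}}_{H_i} \leq c_u c_\xi$. Second, by Lemma~\ref{lemma:product_is_tau_mixing}, this product process is $\tau$-mixing with coefficients $\tau(h) \leq \sqrt{2}\sqrt{c_u^2 + c_\xi^2}\, \alpha \exp(-(\theta h)^\gamma)$, i.e.\ exponentially decaying with Weibull rate $\gamma$. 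These are precisely the hypotheses under which a Bernstein-type inequality for bounded Hilbert-valued $\tau$-mixing sequences of the form
\[
    \pp\left( \hnorm{T^{-1} \sum_{t=1}^T u_{lt} \xi_{it}}_{H_i} > \eta \right)
    \leq C_1 \exp\!\left( - \frac{C_2 T \eta^2}{1 + \eta (\log T)^{1/\gamma}} \right)
\]
holds---cf.\ \citet{blanchard2019concentration}, which provides such an inequality via the variational characterisation of $\tau$-mixing.

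To close, I will choose $\eta = \eta_{N,T} \defeq K \log(N) \log(T)^{1/(2\gamma)}/\sqrt{T}$ for a constant $K$ to be selected. For this choice, the exponent in the Bernstein bound is of order $-C_2 K^2 \log(N)^2/(1 + o(1))$ uniformly in large $N,T$; hence the bound is $o(1/(Nr))$. Taking a union bound over $i=1,\ldots,N$ and $l=1,\ldots,r$ then yields, for any $\epsilon > 0$, a $K$ large enough that
\[
    \pp\!\left( \max_{i,l} \hnorm{T^{-1}\sum_{t=1}^T u_{lt} \xi_{it}}_{H_i} > \eta_{N,T} \right)
    \leq N r \cdot C_1 \exp(-C_2' K^2 \log(N)^2) \to 0,
\]
which gives the claimed rate.

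The main obstacle is deploying the correct Hilbert-space Bernstein inequality for $\tau$-mixing (or equivalently $\tau_\infty$- / $\theta_\infty$-mixing) sequences with Weibull-type decaying mixing coefficients, and verifying that its variance and bounded-range parameters give the exponent structure above. This is the step that produces the $\log(T)^{1/(2\gamma)}$ factor (through inversion of the Weibull tail), and it is the reason the rate is slower than the $\sqrt{\log N}$ rate obtained in the scalar case in \citet{fan:2013:jrssb-discussion} via the \citet{merlevede2011bernstein} inequality. Once that concentration bound is in place, the rest of the argument is a routine union bound and choice of constants.
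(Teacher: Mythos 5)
Your plan matches the paper's proof almost exactly: coordinate-wise decomposition of the Hilbert--Schmidt norm, union bound over $i$ and $l$, Lemma~\ref{lemma:product_is_tau_mixing} to get exponential $\tau$-mixing of each product process $\{u_{lt}\xi_{it}\}$, and the \citet{blanchard2019concentration} Hilbert-space concentration inequality to close. The only imprecision is in your posited form of the tail bound: what \citet[Prop.~3.6, Cor.~3.7]{blanchard2019concentration} actually give (and what the paper uses) is a Hoeffding-type bound of shape $\pp(\hnorm{T^{-1}\sum_t u_{lt}\xi_{it}}>\eta)\lesssim \exp(-c\,\eta\sqrt{m_T})$ with effective block count $m_T\asymp T\theta/\log(T)^{1/\gamma}$, so that plugging in $\eta = \nu\log(N)\log(T)^{1/(2\gamma)}/\sqrt{T}$ yields an exponent \emph{linear} in $\log N$ and in $\nu$ (after the $\log(T)^{1/(2\gamma)}$ factors cancel against $\sqrt{m_T}$), rather than quadratic in $\log N$ as your heuristic suggests---but the union-bound conclusion still goes through once $\nu$ is chosen above a fixed threshold.
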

\begin{proof}
    Using the union upper bound, we get, for  $\nu > 2$,
    \begin{align}
        \MoveEqLeft \pp\left( \max_{i=1,\ldots, N} \Fnorm{ \singleProj_i \operator{ \bxi_{NT}} \bU_T^\adjoint / T}^2 \geq r \nu^2 \log(N)^2 \log(T)^{1/{\gamma}}/{T} \right) \nonumber
        \\ & \leq r N \max_{i=1,\ldots, N} \max_{l=1,\ldots, r} \pp\left(  \hnorm{ \frac{1}{T} \sum_{s=1}^T u_{ls} \xi_{is} }_{H_i} \geq \nu \log(N) \log(T)^{1/{2\gamma}}/\sqrt{T} \right) \label{eq:uniform_bound_bxi_ut}.
    \end{align}
    Letting $c \defeq \sqrt{c_u^2 + c_\xi^2}$, we almost surely have $\hnorm{ u_{lt} \xi_{it} }_{H_i} \leq c^2$ for all $t \geq 1$  and
    each process $\{ u_{lt} \xi_{it} : t \geq 1 \} \subset H_i$ is $\tau$-mixing with coefficients $\tau(h) = \sqrt{2}\, c\, \alpha \exp(-(\theta h)^{\gamma})$ by Lemma~\ref{lemma:product_is_tau_mixing}. 
    Using the Bernstein-type inequality for $\tau$-mixing time series in Hilbert spaces \citep[][Proposition~3.6 and Corollary~3.7]{blanchard2019concentration}, the right-hand side in  \eqref{eq:uniform_bound_bxi_ut} is bounded by
    \begin{align*}
        \MoveEqLeft 2 r N \exp\left\{ -\nu \log(N) \log(T)^{1/{2\gamma}} T^{-1/2} \sqrt{ \left\lfloor \frac{T \theta}{2} \frac{1}{\log(T \sqrt{2} \alpha \theta / c)^{1/\gamma}} \right\rfloor}/(34c^2)  \right\}
        \\ & \leq 2 r N \exp\left\{ -\nu \log(N) \log(T)^{1/{2\gamma}} T^{-1/2} \sqrt{  \frac{ T \theta}{\log(T \sqrt{2} \alpha \theta / c)^{1/\gamma}} }/(68 c^2)  \right\}
        \\ & = 2 r N \exp\left\{ -\nu \log(N) \sqrt{\theta} \left(  \frac{\log(T) }{\log(T \sqrt{2} \alpha \theta / c)} \right)^{1/(2 \gamma)} /(68 c^2)  \right\}
        \\ & \leq 2 r N \exp\left\{ -\nu \log(N) \sqrt{\theta}/(136 c^2)  \right\}
        \\ & = 2 r \exp\left\{\log(N)\left( 1 -  \nu  \sqrt{\theta}/(136 c^2) \right)  \right\}
        \\ & \leq 2 r \exp\left\{ 1 -  \nu  \sqrt{\theta}/(136 c^2)   \right\}
    \end{align*}
    where the first and second inequalities hold if $T$ is large enough,   the third one if $\nu$ is large enough and $N \geq 3$. The last expression can be made arbitrarily small for $\nu$ large enough, independently of $N,T$ large enough. This concludes the proof.
\end{proof}

\begin{proof}[Proof of Theorem~\ref{thm:unif_bound_loadings}]
    Mimicking the steps in the proof of Theorem~\ref{theorem:loadings_average_bound}, we have
    \begin{align*}
        \max_{i=1,\ldots, N} \Fnorm{\tildeb{r}_i - \bb_i \tilde \bR^{-1} } 
                & \leq T^{-1} \max_{i=1,\ldots, N} \Fnorm{\bb_i} \Fnorm{\bU_T - \tilde \bR^{-1} \tildebU{r}_T} \Fnorm{\tildebU{r}_T}
             \\ & \qquad + T^{-1} \max_{i=1,\ldots, N} \Fnorm{\singleProj_i \operator{\bxi_{NT}} } \Fnorm{\tildebU{r}_T - \tilde \bR \bU_T}
             \\ & \qquad + T^{-1} \max_{i=1,\ldots, N} \Fnorm{\singleProj_i \operator{\bxi_{NT}} \bU_T^\adjoint } \Fnorm{\tilde \bR}.
    \end{align*}
    From the proof of Theorem~\ref{theorem:factors_average_bound} and Assumption~\ref{assumption:Bn_xi}, the first summand is $O_{\rm P}(C_{N,T}^{-1})$ 
    (here and below, all rates are meant as $N,T \to \infty$).
    The second summand is also $O_{\rm P}(C_{N,T}^{-1})$ since
    \[
        \Fnorm{\singleProj_i \operator{ \bxi_{NT} } }^2 = \sum_{t=1}^T \hnorm{\xi_{it}}^2 \leq c^2_\xi T.
    \]
    The third summand is $O_{\rm P}(\log(N) \log(T)^{1/{2\gamma}}/\sqrt{T})$ by Lemmas~\ref{lma:uniform_bound_bxi_ut} and~\ref{lma:bound_singular_values_tildeR}. Piecing the results together, we get
    \[
        \max_{i=1,\ldots, N} \Fnorm{\tildeb{r}_i - \bb_i \tilde \bR^{-1}} = O_{\rm P}\left( \max\left\{ \frac{1}{\sqrt{N}}, \frac{\log(N)\log(T)^{1/{2\gamma}}}{\sqrt{T}} \right\} \right).
    \]
    $\,$\vspace{-13mm}
    
\end{proof}

\begin{proof}[Proof of Theorem~\ref{thm:uniform_bound_common_component}]
    We have
    \begin{align*}
         \hat \chi_{it} - \chi_{it}
                & =  \tildeb{r}_i \tilde \bu_t - \bb_i \bu_t
            \\  & = (\tildeb{r}_i  - \bb_i \tilde \bR^{-1} )  (\tilde \bu_t - \tilde \bR \bu_t) 
             \\ & \qquad + (\tildeb{r}_i  - \bb_i \tilde \bR^{-1} ) \tildeR \bu_t  +   \bb_i \tilde \bR^{-1} (  \tilde \bu_t - \tilde \bR \bu_t ).
    \end{align*}
    Therefore, using Lemmas~\ref{lma:largest_sample_eigenvalue_bounded}, \ref{lma:r_th_sample_eigenvalue_away_from_zero} and~\ref{lma:bound_singular_values_tildeR} and Theorems~\ref{thm:unif_bound_factors} and~\ref{thm:unif_bound_loadings}, we get
    \begin{align*}
        \MoveEqLeft \max_{t=1,\ldots T} \max_{i=1,\ldots, N} \hnorm{ \hat \chi_{it} - \chi_{it} }_{H_i}
                \\ & \leq  \max_{i=1,\ldots, N} \opnorm{\tildeb{r}_i  - \bb_i \tilde \bR^{-1} }  \max_{t=1,\ldots T} \rpnorm{ \tilde \bu_t - \tilde \bR \bu_t}
                \\ & \qquad + \max_{i=1,\ldots, N} \opnorm{\tildeb{r}_i  - \bb_i \tilde \bR^{-1} }  \opnorm{\tildeR} \max_{t=1,\ldots T} \rpnorm{ \bu_t  }
                \\ & \qquad + \max_{i=1,\ldots, N} \opnorm{\bb_i} \opnorm{\tilde \bR^{-1} } \max_{t=1,\ldots T} \rpnorm{  \tilde \bu_t - \tilde \bR \bu_t }
                \\ & = O_{\rm P}\left( \max\left\{ \frac{1}{\sqrt{N}}, \frac{\log(N)\log(T)^{1/{2\gamma}}}{\sqrt{T}} \right\} \right)  O_{\rm P}\left( \max\left\{ T^{-1/2}, T^{1/(2 \kappa)} N^{-1/2} \right\} \right)
             \\ & \qquad +  O_{\rm P}\left( \max\left\{ \frac{1}{\sqrt{N}}, \frac{\log(N)\log(T)^{1/{2\gamma}}}{\sqrt{T}} \right\} \right) O_{\rm P}(1) c_u
             \\ & \qquad + O(1) O_{\rm P}(1) O_{\rm P}\left( \max\left\{ T^{-1/2}, T^{1/(2 \kappa)} N^{-1/2} \right\} \right)
             \\ & = O_{\rm P}\left( \max\left\{ \frac{T^{1/(2 \kappa)}}{\sqrt{N} }, \frac{\log(N)\log(T)^{1/{2\gamma}}}{ \sqrt{N}\, T^{( \kappa-1)/(2 \kappa)}} , \frac{\log(N)\log(T)^{1/{2\gamma}}}{\sqrt{T}} \right\} \right)
                 \end{align*}
                 as $N,T \to \infty$
\end{proof}


\subsection{Background results and technical lemmas}

We are grouping here the technical lemmas that have been used in the previous sections. 

\begin{Lemma}
    \label{lma:bai_Ng_rth_largest_eigenvalue_of_covariance_diverges}
    Assume that $\eee{\bu_t \tensor \bu_t}$ is positive definite, and let Assumption~\ref{assumption:b}
    hold. If $\ee \bu_t =\b 0, \ee \bxi_t = \b 0$, $\ee \hnorm{\bxi_t}^2 < \infty$ and $\eee{\bu_t \tensor \bxi_t} = \b 0$,
    \[
        \lambda_r\left[ \ee{\bx_t \tensor \bx_t} \right] = \Omega(N) \quad \text{as } N \to \infty.
    \]
    If, in addition, $\sum_{j = 1}^\infty \opnorm{ \ee{\xi_{it} \tensor \xi_{jt}} } < M < \infty$ 
    for all $i \geq 1$, then
    \[
        \lambda_{r+1}\left[ \ee{\bx_t \tensor \bx_t} \right] = O(1) \quad \text{as } N \to \infty.
    \]
\end{Lemma}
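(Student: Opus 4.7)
The key observation is that $\bx_t = \bB_N \bu_t + \bxi_t$ with $\eee{\bu_t \tensor \bxi_t} = 0$, so the covariance operator splits orthogonally as
\[
    \b \Sigma^x_N \defeq \eee{\bx_t \tensor \bx_t} = \b \Sigma^\chi_N + \b \Sigma^\xi_N, \qquad \b \Sigma^\chi_N \defeq \bB_N \b \Sigma_\bu \bB_N^\adjoint,\quad \b \Sigma^\xi_N \defeq \eee{\bxi_t \tensor \bxi_t},
\]
with both summands positive semi-definite. I would then attack the two halves of the statement using the two Weyl-type inequalities $\lambda_r(\b \Sigma^x_N) \geq \lambda_r(\b \Sigma^\chi_N)$ and $\lambda_{r+1}(\b \Sigma^x_N) \leq \lambda_{r+1}(\b \Sigma^\chi_N) + \lambda_1(\b \Sigma^\xi_N)$.

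For the first bound, I would use the fact that the nonzero eigenvalues of $\bB_N \b \Sigma_\bu \bB_N^\adjoint \in \bounded{\bH_N}$ coincide with those of the $r \times r$ matrix $\b \Sigma_\bu^{1/2} (\bB_N^\adjoint \bB_N) \b \Sigma_\bu^{1/2}$, which has exactly $r$ nonzero eigenvalues and whose smallest one is bounded below by $\lambda_r(\b \Sigma_\bu) \cdot \lambda_r(\bB_N^\adjoint \bB_N)$ by the Courant--Fischer--Weyl minimax characterization. Assumption~\ref{assumption:b} ensures that $\bB_N^\adjoint \bB_N/N \to \b \Sigma_{\bB}$ with $\b \Sigma_{\bB}$ positive definite, so $\lambda_r(\bB_N^\adjoint \bB_N) = \Omega(N)$; combined with $\lambda_r(\b \Sigma_\bu) > 0$, this yields $\lambda_r(\b \Sigma^\chi_N) = \Omega(N)$ and hence the first claim.

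For the second bound, $\b \Sigma^\chi_N$ has rank at most $r$, so $\lambda_{r+1}(\b \Sigma^\chi_N) = 0$, and the inequality reduces to $\lambda_{r+1}(\b \Sigma^x_N) \leq \opnorm{\b \Sigma^\xi_N}$. I would view $\b \Sigma^\xi_N$ as a self-adjoint block operator on $\bH_N = H_1 \oplus \cdots \oplus H_N$ with $(i,j)$-block $\eee{\xi_{it} \tensor \xi_{jt}} \in \bounded{H_j, H_i}$. For $\bv = (v_1, \ldots, v_N)^\tp \in \bH_N$, the triangle and Cauchy--Schwarz inequalities give
\[
    \hnorm{\b \Sigma^\xi_N \bv}^2 \leq \sum_{i=1}^N \Bigl( \sum_{j=1}^N \opnorm{\eee{\xi_{it} \tensor \xi_{jt}}} \hnorm{v_j}_{H_j} \Bigr)^2 \leq \Bigl( \max_i \sum_{j=1}^N \opnorm{\eee{\xi_{it} \tensor \xi_{jt}}} \Bigr)^2 \hnorm{\bv}^2,
\]
where the second inequality uses Cauchy--Schwarz together with the symmetry $\opnorm{\eee{\xi_{it} \tensor \xi_{jt}}} = \opnorm{\eee{\xi_{jt} \tensor \xi_{it}}}$ to swap the indices $i,j$ in one of the sums. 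The supplementary hypothesis $\sum_{j \geq 1} \opnorm{\eee{\xi_{it} \tensor \xi_{jt}}} \leq M$ uniformly in $i$ then caps this by $M$, so $\opnorm{\b \Sigma^\xi_N} \leq M$ uniformly in $N$, which delivers the second claim.

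The only mildly delicate step is the block-operator norm bound in the second part: writing out the Cauchy--Schwarz estimate carefully and exploiting self-adjointness to control both the row and column $\ell_1$ sums with the single assumed bound is where a small computation is needed. Everything else (Weyl's inequalities, Assumption~\ref{assumption:b}, and reduction of the rank-$r$ eigenproblem to an $r \times r$ matrix) is routine.
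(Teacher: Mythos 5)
Your proposal is correct and its overall skeleton is the same as the paper's: you split $\eee{\bx_t \tensor \bx_t}$ into the rank-$r$ piece $\bB_N \b \Sigma_\bu \bB_N^\adjoint$ and the idiosyncratic piece $\eee{\bxi_t \tensor \bxi_t}$, and you use the same two Weyl inequalities and the same reduction of $\lambda_r(\bB_N\b\Sigma_\bu\bB_N^\adjoint)$ to $\lambda_r(\b\Sigma_\bu)\lambda_r(\bB_N^\adjoint\bB_N) = \Omega(N)$. The point where you genuinely diverge from the paper is in bounding $\opnorm{\eee{\bxi_t \tensor \bxi_t}}$: you apply the Schur (Holmgren) test directly, splitting $a_{ij}^{1/2}\cdot a_{ij}^{1/2}\hnorm{v_j}$ inside the $j$-sum, applying Cauchy--Schwarz, and using the symmetry $a_{ij}=a_{ji}$ of $a_{ij}\defeq\opnorm{\eee{\xi_{it}\tensor\xi_{jt}}}$ so that a single row-sum hypothesis bounds both factors. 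The paper instead proves an abstract lemma: for any submultiplicative norm $\hnorm{\cdot}_*$ on $\bounded{\bH_N}$ and any compact self-adjoint $A$, one has $\abs{\lambda_1[A]}\le\hnorm{A}_*$ (via $A(x\tensor x)=\lambda_1[A]\,(x\tensor x)$), and then verifies that $A\mapsto\max_i\sum_j\opnorm{\singleProj_i A\singleProj_j^\adjoint}$ is such a norm. Both land on exactly the same numerical bound $\lambda_1[\eee{\bxi_t\tensor\bxi_t}]\le M$; your route is the more elementary and self-contained of the two, while the paper's route produces a reusable general fact about block operator norms. Your proof is complete once you spell out the intermediate Cauchy--Schwarz step you already flag as the ``mildly delicate'' one, namely
\[
\sum_{i=1}^N\Bigl(\sum_{j=1}^N a_{ij}\hnorm{v_j}_{H_j}\Bigr)^2
\le \sum_{i=1}^N\Bigl(\sum_{j=1}^N a_{ij}\Bigr)\Bigl(\sum_{j=1}^N a_{ij}\hnorm{v_j}_{H_j}^2\Bigr)
\le \Bigl(\max_i\sum_j a_{ij}\Bigr)\Bigl(\max_j\sum_i a_{ij}\Bigr)\hnorm{\bv}^2,
\]
and then invoke symmetry of $a_{ij}$ to identify the two max-sum factors.
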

\noindent In other words, the $r$-th largest eigenvalue of the covariance of $\bx_t$
    diverges and the~$(r+1)$-th  largest one remains bounded as $N \rightarrow
    \infty$, which implies that the covariance of $\bx_t$ satisfies the conditions of
    Theorem~\ref{thm:equiv_factormodel_eigenvalues}.

\begin{proof}
    Here and below, all rates are meant as $N \to \infty$.
    Since $\eee{\bu_t \tensor \bxi_t} = \b 0$, we get  
    \[
        \Sigma \defeq \eee{\bx_t \tensor \bx_t} =  \bB_N \eee{\bu_t \tensor
    \bu_t} \bB_N^\adjoint + \eee{\bxi_t \tensor \bxi_t},
\]
and $\Sigma$ is compact and self-adjoint, since $\ee \hnorm{\bxi_t}^2 < \infty$ implies that $\eee{\bxi_t \tensor \bxi_t}$ is trace-class.
    Lemma~\ref{lma:eigenvalue_inequalities} yields 
    \[
        \lambda_r[\Sigma] \geq \lambda_r\left[ \bB_N \ee( \bu_t \tensor \bu_t ) \bB_N^\adjoint \right]
        \geq \lambda_r\left[\ee( \bu_t \tensor \bu_t) \right] \lambda_r[\bB_N \bB_N^\adjoint],
    \]
    where the second inequality follows from the Weyl--Fischer characterization of
    eigenvalues \citep{hsing:eubank:2015}. By assumption, the first term is
    bounded away from zero. For the second term, we have,     by Assumption~\ref{assumption:b}, 
    \[
        \lambda_r[ \bB_N \bB_N^\adjoint ] = \lambda_r[ \bB_N^\adjoint \bB_N ] = \Omega(N).
    \]
    For the second statement, using Lemma~\ref{lma:eigenvalue_inequalities}, we
    get
    \[
        \lambda_{r+1}[ \ee{\bx_t \tensor \bx_t} ] \leq \lambda_{r+1}[ \bB_N \ee( \bu_t \tensor
        \bu_t) \bB_N^\adjoint ] + \lambda_1[\ee \bxi_t \tensor \bxi_t ] \leq \lambda_1[\ee \bxi_t \tensor \bxi_t ]
    \]
    since $\bB_N \ee( \bu_t \tensor \bu_t) \bB_N^\adjoint$ has rank at most $r$. We want to show
    that $\opnorm{ \ee( \bxi_t \tensor \bxi_t) }$ is~$O(1)$. We will show that, for any
    norm $\hnorm{\cdot}_*$ on the operators $\bounded{\bH_N}$ that is a matrix norm, that
    is, satisfies,    for all $A,B \in \bounded{\bH_N}$ and  $\gamma \in \bbR$, 
    \begin{enumerate}
        \item[(i)] $\hnorm{A}_* \geq 0$ with equality if and only if $A = \b 0$, 
        \item[(ii)] $\hnorm{\gamma A}_* = \abs{\gamma} \hnorm{A}_*$,
        \item[(iii)] $\hnorm{A + B}_* \leq \hnorm{A}_* + \hnorm{B}_*$,
        \item[(iv)] $\hnorm{AB}_* \leq \hnorm{A}_* \hnorm{B}_*$,
    \end{enumerate}
  we have $|\lambda_1[A] | \leq
    \hnorm{A}_*$ if $A$ is compact and self-adjoint. 
    Indeed, since $A$ is compact and self-adjoint, $A x = \lambda_1[A] x$ for some non-zero $x \in \bH_N$. Then $A x \tensor x = \lambda_1[A] x \tensor x$, and thus 
    \begin{align*}
        \abs{\lambda_1[A]} \hnorm{x \tensor x}_* = \hnorm{(\lambda_1[A] x) \tensor x}_* = \hnorm{(A x) \tensor x}_*  &= \hnorm{A (x \tensor x)}_* 
        \\ & \leq \hnorm{A}_* \hnorm{x \tensor x}_*, 
    \end{align*}
hence  $\abs{\lambda_1[A] } \leq \hnorm{A}_*$. 
    To complete the proof, we still need to show  that 
    \[
        \hnorm{A}_* \defeq \max_{i=1,\ldots, N} \sum_{j=1}^N \opnorm{ \singleProj_i A \singleProj_j^\adjoint },
    \]
 where $\singleProj_i\!:\! \bH_N \to H_i$, $i = 1,\ldots, N$,   is the canonical
    projection mapping $(v_1,\ldots, v_N)^\tp\! \in~\!\bH_N$ to~$v_i \in~\!H_i$,   is a matrix norm. 
    Once we notice that $\sum_{i=1}^N
    \singleProj_i^\adjoint \singleProj_i = \identity_N$,      the identity operator in~$\bounded{\bH_N}$,
 (i), (ii), and (iii) are straightforward. Let us show that (iv) also holds.  For any~$A,B \in
    \bounded{\bH_N}$, we have
    \begin{align*}
        \hnorm{AB}_* & = \max_{i=1,\ldots, N} \sum_{j=1}^N \opnorm{\singleProj_i AB \singleProj_j^\adjoint}
                  \\ & = \max_{i=1,\ldots, N} \sum_{j=1}^N \opnorm{\singleProj_i A \left(\sum_{l=1}^N \singleProj_l^\adjoint \singleProj_l\right) B \singleProj_j^\adjoint}
                  \\ & \leq \max_{i=1,\ldots, N} \sum_{j=1}^N \sum_{l=1}^N \opnorm{\singleProj_i A  \singleProj_l^\adjoint \singleProj_l B \singleProj_j^\adjoint}
                  \\ & \leq \max_{i=1,\ldots, N} \sum_{l=1}^N \opnorm{\singleProj_i A  \singleProj_l^\adjoint} \sum_{j=1}^N \opnorm{\singleProj_l B \singleProj_j^\adjoint}
                  \\ & \leq \left(\max_{i=1,\ldots, N} \sum_{l=1}^N \opnorm{\singleProj_i A  \singleProj_l^\adjoint}\right) \left( \max_{l=1,\ldots, N} \sum_{j=1}^N \opnorm{\singleProj_l B \singleProj_j^\adjoint}\right)
                   = \hnorm{A}_* \hnorm{B}_*.
    \end{align*}
    This completes the proof.
\end{proof}

\begin{Lemma} \label{lma:norm_bu_bxi}
    Let Assumption~\ref{assumption:a} hold. Assume that 
    $$\eee{\sc{\bxi_t, \bxi_s} u_{lt}u_{ls}} = \eee{\sc{\bxi_t,
    \bxi_s}} \eee{ u_{lt}u_{ls}}$$
     for  all~$l = 1,\ldots, r$ and 
    $s,t \in \bbZ$ and  that $\sum_{t \in \bbZ} \abs{\nu_N(t)} < M < \infty$ for all $N \geq 1$. 
    Then, Assumption~\ref{assumption:u_xi} holds with $\alpha=1$.
\end{Lemma}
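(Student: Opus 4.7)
The plan is to estimate the second moment $\ee \Fnorm{T^{-1}\sum_{t=1}^T \bu_t \tensor \bxi_t}^2$ in closed form via a trace expansion and then conclude by Markov's inequality.

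First I would exploit the identity $\Fnorm{A}^2 = \trace(AA^\adjoint)$ together with the composition rule $(\bu_t \tensor \bxi_t)(\bxi_s \tensor \bu_s) = \sc{\bxi_t, \bxi_s}(\bu_t \tensor \bu_s)$, which follows directly from the definition of the tensor product and its adjoint given in Section~\ref{sub:notation_for_model}. Identifying $\bu_t \tensor \bu_s \in \bounded{\bbR^r}$ with the outer product $\bu_t \bu_s^\tp \in \bbR^{r \times r}$, whose trace equals $\bu_s^\tp \bu_t = \sum_{l=1}^r u_{lt} u_{ls}$, this yields the exact identity
\[
    \Fnorm{T^{-1}\sum_{t=1}^T \bu_t \tensor \bxi_t}^2 = T^{-2} \sum_{l=1}^r \sum_{t,s=1}^T \sc{\bxi_t, \bxi_s}\, u_{lt} u_{ls}.
\]

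Next I would take expectations. The hypothesized factorization gives $\eee{\sc{\bxi_t,\bxi_s}\, u_{lt} u_{ls}} = \eee{\sc{\bxi_t,\bxi_s}}\eee{u_{lt} u_{ls}} = N \nu_N(t-s)\,\gamma_l(t-s)$, where $\gamma_l(h) \defeq \eee{u_{l,t+h} u_{lt}}$ is well defined and $t$-independent by the second-order stationarity part of Assumption~\ref{assumption:a}. Combining this with the change of variable $h = t-s$ and the elementary estimate $\sum_{t,s=1}^T f(t-s) \leq T \sum_{h \in \bbZ} |f(h)|$, I obtain
\[
    \ee \Fnorm{T^{-1}\sum_{t=1}^T \bu_t \tensor \bxi_t}^2 \leq \frac{N}{T} \sum_{l=1}^r \sum_{h \in \bbZ} |\nu_N(h)|\,|\gamma_l(h)|.
\]
A Cauchy--Schwarz bound $|\gamma_l(h)| \leq \gamma_l(0) = (\b \Sigma_\bu)_{ll}$ together with the assumed summability $\sum_h |\nu_N(h)| \leq M$ uniformly in $N$ then turn the right-hand side into $M\,\trace(\b \Sigma_\bu)\,N/T = O(N/T)$.

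Finally, Markov's inequality yields $\Fnorm{T^{-1}\sum_t \bu_t \tensor \bxi_t}^2 = O_{\rm P}(N/T)$; since $C_{N,T}^2 = \min(N,T) \leq T$, this is in turn $O_{\rm P}(N C_{N,T}^{-2})$, which is exactly Assumption~\ref{assumption:u_xi} with $\alpha = 1$. The only step that really demands care is the operator-theoretic trace computation at the start, in particular the composition rule for tensor products between $\bbR^r$ and $\bH_N$; once that identity is in hand, the rest reduces to a standard second-moment estimate for a weakly stationary scalar-valued sequence, so I do not anticipate any genuine obstacle.
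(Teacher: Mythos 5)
Your proposal is correct and follows essentially the same route as the paper's own proof: express $\Fnorm{T^{-1}\sum_t \bu_t\tensor\bxi_t}^2$ as the trace double sum $T^{-2}\sum_l\sum_{t,s}u_{lt}u_{ls}\sc{\bxi_t,\bxi_s}$, take expectations using the assumed factorization, bound the resulting sum by Cauchy--Schwarz and the summability of $\nu_N$ to get $O(N/T)$, and conclude via Markov and $C_{N,T}^2\le T$. The only differences are cosmetic (the paper writes the operator as $\bU_T\operator{\bxi_{NT}}^\adjoint$ and bounds $|\eee{u_{lt}u_{ls}}|$ by $\max_l\ee u_{lt}^2$ rather than by $\gamma_l(0)$), so there is no gap and no genuinely different idea.
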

\begin{proof}
    We have 
    \[ \Fnorm{\bU_T \operator{ \bxi_{NT} }^\adjoint}^2  = \trace\left[ \bU_T \operator{ \bxi_{NT} }^\adjoint \operator{ \bxi_{NT} }
        \bU_T^\adjoint\right] = \sum_{l=1}^r \sum_{s,t=1}^T u_{lt}u_{ls} \sc{\bxi_t, \bxi_s},
    \]
    and thus 
    \begin{align*}
        \ee \Fnorm{\bU_T \operator{ \bxi_{NT} }^\adjoint}^2  & = \sum_{l=1}^r \sum_{s,t=1}^T \eee{u_{lt}u_{ls}} \eee{ \sc{\bxi_t, \bxi_s}},
                                      \\ & \leq r N \left( \max_l \ee u_{lt}^2 \right) \sum_{s,t=1}^T \abs{\nu_N(t-s)}
                                       = O(NT)
    \end{align*}
    as $N,T \to \infty$
   since, because    $(\bu_t)_t$ is
    stationary, $\ee u_{lt}^2$ does not depend on $t$.  Hence, 
    \[
        \Fnorm{\bU_T \operator{ \bxi_{NT} }^\adjoint}^2 = O_{\rm P}(NT) \leq O_{\rm
        P}(NT^2 C_{N,T}^{-2})
    \]
as $N,T \to \infty$
\end{proof}

The next Lemma tells us that the singular values of compact operators are stable under compact perturbations.
\begin{lma}
    \citep[Chapter~7]{Weid:80}
    \label{lma:perturbation_of_singular_values}
    Let $A,B:H_1 \rightarrow H_2$ be compact operators between two separable Hilbert
    spaces $H_1$ and $ H_2$, with the singular value decompositions
    \[
        A = \sum_{j \geq 1} s_j[A] u_j \tensor v_j \quad\text{and}\quad 
        B = \sum_{j \geq 1} s_j[B] w_j \tensor z_j,
    \]
    where $(s_j[A])_j$ and~$(s_j[B])_j$  are the singular values of $A$ and $B$, respectively, arranged in decreasing order. 
    Then, 
    \[
        \abs{s_j[A] - s_j[B]} \leq \opnorm{A-B}, \quad \text{for all } j \geq 1.
    \]
 \end{lma}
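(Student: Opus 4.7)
My plan is to deduce the inequality from the Courant--Fischer--Weyl min-max characterization of singular values, which is the natural tool for comparing spectral quantities of two compact operators that differ by a compact perturbation. Specifically, for any compact operator $T: H_1 \to H_2$ between separable Hilbert spaces, the $j$-th singular value admits the variational representation
\[
    s_j[T] = \inf_{\substack{V \subset H_1 \\ \dim V \leq j-1}} \sup_{\substack{x \in V^\perp \\ \hnorm{x}_{H_1} = 1}} \hnorm{T x}_{H_2},
\]
which follows from applying the standard min-max formula for self-adjoint compact operators to the positive operator $(T^\adjoint T)^{1/2}$, whose eigenvalues are precisely the singular values of $T$.

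With this variational formula in hand, the proof is a one-line triangle inequality argument. First I would fix $j \geq 1$ and, for an arbitrary $(j-1)$-dimensional subspace $V \subset H_1$, bound
\[
    \sup_{x \in V^\perp, \hnorm{x}_{H_1}=1} \hnorm{B x}_{H_2} \leq \sup_{x \in V^\perp, \hnorm{x}_{H_1}=1} \hnorm{A x}_{H_2} + \opnorm{A - B},
\]
using $\hnorm{Bx}_{H_2} \leq \hnorm{A x}_{H_2} + \hnorm{(B-A)x}_{H_2}$ and the definition of the operator norm. Taking the infimum over $V$ on both sides yields $s_j[B] \leq s_j[A] + \opnorm{A - B}$. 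Swapping the roles of $A$ and $B$ gives the reverse bound, and combining the two produces $\abs{s_j[A] - s_j[B]} \leq \opnorm{A - B}$, as required.

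There is no real obstacle here; the only mildly delicate point is justifying the min-max formula above for singular values of a compact operator between two (possibly different) Hilbert spaces. I would handle this by noting that $T^\adjoint T \in \bounded{H_1}$ is compact, self-adjoint, and positive semi-definite, so that the classical Courant--Fischer--Weyl theorem applies to give the usual min-max formula for its eigenvalues $s_j[T]^2$; taking square roots and using the identity $\hnorm{Tx}_{H_2}^2 = \sc{T^\adjoint T x, x}_{H_1}$ transports the characterization from $T^\adjoint T$ back to $T$, which is what is needed above. This reduces the lemma to a standard perturbation inequality in the spirit of Weyl's inequalities for eigenvalues.
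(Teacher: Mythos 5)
Your argument is correct. The paper does not supply a proof of this lemma at all; it simply cites Weidmann's monograph (Chapter 7) and treats it as a known fact, so there is no in-paper proof to compare against. The min-max argument you give is the standard one: the variational characterization $s_j[T] = \inf_{\dim V \le j-1}\sup_{x\in V^\perp,\,\hnorm{x}=1}\hnorm{Tx}$, obtained from the Courant--Fischer--Weyl theorem applied to the compact positive self-adjoint operator $T^\adjoint T$, combined with a triangle-inequality bound inside the sup and an infimum over $V$, followed by symmetrizing in $A$ and $B$. This is precisely the Weidmann/Gohberg--Kre\u{\i}n proof being referenced, so you have in effect reconstructed the cited result; no gap.
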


\begin{lma}
    \label{lma:trace_of_product_inequality}
    Let $A,B \in \bbR^{r \times r}$ be symmetric positive semi-definite matrices, and let~$\lambda_j[A]$, $\lambda_j[B]$ denote their respective $j$-th largest eigenvalues. Then, 
    \[
        \trace(AB) \geq \lambda_1[A] \lambda_r[B].
    \]
\end{lma}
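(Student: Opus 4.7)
The plan is to reduce everything to a single spectral decomposition of $A$, together with the elementary Loewner-order fact $B \succeq \lambda_r[B]\, \b I_r$ (valid since $B$ is symmetric positive semi-definite), and then exploit non-negativity of the eigenvalues of $A$.

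Concretely, I would write $A$ in spectral form as $A = \sum_{i=1}^r \lambda_i[A]\, u_i u_i^\tp$, where $\{u_i\}_{i=1}^r$ is an orthonormal eigenbasis. Using linearity and the cyclic property of the trace,
\[
\trace(AB) \;=\; \sum_{i=1}^r \lambda_i[A]\, \trace(u_i u_i^\tp B) \;=\; \sum_{i=1}^r \lambda_i[A]\, u_i^\tp B u_i.
\]
Since $B - \lambda_r[B]\, \b I_r$ is symmetric positive semi-definite, for every unit vector $u_i$ we have $u_i^\tp B u_i \geq \lambda_r[B]$. Combined with $\lambda_i[A] \geq 0$, this yields
\[
\trace(AB) \;\geq\; \lambda_r[B] \sum_{i=1}^r \lambda_i[A] \;=\; \lambda_r[B]\, \trace(A).
\]
Finally, since all eigenvalues of $A$ are non-negative, $\trace(A) = \sum_{i=1}^r \lambda_i[A] \geq \lambda_1[A]$, and the desired inequality follows.

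There is no real obstacle here: the only subtlety is to remember to bound the quadratic forms $u_i^\tp B u_i$ from below by $\lambda_r[B]$ (not by $0$), which crucially uses that $B$, being positive semi-definite, satisfies $B \succeq \lambda_r[B]\, \b I_r$, and then to use non-negativity of the $\lambda_i[A]$ to replace $\trace(A)$ by $\lambda_1[A]$ in the final step.
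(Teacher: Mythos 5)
Your proof is correct, and it takes a slightly different route than the paper's. The paper symmetrizes the product as $\trace(AB)=\trace(A^{1/2}BA^{1/2})$, then bounds $\trace(A^{1/2}BA^{1/2})$ from below by its largest eigenvalue $\sup_{\rpnorm{v}=1}\sc{A^{1/2}BA^{1/2}v,v}$, and finally evaluates that supremum at (essentially) the top eigenvector of $A$, using $B\succeq\lambda_r[B]\,\b I_r$ on the inner expression. You instead stay with the spectral decomposition of $A$ alone, bound each Rayleigh quotient $u_i^\tp B u_i\geq\lambda_r[B]$ directly, and sum over all $r$ eigenvectors; this gives the cleaner intermediate inequality $\trace(AB)\geq\lambda_r[B]\,\trace(A)$, which you then relax to $\lambda_r[B]\,\lambda_1[A]$ via $\trace(A)\geq\lambda_1[A]$. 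Your version avoids introducing $A^{1/2}$ and in fact proves something marginally stronger ($\trace(AB)\geq\lambda_r[B]\,\trace(A)$), which collapses to the paper's bound exactly when $A$ has rank one; the paper's version is closer to how the lemma is actually invoked (picking a single extremal direction). Both are short and equally rigorous.
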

\begin{proof}
    Since $\trace(AB) = \trace(A^{1/2}B A^{1/2})$ and $ A^{1/2}B A^{1/2}$ is
    symmetric positive semi-definite, 
    \begin{align*}
        \trace(BA) \geq \sup_{v: v^\tp v = 1} \sc{A^{1/2} B A^{1/2}v, v} &= \sup_v \sc{B
        A^{1/2}v, A^{1/2}v} 
        \\ &\geq \sup_v \lambda_r[B] \sc{A^{1/2}v, A^{1/2}v} 
        = \lambda_r[B] \lambda_1[A].
    \end{align*}
$\,$\vspace{-13.5mm}

\end{proof}

\begin{lma}
    \label{lma:largest_eigenvalue_of_difference_of_two_spd_matrices_with_unequal_rank}
    Let $A,B \in \bbR^{r \times r}$ be  symmetric positive semi-definite matrices, and let~$\lambda_j[A]$ denote the $j$-th largest eigenvalue of $A$. If $\mathrm{rank}(B) <
    \mathrm{rank}(A)$, then
    \[
        \lambda_1[A-B] \geq \lambda_r[A].
    \]
\end{lma}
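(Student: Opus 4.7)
The plan is to use a dimension argument together with the variational (Rayleigh-Ritz / Courant--Fischer) characterization of extreme eigenvalues. The key observation is that the rank hypothesis $\mathrm{rank}(B) < \mathrm{rank}(A) \leq r$ forces the null space $\ker(B)$ to be nontrivial: specifically, $\dim \ker(B) = r - \mathrm{rank}(B) \geq 1$. So I can pick a unit vector $v \in \ker(B)$, for which $Bv = 0$ (and in particular $v^\tp B v = 0$).

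With such a $v$ in hand, the quadratic form $v^\tp(A-B)v$ reduces cleanly to $v^\tp A v$. Then I would invoke two standard variational identities:
\begin{enumerate}
    \item[(i)] $\lambda_1[A-B] = \max_{\|w\|=1} w^\tp (A-B) w \geq v^\tp(A-B)v$, since $v$ is a unit vector;
    \item[(ii)] $\lambda_r[A] = \min_{\|w\|=1} w^\tp A w \leq v^\tp A v$, again since $v$ is a unit vector.
\end{enumerate}
Chaining these gives $\lambda_1[A-B] \geq v^\tp A v \geq \lambda_r[A]$, which is the desired inequality.

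There is no real obstacle here; the positive semi-definiteness of $B$ is needed only insofar as we use that $Bv = 0$ implies $v^\tp B v = 0$ (which is immediate regardless of sign), and the positive semi-definiteness of $A$ is not even required for the argument. The only point to double-check is that the variational characterization $\lambda_r[A] = \min_{\|w\|=1} w^\tp A w$ applies to a symmetric $r \times r$ matrix, which it does by the spectral theorem. The whole proof should fit in three or four lines.
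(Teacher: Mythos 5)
Your argument is correct and is precisely the paper's own proof: pick a unit vector $v_0 \in \ker(B)$ (which exists since $\mathrm{rank}(B) < \mathrm{rank}(A) \leq r$), then chain $\lambda_1[A-B] \geq v_0^\tp(A-B)v_0 = v_0^\tp A v_0 \geq \lambda_r[A]$ via the Rayleigh quotient characterizations. Your side remarks that symmetry of $A$ suffices (PSD is not used) and that PSD of $B$ is not used beyond $Bv_0 = 0$ are both accurate observations.
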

\begin{proof}
   Since the rank of $B$ is less than the rank of $A$, there exists a vector $v_0$ of
   norm $1$ such that $Bv_0 = 0$. Therefore,
   \begin{align*}
       \lambda_1[A-B] & = \sup_{v : v^\adjoint v = 1} \sc{(A-B)v, v} 
        \geq \sc{(A-B)v_0,
       v_0} 
       \\ &= \sc{Av_0, v_0} 
       \geq \lambda_r[A] \sc{v_0, v_0} 
       = \lambda_r[A],
   \end{align*}
   as was to be shown.
\end{proof}

\begin{lma}
  \label{lma:eigenvalue_inequalities}
  Let $D, E \in \compact{H}$ be symmetric positive semi-definite operators on a
  separable Hilbert space $H$, and let
  $\lambda_s[C]$ denote the $s$-th largest eigenvalue of an operator~$C \in \compact{H}$.
  \begin{enumerate}
    \item[(i)] Letting $F = D + E$, we have,  for all $i \geq 1$, 
      \end{enumerate}
      \begin{equation*}
          \lambda_{i}[F] \leq \min(\lambda_1[D] + \lambda_i[E], \lambda_i[D] +
          \lambda_1[E]) \;\text{and}\; 
          \max(\lambda_i[D], \lambda_i[E]) \leq \lambda_i[F].
      \end{equation*}
   \begin{enumerate} 
    \item[(ii)] Let $G$ be a compression of $D$, meaning that $G = PDP$ for some orthogonal
      projection operator $P \in \bounded{H}$ ($P^2 = P = P^\adjoint$). Then, 
      \[
          \lambda_i[G] \leq \lambda_i[D] \quad \text{for all $i \geq 1$}.
      \]
  \end{enumerate}
  \begin{proof}
    This is a straightforward consequence of the Courant--Fischer--Weyl minimax characterization of eigenvalues of compact
    operators, see, e.g.\ \citet{hsing:eubank:2015}.
  \end{proof}
\end{lma}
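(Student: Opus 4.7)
The plan is to derive both statements from the Courant--Fischer--Weyl minimax (and maximin) characterization of the eigenvalues of a compact self-adjoint operator $C$ on a separable Hilbert space $H$, namely
\[
    \lambda_i[C] = \max_{\substack{V \subset H \\ \dim V = i}} \min_{\substack{v \in V \\ \hnorm{v}=1}} \sc{Cv, v} = \min_{\substack{W \subset H \\ \dim W = i-1}} \max_{\substack{v \perp W \\ \hnorm{v}=1}} \sc{Cv, v}.
\]
Each inequality in the statement reduces to a pointwise comparison of the relevant quadratic forms on the same family of admissible subspaces.

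For part (i), the lower bound $\max(\lambda_i[D], \lambda_i[E]) \leq \lambda_i[F]$ follows because $D, E \succeq 0$ give $\sc{Fv,v} = \sc{Dv,v} + \sc{Ev,v} \geq \sc{Dv,v}$ for every $v$; substituting into the maximin form yields $\lambda_i[F] \geq \lambda_i[D]$, and by symmetry $\lambda_i[F] \geq \lambda_i[E]$. For the upper bound $\lambda_i[F] \leq \lambda_1[D] + \lambda_i[E]$, I would use the minimax form: for any $(i-1)$-dimensional $W$ and any unit $v \perp W$, the estimate $\sc{Fv,v} \leq \lambda_1[D] + \sc{Ev,v}$ holds; taking the maximum over $v$ and then the minimum over $W$ gives the claim. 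Interchanging the roles of $D$ and $E$ yields $\lambda_i[F] \leq \lambda_i[D] + \lambda_1[E]$.

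For part (ii), I would exploit that $PDPv = PDv$ for $v \in \image(P)$, so the restriction of $PDP$ to $\image(P)$ has quadratic form $v \mapsto \sc{Dv,v}$, and $PDP$ vanishes on $\ker(P) = \image(P)^\perp$. Thus, for $i \leq \dim \image(P)$,
\[
    \lambda_i[PDP] = \max_{\substack{V \subset \image(P) \\ \dim V = i}} \min_{\substack{v \in V \\ \hnorm{v}=1}} \sc{Dv,v} \leq \max_{\substack{V \subset H \\ \dim V = i}} \min_{\substack{v \in V \\ \hnorm{v}=1}} \sc{Dv,v} = \lambda_i[D],
\]
where the inequality is immediate since the left-hand maximization is over a smaller collection of subspaces. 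For $i > \dim \image(P)$ one has $\lambda_i[PDP] = 0 \leq \lambda_i[D]$ since $D \succeq 0$.

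No real obstacle is anticipated: once the minimax characterization is in place, every step is a routine pointwise comparison of quadratic forms. The only mildly delicate point is the case split in (ii) for subspace dimensions exceeding the rank of $P$, which is handled by the trivial observation that $PDP$ is zero there.
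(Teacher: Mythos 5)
Your proposal is correct and takes exactly the approach the paper indicates: the paper's proof is a one-line appeal to the Courant--Fischer--Weyl minimax characterization, and your write-up supplies the pointwise quadratic-form comparisons that make each inequality explicit, including the correct case split in (ii) when $i$ exceeds $\dim\image(P)$.
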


\begin{Lemma}
    \label{lma:sup_of_time_series}
    Let $(Y_t)_{t =1,2, \ldots,}
    $ be a sequence of non-negative random variables satisfying $\sup_{t \geq 1} \ee {Y_t}^\alpha < \infty$ for some $\alpha \geq 1$. Then, 
    \[
        \max_{t=1,\ldots, T} {Y_t} = O_{\rm P}(T^{1/\alpha}) \quad \text{as } T \to \infty.
    \] 
    If $\sup_{t} \ee \exp( {Y_t} ) < \infty$, then 
    \[
        \max_{t=1,\ldots, T} {Y_t} = O_{\rm P}(\log(T)) \quad \text{as } T \to \infty.
    \] 
\end{Lemma}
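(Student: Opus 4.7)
The plan is to establish both statements by the classical combination of a union bound with Markov's inequality, applied to a carefully chosen moment function. Since no dependence structure on $(Y_t)$ is assumed, only finite moment bounds, no sharper argument is available (or needed).

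For the first statement, I would fix $c > 0$ and write
\[
\pp\left( \max_{t=1,\ldots,T} Y_t > c\, T^{1/\alpha} \right) \leq \sum_{t=1}^T \pp\left( Y_t > c\, T^{1/\alpha} \right) \leq \sum_{t=1}^T \frac{\ee Y_t^\alpha}{c^\alpha T} \leq \frac{M}{c^\alpha},
\]
where $M \defeq \sup_{t \geq 1} \ee Y_t^\alpha < \infty$ and the second inequality uses Markov's inequality on $Y_t^\alpha$. Given any $\vep > 0$, choosing $c = (M/\vep)^{1/\alpha}$ makes the bound at most $\vep$, uniformly in $T$. This is exactly the definition of $\max_{t \leq T} Y_t = O_{\rm P}(T^{1/\alpha})$.

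For the second statement, the same strategy with $\exp(Y_t)$ in place of $Y_t^\alpha$ gives
\[
\pp\left( \max_{t=1,\ldots,T} Y_t > c \log T \right) \leq \sum_{t=1}^T \pp\left( \exp(Y_t) > T^c \right) \leq \sum_{t=1}^T \frac{\ee \exp(Y_t)}{T^c} \leq \frac{M'}{T^{c-1}},
\]
with $M' \defeq \sup_{t \geq 1} \ee \exp(Y_t) < \infty$. For any $c > 1$ this tends to $0$ as $T \to \infty$; taking $c$ large enough that the right-hand side is eventually below any prescribed $\vep > 0$ yields $\max_{t \leq T} Y_t = O_{\rm P}(\log T)$.

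There is no real obstacle here: both inequalities are one-line applications of the union bound plus Markov's inequality, and the result holds without any assumption on the joint distribution of the $Y_t$'s. The only subtle point worth noting is that the bound in the exponential case is $M'/T^{c-1}$, which is uniform in $T$ only after specializing $c > 1$; this is why the resulting rate involves $\log T$ rather than a constant.
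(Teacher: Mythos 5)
Your proof is correct and follows essentially the same route as the paper's: a union bound over $t=1,\dots,T$ followed by Markov's inequality applied to $Y_t^\alpha$ (resp.\ $\exp(Y_t)$). The only cosmetic difference is in the exponential case, where the paper makes the final bound uniform in $T$ by observing that $T^{1-M}\leq e^{1-M}$ for $T\geq 3$ and $M>1$, whereas you leave the $T$-dependent bound $M'/T^{c-1}$ and appeal to its convergence to zero; both readings satisfy the $O_{\rm P}$ definition.
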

\begin{proof}
    Assume $\sup_t \ee Y_t^\alpha < \infty$.
    By the union bound, for any $M > 0$,
    \begin{align*}
        \pp( \max_{t=1,\ldots, T} Y_t \geq M T^{1/\alpha} ) 
                    & \leq \sum_{t=1}^T \pp( Y_t \geq M T^{1/\alpha})
                 \\ & \leq  T \sup_{t \geq 1} \pp( Y_t^\alpha \geq M^\alpha T )
                 \\ & \leq  T \sup_{t \geq 1} \ee Y_t^\alpha /(M^\alpha T) 
                  =  \sup_{t \geq 1} \ee Y_t^\alpha /M^\alpha, 
    \end{align*}
    where the third inequality follows from the Markov inequality.  The
    first statement of the Lemma follows, since the 
    last term does not depend on $T$ and can be made arbitrarily small for $M$ large enough. The second statement of the Lemma follows from similar arguments: if $M > 1$ and $T \geq 3$,
    \begin{align*}
        \pp( \max_{t=1,\ldots, T} Y_t \geq M \log(T) )
                    & \leq \sum_{t=1}^T \pp( Y_t \geq M \log(T) )
                 \\ & \leq  T \sup_{t \geq 1} \ee \exp(Y_t) \, \exp(- M \log(T) )
                 \\ & \leq  \sup_{t \geq 1} \ee \exp(Y_t) \, \exp( (1 - M) \log(T) )
                 \\ & 
                 \leq  \sup_{t \geq 1} \ee \exp(Y_t) \, \exp( 1 - M ).
    \end{align*}
    The last term can be made arbitrarily small for $M$ large enough.
    This concludes the proof.
\end{proof}

\section{Results without the mean zero assumption}
\label{sec:results_no_mean_zero}

\subsection{Notation}

Here we assume that $x_{it} - \mu_i$ follows a  functional factor model with $r$ factors and  mean zero
\[
    x_{it} = \mu_i + b_{i1}u_{1t} + \cdots + b_{ir} u_{rt} + \xi_{it},
\]
where
 $\mu_i = \ee x_{it} \in H_i$    by stationarity does not depend on $t$. 
 
 Put~$\hat \mu_{i} \coloneqq \sum_{t=1}^T x_{it}/T$, 
 $\bmu_N = (\mu_1,\ldots, \mu_N)^\tp \in \bH_N$, and~$\hat \bmu_N = (\hat \mu_1,\ldots, \hat \mu_N)^\tp \in \bH_N$. 
Let $\bmu_{NT}$ and 
  $\hat \bmu_{NT}$
be the $N \times T$ matrix with all columns equal to $\bmu_N \in \bH_N$ and  the~$N \times T$ matrix with all columns equal to $\hat \bmu_N \in \bH_N$, respectively. 
 Notice that 
\begin{equation}
    \hat \bmu_N - \bmu_N = \bB_N \left( \sum_{t=1}^T \bu_t/T \right) + \sum_{t=1}^T \bxi_t/T 
\end{equation}
and
\begin{equation}
    \label{eq:operator_hatBmu-Bmu}
    \operator{\hat \bmu_{NT} - \bmu_{NT}} = \bB_N (\bU_T \ones_T/T) \ones_T^\tp  + \operator{\bxi_{NT}} \ones_T \ones_T^\tp /T
\end{equation}
where $\ones_T\coloneqq (1, 1, \ldots, 1)^\tp \in \bbR^T$. 
 Using this notation, we can rewrite our $r$-factor model (without the mean zero assumption) as
\begin{equation}
    \label{eq:model_not_mean_zero}
    \bX_{NT} = \bmu_{NT} + \bchi_{NT} + \bxi_{NT} = \bmu_{NT} + \matrixOf{\bB_N \bU_T} + \bxi_{NT} 
\end{equation}

\subsection{Representation results}

Theorems~\ref{thm:equiv_factormodel_eigenvalues} and~\ref{thm:factormodel_uniqueness} trivially hold   with  $x_{it} - \mu_i$ instead of $x_{it}$ and   $\bx_t - \bmu_N$ instead of $\bx_t$.

\subsection{Estimators}%

The estimators for the factors, loadings, and common component without the mean zero assumption are defined as in Section~\ref{sub:estimation_of_factors_loadings_and_common_component}, with the empirically centered $\bx_t - \hat \bmu_N$ replacing  $\bx_t$; in order to simplify notation, however, we keep the same notation for these  estimators (of factors, loadings, etc.) based on  the centered observations~$\bx_t - \hat \bmu_N$.

\subsection{A fundamental technical result}

A central technical result, on which most of the proofs depend of for the case $\bmu_N = 0$, is Theorem~\ref{thm:consistency_hatbu}. We will now prove it for the case~$\bmu_N \neq \bf 0$.
First, let us introduce a new assumption. 
\begin{customass}{A$_0$} \label{assumption:u_weak_dependence} \mbox{}
    $\rpnorm{T^{-1}(\bu_1 + \cdots + \bu_T)} = O_{\rm P}(T^{-1/2})$ as $T \to \infty$.
\end{customass}
This is an extremely mild assumption: it implicitly assumes some form of weak dependence on the series $\{ \bu_t \}$. A sufficient condition for Assumption~\ref{assumption:u_weak_dependence} to hold is that the traces of the autocovariances of $\{\bu_t \}$ are absolutely summable, that is,
\[
    \sum_{h \in \bbZ} \rpnorm{ \trace( \eee{\bu_0 \tensor \bu_t} )} < \infty.
\]
Notice that Assumption~\ref{assumption:u_weak_dependence} implies that $\Fnorm{\bU_T \ones_T/T} = O_{\rm P}(T^{-1/2})$.

The following equation links $\hatbU{k}_T$ and $\tildebU{k}_T$:
\begin{equation}
    \label{eq:relation-u_hat-u_tilde_non_mean_zero}
    \hatbU{k}_T = \tildebU{k}_T \operator{\bX_{NT} - \hat \bmu_{NT}}^{\adjoint} \operator{\bX_{NT} - \hat \bmu_{NT}}/(NT).
\end{equation}
Let us also define the   $k \times r$  matrix
\begin{equation}
    \label{eq:Qk_non_mean_zero}
    \bQ_k \defeq \tildebU{k}_T \bU_T^\adjoint \bB_N^\adjoint \bB_N/(NT) \in \bbR^{k \times r}.
\end{equation}
Recall that~$C_{N,T} \defeq \min\{ \sqrt{N}, \sqrt{T} \}$ and let $\hatbu{k}_t \in \bbR^k$ be the $t$-th column of $\hatbU{k}_T$ 

\begin{Theorem}
    \label{thm:consistency_hatbu_non_mean_zero}

    Under Assumptions~\ref{assumption:u_weak_dependence}, \ref{assumption:a}, \ref{assumption:c}, \ref{assumption:Bn_xi}, for any fixed $k
    \in \{1, 2, \ldots \}$, 
    \[
        T^{-1} \Fnorm{\hatbU{k}_T - \bQ_k \bU_T}^2 = 
        \frac{1}{T}
        \sum_{t=1}^{T} \hnorm{ \hatbu{k}_{t} -\b Q_{k} \bu_{t} }^{2} =
        O_{\rm P}(C_{N,T}^{-2})
    \]
    as $N,T \to \infty$,
    where $C_{N,T} \defeq \min \{\sqrt{N},\sqrt{T} \}$. 
\end{Theorem}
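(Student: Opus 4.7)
The plan is to reduce the non-mean-zero case to the mean-zero proof of Theorem~\ref{thm:consistency_hatbu} by exploiting the structure of empirical centering. Let $\bM \defeq \bI_T - \ones_T \ones_T^\tp/T \in \bbR^{T \times T}$ denote the centering projection. Using \eqref{eq:operator_hatBmu-Bmu} and \eqref{eq:model_not_mean_zero}, a direct calculation gives the clean identity
\[
\operator{\bX_{NT} - \hat \bmu_{NT}} = \bB_N \bU_T \bM + \operator{\bxi_{NT}} \bM.
\]
The first key observation is that since $\operator{\bX_{NT} - \hat \bmu_{NT}} \ones_T = 0$, the vector $\ones_T$ lies in the null space of $\operator{\bX_{NT} - \hat \bmu_{NT}}^\adjoint \operator{\bX_{NT} - \hat \bmu_{NT}}$; hence every eigenvector associated with a non-zero eigenvalue is orthogonal to $\ones_T$, i.e.\ $\tildebU{k}_T \ones_T = \bzero$, so that $\tildebU{k}_T \bM = \tildebU{k}_T$.

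Substituting the above decomposition into \eqref{eq:relation-u_hat-u_tilde_non_mean_zero} and applying $\tildebU{k}_T \bM = \tildebU{k}_T$ on the left, one obtains
\begin{align*}
\hatbU{k}_T - \bQ_k \bU_T &= \frac{1}{NT}\tildebU{k}_T \bU_T^\adjoint \bB_N^\adjoint \bB_N \bU_T \bM + \frac{1}{NT}\tildebU{k}_T \bU_T^\adjoint \bB_N^\adjoint \operator{\bxi_{NT}} \bM \\
&\quad + \frac{1}{NT}\tildebU{k}_T \operator{\bxi_{NT}}^\adjoint \bB_N \bU_T \bM + \frac{1}{NT}\tildebU{k}_T \operator{\bxi_{NT}}^\adjoint \operator{\bxi_{NT}} \bM - \bQ_k \bU_T.
\end{align*}
Writing $\bU_T \bM = \bU_T - (\bU_T \ones_T/T) \ones_T^\tp$ in the first term, it splits as $\bQ_k \bU_T - \bQ_k (\bU_T \ones_T/T) \ones_T^\tp$, so the $\bQ_k \bU_T$ cancels and we are left with
\[
\hatbU{k}_T - \bQ_k \bU_T = \text{(three idiosyncratic terms, each multiplied by $\bM$)} - \bQ_k (\bU_T \ones_T/T) \ones_T^\tp.
\]

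The three idiosyncratic terms are exactly the ones handled in the proof of Theorem~\ref{thm:consistency_hatbu}, now post-multiplied by the orthogonal projection $\bM$ (which satisfies $\opnorm{\bM} \leq 1$ and thus can only improve the Frobenius bounds). Lemmas~\ref{lma:Frobenius_Norm_of_xi} and~\ref{lma:frobenius_Norm_of_Bn_xi}, together with $\opnorm{\tildebU{k}_T} = O_{\rm P}(\sqrt{T})$ and $\opnorm{\bU_T} = O_{\rm P}(\sqrt{T})$, yield a Frobenius norm of order $O_{\rm P}(\sqrt{T}/C_{N,T})$ exactly as before. The new summand $\bQ_k (\bU_T \ones_T/T) \ones_T^\tp$ is where Assumption~\ref{assumption:u_weak_dependence} enters: its Frobenius norm equals $\rpnorm{\bQ_k (\bU_T \ones_T/T)} \cdot \sqrt{T}$, and since $\opnorm{\bQ_k} = O_{\rm P}(1)$ (as in Lemma~\ref{lma:minimal_singular_value_Qk}) and $\rpnorm{\bU_T \ones_T/T} = O_{\rm P}(T^{-1/2})$, this quantity is $O_{\rm P}(1)$, hence contributes only $O_{\rm P}(T^{-1})$ to $T^{-1}\Fnorm{\cdot}^2$, which is dominated by $C_{N,T}^{-2}$.

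Combining the bounds in the manner of \eqref{eq:theorem_consistency_proof_bound} yields the claimed rate. The only mild obstacle is verifying the cancellation identity and justifying the step $\tildebU{k}_T \bM = \tildebU{k}_T$ when $\hat \lambda_k$ may be numerically small; however, if $\hat \lambda_k = 0$ the conclusion is trivial (the $k$-th row of $\hatbU{k}_T$ is zero, so one can restrict to eigenvectors with positive eigenvalues without changing the statement), so no real difficulty arises.
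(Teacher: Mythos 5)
Your proof is correct and takes a genuinely different route from the paper's. The paper's proof keeps the decomposition $\operator{\bX_{NT} - \hat\bmu_{NT}} = \operator{\bX_{NT} - \bmu_{NT}} - \operator{\hat\bmu_{NT} - \bmu_{NT}}$, expands the quadratic form into four terms $S_1,\dots,S_4$, and then further expands $S_2$ and $S_4$ via \eqref{eq:operator_hatBmu-Bmu} into four sub-terms each, bounding twelve correction terms one by one. Your argument instead observes that empirical centering is right-multiplication by the centering projection $\bM = \bI_T - \ones_T\ones_T^\tp/T$, and that the annihilation $\operator{\bmu_{NT}}\bM = 0$ together with the spectral fact $\tildebU{k}_T\bM = \tildebU{k}_T$ (eigenvectors of $\bM(\cdot)\bM$ with positive eigenvalues are automatically orthogonal to $\ones_T$) collapse almost all of the bookkeeping. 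The three idiosyncratic terms from the mean-zero case are simply post-multiplied by the contraction $\bM$, so their bounds transfer verbatim, and the only new piece is the rank-one correction $\bQ_k(\bU_T\ones_T/T)\ones_T^\tp$, which is where Assumption~\ref{assumption:u_weak_dependence} is used and which gives $O_{\rm P}(T^{-1})$ after rescaling---dominated by $C_{N,T}^{-2}$. The algebra checks out: $\Fnorm{(\bU_T\ones_T/T)\ones_T^\tp} = \rpnorm{\bU_T\ones_T/T}\cdot\sqrt{T} = O_{\rm P}(1)$, and $\opnorm{\bQ_k} = O_{\rm P}(1)$ as in Lemma~\ref{lma:minimal_singular_value_Qk}. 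This is the cleaner argument; it makes structural the role of both Assumption~\ref{assumption:u_weak_dependence} (controls the single correction term) and the empirical centering (a harmless projection). Your handling of the degenerate case $\hat\lambda_k = 0$ is adequate; one can always choose the eigenvectors beyond the rank to lie in the range of $\bM$, so the identity $\tildebU{k}_T\bM = \tildebU{k}_T$ involves no loss of generality.
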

\begin{proof}
    By \eqref{eq:relation-u_hat-u_tilde_non_mean_zero}, we have
    \begin{align*}
        \hatbU{k}_T &= \tildebU{k}_T \operator{\bX_{NT} - \hat \bmu_{NT} }^\adjoint \operator{\bX_{NT} - \hat \bmu_{NT} }/(NT)
                 \\ &= \tildebU{k}_T \operator{(\bX_{NT} - \bmu_{NT}) - (\hat \bmu_{NT} - \bmu_{NT}) }^\adjoint \operator{(\bX_{NT} - \bmu_{NT}) - (\hat \bmu_{NT} - \bmu_{NT}) }/(NT)
                 \\ &= \frac{1}{NT} \Big\{ \tildebU{k}_T \operator{\bX_{NT} - \bmu_{NT} }^\adjoint \operator{\bX_{NT} - \bmu_{NT} }
                 \\ &\phantom{=} - \tildebU{k}_T \operator{\bX_{NT} - \bmu_{NT}}^\adjoint \operator{\hat \bmu_{NT} - \bmu_{NT}}
                 \\ &
                 \phantom{=} - \tildebU{k}_T \operator{\hat \bmu_{NT} - \bmu_{NT}}^\adjoint \operator{\bX_{NT} - \bmu_{NT}} 
                 \\ &\phantom{=} + \tildebU{k}_T \operator{\hat \bmu_{NT} - \bmu_{NT}}^\adjoint \operator{\hat \bmu_{NT} - \bmu_{NT}} \Big\} 
                 \\ &\eqdef S_1 + S_2 + S_3 + S_4.
    \end{align*}
    By the proof of Theorem~\ref{thm:consistency_hatbu}, we know that
    \begin{align*}
        \hatbU{k}_T - \bQ_k \tildebU{k}_T = \tilde S_1 + S_2 + S_3 + S_4
    \end{align*}
    where 
    \begin{align*}
        \tilde S_1 =& = \frac{1}{NT}\tildebU{k}_T \operator{ \bxi_{NT} }^{\adjoint}\operator{ \bxi_{NT} } + \frac{1}{N T} \tildebU{k}_T \bU_T^\adjoint \bB_N^{\adjoint} \operator{ \bxi_{NT} } 
                 \\ & \qquad\qquad\qquad\qquad\qquad\qquad\qquad\! +\frac{1}{NT}\tildebU{k}_T \operator{ \bxi_{NT} }^{\adjoint} \bB_N \bU_T
    \end{align*}
    and 
    \[
        \Fnorm{\tilde S_1} = O_{\rm P}(\sqrt{T} C_{N,T}^{-1})\quad\text{ as $N,T \to \infty$.}
    \]

    Let us now bound $S_2, S_3, S_4$. Using \eqref{eq:operator_hatBmu-Bmu}, we get
    \begin{align*}
        - S_2 &= \tildebU{k}_T \operator{\bX_{NT} - \bmu_{NT}}^\adjoint \operator{\hat \bmu_{NT} - \bmu_{NT}}/(NT)
           \\ &= \tildebU{k}_T \left( \bB_N \bU_T + \operator{\bxi_{NT}} \right)^\adjoint \left(\bB_N (\bU_T \ones_T/T) \ones_T^\tp + \operator{\bxi_{NT}}\ones_T \ones_T^\tp/T\right)/(NT)
           \\ &= \tildebU{k}_T \bU_T^\adjoint \bB_N^\adjoint \bB_N (\bU_T \ones_T/T) \ones_T^\tp /(NT)
           \\ &\phantom{= } + \tildebU{k}_T \bU_T^\adjoint \bB_N^\adjoint \operator{\bxi_{NT}}\ones_T \ones_T^\tp T^{-1}/(NT)
           \\ &\phantom{= } + \tildebU{k}_T \operator{\bxi_{NT}}^\adjoint \bB_N (\bU_T \ones_T/T) \ones_T^\tp /(NT)
           \\ &\phantom{= } + \tildebU{k}_T \operator{\bxi_{NT}}^\adjoint \operator{\bxi_{NT}}\ones_T \ones_T^\tp T^{-1}/(NT)
           \\ & \eqdef a_1 + a_2 + a_3 + a_4.
    \end{align*}
    We have $\Fnorm{a_1} = O_{\rm P}(T) O_{\rm P}(N) O_{\rm P}(T^{-1/2}) \sqrt{T} / (NT) = O_{\rm P}(1)$, where we have used, in particular, Assumption~\ref{assumption:u_weak_dependence}. Using Lemma~\ref{lma:frobenius_Norm_of_Bn_xi}, we get~$\Fnorm{a_2} = O_{\rm P}(\sqrt{T/N})$ and~$\Fnorm{a_3} = O_{\rm P}(\sqrt{T/N})$. Finally, using Lemma~\ref{lma:Frobenius_Norm_of_xi}, we get $\Fnorm{a_4} = O_{\rm P}(\sqrt{T}/C_{N,T})$. Piecing these together, we get $\Fnorm{S_2} = O_{\rm P}(\sqrt{T}/C_{N,T})$. The bound $\Fnorm{S_3} = O_{\rm P}(\sqrt{T}/C_{N,T})$ is obtained similarly.

    Let us now turn to $S_4$. From \eqref{eq:operator_hatBmu-Bmu}, we obtain
    \begin{align*}
        S_4 &= \tildebU{k}_T \operator{\hat \bmu_{NT} - \bmu_{NT}}^\adjoint \operator{\hat \bmu_{NT} - \bmu_{NT}}/(NT)
         \\ &= \tildebU{k}_T \left( \bB_N (\bU_T \ones_T/T) \ones_T^\tp  + \operator{\bxi_{NT}} \ones_T \ones_T^\tp /T \right)^\adjoint \left( \bB_N (\bU_T \ones_T/T) \ones_T^\tp  + \operator{\bxi_{NT}} \ones_T \ones_T^\tp /T \right)/(NT)
         \\ &= \tildebU{k}_T \ones_T (\bU_T \ones_T/T)^\adjoint \bB_N^\adjoint \bB_N (\bU_T \ones_T/T) \ones_T^\adjoint/(NT)
         \\ &\phantom{= } + \tildebU{k}_T \ones_T (\bU_T \ones_T/T)^\adjoint \bB_N^\adjoint  \operator{\bxi_{NT}} \ones_T \ones_T^\tp /(NT^2)
         \\ &\phantom{= } + \tildebU{k}_T \ones_T \ones_T^\adjoint \operator{\bxi_{NT}}^\adjoint  \bB_N (\bU_T \ones_T/T) \ones_T^\tp /(NT^2)
         \\ &\phantom{= } + \tildebU{k}_T \ones_T \ones_T^\adjoint \operator{\bxi_{NT}}^\adjoint \operator{\bxi_{NT}} \ones_T \ones_T^\tp /(NT^3)
          \eqdef b_1 + b_2 + b_3 + b_4.
    \end{align*}
    Assumption~\ref{assumption:u_weak_dependence}, Lemmas~\ref{lma:Frobenius_Norm_of_xi} and \ref{lma:frobenius_Norm_of_Bn_xi} yield 
    $$\Fnorm{b_1} = O_{\rm P}(1/\sqrt{T}),\quad\Fnorm{b_2} = O_{\rm P}(1/\sqrt{N}),\quad\Fnorm{b_3} = O_{\rm P}(1/\sqrt{N}),$$ and $$\Fnorm{b_4} = O_{\rm P}(\sqrt{T}/ C_{N,T}).$$ 
    Hence,
    $\Fnorm{S_4} = O_{\rm P}(\sqrt{T}/C_{N,T})$. 
    Piecing all the results together completes the proof.
\end{proof}
Notice that the proof implies in particular that
\begin{equation}
    \Fnorm{\operator{\hat \bmu_{NT} - \bmu_{NT}}^\adjoint \operator{\hat \bmu_{NT} - \bmu_{NT}} } = O_{\rm P}(NT/C_{N,T})
\end{equation}
and
\begin{equation}
    \opnorm{\operator{\hat \bmu_{NT} - \bmu_{NT}}} = O_{\rm P}(\sqrt{NT/C_{N,T}}).
\end{equation}

\subsection{Consistent estimation of the number of factors}

All the results hold with the same rates once Assumption~\ref{assumption:u_weak_dependence} is assumed in addition to their current assumptions. The proofs follow by mimicking the steps of the existing proofs, and bounding quantities corresponding to terms involving the bias term $\hat \bmu_{NT} - \bmu_{NT}$ as in the proof of Theorem~\ref{thm:consistency_hatbu_non_mean_zero}. Details are left to the reader.

\subsection{Average error bounds}

All the results concerning the average error bounds hold with the same rates once Assumption~\ref{assumption:u_weak_dependence} is assumed in addition to the current assumptions.
The proof for the average error bound on the loadings needs to be modified by using the identity 
\begin{equation}
    \label{eq:tildeB-eigenvector-of-X}
    \tildeB{k}_N = \operator{\bX_{NT} - \hat \bmu_{NT} } \operator{\bX_{NT} - \hat \bmu_{NT} }^\adjoint \barB{k}_N /(NT)
\end{equation}
and fleshing out the equation as in the proof of Theorem~\ref{thm:consistency_hatbu_non_mean_zero}.
Details are left to the reader.

\subsection{Uniform error bounds}

All the results concerning the uniform error bounds hold with the same rates once Assumption~\ref{assumption:u_weak_dependence} is assumed in addition to the current assumptions.
The result of Theorem~\ref{thm:unif_bound_factors} is slightly different, though, with $\tilde \bR$ replaced by 
\[
    \check{\bR} = \hat \bLambda^{-1} \tildebU{r}_T \left[ \bU_T^\adjoint - \ones_T (\bU_T \ones_T/T)^\adjoint \right] \bB_N^\adjoint \bB_N/(NT).
\]
The proof follows along the same lines as the proofs of Theorems~\ref{thm:unif_bound_loadings} and~\ref{thm:consistency_hatbu_non_mean_zero}, noting in particular that many terms do not depend on $t$. 

The proof for the uniform error bound on the loadings needs to be modified by using the identity \eqref{eq:tildeB-eigenvector-of-X},
and a technical result, similar to Lemma~\ref{lma:uniform_bound_bxi_ut}, is needed to bound 
\[
    \max_{i=1,\ldots, N} \Fnorm{\singleProj_i \operator{ \bxi_{NT} } \ones_T/T }.
\]
The proof for the uniform error bound on the common component follows once it is noticed that $\opnorm{ \tilde \bR^{-1} - \check \bR^{-1}} = O_{\rm P}(T^{-1/2})$ as $N,T \to \infty$.
Details are left to the reader.

\section{Additional Simulations}

\subsection{Estimation with Misspecified Number of Factors}
\label{s:simulation_r_misspecified}

Comparing the empirical performance of the estimators of factors and loadings when the number $r$ of factors is misspecified is moot, since  these estimators, for $k$ and $k^\prime$ factors, are   nested. More precisely,  for~$k < k^\prime $,   the matrix $\tildebU{k}_T \in \bbR^{k \times T}$ based on the assumption of $k$ factors is a submatrix of its $k^\prime$-factor counterpart~$\tildebU{k^\prime }_T \in \bbR^{k^\prime  \times T}$. Therefore, the error rates obtained in Sections~\ref{sub:average_error_bounds} and~\ref{sub:uniform_error_bounds} remain valid for $k \leq r$:
\begin{align*}
    \min_{\bR \in \bbR^{k \times r}} \Fnorm{ \tildebU{k}_T - \bR \bU_T}/{\sqrt{T}}  & \leq \min_{\bR \in \bbR^{r \times r}} \Fnorm{ \tildebU{r}_T - \bR \bU_T}/{\sqrt{T}} 
                                                                                 \\ & = O_{\rm P}(C_{N,T}^{-1})  \quad \text{as } N,T \to \infty,
\end{align*}
while, for $k > r$, they are bounded from  below:
\[
    \min_{\bR \in \bbR^{k \times r}} \Fnorm{ \tildebU{k}_T - \bR \bU_T}/{\sqrt{T}} \geq 1, \quad k > r
\]
since the rows of $\tildebU{k}$ are orthogonal with norm $\sqrt{T}$. 

A similar fact holds for the loadings.

\begin{table}[t]
    \centering
    \small
    \begin{tabular}{|c|ccccc|ccccc|}
    \hline
    & \multicolumn{5}{c|}{\texttt{DGP1}} & \multicolumn{5}{c|}{\texttt{DGP2}} \\
    \hline
    & $k=1$& $k=2$& $k=r=3$& $k=4$& $k=5$& $k=1$& $k=2$& $k=r=3$& $k=4$& $k=5$ \\
    \hline
        $T=25$   & 0.217 & 0.150 & 0.154 & 0.218 & 0.277 & 0.210 & 0.134 & 0.129 & 0.187 & 0.241 \\ 
        $T=50$   & 0.206 & 0.118 & 0.086 & 0.127 & 0.166 & 0.199 & 0.104 & 0.064 & 0.099 & 0.132 \\ 
        $T=100$  & 0.200 & 0.103 & 0.052 & 0.081 & 0.107 & 0.193 & 0.089 & 0.032 & 0.054 & 0.075 \\ 
        $T=200$  & 0.198 & 0.097 & 0.036 & 0.056 & 0.075 & 0.192 & 0.083 & 0.016 & 0.031 & 0.045 \\ 
  \hline
      & \multicolumn{5}{c|}{\texttt{DGP3}} & \multicolumn{5}{c|}{\texttt{DGP4}} \\
      \hline
  $T=25$  & 0.532 & 0.911 & 1.327 & 1.729 & 2.113 & 0.470 & 0.779 & 1.148 & 1.511 & 1.865 \\ 
  $T=50$  & 0.381 & 0.567 & 0.819 & 1.076 & 1.325 & 0.328 & 0.444 & 0.649 & 0.866 & 1.080 \\ 
  $T=100$  & 0.307 & 0.373 & 0.508 & 0.686 & 0.860 & 0.258 & 0.261 & 0.351 & 0.490 & 0.628 \\ 
  $T=200$  & 0.271 & 0.271 & 0.321 & 0.457 & 0.588 & 0.224 & 0.169 & 0.173 & 0.271 & 0.367 \\ 
  \hline
    \end{tabular}
    \caption{ Mean Squared Errors $\phi_{N,T}(k)$ for  the estimation of   the common component $ \chi_{it}$  based on various misspecified values $k$ of the actual number  $r=3$ of factors, under different data-generating processes, averaged over 500 replications of the simulations, for $N=100$.}
    \label{table:rmisspecified}
\end{table}

The case  is different for the estimation of the common component. Considering the same data-generating processes as in Section~\ref{s:simu}, we computed the estimation errors 
\begin{equation}
    \label{eq:phiNTr}
    \phi_{N,T}(k) \defeq (NT)^{-1} \sum_{i=1}^N \sum_{t=1}^T \hnorm{ \chi_{it} - \hat{ \chi}^{(k)}_{it}}_{H_i}^2 
\end{equation}
resulting from assuming $k$ factors, $k=1,\ldots,5$. %
Notice that $\chi_{it}$ here  is independent of $k$ and only depends on the actual number of factors, namely $r=3$. The values of $\phi_{N,T}(k)$, averaged over 500 replications of the simulations, are given in Table~\ref{table:rmisspecified} for $N=100$.
For \texttt{DGP1} and \texttt{DGP2}, the minimal value of $\phi_{N,T}(k)$ is obtained for the correct value $k = 3$ of the estimate, except for \texttt{DGP1} and $T = 25$. The situation is different for \texttt{DGP3} and \texttt{DGP4}, where the amount of idiosyncratic variance is larger than in \texttt{DGP1} and~\texttt{DGP2}. For \texttt{DGP3}, underestimating  the number of factors ($k < 3$) yields smaller error values~($ \phi_{N,T}(k)<\phi_{N,T}(3)$,  even for $T=200$. This is due to the fact that it is harder to estimate the factors in that setting. For~$N=200$ and $T=500$  (simulations not shown here),  $k = 3$, however, minimizes~$\phi_{N,T}(k)$ again, as expected. For \texttt{DGP4}, the minimal error is almost attained at $k = 3$ for $T=200$; looking at the rates of decrease of the errors (at $T$ increases), it is clear that the minimum will be achieved for $T > 200$. Overall, for $N,T$ large enough, it seems that overestimating $r$ is less problematic than underestimating it. For small sample size (e.g.\ smaller values of $N$, results not reported here) or higher idiosyncratic variance, small values of $k$ yield better results. The interpretation is clear: we do not have enough data to estimate well a large number of parameters.

\section{Additional Results for the Forecasting of Mortality Curves}
\label{sec:additional_application}

In this section, we present the results obtained by implementing the methodology described in Section~\ref{sec:forecasting_mortality_curves} on the level curves (as opposed to the log curves). Similarly to \cite{gao:shang:2018}, the predicted  curves are the log curves and a prediction of the original level curves is obtained as the exponential of the   log prediction. The performabe assessment metrics presented in Table~\ref{t:mortality_forecasting2} are   based on the level curves. As explained in Section~\ref{sec:forecasting_mortality_curves}, we do not believe that getting back to the level curves (that is, inverting the log transformation before evaluating predictive performance) is a good idea,  but we nonetheless present these results  for the sake of comparison. Note that, even with this disputable metric, our method (TNH) still outperforms the other two (GSY, the \cite{gao:shang:2018} one, and IF, the componentwise forecasting one).

\begin{table}[ht]
    \centering
    \small
    \begin{tabular}{l||rrr|rrr||rrr|rrr|}
            & \multicolumn{6}{c||}{Female} & \multicolumn{6}{c|}{Male} \\
            & \multicolumn{3}{c|}{MAFE} & \multicolumn{3}{c||}{MSFE} & \multicolumn{3}{c|}{MAFE} & \multicolumn{3}{c|}{MSFE}  \\
            & GSY & IF &TNH & GSY &IF & TNH & GSY &IF & TNH & GSY &IF & TNH  \\
            \hline
        $h=1$  & 3.47 & 2.96 & \textbf{1.65} & 2.26 & 0.75 & \textbf{0.30} & 4.54 & 3.02 & \textbf{2.55} & 2.26 & 0.66 & \textbf{0.65} \\ 
        $h=2$  & 3.35 & 3.30 & \textbf{1.69} & 2.04 & 0.88 & \textbf{0.31} & 4.49 & 3.29 & \textbf{2.58} & 2.16 & 0.73 & \textbf{0.66} \\ 
        $h=3$  &3.36 & 3.51 & \textbf{1.70} & 2.28 & 0.95 & \textbf{0.32} & 4.43 & 3.54 & \textbf{2.62} & 2.20 & 0.79 & \textbf{0.68} \\ 
        $h=4$  & 3.57 & 3.65 & \textbf{1.73} & 2.39 & 0.99 & \textbf{0.33} & 4.43 & 3.75 & \textbf{2.69} & 2.34 & 0.83 & \textbf{0.72} \\  
        $h=5$  & 3.46 & 3.76 & \textbf{1.67} & 2.35 & 1.00 & \textbf{0.32} & 4.20 & 3.95 & \textbf{2.66} & 2.04 & 0.86 & \textbf{0.71} \\ 
        $h=6$  &3.54 & 3.94 & \textbf{1.61} & 2.57 & 1.05 & \textbf{0.33} & 4.15 & 4.27 & \textbf{2.73} & 1.95 & 0.96 & \textbf{0.76} \\  
        $h=7$  & 3.67 & 4.16 & \textbf{1.64} & 2.50 & 1.12 & \textbf{0.36} & 3.75 & 4.55 & \textbf{2.77} & 1.60 & 1.03 & \textbf{0.79} \\ 
        $h=8$  & 3.48 & 4.39 & \textbf{1.72} & 2.08 & 1.19 & \textbf{0.39} & 3.61 & 4.87 & \textbf{2.84} & 1.40 & 1.12 & \textbf{0.83} \\ 
        $h=9$  & 3.63 & 4.70 & \textbf{1.81} & 2.28 & 1.32 & \textbf{0.43} & 3.78 & 5.25 & \textbf{2.97} & 1.46 & 1.25 & \textbf{0.89} \\  
        $h=10$ & 4.08 & 4.91 & \textbf{1.87} & 3.20 & 1.38 & \textbf{0.46} & 3.91 & 5.53 & \textbf{2.95} & 1.47 & 1.32 & \textbf{0.88} \\ 
        \hline
        Mean   & 3.56 & 3.93 & \textbf{1.71} & 2.39 & 1.06 & \textbf{0.36} & 4.13 & 4.20 & \textbf{2.74} & 1.89 & 0.96 & \textbf{0.76} \\ 
        Median &3.51 & 3.85 & \textbf{1.70} & 2.31 & 1.02 & \textbf{0.33} & 4.17 & 4.11 & \textbf{2.71} & 2.00 & 0.91 & \textbf{0.74} \\ 
    \end{tabular}
    \caption{Comparison of the forecasting accuracies of our method (TNH ) with \cite{gao:shang:2018} (GSY) and the independent forecast (IF) on the Japanese mortality curves  (not their log transforms) for~$h=1,\ldots, 10$,  female and male curves. Boldface is used for the winning \emph{MAFE} and \emph{MSFE} values, which uniformly correspond to the TNH  forecast. \emph{MAFE} is multiplied by a factor 1000 and \emph{MSFE}  by a  factor 10000 for readability.}
    \label{t:mortality_forecasting2}
\end{table}

\end{document}